\documentclass[11pt]{article}
\usepackage[top=0.75in, bottom=1in, left=3cm, right=3cm]{geometry}
\usepackage[nosort]{cite}

\usepackage{amssymb,amsfonts,amsmath,amsthm}
\allowdisplaybreaks[4]
\usepackage{esint}
\usepackage{braket}
\usepackage{extarrows}
\usepackage{tensor}
\usepackage{bm}
\usepackage[shortcuts]{extdash}
\usepackage{mathrsfs}
\usepackage{dsfont}
\everymath{\displaystyle}
\usepackage{tikz-cd}
\usepackage{thm-restate}

\usepackage[colorlinks=true,linkcolor=black,citecolor=black,CJKbookmarks=true]{hyperref}
\usepackage{enumitem}

\usepackage{thmtools}
\declaretheoremstyle[
bodyfont=\itshape
]{mystyle}
\declaretheorem[style=mystyle,name=Theorem]{theorem}
\declaretheorem[style=mystyle,name=Lemma,sibling=theorem]{lemma}
\declaretheorem[style=mystyle,name=Corollary,sibling=theorem]{corollary}
\declaretheorem[style=mystyle,name=Proposition,sibling=theorem]{proposition}

\declaretheoremstyle[
]{mystyle1}

\declaretheorem[style=mystyle1,name=Definition,sibling=theorem]{definition}
\declaretheorem[style=mystyle1,name=Remark,sibling=theorem]{remark}
\declaretheorem[style=mystyle,name=Example,sibling=theorem]{example}
\usepackage{microtype}

\makeatletter
\renewcommand{\@fnsymbol}[1]{\arabic{footnote}}
\makeatother

\allowdisplaybreaks[4]
\usepackage{cite}		
\usepackage{xcolor}
\definecolor{mred}{RGB}{220,20,60}
\definecolor{green}{RGB}{34,139,34}
\definecolor{blue}{RGB}{0,136,199}
\definecolor{purple}{RGB}{41,18,171}

\begin{document}
	\boldmath
	\title{\centering\textbf{K-energy for the coupled constant scalar curvature K\"{a}hler equation, I: Slope formula and semistability}}
	\author{By \text{Akito Futaki\footnote{Yau Mathematical Sciences Center, Tsinghua University, Beijing 100084, China, futaki@tsinghua.edu.cn} } and \text{Jianwei Shi\footnote{Department of Mathematical Sciences, Tsinghua University, Beijing 100084, China, shijw23@mails.tsinghua.edu.cn}}}
\date{\today}
\maketitle
\unboldmath
\begin{abstract}
		In this paper we study coupled constant scalar curvature K\"ahler metrics 
			defined by Datar-Pingali extending coupled K\"ahler-Einstein metrics defined by Hultgren-Witt Nystr\"om. We consider two K-energy
			functionals of which one (denoted $\mathcal{M}_{\operatorname{ccscK}}$) integrates the moment map considered by Datar-Pingali and the other 
			(denoted $\mathcal{M}'$) is
			designed so that its slope at infinity for subgeodesic rays for ample test configurations with reduced central fiber
			coincides with the Donaldson-Futaki invariant considered by Hultgren-Witt Nystr\"om and Datar-Pingali. We also extend $\mathcal{M}_{\operatorname{ccscK}}$ to $\mathcal{H}^{1,\bar{1}}$ and the finite energy class $\mathcal{E}^1$, and prove its convexity along tuples of weak geodesics and finite energy geodesics, respectively. As a corollary, we obtain a semistability condition necessary for the existence of ccscK metrics.
\end{abstract}
\boldmath
\tableofcontents
\unboldmath
\section{Introduction}
\label{sec:1}
Finding canonical metrics has been one of the main subjects in K\"ahler geometry. The celebrated works on K\"ahler-Einstein metrics are \cite{Yau1977,Yau1978Comm,Aubin1976Paris,Tian2015CPAM,chen2015kahler3} and more recent results on constant scalar curvature K\"ahler (cscK for short) metrics are typically \cite{boucksom2025yautiandonaldsonconjectureweightedcsck,darvas2025ytdcorrespondenceconstantscalar,trusiani2026solutionyautiandonaldsonconjecturespecial}. 

In \cite{hultgren2019coupled}, Hultgren and Witt Nystr\"{o}m defined a new candidate for canonical K\"ahler metrics called the coupled K\"ahler-Einstein (cKE for short) metrics.
There is an example of a Fano 4-fold which has reductive automorphism group but does not admit a K\"ahler-Einstein metric as shown in \cite[Section 3]{Futaki1983Invent}. Hultgren proved in \cite[Corollary 4]{hultgren2019AnP} that the same manifold carries coupled K\"ahler-Einstein metric, showing the possibility that coupled K\"ahler-Einstein metrics could be
another candidate for canonical K\"{a}hler metrics on Fano manifolds, see also \cite{FutakiZhang2021}. More recently, Fujita-Hashimoto \cite{fujita2024coupleddingstabilityyautiandonaldson} obtained a kind of K-stability characterization of the existence of coupled K\"ahler-Einstein metrics.

The cKE metrics are formulated as follows. Let $X$ be a compact complex manifold of complex dimension $n$, we say that a tuple of K\"{a}hler metrics $\omega_i,i=0,...,m$ satisfies the \textit{coupled K\"{a}hler-Einstein equation} if\begin{align}\label{cKE}
	\operatorname{Ric}(\omega_0)=\cdots=\operatorname{Ric}(\omega_m)=\epsilon\sum_i \omega_i, \ \epsilon = 1\text{ or }-1
\end{align}where $\operatorname{Ric}(\omega_i)$ denotes the Ricci form of $\omega_i$. This equation naturally requires that $c_1(X)=\epsilon\sum_i [\omega_i]$. In particular, for the case $\epsilon=-1$, $K_X$ should be ample, and \cite[Theorem 1.3]{hultgren2019coupled} provides a generalization of the Aubin-Yau theorem. 

In the case of $\epsilon=1$, the equation\begin{align}\label{cKE1}
	\operatorname{Ric}(\omega_0)=\cdots=\operatorname{Ric}(\omega_k)=\sum_i \omega_i
\end{align}demands that $X$ should be Fano, and does not always have a solution. Recall in the classical setting, the K\"{a}hler-Einstein equation $\operatorname{Ric}(\omega)=\omega$ is just equivalent to the following fourth-order partial differential equation, known as the constant scalar curvature K\"{a}hler (cscK) equation:\begin{align*}
	S_{\omega}=n,
\end{align*}where $S_\omega$ denotes the scalar curvature of $\omega$. In the coupled setting, Datar and Pingali \cite{datar2020coupled} investigated a generalized system for the cscK equation, formulated as follows:\begin{equation}\label{ccscK}
	\begin{aligned}
		&\frac{\omega_0^n}{V_0}=\cdots=\frac{\omega_m^n}{V_m}\\
		&S_{\omega_0}=\operatorname{tr}_{\omega_0}\left(\sum_{i=0}^m\omega_i\right)+\hat{S},
	\end{aligned}
\end{equation}where $V_i=[\omega_i]^n$ and $\hat{S}$ is a topological constant. Under the assumption that $c_1(X)=\sum_{i=0}^m[\omega_i]$, this equation turns out to be equivalent to (\ref{cKE1}).

The equation (\ref{ccscK}) is called the \textit{coupled constant scalar curvature K\"{a}hler equation} (ccscK for short) and the solutions are called \textit{ccscK metrics}. Building upon the fundamental theories of the ccscK and cKE equations, this paper focuses on a functional $\mathcal{M}_{\operatorname{ccscK}}$ called the \textit{coupled K-energy} or \textit{coupled Mabuchi energy}, which integrates the moment map for the ccscK equation. This functional is suggested in \cite[Remark 12]{datar2020coupled} (see also \cite{Lee2023AJM}). Our first result is a Chen-Tian formula (\ref{our}) of $\mathcal{M}_{\operatorname{ccscK}}$. Let $X$ be an $n$-dimensional complex manifold, with K\"{a}hler classes $\alpha_i,i=0,\cdots,m$, then for any $\omega_i\in \alpha_i$ and any strictly $\omega_i$-plurisubharmonic functions $\varphi_i$, we have
\begin{align*}
	&\mathcal{M}_{\operatorname{ccscK}}((\omega_{i}+dd^c\varphi_i),(\omega_{i}))= \mathcal{M}_{\operatorname{cscK}}(\omega_{0}+dd^c\varphi_0,\omega_{0})+\frac{S}{V_0(n+1)!}\braket{\varphi_0,\cdots,\varphi_0}_{(\omega_0,\cdots,\omega_0)}\\ &\quad -\sum_{i=1}^m\frac{1}{V_i(n+1)!}\braket{\varphi_{i},\cdots,\varphi_{i}}_{(\omega_i,\cdots,\omega_i)}+\frac{1}{V_0n!}\sum_{i=1}^m\braket{\varphi_i,\varphi_0,\cdots,\varphi_0}_{(\omega_i,\omega_0,\cdots,\omega_0)},
\end{align*}where $\braket{\varphi_0,\cdots,\varphi_0}_{(\omega_0,\cdots,\omega_0)}$ denotes the multivariate energy functional (also known as analytic Deligne
pairing) as defined in Definition \ref{delignepairing} and $\mathcal{M}_{\operatorname{cscK}}$ is the well-known K-energy for the cscK
metrics, $V_i=\alpha_i^n$ and $S:=-n\frac{\sum_{i=1}^m\alpha_i\cdot\alpha_0^{n-1}}{\alpha_0^n}$. See \eqref{CT1}, \eqref{CT2}, \eqref{CT3} and \eqref{finiteenergyextensionformula} for more
explicit expressions of the last three terms of $\mathcal{M}_{\operatorname{ccscK}}((\omega_{i}+dd^c\varphi_i),(\omega_{i}))$.

From this formula, we obtain the asymptotic behavior of $\mathcal{M}_{\operatorname{ccscK}}$, by utilizing the results of \cite{boucksom2019} within the analytic framework established by Sj\"{o}str\"{o}m Dyrefelt \cite{sjostrom2018k}. More precisely, we establish the following results, in both the algebraic and cohomological settings.
\begin{theorem}\label{our2}
	Let $X$ be a compact complex manifold with K\"ahler forms $\omega_0,\cdots,\omega_m$ on it, and $(\mathcal{X}_i,\mathcal{A}_i)$ be (compactified) relatively K\"ahler cohomological test conﬁgurations for $(X,[\omega_i])$ respectively. Suppose that all of these test configurations are smooth and dominate $X\times\mathbb{P}^1$. Then for any tuple of subgeodesic rays $(\varphi_i(t))_{t\geq 0}\subset\operatorname{PSH}(X,\omega_i)$, each $\mathcal{C}^{1,\bar{1}}$-compatible (in sense of Definition \ref{compatible}) with the corresponding test configuration $(\mathcal{X}_i,\mathcal{A}_i)$, $i=0,\ldots,m$, we have
	\begin{equation}\label{ourslope}
		\begin{aligned}
			\lim_{t\to\infty}\frac{\mathcal{M}_{\operatorname{ccscK}}\left((\omega_i+dd^c\varphi_i(t)),(\omega_i)\right)}{t}&=\operatorname{M}^{\operatorname{NA}}(\mathcal{X}_0, \mathcal{A}_0)+\frac{S}{V_0(n+1)!}(\mathcal{A}_0^{n+1})\\ &\quad -\sum_{i=1}^m\frac{1}{V_i(n+1)!}(\mathcal{A}_{i}^{n+1})+\frac{1}{V_0n!}\sum_{i=1}^m(\mathcal{A}_i\cdot\mathcal{A}_0^n),
		\end{aligned}
	\end{equation}where $\operatorname{M}^{\operatorname{NA}}(\mathcal{X}_0,\mathcal{A}_0)$ denotes the non-Archimedean Mabuchi functional of $(\mathcal{X}_0,\mathcal{A}_0)$, defined by \begin{align*}
		\operatorname{M}^{\operatorname{NA}}(\mathcal{X}_0,\mathcal{A}_0)=&\ \frac{n(-K_X\cdot [\omega_0]^{n-1})/[\omega_0]^{2n}}{(n+1)!}(\mathcal{A}_0^{n+1})+\frac{1}{V_0n!}\left(K_{\mathcal{X}_0/\mathbb{P}^1}\cdot\mathcal{A}_0^n\right)\\&+\frac{1}{V_0n!}((\mathcal{X}_0(0)_{\operatorname{red}}-\mathcal{X}_0(0))\cdot \mathcal{A}_0^n)
	\end{align*} where $\mathcal{X}_0(0)$ denotes the central fiber of $\mathcal{X}_0$ (c.f. \cite{boucksom2017uniform,boucksom2019} and \cite[Deﬁnition 3.15]{sjostrom2018k}). The intersection numbers on the right hand side of (\ref{ourslope}) are given by pulling back the cohomology classes $\mathcal{A}_i$ to a smooth test configuration $\mathcal{X}$ that dominates all $\mathcal{X}_i$ (c.f. \cite[Section 6.6]{boucksom2017uniform} and \cite[Section 3.1]{sjostrom2018k}).
\end{theorem}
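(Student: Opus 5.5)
The plan is to use the Chen--Tian type decomposition of $\mathcal{M}_{\operatorname{ccscK}}$ stated just before the theorem and compute the slope of each of its four terms separately. Along the tuple of rays $(\varphi_i(t))$ we write
\[
\mathcal{M}_{\operatorname{ccscK}}\bigl((\omega_i+dd^c\varphi_i(t)),(\omega_i)\bigr)=\mathcal{M}_{\operatorname{cscK}}(\omega_0+dd^c\varphi_0(t),\omega_0)+\frac{S}{V_0(n+1)!}\braket{\varphi_0(t),\ldots,\varphi_0(t)}_{(\omega_0,\ldots,\omega_0)}
\]
\[
\qquad-\sum_{i=1}^m\frac{1}{V_i(n+1)!}\braket{\varphi_i(t),\ldots,\varphi_i(t)}_{(\omega_i,\ldots,\omega_i)}+\frac{1}{V_0n!}\sum_{i=1}^m\braket{\varphi_i(t),\varphi_0(t),\ldots,\varphi_0(t)}_{(\omega_i,\omega_0,\ldots,\omega_0)}.
\]
The theorem then follows once the slope of each summand is identified. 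The key observation is that each summand involves only finitely many rays, each compatible with its own test configuration. Because the slopes are additive, the statement reduces to slope formulas for the cscK K-energy and for the multivariate Deligne pairings.

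\textbf{The cscK term.} The slope of $\mathcal{M}_{\operatorname{cscK}}(\omega_0+dd^c\varphi_0(t),\omega_0)/t$ along a $\mathcal{C}^{1,\bar 1}$-compatible subgeodesic ray is exactly the result of Sj\"ostr\"om Dyrefelt \cite{sjostrom2018k}, extending \cite{boucksom2019}, for smooth relatively K\"ahler cohomological test configurations dominating $X\times\mathbb{P}^1$. It equals $\operatorname{M}^{\operatorname{NA}}(\mathcal{X}_0,\mathcal{A}_0)$, including the correction term $(\mathcal{X}_0(0)_{\operatorname{red}}-\mathcal{X}_0(0))\cdot\mathcal{A}_0^n$. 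I will invoke this directly. Its hypotheses are exactly the ones assumed for $i=0$.

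\textbf{The pairing terms.} Here I would pass to a common smooth test configuration $\mathcal{X}$ dominating all the $\mathcal{X}_i$ and $X\times\mathbb{P}^1$, for instance a resolution of the graph of the birational maps. Pulling back keeps each ray compatible with the pulled-back class $\mathcal{A}_i$. Compatibility means that $\Phi_i(x,\tau)=\varphi_i(-\log|\tau|)(x)$ differs from an $\Omega_i$-psh function on $\mathcal{X}$ by a bounded, indeed $\mathcal{C}^{1,\bar 1}$, function, where $\Omega_i\in\mathcal{A}_i$ is a smooth representative.

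Next I use the change-of-variables (Deligne-pairing) formula: $dd^c_\tau$ of the multivariate energy $\braket{\varphi_{j_0}(t),\ldots,\varphi_{j_n}(t)}$ on the punctured disc equals the fiber integral $\pi_*(\Omega_{j_0,\Phi}\wedge\cdots\wedge\Omega_{j_n,\Phi})$. Integrating over $\mathbb{P}^1$ with Stokes, as in \cite[Section 3]{boucksom2019}, gives slope $=(\mathcal{A}_{j_0}\cdot\ldots\cdot\mathcal{A}_{j_n})$, up to the reference term from $X\times\mathbb{P}^1$, which contributes zero slope. Convexity in $t$ of the energies along subgeodesics ensures that the limit exists. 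Applied with indices $(0,\ldots,0)$, $(i,\ldots,i)$ and $(i,0,\ldots,0)$, this produces the three intersection numbers in \eqref{ourslope}.

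\textbf{Main obstacle.} The delicate step is the mixed pairing $\braket{\varphi_i,\varphi_0,\ldots,\varphi_0}_{(\omega_i,\omega_0,\ldots,\omega_0)}$ for rays attached to \emph{different} test configurations. I would first prove the slope formula for smooth rays induced by smooth $\Omega_i$-psh representatives on $\mathcal{X}$, where the computation is a direct Stokes argument. I would then pass to general compatible rays using that each multivariate pairing is Lipschitz in each entry with respect to the sup norm: $|\braket{\ldots,u,\ldots}-\braket{\ldots,v,\ldots}|\le C\sup|u-v|$. Since compatible rays differ from the model rays by $O(1)$, the slope is unchanged.
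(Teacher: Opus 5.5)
Your proposal is correct and follows the paper's proof. The paper likewise applies the Chen--Tian decomposition (\ref{our}) term by term and cites \cite[Theorem C]{sjostrom2018k} for the $\mathcal{M}_{\operatorname{cscK}}$ slope. For the pairing terms it uses \cite[Lemmas 4.4 and 4.8]{sjostrom2018k} to replace each $\mathcal{C}^{1,\bar 1}$-compatible ray by a smooth $\mathcal{C}^\infty$-compatible subgeodesic ray without changing the slope, which is exactly your $O(1)$-difference plus sup-norm Lipschitz argument, and then applies Lemma \ref{slopeofAnalyticDelignePairing}, whose content your Stokes computation reproves.
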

\begin{theorem}\label{our2alg}
	Let $X$ be a compact complex manifold with ample line bundles $L_0,\cdots,L_m$ on it, and  $(\mathcal{X}_i,\mathcal{L}_i)$ be (compactified) ample algebraic test conﬁgurations for $(X,L_i)$ respectively. Suppose that all of these test configurations are smooth and dominate $X\times\mathbb{P}^1$. Then for any smooth strictly positive metrics $\Phi_i$ on $\mathcal{L}_i$ near the central ﬁber, we have
	\begin{equation}\label{ourslopealg}
		\begin{aligned}
			\lim_{t\to\infty}\frac{\mathcal{M}_{\operatorname{ccscK}}\left((\omega_i(t)),(\omega_i)\right)}{t}&=\operatorname{M}^{\operatorname{NA}}(\mathcal{X}_0, \mathcal{L}_0)+\frac{S}{V_0(n+1)!}(\mathcal{L}_0^{n+1})\\ &\quad -\sum_{i=1}^m\frac{1}{V_i(n+1)!}(\mathcal{L}_{i}^{n+1})+\frac{1}{V_0n!}\sum_{i=1}^m(\mathcal{L}_i\cdot\mathcal{L}_0^n),
		\end{aligned}
	\end{equation}where $\omega_i$ is a reference metric in $c_1(L_i)$, $\omega_i(t) = dd^c\phi_i(t)$ and $\phi_i(t)$ is the ray of smooth positive metrics on $L_i$ corresponding to $\Phi_i$. The intersection numbers on the right hand side of (\ref{ourslopealg}) are given by pulling back the cohomology classes $\mathcal{L}_i$ to a smooth test configuration $\mathcal{X}$ that dominates all $\mathcal{X}_i$ (c.f. \cite[Section 6.6]{boucksom2017uniform} and \cite[Section 3.1]{sjostrom2018k}).
\end{theorem}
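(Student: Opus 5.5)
The natural route is to use the Chen--Tian type decomposition of $\mathcal{M}_{\operatorname{ccscK}}$ stated in the introduction. It splits $\mathcal{M}_{\operatorname{ccscK}}((\omega_i(t)),(\omega_i))$ into four kinds of terms:
\begin{itemize}
\item the classical K-energy $\mathcal{M}_{\operatorname{cscK}}(\omega_0(t),\omega_0)$;
\item the multiple $\frac{S}{V_0(n+1)!}$ of the Deligne pairing $\braket{\phi_0,\dots,\phi_0}_{(\omega_0,\dots,\omega_0)}$;
\item the terms $-\frac{1}{V_i(n+1)!}\braket{\phi_i,\dots,\phi_i}_{(\omega_i,\dots,\omega_i)}$;
\item the mixed terms $\frac{1}{V_0n!}\braket{\phi_i,\phi_0,\dots,\phi_0}_{(\omega_i,\omega_0,\dots,\omega_0)}$.
\end{itemize}
Here $\phi_i(t)$ denotes the potential of $\omega_i(t)$ relative to $\omega_i$. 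Each slope can then be computed separately and the results added. This matches the right-hand side of (\ref{ourslopealg}) term by term, so the whole proof reduces to slope formulas for each piece.

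\textbf{Classical K-energy term.} For $\mathcal{M}_{\operatorname{cscK}}$ I would invoke the slope formula of Boucksom--Hisamoto--Jonsson \cite{boucksom2019}. For a smooth ample test configuration dominating $X\times\mathbb{P}^1$, and the ray $\phi_0(t)$ induced by a smooth strictly positive metric $\Phi_0$ near the central fiber, it gives
\[
\lim_{t\to\infty}\mathcal{M}_{\operatorname{cscK}}(\omega_0(t),\omega_0)/t=\operatorname{M}^{\operatorname{NA}}(\mathcal{X}_0,\mathcal{L}_0).
\]
The term $((\mathcal{X}_0(0)_{\operatorname{red}}-\mathcal{X}_0(0))\cdot\mathcal{L}_0^n)$ enters through the log-discrepancy/entropy part. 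Equivalently, one can use Sj\"ostr\"om Dyrefelt's version \cite{sjostrom2018k}, which is what handles Theorem \ref{our2} in the cohomological setting.

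\textbf{Deligne pairing terms.} For the multivariate energy terms I would use the functoriality of Deligne pairings. Pull everything back to a smooth test configuration $\mathcal{X}$ dominating all $\mathcal{X}_i$. On $\mathcal{X}$, the $S^1$-invariant metrics $\Phi_i$, regarded as functions of $\tau$ with $t=-\log|\tau|$, give $\braket{\phi_{i_0},\dots,\phi_{i_n}}(t)$ as the (pushforward) Deligne pairing of the metrized line bundles $\mathcal{L}_{i_0},\dots,\mathcal{L}_{i_n}$ restricted over $\mathbb{P}^1\setminus\{0\}$. This is a metric on a line bundle over $\mathbb{P}^1$ (trivialized away from $0$ by the $\mathbb{C}^*$-action and the product structure at $\infty$).

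A standard computation (as in \cite[Section 3]{boucksom2019}) then gives the slope: $dd^c_\tau$ of the pairing equals the fiber integral of $\bigwedge dd^c\Phi_{i_j}$. Since the $\Phi_i$ are smooth on $\mathcal{X}$ near the central fiber, the pairing extends continuously across $\tau=0$. Hence the slope equals the degree $(\mathcal{L}_{i_0}\cdot\ldots\cdot\mathcal{L}_{i_n})$, up to the normalizing sign conventions that match Definition \ref{delignepairing}. Multilinearity and symmetry of the pairing handle the mixed terms $\braket{\phi_i,\phi_0,\dots,\phi_0}$, giving $(\mathcal{L}_i\cdot\mathcal{L}_0^n)$. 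Multiplying by the coefficients $\frac{S}{V_0(n+1)!}$, $-\frac{1}{V_i(n+1)!}$ and $\frac{1}{V_0n!}$ and adding the K-energy slope yields (\ref{ourslopealg}).

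\textbf{Main obstacle.} The delicate point is the mixed pairings, where the rays come from different test configurations. One must check that after pulling back to the common dominating $\mathcal{X}$, the pulled-back metrics remain smooth (semipositive, no longer strictly positive) near the central fiber. The continuity-at-$0$ argument only needs smooth metrics, not positivity, so it should go through. It also needs the energy slopes to be independent of the choice of $\mathcal{X}$; this follows from the projection formula.

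For Theorem \ref{our2}, the same argument applies with $\mathcal{C}^{1,\bar1}$-compatibility replacing smoothness. There I would follow Sj\"ostr\"om Dyrefelt's argument that bounded Laplacian suffices for the Deligne pairing to have the expected asymptotics.
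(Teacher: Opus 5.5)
Your proposal is correct and follows essentially the paper's argument. Like the paper, you decompose $\mathcal{M}_{\operatorname{ccscK}}$ via the Chen--Tian formula (\ref{our}), take the slope of $\mathcal{M}_{\operatorname{cscK}}$ from Boucksom--Hisamoto--Jonsson \cite{boucksom2019}, and identify the analytic Deligne pairings with the metrized Deligne pairings of \cite{boucksom2019}, so that their slope lemma gives the intersection numbers on a common dominating test configuration; the only difference is that you sketch the continuity-of-the-Deligne-metric argument behind that lemma, which the paper simply cites.
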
Under the cohomological assumption $c_1(X)=\sum_{i=0}^{m}[\omega_i]$, derived from the cKE equation (\ref{cKE1}), we have $S-n=-n\frac{\sum_{i=0}^m[\omega_i]\cdot[\omega_0]^{n-1}}{[\omega_0]^n}=n\frac{K_X\cdot[\omega_0]^{n-1}}{[\omega_0]^n}$ and thus\begin{align*}
	\lim_{t\to\infty}\frac{\mathcal{M}_{\operatorname{ccscK}}\left((\omega_i(t)),(\omega_i)\right)}{t}&=-\sum_{i=0}^m\frac{1}{V_i(n+1)!}(\mathcal{L}_{i}^{n+1})+\frac{1}{V_0n!}\left(\left(K_{\mathcal{X}_0/\mathbb{P}^1}+\sum_{i=0}^m\mathcal{L}_i\right)\cdot\mathcal{L}_0^n\right),
\end{align*}when $\mathcal{X}_0(0)$ is reduced, i.e.  $((\mathcal{X}_0(0)_{\operatorname{red}}-\mathcal{X}_0(0))\cdot \mathcal{L}_0^n)=0$).
This formula is quite similar to the DF-invariant given by \cite{hultgren2019coupled} (see also \cite[Proposition 19]{datar2020coupled}), but not exactly the same. To explain this difference, 
we construct in section \ref{subsec:6} another K-energy $\mathcal{M}'$ expressed as (\ref{M'}) that matches their DF-invariant in \cite{hultgren2019coupled} and \cite{datar2020coupled}
	in the sense that the slope (i.e. non-Archimedean functional) of $\mathcal{M}'$ is their DF-invariant, see (\ref{slopeforHWNDF})
	in Proposition \ref{our5}. To compare $\mathcal{M}_{\operatorname{ccscK}}$ and $\mathcal{M}'$, it is convenient to use an auxiliary energy functional 
	$\mathcal{N}_{\operatorname{cKE}}$, which is given in Definition \ref{NcKE} (originally introduced in \cite[(14)]{hultgren2019coupled}), together with the entropy $\operatorname{Ent}$ as described in (\ref{Entropy}). We prove that $\mathcal{M}_{\operatorname{ccscK}}$ and $\mathcal{M}'$ are both greater than $\mathcal{N}_{\operatorname{cKE}}$ by one entropy part respectively as described in the next theorem.
\begin{theorem}\label{ourt2}Suppose that $c_1(X)=\sum_{i=0}^m[\omega_i]$ and let $\Omega$ be the K\"{a}hler form in $c_1(X)$ such that $\Omega:\operatorname{Ric}(\Omega)=\sum_{i=0}^m\omega_i$. Then\\
	$(\mathrm i)$ \begin{equation}\label{my3}
		\begin{aligned}
			&\mathcal{M}_{\operatorname{ccscK}}((\omega_{i}+dd^c\varphi_i),(\omega_{i}))\\&=\mathcal{N}_{\operatorname{cKE}}((\omega_{i}+dd^c\varphi_i),(\omega_{i}))+\operatorname{Ent}\left(\frac{(\omega_0+dd^c\varphi_0)^n}{\int_X\omega_0^n},\frac{e^{-\sum_{i=0}^m\varphi_i}\Omega^n}{\int_Xe^{-\sum_{i=0}^m\varphi_i}\Omega^n}\right)-R
		\end{aligned}
	\end{equation}holds for any $\varphi_i\in \mathcal{H}(\omega_i)$, where the relative entropy $\operatorname{Ent}(\mu,\nu)$ is defined by (\ref{Entropy}) and $R:=\operatorname{Ent}\left(\frac{\omega_0^n}{\int_X\omega_0^n},\frac{\Omega^n}{\int_X\Omega^n}\right)$.\\
	$(\mathrm{ii})$ \begin{equation}\label{my3'}
		\begin{aligned}
			&\mathcal{M}'((\omega_{i}+dd^c\varphi_i),(\omega_{i}))\\&=\mathcal{N}_{\operatorname{cKE}}((\omega_{i}+dd^c\varphi_i),(\omega_{i}))+\operatorname{Ent}\left(\frac{\left(\sum_{i=0}^m\omega_i+dd^c\varphi_i\right)^n}{\int_X\left(\sum_{i=0}^m\omega_i\right)^n},\frac{e^{-\sum_{i=0}^m\varphi_i}\Omega^n}{\int_Xe^{-\sum_{i=0}^m\varphi_i}\Omega^n}\right)-R'
		\end{aligned}
	\end{equation}holds for any $\varphi_i\in \mathcal{H}(\omega_i)$, where $R':=\operatorname{Ent}\left(\frac{\left(\sum_{i=0}^m\omega_i\right)^n}{\int_X\left(\sum_{i=0}^m\omega_i\right)^n},\frac{\Omega^n}{\int_X\Omega^n}\right)$.
\end{theorem}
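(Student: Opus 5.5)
Since both sides of \eqref{my3} and \eqref{my3'} vanish at $\varphi_i=0$ (the entropy terms reduce to $R$ and $R'$, and $\mathcal{N}_{\operatorname{cKE}}$ and the K-energies vanish at the base point), the plan is to compare first variations. Both sides are defined by path integration, or satisfy a cocycle property, on $\prod_i\mathcal{H}(\omega_i)$, and that space is connected. It therefore suffices to show that for any smooth path $(\varphi_i(s))$ the $s$-derivatives of the two sides agree. Write $\omega_{i,\varphi}=\omega_i+dd^c\varphi_i$ and $\mu_\varphi=e^{-\sum_i\varphi_i}\Omega^n/\int_Xe^{-\sum_i\varphi_i}\Omega^n$. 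Note that $\operatorname{Ric}(\mu_\varphi)=\operatorname{Ric}(\Omega)+dd^c\sum_i\varphi_i=\sum_i\omega_{i,\varphi}$, using $\operatorname{Ric}(\Omega)=\sum_i\omega_i$.

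\textbf{Step 1 (the $\mathcal{N}_{\operatorname{cKE}}$ variation).} From Definition \ref{NcKE}, $\mathcal{N}_{\operatorname{cKE}}$ is the sum of the Monge--Amp\`ere energies $-\sum_i\mathcal{E}_{\omega_i}(\varphi_i)$ (each normalised by $V_i$) plus $-\log\int_Xe^{-\sum_i\varphi_i}\Omega^n$. Its derivative is therefore
\[
\sum_i\int_X\dot\varphi_i\Big(\mu_\varphi-\frac{\omega_{i,\varphi}^n}{V_i}\Big).
\]

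\textbf{Step 2 (the entropy variation).} For (i), write
\[
\operatorname{Ent}(\omega_{0,\varphi}^n/V_0,\mu_\varphi)=\int\log\frac{\omega_{0,\varphi}^n}{V_0\Omega^n}\frac{\omega_{0,\varphi}^n}{V_0}+\int\sum_i\varphi_i\frac{\omega_{0,\varphi}^n}{V_0}+\log\int e^{-\sum\varphi_i}\Omega^n .
\]
Differentiating and integrating by parts gives the standard term $\int\dot\varphi_0(-S_{\omega_{0,\varphi}}+\operatorname{tr}_{\omega_{0,\varphi}}\operatorname{Ric}\Omega)\omega_{0,\varphi}^n/V_0$. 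The second piece contributes
\[
\int\sum_i\dot\varphi_i\,\omega_{0,\varphi}^n/V_0+\int\sum_i\varphi_i\,n\,dd^c\dot\varphi_0\wedge\omega_{0,\varphi}^{n-1}/V_0 ,
\]
and the last piece contributes $-\int\sum_i\dot\varphi_i\,\mu_\varphi$. Combining $\operatorname{tr}_{\omega_{0,\varphi}}\operatorname{Ric}\Omega$ with $n\,dd^c\sum_i\varphi_i\wedge\omega_{0,\varphi}^{n-1}$ yields $\operatorname{tr}_{\omega_{0,\varphi}}\sum_i\omega_{i,\varphi}$.

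Adding Step 1 and Step 2, the $\mu_\varphi$ terms cancel. The remainder is
\[
\int\dot\varphi_0\big(\operatorname{tr}_{\omega_{0,\varphi}}\textstyle\sum_i\omega_{i,\varphi}-S_{\omega_{0,\varphi}}\big)\frac{\omega_{0,\varphi}^n}{V_0}+\sum_{i}\int\dot\varphi_i\Big(\frac{\omega_{0,\varphi}^n}{V_0}-\frac{\omega_{i,\varphi}^n}{V_i}\Big).
\]
This should be exactly the moment-map variation defining $\mathcal{M}_{\operatorname{ccscK}}$, with $\hat S$ matching the cohomological constant. The constant $\hat S$ integrates to zero against the normalised measures, so the contributions from the $i=0$ term and the constants are consistent.

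\textbf{Step 3 (part (ii)).} Part (ii) follows the same pattern with $\omega_{0,\varphi}$ replaced by $\omega_\Sigma:=\sum_i\omega_{i,\varphi}$, a K\"ahler form in $c_1(X)$ with potential $\sum_i\varphi_i$. The entropy variation then produces $\int\sum_i\dot\varphi_i(\operatorname{tr}_{\omega_\Sigma}\omega_\Sigma-S_{\omega_\Sigma})\omega_\Sigma^n/\int\omega_\Sigma^n$ plus $\sum_i\int\dot\varphi_i\,\omega_\Sigma^n/\int\omega_\Sigma^n$ minus the $\mu_\varphi$ term, which again cancels against Step 1. I will then compare the result with the definition \eqref{M'} of $\mathcal{M}'$, presumably checking it directly or via its Chen--Tian type expression.

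The main obstacle is bookkeeping. One must verify that the normalisations ($V_i$, the sign convention for $\mathcal{E}$, $\hat S$ versus $S$) in the definitions of $\mathcal{M}_{\operatorname{ccscK}}$ and $\mathcal{M}'$ match the expression produced above. In particular, the cross term $\int\sum_i\varphi_i\,dd^c\dot\varphi_0\wedge\omega_{0,\varphi}^{n-1}$ requires the integration by parts to be done in the right order. If $\mathcal{M}'$ is given only via an explicit formula rather than a variation, I would instead differentiate that formula term by term, each term being a Deligne-pairing-type energy.
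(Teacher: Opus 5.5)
Your proposal is correct.

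\textbf{Part (i).} This is essentially the paper's proof. You compare first variations and expand the entropy as $\int\log\frac{\omega_{0,\varphi}^n}{V_0\Omega^n}\frac{\omega_{0,\varphi}^n}{V_0}+\int\sum_i\varphi_i\frac{\omega_{0,\varphi}^n}{V_0}+\log\int_Xe^{-\sum_i\varphi_i}\Omega^n$. You then integrate by parts and use $\operatorname{Ric}(\Omega)=\sum_i\omega_i$ to merge $\operatorname{tr}_{\omega_{0,\varphi}}\operatorname{Ric}(\Omega)$ with the cross term into $\operatorname{tr}_{\omega_{0,\varphi}}\sum_i\omega_{i,\varphi}$. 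That is exactly what the paper does.

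\textbf{Part (ii).} Here your route differs from the paper's.
\begin{itemize}
\item You redo the variational computation with $\omega_\Sigma=\sum_i\omega_{i,\varphi}$. The key simplification is $\operatorname{tr}_{\omega_\Sigma}\bigl(\operatorname{Ric}(\Omega)+dd^c\sum_i\varphi_i\bigr)=\operatorname{tr}_{\omega_\Sigma}\omega_\Sigma=n$. Since $\int_X\omega_\Sigma^n=c_1(X)^n=V$, the entropy variation is exactly $\frac1V\int\sum_i\dot\varphi_i(n+1-S_{\omega_\Sigma})\omega_\Sigma^n-\int\sum_i\dot\varphi_i\,\mu_\varphi$. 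Added to the $\mathcal{N}_{\operatorname{cKE}}$ variation, this reproduces the defining one-form (\ref{M'}) on the nose.
\item The paper instead uses the Chen--Tian type formula of Proposition \ref{our5}(i). The $\operatorname{E}_{\omega_i}(\varphi_i)$ terms cancel between $\mathcal{M}'$ and $\mathcal{N}_{\operatorname{cKE}}$. What remains is $\mathcal{M}_{\operatorname{cscK}}-\mathcal{N}_{\operatorname{KE}}$, evaluated at the single potential $\sum_i\varphi_i$ with reference form $\sum_i\omega_i\in c_1(X)$. The paper then quotes the classical identity (Tian, BBEGZ) expressing this difference as entropy minus a constant.
\item The paper's route shows structurally that $\mathcal{M}'-\mathcal{N}_{\operatorname{cKE}}$ depends only on $\sum_i\varphi_i$ and is the uncoupled Mabuchi-minus-Ding functional. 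Yours is self-contained and in effect reproves that classical identity in this setting.
\end{itemize}

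\textbf{Bookkeeping corrections.}
\begin{itemize}
\item $\hat S$ does not ``integrate to zero''; it vanishes identically, because $K_X+\sum_i[\omega_i]=0$ under the hypothesis. That is what makes your expression match (\ref{coupledMabuchiDef}). The $i=0$ term of your last sum vanishes trivially.
\item At $\varphi=0$ one has $\mathcal{N}_{\operatorname{cKE}}=-\frac{1}{n!}\log\int_X\Omega^n$. Your base-point claim therefore needs $\Omega^n$ normalized to total mass $1$, as the paper tacitly assumes later when it writes $\int_Xg_t^*\Omega^n=1$. Otherwise the two sides differ by this constant. The paper's proof also only matches derivatives and shares this implicit normalization.
\item The $1/n!$ factors must be read uniformly, i.e.\ with weight $\frac{1}{V_i\,n!}$ on $\operatorname{E}_{\omega_i}$ in $\mathcal{N}_{\operatorname{cKE}}$. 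This is how the paper's own derivative computation treats them, and it is consistent with your dropping $n!$ throughout.
\end{itemize}
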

We also show that the functionals $\mathcal{M}_{\operatorname{ccscK}},\mathcal{M}'$ and $\mathcal{N}_{\operatorname{cKE}}$ differ by constants along curves of K\"ahler forms obtained by pullback under holomorphic automorphisms. In particular, along the curve induced by $\exp(t\xi)$, where $\xi$ is a holomorphic vector field, they have the same constant slope, given by the coupled Futaki invariant evaluated at $\xi$.

The critical points of $\mathcal{M}_{\operatorname{ccscK}}$ are solutions of (\ref{ccscK}), i.e. ccscK metrics while 
the critical points of $\mathcal{M}'$ are expressed as solutions of 
\begin{align*}
	\frac{1}{V}(n+1-S_{\sum_{i=0}^m\omega_i,J})\left(\sum_{i=0}^m\omega_i\right)^n=\frac{\omega_i^n}{V_i}.
\end{align*}
Even on Fano manifolds, it is not clear how the cKE equation is related to this equation. The convexity of $\mathcal{M}'$ is also unclear. See Remark \ref{remarkM'} for more detail. However we can show the following.
\begin{corollary}\label{our4}
	Suppose that $c_1(X)=\sum_{i=0}^m[\omega_i]$ and let $\Omega$ be the K\"{a}hler form in $c_1(X)$ such that $\Omega:\operatorname{Ric}(\Omega)=\sum_{i=0}^m\omega_i$. Then\\
	$(\mathrm{i})$ If $([\omega_i])$ admits a coupled Kähler–Einstein metric, then $\mathcal{M}_{\operatorname{ccscK}}(\cdot,(\omega_i))$ and $\mathcal{M}'(\cdot,(\omega_i))$ are bounded from below;\\
	$(\mathrm{ii})$ If $[\omega_i]=L_i$ for ample line bundles $L_i$ and $([\omega_i])$ admits a coupled Kähler–Einstein metric, then for any smooth (compactified) ample algebraic test conﬁgurations $(\mathcal{X}_i,\mathcal{L}_i)$ of $(X,L_i)$ that dominates $X\times\mathbb{P}^1$, we have\begin{align}\label{his1}
		-\sum_{i=0}^m\frac{1}{V_i(n+1)!}(\mathcal{L}_{i}^{n+1})+\frac{1}{V_0n!}\left(\left(K_{\mathcal{X}_0/\mathbb{P}^1}+\sum_{i=0}^m\mathcal{L}_i\right)\cdot\mathcal{L}_0^n\right)\geq 0;
	\end{align}\noindent $(\mathrm{iii})$ If $[\omega_i]=L_i$ for ample line bundles $L_i$ and $([\omega_i])$ admits a coupled Kähler–Einstein metric, then for any smooth (compactified) ample algebraic test conﬁgurations $(\mathcal{X},\mathcal{L}_i)$ of $(X,L_i)$ that dominates $X\times\mathbb{P}^1$ (sharing a common total space $\mathcal{X}$), we have\begin{align}\label{his2}
	-\sum_{i=0}^m\frac{1}{V_i(n+1)!}(\mathcal{L}_{i}^{n+1})+\frac{1}{Vn!}\left(\left(K_{\mathcal{X}/\mathbb{P}^1}+\sum_{i=0}^m\mathcal{L}_i\right)\cdot\left(\sum_{i=0}^m\mathcal{L}_i\right)^n\right)\geq 0,
	\end{align}where $V=(-K_X)^n$.
\end{corollary}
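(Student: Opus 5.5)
The plan is to reduce everything to the lower boundedness of $\mathcal{N}_{\operatorname{cKE}}$ and then pass to slopes. Theorem \ref{ourt2} writes both $\mathcal{M}_{\operatorname{ccscK}}$ and $\mathcal{M}'$ as $\mathcal{N}_{\operatorname{cKE}}$ plus a relative entropy term minus a constant ($R$ or $R'$). Relative entropy of probability measures is nonnegative by Jensen's inequality, so
\begin{align*}
\mathcal{M}_{\operatorname{ccscK}}\geq \mathcal{N}_{\operatorname{cKE}}-R,\qquad \mathcal{M}'\geq \mathcal{N}_{\operatorname{cKE}}-R'.
\end{align*}
Hence (i) follows once $\mathcal{N}_{\operatorname{cKE}}(\cdot,(\omega_i))$ is bounded below whenever a cKE tuple exists. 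This functional is the Ding-type functional of Hultgren--Witt Nystr\"om, the sum of the Monge--Amp\`ere energies $-\sum_i \frac{1}{V_i(n+1)!}\langle\varphi_i,\dots,\varphi_i\rangle$ together with $-\log\int e^{-\sum\varphi_i}\Omega^n$. I would invoke their result \cite{hultgren2019coupled} that a cKE tuple minimizes it. Alternatively, one can prove this directly: the tuple of weak geodesics joining the cKE tuple to a given tuple is convex in the energy part (which is affine), and $-\log\int e^{-\sum\varphi_i}\Omega^n$ is convex along it by Berndtsson's theorem, since $\sum\varphi_i$ is a geodesic-type subgeodesic for $\sum\omega_i\in c_1(X)$. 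Since the cKE tuple is critical, it is a minimum. Defining the functionals on $\mathcal{H}$, or via the extension to $\mathcal{E}^1$, handles the regularity.

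For (ii), I take any smooth positive metrics $\Phi_i$ on $\mathcal{L}_i$ and the associated rays $\phi_i(t)$. By (i), $\mathcal{M}_{\operatorname{ccscK}}((\omega_i(t)),(\omega_i))$ is bounded below, so its slope at infinity is $\geq 0$. Theorem \ref{our2alg}, combined with $c_1(X)=\sum[\omega_i]$ as computed after it, identifies the slope. Substituting $S-n = nK_X\cdot[\omega_0]^{n-1}/[\omega_0]^n$, the slope becomes the left side of \eqref{his1} plus the term $\frac{1}{V_0n!}((\mathcal{X}_0(0)_{\operatorname{red}}-\mathcal{X}_0(0))\cdot\mathcal{L}_0^n)$. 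This extra term is $\leq 0$, because $\mathcal{X}_0(0)-\mathcal{X}_0(0)_{\operatorname{red}}$ is effective and $\mathcal{L}_0$ is relatively ample. So the bound obtained this way is only $\text{LHS of }\eqref{his1}\geq -(\text{nonpositive})$, which does not directly give \eqref{his1}.

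This is the main obstacle, and I would handle it by base change and normalization. Pulling back along $z\mapsto z^d$ and normalizing makes the central fiber reduced for suitable $d$, and the expression in \eqref{his1} transforms by the factor $d$, which I would check by a standard computation as in \cite{boucksom2017uniform}. Alternatively, the log-canonical/Ding comparison of \cite{boucksom2017uniform} shows directly that $K_{\mathcal{X}/\mathbb{P}^1}$ absorbs the nonreduced part. Taking a common resolution keeps the smoothness and domination hypotheses.

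For (iii), I argue the same way with $\mathcal{M}'$, using Proposition \ref{our5}: its slope along rays from a common total space $\mathcal{X}$ is the Hultgren--Witt Nystr\"om DF invariant, which is the left side of \eqref{his2} (possibly up to the same reducedness correction, handled as above). Lower boundedness of $\mathcal{M}'$ from (i) then gives slope $\geq 0$.
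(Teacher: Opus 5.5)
Your part (i) is exactly the paper's argument: Theorem \ref{ourt2}, nonnegativity of $\operatorname{Ent}$, and the lower bound on $\mathcal{N}_{\operatorname{cKE}}$ from Theorem \ref{DingBoundedBelow}(iii). Your slope set-up for (ii) and (iii) is also the paper's. However, the ``main obstacle'' you identify does not exist; it comes from a sign slip.

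Let $D$ be the left side of \eqref{his1} and $B=\frac{1}{V_0n!}\bigl((\mathcal{X}_0(0)_{\operatorname{red}}-\mathcal{X}_0(0))\cdot\mathcal{L}_0^n\bigr)$. As you correctly note, $B\le 0$. Lower boundedness together with Theorem \ref{our2alg} gives $D+B\ge 0$, hence $D\ge -B\ge 0$, which is \eqref{his1}. That single line is how the paper concludes. The same works for \eqref{his2} via Proposition \ref{our5}(ii). There the correction term is $\frac{1}{Vn!}\bigl((\mathcal{X}(0)_{\operatorname{red}}-\mathcal{X}(0))\cdot(\sum_i\mathcal{L}_i)^n\bigr)\le 0$, because $\sum_i\mathcal{L}_i$ is relatively ample. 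The nonreduced correction works in your favour, since it is the gap $\mathrm{DF}\ge\mathrm{M}^{\mathrm{NA}}$. So the bound ``$D\ge-(\text{nonpositive})$'' is exactly what you need, and your argument is already complete at that point.

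You should drop the base-change detour, because as written its key claim is false. Under $z\mapsto z^d$ followed by normalization, the $\mathrm{M}^{\mathrm{NA}}$-type quantity $D+B$ is exactly $d$-homogeneous. The $\mathrm{DF}$-type quantity $D$, however, only satisfies $D_d\le dD$, with strict inequality precisely when $\mathcal{X}_0(0)$ is nonreduced. This is the inequality from \cite[Proposition 2.23, Lemma 2.24]{DR} that the paper uses in its reduction to reduced central fibres. So ``the expression transforms by the factor $d$'' fails in the only case where you would need it.

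The correct inequality does point the right way, but the detour is circular. Since the base change is reduced, $D_d=\mathrm{M}_d=d(D+B)$, so $D_d\le dD$ is equivalent to $B\le 0$, which is what you already had. In addition, after resolving the normalized base change, the pulled-back $\mathcal{L}_i$ are only relatively semiample. That takes you outside the hypotheses of Theorem \ref{our2alg} as stated.
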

In fact, the conclusion (ii) of Corollary \ref{our4} holds in general ccscK and cohomological case, namely:
\begin{theorem}\label{our6}
	Let $X$ be a compact complex manifold with K\"{a}hler classes $\alpha_i,i=0,\cdots,m$, on it.\\
		$(\mathrm{i})$ If $(\alpha_i)_{i=0}^m$ admits a system of coupled cscK metrics $(\omega_i')_{i=0}^m$, then \begin{align}\label{BoundedBelow}
		\mathcal{M}_{\operatorname{ccscK}}((\omega_i''),(\omega_i))\geq \mathcal{M}_{\operatorname{ccscK}}((\omega_i'),(\omega_i))
	\end{align}for any smooth K\"{a}hler forms $\omega_i''\in \alpha_i$. Moreover, for any smooth (compactified) relatively K\"{a}hler cohomological test conﬁgurations $(\mathcal{X}_i,\mathcal{A}_i)$ of $(X,\alpha_i)$ that dominates $X\times\mathbb{P}^1$, we have\begin{align}\label{cohomologicalcoupledKsemistable}
		\operatorname{M}^{\operatorname{NA}}(\mathcal{X}_0, \mathcal{A}_0)+\frac{S}{V_0(n+1)!}(\mathcal{A}_0^{n+1}) -\sum_{i=1}^m\frac{1}{V_i(n+1)!}(\mathcal{A}_{i}^{n+1})+\frac{1}{V_0n!}\sum_{i=1}^m(\mathcal{A}_i\cdot\mathcal{A}_0^n)\geq 0.
		\end{align}The intersection numbers here are given by pulling back the cohomology classes $\mathcal{A}_i$ to a smooth test configuration $\mathcal{X}$ that dominates all $\mathcal{X}_i$ (c.f. \cite[Section 3.1]{sjostrom2018k}).\\
	$(\mathrm{ii})$ If further $\alpha_i=L_i$ for ample line bundles $L_i$, then for any smooth (compactified) ample algebraic test conﬁgurations $(\mathcal{X}_i,\mathcal{L}_i)$ of $(X,L_i)$ that dominates $X\times\mathbb{P}^1$, we have\begin{align*}
		\operatorname{M}^{\operatorname{NA}}(\mathcal{X}_0, \mathcal{L}_0)+\frac{S}{V_0(n+1)!}(\mathcal{L}_0^{n+1}) -\sum_{i=1}^m\frac{1}{V_i(n+1)!}(\mathcal{L}_{i}^{n+1})+\frac{1}{V_0n!}\sum_{i=1}^m(\mathcal{L}_i\cdot\mathcal{L}_0^n)\geq 0.
	\end{align*}
\end{theorem}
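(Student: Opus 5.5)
The strategy is the standard convexity argument. First we show that the ccscK tuple minimizes $\mathcal{M}_{\operatorname{ccscK}}$; then we read off the slope inequality from Theorem \ref{our2} and Theorem \ref{our2alg}.

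For (i), fix the coupled cscK tuple $(\omega_i')$ and an arbitrary tuple $(\omega_i'')$. Write $\omega_i'=\omega_i+dd^c\varphi_i'$ and $\omega_i''=\omega_i+dd^c\varphi_i''$. For each $i$, join $\varphi_i'$ to $\varphi_i''$ by the Chen weak geodesic $\varphi_i(s)$, $s\in[0,1]$. This is a $\mathcal{C}^{1,\bar 1}$ geodesic, so the tuple lies in $\mathcal{H}^{1,\bar 1}$. By the extension of $\mathcal{M}_{\operatorname{ccscK}}$ to $\mathcal{H}^{1,\bar1}$ and its convexity along tuples of weak geodesics (announced in the abstract and proved in the body), $s\mapsto \mathcal{M}_{\operatorname{ccscK}}((\omega_i+dd^c\varphi_i(s)),(\omega_i))$ is convex on $[0,1]$.

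It therefore suffices to show that the right derivative at $s=0$ is $\ge 0$. At $s=0$ the endpoint is smooth, so we can use the first-variation formula, i.e. the moment-map expression $\frac{d}{ds}\mathcal{M}_{\operatorname{ccscK}}$. This is an integral of $\dot\varphi_i$ against quantities that vanish at a ccscK tuple: $\frac{\omega_0'^n}{V_0}=\frac{\omega_i'^n}{V_i}$ and $S_{\omega_0'}-\operatorname{tr}_{\omega_0'}\sum\omega_i'-\hat S=0$.

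The delicate point is that $\dot\varphi_i(0^+)$ is only bounded, not smooth, so we cannot differentiate naively. I would handle this as in Berman--Berndtsson. Split $\mathcal{M}_{\operatorname{ccscK}}$ via the Chen--Tian formula into the entropy part and the energy part $E$, the latter built from the pairings $\braket{\cdot}$. The energy part is differentiable along $\mathcal{C}^{1,\bar1}$ curves with derivative given by the same integral formula. The entropy part $\int\log(\omega_0(s)^n/\omega_0^n)\,\omega_0(s)^n$ is convex, and its one-sided derivative at a smooth endpoint is bounded below by $-\int \dot\varphi_0(0)\,\operatorname{Ric}(\omega_0')\wedge\ldots$; this bound comes from the convexity argument in Berman--Berndtsson's proof of uniqueness.

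Summing, the right derivative is bounded below by the pairing of the moment map at $(\omega_i')$ with $\dot\varphi(0)$, which is $0$. Convexity then gives $\mathcal{M}_{\operatorname{ccscK}}(\omega'')\ge \mathcal{M}_{\operatorname{ccscK}}(\omega')$, which is \eqref{BoundedBelow}. I expect this derivative-at-the-endpoint step to be the main obstacle. If it is awkward, an alternative is to approximate $(\omega'')$ by $\varepsilon$-geodesics, which are smooth, and pass to the limit using continuity of $\mathcal{M}_{\operatorname{ccscK}}$ on $\mathcal{H}^{1,\bar1}$.

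For the slope inequality \eqref{cohomologicalcoupledKsemistable}, take smooth test configurations $(\mathcal{X}_i,\mathcal{A}_i)$ dominating $X\times\mathbb{P}^1$. Choose for each a subgeodesic ray $\varphi_i(t)$ that is $\mathcal{C}^{1,\bar1}$-compatible with $(\mathcal{X}_i,\mathcal{A}_i)$; for instance, restrict a smooth $S^1$-invariant relatively Kähler form representing $\mathcal{A}_i$, which exists by relative Kählerness. Each $\omega_i+dd^c\varphi_i(t)$ is a (smooth) Kähler form in $\alpha_i$, so by \eqref{BoundedBelow}
\[
\mathcal{M}_{\operatorname{ccscK}}\left((\omega_i+dd^c\varphi_i(t)),(\omega_i)\right)\ge C
\]
for all $t$. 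Dividing by $t$ and letting $t\to\infty$, Theorem \ref{our2} gives that the right-hand side of \eqref{ourslope} is $\ge 0$, which is exactly \eqref{cohomologicalcoupledKsemistable}.

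Part (ii) is the same argument, using Theorem \ref{our2alg}. We take smooth strictly positive metrics $\Phi_i$ on $\mathcal{L}_i$ near the central fiber; these exist by ampleness. The associated rays $\omega_i(t)$ are smooth Kähler forms in $c_1(L_i)$, so the lower bound \eqref{BoundedBelow} applies, and the slope formula \eqref{ourslopealg} gives the stated nonnegativity.
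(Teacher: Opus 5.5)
Your proposal is correct and follows essentially the paper's route: join the ccscK tuple to $(\omega_i'')$ by weak geodesics, use the convexity of $\mathcal{M}_{\operatorname{ccscK}}$ along them together with the endpoint bound \eqref{minimize1}, then read off the inequalities from Theorems \ref{our2} and \ref{our2alg}. The paper obtains \eqref{minimize1} exactly as you suggest, from Proposition \ref{ddcofanalyticDelignepairingweak}~(ii) for the pairing terms and \cite[Lemma 3.5]{BB17} for the cscK part. One small point of precision: that lemma's entropy bound relies on convexity of the entropy as a function of the measure, not along geodesics (only the full K-energy is geodesically convex), but this does not affect your argument, since you take the global convexity from $\mathcal{M}_{\operatorname{ccscK}}$ itself.
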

The inequality (\ref{cohomologicalcoupledKsemistable}) indicates a semistability condition which generalizes \cite[Definition 3.11]{boucksom2017uniform}, \cite[Deﬁnition 2.22 (i)]{DR} and \cite[Deﬁnition 3.8]{sjostrom2018k} in the coupled case.
\begin{definition}\label{coupledKsemistableDef}
	Let $(X,([\omega_i])_{i=0}^m)$ be a compact K\"ahler manifold with a tuple of K\"ahler classes.\\
	$(\mathrm i)$ Let $(\mathcal{X}_i,\mathcal{A}_i)_{i=0}^m$ be a tuple of (compactified) relatively K\"{a}hler cohomological test conﬁgurations of $(X,\alpha_i),i=0,\cdots,m$, the coupled Donaldson–Futaki invariant of $(\mathcal{X}_i,\mathcal{A}_i)_{i=0}^m$ is defined by\begin{small}
		\begin{align*}
		\operatorname{DF}_c((\mathcal{X}_i,\mathcal{A}_i)_{i=0}^m):=\operatorname{DF}(\mathcal{X}_0, \mathcal{A}_0)+\frac{S}{V_0(n+1)!}(\mathcal{A}_0^{n+1}) -\sum_{i=1}^m\frac{1}{V_i(n+1)!}(\mathcal{A}_{i}^{n+1})+\frac{1}{V_0n!}\sum_{i=1}^m(\mathcal{A}_i\cdot\mathcal{A}_0^n).
	\end{align*}
	\end{small}Here the intersection number and classical DF-invariant are defined on a common smooth equivariant resolution (see \cite[Section 3.1 nd 3.3]{sjostrom2018k}), which is well-defined by the projection formula.\\
	$(\mathrm{ii})$ We say that $(X,([\omega_i])_{i=0}^m)$ is coupled K-semistable if for any tuple of  (compactified) relatively K\"{a}hler cohomological test conﬁgurations $(\mathcal{X}_i,\mathcal{A}_i)_{i=0}^m$ of $(X,\alpha_i),i=0,\cdots,m$, we have $\operatorname{DF}_c((\mathcal{X}_i,\mathcal{A}_i)_{i=0}^m)\geq 0$.\\
	\indent Suppose now $[\omega_i]=c_1(L_i)$ for ample line bundles $L_i$.\\
	$(\mathrm i^\prime)$ Let $(\mathcal{X}_i,\mathcal{L}_i)_{i=0}^m$ be a tuple of (compactified) ample alegbraic test conﬁgurations of $(X,L_i),i=0,\cdots,m$, the coupled Donaldson–Futaki invariant of $(\mathcal{X}_i,\mathcal{L}_i)_{i=0}^m$ is defined by\begin{small}
		\begin{align*}
			\operatorname{DF}_c((\mathcal{X}_i,\mathcal{L}_i)_{i=0}^m):=\operatorname{DF}(\mathcal{X}_0, \mathcal{L}_0)+\frac{S}{V_0(n+1)!}(\mathcal{L}_0^{n+1}) -\sum_{i=1}^m\frac{1}{V_i(n+1)!}(\mathcal{L}_{i}^{n+1})+\frac{1}{V_0n!}\sum_{i=1}^m(\mathcal{L}_i\cdot\mathcal{L}_0^n).
		\end{align*}
	\end{small}$(\mathrm{ii}^\prime)$ We say that $(X,(L_i)_{i=0}^m)$ is coupled algebraically K-semistable if for any tuple of (compactified) ample algebraic test conﬁgurations $(\mathcal{X}_i,\mathcal{L}_i)_{i=0}^m$ of $(X,L_i),i=0,\cdots,m$, we have $\operatorname{DF}_c((\mathcal{X}_i,\mathcal{L}_i)_{i=0}^m)\geq 0$.
\end{definition}\noindent Moreover, we can show the following extension of \cite[Theorem A (i), Proposition 3.12 and Proposition 3.14]{sjostrom2018k}.
\begin{theorem}\label{coupledKsemistable}\mbox{}\\
	$(\mathrm i)$ If $(X,([\omega_i])_{i=0}^m)$ admits a tuple of coupled cscK metrics, then $(X,([\omega_i])_{i=0}^m)$ is coupled K-semistable.\\
	$(\mathrm{ii})$ $(X,(L_i)_{i=0}^m)$ is coupled algebraically K-semistable if and only if $(X,(c_1(L_i))_{i=0}^m)$ is coupled K-semistable.
\end{theorem}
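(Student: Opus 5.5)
For (i), the idea is to reduce from $\operatorname{DF}_c$ to the non-Archimedean quantity controlled by Theorem \ref{our6}. For (ii), we compare the algebraic and cohomological notions, following \cite[Propositions 3.12, 3.14]{sjostrom2018k} one index at a time.

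\textbf{Part (i).} Take an arbitrary tuple $(\mathcal{X}_i,\mathcal{A}_i)_{i=0}^m$ of relatively K\"ahler cohomological test configurations.
\begin{enumerate}
\item Replace each $(\mathcal{X}_i,\mathcal{A}_i)$ by its pullback to a common smooth $\mathbb{C}^*$-equivariant resolution $\mathcal{X}$ dominating $X\times\mathbb{P}^1$. The resulting classes are only relatively semipositive. Fix this by adding $\epsilon$ times a relatively ample exceptional-type class, as in \cite[Section 3]{sjostrom2018k}. Every term of $\operatorname{DF}_c$ is an intersection number, hence continuous in $\epsilon$, and is invariant under pullback by the projection formula. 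So it suffices to treat smooth, dominating, relatively K\"ahler test configurations.
\item Pass to the reduced setting by base change $t\mapsto t^d$ and normalization, so that $\mathcal{X}_0(0)$ becomes reduced. On the reduced model $\operatorname{M}^{\operatorname{NA}}=\operatorname{DF}$, since the correction term $((\mathcal{X}_0(0)_{\operatorname{red}}-\mathcal{X}_0(0))\cdot\mathcal{A}_0^n)$ vanishes. The base change must be performed simultaneously for all $i$ on the common model.
\item Every term in $\operatorname{DF}_c$ is homogeneous of degree $1$ under base change. Hence $\operatorname{DF}_c(\text{original})\ge \frac1d\operatorname{DF}_c(\text{base change})$, using the classical inequality $\operatorname{DF}\ge \operatorname{M}^{\operatorname{NA}}$ after base change for the $\operatorname{DF}$ part; the remaining terms scale exactly by $d$.
\item Apply \eqref{cohomologicalcoupledKsemistable} of Theorem \ref{our6}(i) to the base-changed tuple to conclude $\operatorname{DF}_c\ge 0$.
\end{enumerate}
The main obstacle is step 2 combined with the resolution: the base-changed normalization may be singular, so it must be resolved again while keeping relative K\"ahlerness. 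I would follow \cite[Proposition 3.14]{sjostrom2018k} and the uniform-convergence argument there verbatim, noting that only $\mathcal{A}_0$ enters the $K$ and reducedness terms.

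\textbf{Part (ii), forward direction.} Coupled K-semistability implies the algebraic version, since an ample algebraic test configuration gives a relatively K\"ahler cohomological one with $\mathcal{A}_i=c_1(\mathcal{L}_i)$, and $\operatorname{DF}_c$ agrees in both settings.

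\textbf{Part (ii), converse.} Suppose $(X,(L_i))$ is coupled algebraically K-semistable, and take any cohomological tuple, pulled back to a common smooth model $\mathcal{X}$ as in step 1.
\begin{enumerate}
\item Approximate each $\mathcal{A}_i$ in $H^{1,1}(\mathcal{X})$ by rational classes $\mathcal{A}_i^{(k)}$ restricting to $c_1(L_i)$ on $X$ and remaining relatively K\"ahler. This is possible because relative K\"ahlerness is an open condition and the fibre-restriction constraint is rational; I would use that $\mathcal{X}$ is projective over $\mathbb{P}^1$ when $X$ is projective \cite[Prop. 3.12]{sjostrom2018k}.
\item Multiply by a common integer so that the classes become relatively ample line bundles, which are therefore ample algebraic test configurations of $(X,rL_i)$ up to twisting by a multiple of the central fibre.
\item Check that $\operatorname{DF}_c$ is invariant under the twist $\mathcal{L}_i\mapsto\mathcal{L}_i+c\,\mathcal{X}(0)$. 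This is where the choice of $S$ and of the volumes enters. Each twist changes $(\mathcal{L}_i^{n+1})$ by $(n+1)c\,V_i$ and changes $(\mathcal{L}_i\cdot\mathcal{L}_0^n)$ by $c\,V_0$ plus $n c\,(L_i\cdot L_0^{n-1})$, and these changes cancel by the definition of $S$ together with the classical invariance of $\operatorname{DF}$.
\item Check that $\operatorname{DF}_c$ is homogeneous under scaling, after replacing $V_i$ by the volumes of $rL_i$.
\item Let $k\to\infty$. Nonnegativity passes to the limit by continuity of intersection numbers, giving $\operatorname{DF}_c\ge 0$.
\end{enumerate}
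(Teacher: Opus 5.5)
There is a genuine gap in your converse of (ii): step 4 is false. Your overall architecture is otherwise the paper's. For (i), you reduce to smooth, dominating, relatively K\"ahler tuples by the perturbation argument of \cite[Proposition 3.12]{sjostrom2018k}, then combine Theorem \ref{our6}(i) with $\operatorname{M}^{\operatorname{NA}}\le\operatorname{DF}$. For (ii), you approximate by rational classes as in \cite[Proposition 3.14]{sjostrom2018k}.

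\textbf{The failing step in (ii).} $\operatorname{DF}_c$ is not homogeneous under $\mathcal{L}_i\mapsto r\mathcal{L}_i$. The classical term $\operatorname{DF}(\mathcal{X}_0,\mathcal{L}_0)$ and the constant $S$ are scale invariant. By contrast, each of $\frac{S}{V_0(n+1)!}(\mathcal{L}_0^{n+1})$, $\frac{1}{V_i(n+1)!}(\mathcal{L}_i^{n+1})$ and $\frac{1}{V_0n!}(\mathcal{L}_i\cdot\mathcal{L}_0^n)$ picks up a factor $r$. So $\operatorname{DF}_c$ for $(X,(rL_i))$ equals $\operatorname{DF}+r\cdot(\text{coupling part})$, which is not a multiple of $\operatorname{DF}_c$ for $(X,(L_i))$; this is expected, since the ccscK system is not scale invariant. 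Consequently the hypothesis on $(X,(L_i))$ cannot be transported to the integral configurations for $(X,(rL_i))$ that you build in step 2.

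\textbf{How to fix it.} Do not clear denominators at all, because the definition of algebraic test configuration allows $\mathbb{Q}$-line bundles.
\begin{enumerate}
\item Write $\mathcal{A}_i=\mu^*\operatorname{pr}_1^*c_1(L_i)+[D_i]$ and approximate $D_i$ by $\mathbb{Q}$-divisors $D_{i,k}$ supported on $\mathcal{X}(0)$.
\item Then $\mu^*\operatorname{pr}_1^*L_i+D_{i,k}$ is a relatively ample $\mathbb{Q}$-line bundle, by openness of relative K\"ahlerness as in \cite[Proposition 3.14]{sjostrom2018k}.
\item It restricts to exactly $\operatorname{pr}_1^*L_i$ off the central fibre, so it is an ample algebraic test configuration of $(X,L_i)$ itself.
\item Apply the hypothesis and let $k\to\infty$, as the paper does.
\end{enumerate}
This makes your twisting step superfluous. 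Your twist computation is correct (it is the paper's Lemma on $\mathcal{A}_i+c_i\pi^*\beta$), but nothing needs twisting here. It also removes the $\operatorname{Pic}^0(X)$ ambiguity of choosing a line bundle from a rational class that restricts to $c_1(L_i)$ only in cohomology.

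\textbf{The detour in (i).} Steps 2--4 create the obstacle you flag and leave unresolved, and none of them is needed. On the smooth, dominating, relatively K\"ahler model from step 1, the correction $\frac{1}{V_0n!}((\mathcal{X}_0(0)_{\operatorname{red}}-\mathcal{X}_0(0))\cdot\mathcal{A}_0^n)$ is $\le 0$. The reason is that every central-fibre component $E$ has multiplicity $\ge1$ and $(E\cdot\mathcal{A}_0^n)\ge0$, because $\mathcal{A}_0$ is relatively K\"ahler. Hence $\operatorname{DF}_c$ dominates the left-hand side of \eqref{cohomologicalcoupledKsemistable}, which is $\ge 0$ by Theorem \ref{our6}(i). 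No base change or re-resolution is required, and this is exactly the paper's proof. In the paper, the base change from \cite{DR} only serves the extra claim that reduced central fibres suffice.

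Your step 3 also misstates the scaling: $\operatorname{DF}$ is not homogeneous under base change; only $\operatorname{M}^{\operatorname{NA}}$ is. The correct chain, with $\mathcal{X}_d$ the normalized base change, is $\operatorname{DF}(\mathcal{X}_d)=\operatorname{M}^{\operatorname{NA}}(\mathcal{X}_d)=d\operatorname{M}^{\operatorname{NA}}(\mathcal{X})\le d\operatorname{DF}(\mathcal{X})$. It already uses the very inequality $\operatorname{M}^{\operatorname{NA}}\le\operatorname{DF}$ that makes the detour unnecessary.
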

Finally, in preparation for the second paper \cite{toappear} in this series, we also extend $\mathcal{M}_{\operatorname{ccscK}}$ to the finite energy class and prove its convexity along tuples of finite energy geodesics, extending \cite[Theorem 1.2]{BDL2017GT} to the coupled case (for $\chi=0$).
	\begin{theorem}\label{finiteenergyextension}
		Suppose $X$ is a compact connected K\"ahler manifold with K\"{a}hler forms $\omega_0,\ \cdots,\ \omega_m$. The coupled K-energy (\ref{coupledMabuchiDef}) can be extended to a functional $\mathcal{M}_{\operatorname{ccscK}}:\prod_{i=0}^m\mathcal{E}^1(X,\omega_i)\to \mathbb{R}\cup\{\infty\}$ 
		using the formula \begin{equation}\label{finiteenergyextensionformula}
			\begin{aligned}
				&\mathcal{M}_{\operatorname{ccscK}}(u_0,\cdots,u_m)= \mathcal{M}_{\operatorname{cscK}}(u_0)+\frac{\overline{S}+n}{V_0n!}\operatorname{E}_{\omega_0}(u_0)\\ &\quad \sum_{i=1}^m\left(\frac{1}{V_0n!}\int_Xu_i(\omega_0+dd^cu_0)^n-\frac{1}{V_in!}\operatorname{E}_{\omega_i}(u_i)+\frac{1}{V_0(n-1)!}\operatorname{E}^{\omega_i}_{\omega_0}(u_0)\right).
			\end{aligned}
		\end{equation}Thus extended, $\mathcal{M}_{\operatorname{ccscK}}
		$ is the greatest $\prod_{i=0}^m\ d_{1,\omega_i}$-lsc extension of (\ref{coupledMabuchiDef}). Additionally, $\mathcal{M}_{\operatorname{ccscK}}
		$ is convex along any tuple of finite-energy geodesics in $\prod_{i=0}^m\mathcal{E}^1(X,\omega_i)$.
	
\end{theorem}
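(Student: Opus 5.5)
The plan is to reduce everything to the uncoupled theory of Berman--Darvas--Lu \cite{BDL2017GT} together with standard facts about the Monge--Amp\`ere energy $\operatorname{E}$ and the mixed energies on $\mathcal{E}^1$.

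First I will check that on smooth potentials the right-hand side of (\ref{finiteenergyextensionformula}) coincides with (\ref{coupledMabuchiDef}). This follows from the Chen--Tian type formula already proved, expanding the Deligne pairings via \eqref{CT1}--\eqref{CT3}. In this expansion, $\frac{1}{(n+1)!}\braket{\varphi_i,\dots,\varphi_i}=\frac{1}{n!}\operatorname{E}_{\omega_i}(\varphi_i)$, and the mixed pairing $\braket{\varphi_i,\varphi_0,\dots,\varphi_0}$ splits as $\int\varphi_i\omega_{\varphi_0}^n+n\operatorname{E}^{\omega_i}_{\omega_0}(\varphi_0)$. Here $\operatorname{E}^{\omega_i}_{\omega_0}(u)=\frac1n\sum_j\int u\,\omega_i\wedge\omega_0^{j}\wedge\omega_{u}^{n-1-j}$ up to normalization.

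Second, I will show the formula makes sense and is lower semicontinuous on $\prod\mathcal{E}^1$ with respect to $\prod d_{1,\omega_i}$:
\begin{itemize}
\item $\mathcal{M}_{\operatorname{cscK}}$ is the BDL extension, i.e.\ entropy plus $d_1$-continuous energy terms, and hence is $d_1$-lsc.
\item $\operatorname{E}_{\omega_i}$ and $\operatorname{E}^{\omega_i}_{\omega_0}$ are $d_1$-continuous, since $\omega_i\le C\omega_0$ lets one bound the twisted energy by $d_1$.
\item The mixed term $(u_0,u_i)\mapsto\int u_i\,\omega_{u_0}^n$ is finite on $\mathcal{E}^1(\omega_i)\times\mathcal{E}^1(\omega_0)$ and jointly continuous under $d_1$-convergence. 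This comes from the standard estimate $|\int(u-v)\omega_\phi^n|\lesssim d_1(u,v)$ (with constants controlled by the energies), applied after comparing $\omega_i$ and $\omega_0$ via $u_i\in\mathrm{PSH}(X,C\omega_0)$.
\end{itemize}
Hence $\mathcal{M}_{\operatorname{ccscK}}$ is lsc, with only the entropy possibly infinite.

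Third, for the greatest lsc extension property, I will take the tuple $(u_i)$ and produce smooth approximants $u_i^k\to u_i$ in $d_1$ with $\mathcal{M}_{\operatorname{ccscK}}(u^k)\to\mathcal{M}_{\operatorname{ccscK}}(u)$. For $u_0$ I use the BDL approximation, which gives $\operatorname{Ent}$ convergence; for $i\ge1$ I use arbitrary smooth decreasing approximations. The continuity of the non-entropy terms, established in the previous step, then gives convergence. Any other lsc extension $F$ satisfies $F(u)\le\liminf F(u^k)=\lim\mathcal{M}(u^k)=\mathcal{M}(u)$.

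Fourth, for convexity along a tuple of finite-energy geodesics $t\mapsto(u_i(t))$:
\begin{itemize}
\item $\mathcal{M}_{\operatorname{cscK}}(u_0(t))$ is convex by BDL.
\item $\operatorname{E}_{\omega_i}(u_i(t))$ is affine, so its negative is harmless.
\item The remaining combination $\frac{\bar S+n}{V_0n!}\operatorname{E}_{\omega_0}(u_0)+\sum_i\big(\frac{1}{V_0n!}\int u_i\omega_{u_0}^n+\frac{1}{V_0(n-1)!}\operatorname{E}^{\omega_i}_{\omega_0}(u_0)\big)$ must be shown convex. Note that $\bar S+n+n\sum_i\alpha_i\alpha_0^{n-1}/\alpha_0^n$ makes it simply $\frac{1}{V_0 n!}\sum_i\braket{u_i,u_0,\dots,u_0}$ plus an affine term.
\end{itemize}
For smooth (or $C^{1,\bar1}$) subgeodesics, the $dd^c_{t,x}$ of the Deligne pairing is $\pi_*(\Omega_i\wedge\Omega_0^n)\ge0$, where $\Omega_i=p^*\omega_i+dd^c U_i\ge0$ on $X\times$ strip; this gives convexity.

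I expect the main obstacle to be this last step for finite-energy geodesics, where no regularity is available. There I will first prove convexity for tuples of $C^{1,\bar1}$ weak geodesics between smooth endpoints, using the Chen--Tian formula and Berndtsson/BDL convexity of the entropy part. I will then approximate general $\mathcal{E}^1$ endpoints by smooth decreasing sequences with entropy convergence, as in the third step. The geodesics converge in $d_1$ for each $t$. Convexity passes to the limit using continuity of the energy terms and lower semicontinuity at interior $t$ combined with convergence at the endpoints, exactly as in \cite[Theorem 1.2]{BDL2017GT}.
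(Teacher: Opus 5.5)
Your proposal is correct and follows essentially the paper's argument. The Chen--Tian expansion identifies the formula. The extension statement comes from the $d_1$-continuity of $\operatorname{E}_{\omega_i}$, $\operatorname{E}^{\omega_i}_{\omega_0}$ and the mixed term $\int_X u_i\,\omega_{u_0}^n$, together with the BDL greatest-lsc property of $\mathcal{M}_{\operatorname{cscK}}$; for joint continuity of the mixed term you also need to control the variation of the measure $\omega_{u_0}^n$, which the paper takes from \cite[Lemma 3.13]{BBGZ}. Convexity then follows from approximating weak geodesics, endpoint stability, and positivity of $dd^c_\tau$ of the Deligne pairing.

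The only real difference is in the limiting step for convexity. The paper applies \cite[Theorem 1.2]{BDL2017GT} to $\mathcal{M}_{\operatorname{cscK}}$ as a black box and gets convexity of the $d_1$-continuous remainder as a pointwise limit of convex functions, so it needs no entropy-convergent approximants. You instead rerun BDL's lsc-plus-endpoint-convergence argument on the whole coupled functional, which also works; the approximants only need to converge in $d_1$, not decreasingly as you state.
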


\paragraph{Organization of the paper} In section \ref{sec:2}, we recall essential background material, including geodesics in the space of K\"ahler metrics, test configurations, finite energy classes, and the basics of coupled KE and cscK equations. 
In section \ref{sec:3} we introduce three energy functionals
$\mathcal{M}_{\operatorname{ccscK}}$ which integrates the moment map, $\mathcal{M}'$ which is adapted to a Donaldson-Futaki invariant
used in the papers of Hultgren-Witt Nystr\"om and Datar-Pingali, and $\mathcal{N}_{\operatorname{cKE}}$ which was used by Hultgren-Witt Nystr\"om. {We present the Chen-Tian formulas for $\mathcal{M}_{\operatorname{ccscK}}$ and $\mathcal{M}'$, and calculate their slopes at infinity along smooth geodesic rays compatible with tuples of relatively K\"ahler cohomological and ample algebraic test configurations. We also demonstrate the convexity of $\mathcal{M}_{\operatorname{ccscK}}$ along weak geodesics, with the help of which we prove Theorem \ref{our6}. The functional $\mathcal{M}_{\operatorname{ccscK}}$ is further extended to the finite energy class and shown to be convex along tuples of finite energy geodesics. In section \ref{sec:4} we compare the three functionals $\mathcal{M}_{\operatorname{ccscK}},\ \mathcal{M}'$ and $\mathcal{N}_{\operatorname{cKE}}$, obtaining Theorems \ref{ourt2} and Corollary \ref{our4}.

\paragraph{Disclosure on the use of AI}
The authors used ChatGPT (GPT-6 Astra) to help identify suitable lemmas from the literature for the proofs of Proposition \ref{BigSymmetryProp} and Lemmas \ref{extensionLemma1} and \ref{extensionLemma2}, and take full responsibility for the correctness of their application. The authors also used ChatGPT (GPT-6 Astra, DeepL) for language editing and grammar checking.
\section{Background}
\label{sec:2}
In this section, we recall some basic setups of the ccscK and cKE equations from \cite{datar2020coupled} and \cite{hultgren2019coupled,Takahashi} respectively. To set the stage, we begin by briefly reviewing some fundamental concepts in the classical setting.

\subsection{Geodesic rays and test configurations}
\label{subsec:1}
The natural domain for solving these equations (cscK, KE, ccscK, and cKE) on a compact K\"ahler manifold $X$ is the space of K\"ahler metrics $\mathcal{K}(\alpha)$ in a fixed K\"aler class $\alpha$ or a product of such spaces. Fixing a reference metric $\omega\in\alpha$, this space can be identified with $\mathcal{H}(\omega)/\mathbb{R}$, where $\mathcal{H}(\omega)$
is the space of smooth strictly $\omega$-plurisubharmonic potentials.

$\mathcal{H}(\omega)$ naturally carries a Riemannian metric $d_2$ defined for $\psi^1,\psi^2 \in T_\varphi\mathcal{H}(\omega)\cong C^\infty(X)$ by (c.f. \cite{Mabuchi1987})\begin{align*}
	\braket{\psi^1,\psi^2}_{\varphi}:=\frac{1}{[\omega]^n}\int_X\psi^1\psi^2(\omega+dd^c\varphi)^n
\end{align*}with geodesic equation for a curve $\varphi(t,x)$, $t\in [0,s]$, in $(\mathcal{H}(\omega),\braket{\cdot,\cdot})$ (c.f. \cite{Sem92})\begin{align*}
		(\pi_1^*\omega+dd^c_{(x,\tau)}\Phi)^{n+1}=0
	\end{align*}where $\Phi:X\times \{\tau: e^{-s}\leq |\tau|\leq 1\}\to \mathbb{R}$, $\Phi(x,\tau):=\varphi(-\ln|\tau|,x)$ and $\pi_1:X\times \{\tau: e^{-s}\leq |\tau|\leq 1\}\to X$ is the projection. Unfortunately, this smooth PSH space is not geodesically convex (see \cite{lempert2013geodesics}), which presents a major obstacle to employing the variational approach. However, by extending the domain to the space\begin{align*}
	\mathcal{H}^{1,\bar{1}}(\omega):=\{\varphi\in \operatorname{PSH}(\omega):dd^c\varphi\in L^\infty_{\operatorname{loc}}\},
\end{align*}any two smooth $\omega$-psh potentials can be connected by a unique weak geodesic segment $\varphi(t)$, in the sense of the following well known theorem:
\begin{theorem}[\hspace*{-0.6ex}\text{\cite{Chen2000JDG}, \cite{Błocki2012}}]
	We say $\varphi(t),t\in [0,s]$ (or $t\in [0,\infty)$) in $\operatorname{PSH}(X,\omega)\cap L^\infty_{\operatorname{loc}}$ is a subgeodesic segment (ray) if $\pi^*_1\omega+dd^c_{(x,\tau)}\Phi\geq 0$ as current. If further $(\pi^*_1\omega+dd^c_{(x,\tau)}\Phi)^{n+1}=0$ in sense of Bedford–Taylor \cite{BedfordTaylor76,BedfordTaylor82}, then we say $\varphi(t),t\in [0,s]$ (or $t\in [0,\infty)$) is a weak geodesic segment (ray). For any two smooth $\omega$-psh potentials $\varphi$ and $\varphi'$, there exists a continuous weak geodesic segment $\varphi(t):t\in [0,1]$ such that $\varphi(0)=\varphi,\varphi(1)=\varphi'$ and $dd^c_{(x,\tau)}\Phi\in L^\infty_{\operatorname{loc}}$.
\end{theorem}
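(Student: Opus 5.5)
The statement is the Chen/Błocki existence theorem for $\mathcal{C}^{1,\bar 1}$ weak geodesics. I would prove it by solving a degenerate homogeneous complex Monge--Ampère equation on the product $M:=X\times A$, where $A=\{e^{-1}\le|\tau|\le 1\}$.

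\textbf{Setup and approximation.} First fix boundary data. Set $\Phi=\varphi$ on $|\tau|=1$ and $\Phi=\varphi'$ on $|\tau|=e^{-1}$. Subtracting a suitable multiple of $\ln|\tau|$ and adding $C(\ln|\tau|)^2$, which is harmless since $dd^c_\tau$ of it is nonnegative, gives a smooth subsolution $\underline\Phi$ with $\pi_1^*\omega+dd^c\underline\Phi>0$ and the prescribed boundary values. Then approximate the degenerate problem by the nondegenerate Dirichlet problems
\begin{align*}
(\pi_1^*\omega+dd^c\Phi_\varepsilon)^{n+1}=\varepsilon\,(\pi_1^*\omega+dd^c\underline\Phi)^{n+1},\qquad \Phi_\varepsilon|_{\partial M}=\underline\Phi|_{\partial M}.
\end{align*}
These are solvable by the continuity method, using the existence theory for the Dirichlet problem on manifolds with boundary (Caffarelli--Kohn--Nirenberg--Spruck, Guan), once we have a priori estimates uniform in $\varepsilon$.

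\textbf{Estimates.}
\begin{itemize}
\item $C^0$: by the comparison principle, $\underline\Phi\le\Phi_\varepsilon\le h$, where $h$ is the harmonic-in-$\tau$ majorant built from the boundary data.
\item Boundary gradient: the barriers $\underline\Phi$ and $h$ agree with $\Phi_\varepsilon$ on $\partial M$, which gives the gradient bound on the boundary.
\item Interior gradient: Błocki's maximum-principle argument gives an interior bound.
\item Complex Hessian, $\sup|dd^c\Phi_\varepsilon|\le C$ independent of $\varepsilon$: on the boundary, follow Chen/Guan. The tangential-tangential derivatives are controlled by the data, the tangential-normal ones by barriers, and the normal-normal one by the equation, using that $\det$ of the tangential block is bounded below. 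In the interior, apply a Yau/Aubin type maximum principle to $\log\operatorname{tr}_{\pi_1^*\omega}(\pi_1^*\omega+dd^c\Phi_\varepsilon)-A\Phi_\varepsilon$. Because the right-hand side degenerates as $\varepsilon\to0$, the argument must be run in the form that only needs a lower bound on the bisectional curvature of the reference and the known bounds on $\log$ of the right-hand side. This means applying the maximum principle with $\log\varepsilon$ absorbed, as in Błocki's $\varepsilon$-uniform Laplacian estimate.
\end{itemize}

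\textbf{Passing to the limit.} With $\Phi_\varepsilon$ bounded in $\mathcal{C}^{1,\bar1}$ uniformly, extract a subsequence converging in $C^{1,\alpha}$, with $dd^c\Phi_\varepsilon$ converging weakly$^*$ in $L^\infty$. Bedford--Taylor continuity of the Monge--Ampère operator along uniformly convergent bounded psh sequences then yields $(\pi_1^*\omega+dd^c\Phi)^{n+1}=0$ with the right boundary values. By uniqueness of the Dirichlet problem via the comparison principle, the solutions are $S^1$-invariant, since the data are. Hence $\Phi(x,\tau)=\varphi(-\ln|\tau|,x)$ defines a continuous weak geodesic with $dd^c\Phi\in L^\infty_{\mathrm{loc}}$.

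\textbf{Main obstacle.} The main obstacle is the $\varepsilon$-independent second-order estimate, and especially the boundary normal-normal bound. I would first try Chen's original approach, in which the boundary is flat in the $\tau$-direction (a product), so tangential derivatives along $X$ come directly from the smooth data $\varphi,\varphi'$.
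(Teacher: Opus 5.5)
Your outline is the standard Chen--B\l ocki argument: the $\varepsilon$-regularized Dirichlet problem on $X\times A$ with subsolution $\underline\Phi$, $\varepsilon$-uniform $\mathcal{C}^{1,\bar 1}$ bounds, and a Bedford--Taylor limit, which is exactly what the paper invokes by citation without giving a proof of its own. The one step to fix is the interior Laplacian estimate: $\pi_1^*\omega$ is degenerate in the $\tau$-direction, so $\operatorname{tr}_{\pi_1^*\omega}$ is undefined and $-A\Phi_\varepsilon$ cannot absorb the $\tau$-direction curvature term; use instead $\log\operatorname{tr}_{\hat\omega}(\pi_1^*\omega+dd^c\Phi_\varepsilon)-A(\Phi_\varepsilon-\underline\Phi)$ with the K\"ahler form $\hat\omega:=\pi_1^*\omega+dd^c\underline\Phi$, relative to which the right-hand side has constant log-density $\log\varepsilon$, so the Aubin--Yau argument closes uniformly in $\varepsilon$.
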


Another key tool in classical KE and cscK theory is the notion of a test configuration, which is given by Donaldson \cite{donaldson2002scalar}. We will follow \cite{boucksom2019} for the algebraic case and \cite{sjostrom2018k} for a generalization to the cohomological case. For convenience we will use the compactified version.

\begin{definition}Let $(V,L)$ be a smooth projective complex variety coupled with a line bundle. An algebraic test configuration $(\mathcal{V},\mathcal{L})$ for $(V,L)$ consists of the following data:\\
	(i) a flat, projective morphism of normal schemes $\pi: \mathcal{V} \to \mathbb{P}^1$ which is $\mathbb{C}^*$-equivariant;\\
	(ii) a $\mathbb{C}^*$-equivariant $\mathbb{Q}$-line bundle $\mathcal{L}$;\\
	(iii) $(\mathcal{V}_t,\mathcal{L}_t) \simeq (V,L)$ for $t\neq 0$.
	
	We say that an algebraic test configuration $(\mathcal{X}, \mathcal{L})$ is smooth if the total space $\mathcal{X}$ is smooth, and ample (resp. semiample) if $\mathcal{L}$ is relatively ample (resp. relatively semiample).
\end{definition}

\begin{definition}[\hspace*{-0.6ex}\text{\cite[Definition 3.2, 3.3 \& 3.4]{sjostrom2018k}}]
	Let $X$ be a compact K\"{a}hler manifold and $\alpha\in H^{1,1}(X,\mathbb{R})$ be a cohomology class on $X$. A cohomological test configuration $(\mathcal{X},\mathcal{A})$ of $(X,\alpha)$ consists of:\\
	(i) a normal compact K\"{a}hler complex space $\mathcal{X}$ with a flat morphism $\pi:\mathcal{X}\to\mathbb{P}^1$ which is $\mathbb{C}^*$-equivariant;\\
	(ii) a $\mathbb{C}^*$-equivariant isomorphism $\mathcal{X}\backslash\mathcal{X}_0\simeq X\times(\mathbb{P}^1\backslash\{0\})$;\\
	(iii) a $\mathbb{C}^*$-invariant Bott-Chern cohomology class $\mathcal{A}\in H^{1,1}_{\operatorname{Bott-Chern}}(\mathcal X,\mathbb R)$ such that the pullback of $\mathcal{A}$ under the isomorphism $\mathcal{X}\backslash\mathcal{X}_0\simeq X\times(\mathbb{P}^1\backslash\{0\})$ is $\operatorname{pr}_1^*\alpha$.

	We say that a cohomological test configuration $(\mathcal{X}, \mathcal{A})$ for $(X, \alpha)$ is smooth
	if the total space $\mathcal{X}$ is smooth. In case $\alpha \in H^{1,1}(X,\mathbb{R})$ is Kähler,
	we say that $(\mathcal{X}, \mathcal{A})$ is relatively Kähler if the cohomology class 
	$\mathcal{A}$ is relatively Kähler, i.e.\ there is a Kähler form $\beta$ on $\mathbb{P}^1$ such that $\mathcal{A} + \pi^{*}\beta$  is Kähler on $\mathcal{X}$.
\end{definition}

\begin{definition}[\hspace*{-0.6ex}\text{\cite[Section 2]{boucksom2017uniform} and \cite[Section 3.1]{sjostrom2018k}}]
	For cohomological (or algebraic) test configurations $(\mathcal{X},\mathcal{A})$ and $(\mathcal{X}',\mathcal{A}')$ (or $(\mathcal{X},\mathcal{L})$ and $(\mathcal{X}',\mathcal{L}')$), the isomorphism $\mathcal{X}_1\simeq X\simeq \mathcal{X}_1'$ induces a canonical $\mathbb{C}^*$-isomorphism $\mathcal{X}'\backslash\mathcal{X}_0'\overset{\simeq}{\to} \mathcal{X}\backslash\mathcal{X}_0$. We say that $\mathcal{X}'$ dominates $\mathcal{X}$ if this isomorphism extends to a morphism $\mathcal{X}' \to \mathcal{X}$.
\end{definition}
\begin{proposition}[\hspace*{-0.6ex}\text{\cite[Section 2.6]{boucksom2017uniform} and \cite[Proposition 3.10]{sjostrom2018k}}]
		Let $(\mathcal{X},\mathcal{A})$ be a smooth cohomological test configuration for $(X,\alpha)$ dominating $X\times\mathbb{P}^1$, with $\mu:\mathcal{X}\to X\times\mathbb{P}^1$ the corresponding canonical $\mathbb{C}^*$-equivariant bimeromorphic morphism. Then there exists a unique $\mathbb{R}$-divisor $D$ supported on the central fiber $\mathcal{X}_0$ such that
		\begin{align*}
			\mathcal{A}=\mu^*\operatorname{pr}_1^*\alpha+[D]
		\end{align*}
		in $H^{1,1}(\mathcal{X},\mathbb{R})$.
\end{proposition}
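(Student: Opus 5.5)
The strategy is to compare $\mathcal{A}$ with $\mu^*\operatorname{pr}_1^*\alpha$. The difference is a class that vanishes away from the central fiber. We then show that such a class is represented by a divisor supported on $\mathcal{X}_0$, and that this divisor is unique.

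\textbf{Step 1: the difference is localized on $\mathcal{X}_0$.} Set $\beta:=\mathcal{A}-\mu^*\operatorname{pr}_1^*\alpha\in H^{1,1}(\mathcal{X},\mathbb{R})$. Condition (iii) in the definition of a cohomological test configuration says that $\mathcal{A}$ restricts to $\operatorname{pr}_1^*\alpha$ on $\mathcal{X}\setminus\mathcal{X}_0\simeq X\times(\mathbb{P}^1\setminus\{0\})$. Over this open set $\mu$ is the identity, so $\beta|_{\mathcal{X}\setminus\mathcal{X}_0}=0$.

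\textbf{Step 2: existence.} Since $\mathcal{X}$ is smooth, we can use the long exact sequence of the pair $(\mathcal{X},\mathcal{X}\setminus\mathcal{X}_0)$. Write $E_1,\dots,E_r$ for the irreducible components of $\mathcal{X}_0$; they are compact divisors. Thom isomorphism and Poincar\'e--Lefschetz duality identify $H^2(\mathcal{X},\mathcal{X}\setminus\mathcal{X}_0;\mathbb{R})$ with $H_{2n}(\mathcal{X}_0;\mathbb{R})\cong\bigoplus_j\mathbb{R}[E_j]$, because $\mathcal{X}_0$ has pure complex dimension $n$. Hence a class with vanishing restriction is of the form $\beta=\sum_j c_j[E_j]$ in $H^2(\mathcal{X},\mathbb{R})$.

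Since $\mathcal{X}$ is compact K\"ahler, $\partial\bar\partial$-lemma and Hodge theory make $H^{1,1}_{BC}\to H^2$ injective. So $\beta=[D]$ in $H^{1,1}$ with $D=\sum_j c_jE_j$.

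An alternative, more analytic route is also available. Pick a smooth closed representative $\theta$ of $\beta$ and use the $\mathbb{C}^*$-invariance to average it, so that $\theta$ is $S^1$-invariant. Because $\theta=dd^c f$ on the complement, one compares it with $\sum c_j[E_j]$ through the Lelong numbers of $f$ along the $E_j$. I prefer the topological argument.

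\textbf{Step 3: uniqueness.} Suppose $\sum_j c_j[E_j]=0$ in $H^{1,1}(\mathcal{X})$. Note that $\mathcal{X}_0=\pi^*[0]=\sum_j b_jE_j$ with all $b_j>0$. Choose a K\"ahler class $\omega$ on $\mathcal{X}$ and intersect with $E_k\cdot\omega^{n-1}$.

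Zariski's lemma, applied in the relative setting over the curve $\mathbb{P}^1$ with the fiber connected, gives the following. The intersection form $(D,D')\mapsto D\cdot D'\cdot\omega^{n-1}$ on divisors supported on the fiber is negative semidefinite, and its kernel is spanned by $\mathcal{X}_0$.

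This kernel is nonzero, so uniqueness cannot come from the intersection form alone. However, $\mathcal{X}_0$ itself is nonzero in $H^{1,1}$, since $[\mathcal{X}_0]\cdot\omega^n=\deg(\pi^*\mathcal{O}(1))\cdot\omega^n>0$. So $D=t\mathcal{X}_0$ with $[D]=0$ forces $t=0$. In practice the intersection with $\omega^n$ already settles this.

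Zariski's lemma is cleaner here: the fiber is connected because $X$ is connected and $\pi$ is flat with generic fiber $X$. This step, proving the kernel statement via the Hodge index inequality on the K\"ahler space, is the main obstacle I expect; the other steps are standard.
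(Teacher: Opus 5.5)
Your proof is correct. The paper gives no argument of its own here; it only quotes \cite[Section 2.6]{boucksom2017uniform} and \cite[Proposition 3.10]{sjostrom2018k}. In the algebraic source the statement is about $\mathbb{G}_m$-linearized $\mathbb{Q}$-line bundles. There $\mathcal{L}-\mu^*\operatorname{pr}_1^*L$ is trivialized off $\mathcal{X}_0$, so the corresponding rational section has divisor $D$ supported on $\mathcal{X}_0$. That route gives an equality of equivariant line bundles, which is finer than a cohomological equality, and it does not need $\mathcal{X}$ smooth. However, it only sees rational classes.

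Your route is topological. You lift $\beta$ to $H^2(\mathcal{X},\mathcal{X}\setminus\mathcal{X}_0;\mathbb{R})\cong H_{2n}(\mathcal{X}_0;\mathbb{R})=\bigoplus_j\mathbb{R}[E_j]$ and come back to Bott--Chern cohomology via the $\partial\bar\partial$-lemma. So it treats transcendental classes directly, and it uses exactly the hypotheses at hand: a smooth, compact K\"ahler total space.

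Two small corrections and a shortcut. First, the duality you need is Alexander--Lefschetz duality for the compact analytic subset $\mathcal{X}_0$, not a Thom isomorphism, since $\mathcal{X}_0$ is in general singular. Second, the kernel statement in Zariski's lemma needs no Hodge index inequality. For $D=\sum_j x_jb_jE_j$, using $\mathcal{X}_0\cdot E_k\cdot\omega^{n-1}=0$, one gets $D^2\cdot\omega^{n-1}=-\tfrac12\sum_{j\neq k}(x_j-x_k)^2b_jb_k\,E_j\cdot E_k\cdot\omega^{n-1}$. Here $E_j\cdot E_k\cdot\omega^{n-1}>0$ whenever $E_j\cap E_k\neq\emptyset$, and connectedness of $\mathcal{X}_0$ finishes the argument. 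That connectedness comes from Stein factorization, $\mathcal{X}$ being normal.

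You can in fact avoid Zariski's lemma altogether by staying in your own exact sequence. The kernel of $H^2(\mathcal{X},\mathcal{X}\setminus\mathcal{X}_0;\mathbb{R})\to H^2(\mathcal{X};\mathbb{R})$ is the image of the connecting map on $H^1(\mathcal{X}\setminus\mathcal{X}_0;\mathbb{R})\cong H^1(X;\mathbb{R})$. This image is zero because the restriction $H^1(\mathcal{X};\mathbb{R})\to H^1(\mathcal{X}\setminus\mathcal{X}_0;\mathbb{R})$ is surjective: precomposed with $(\operatorname{pr}_1\circ\mu)^*$ it is the isomorphism $H^1(X)\cong H^1(X\times\mathbb{C})$. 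Hence the $[E_j]$ are linearly independent in $H^2(\mathcal{X};\mathbb{R})$, and uniqueness follows directly from the domination hypothesis.
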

For a smooth ample algebraic test configuration $(\mathcal{X},\mathcal{L})$, a strictly positive $\mathbb{S}^1$-invariant metric $\Phi$ near the central fiber naturally induces an $\mathcal{H}(\omega)$-subgeodesic ray. In the cohomological case, this notion is extended as follows:
	\begin{definition}[\hspace*{-0.6ex}\text{\cite[Section 4.1, 4.2 and Lemma 4.10]{sjostrom2018k}}]\label{compatible}
		Let $(\mathcal{X},\mathcal{A})$ be a smooth cohomological test configuration for $(X,\alpha)$ dominating $X\times\mathbb{P}^1$, with corresponding bimeromorphic morphism $\mu:\mathcal{X}\to X\times\mathbb{P}^1$, and write $\mathcal{A}=\mu^*\operatorname{pr}_1^*\alpha+[D]$. Choose a smooth $\mathbb{S}^1$-invariant closed real $(1,1)$-form $\Theta$ representing $\mathcal{A}$ and a smooth closed real $(1,1)$-form $\theta$ representing $\alpha$. Set $\theta_D:=\Theta-\mu^*\operatorname{pr}_1^*\theta$, and choose an $\mathbb{S}^1$-invariant locally integrable function $\psi_D$ satisfying $dd^c\psi_D+\theta_D=\delta_D$, where $\delta_D$ denotes the current of integration along $D$. For a ray $(\varphi_t)_{t\geq 0}$, let $\Phi$ denote the associated $\mathbb{S}^1$-invariant function defined by $\Phi(x,e^{-t+\sqrt{-1}s})=\varphi_t(x)$.\\
			(i) A smooth ray $(\varphi_t)_{t\geq 0}$ in $\mathcal{C}^{\infty}(X)$ is said to be $\mathcal{C}^{\infty}$-compatible with $(\mathcal{X},\mathcal{A})$ if there exists an $\mathbb{S}^1$-invariant function $\Psi\in\mathcal{C}^{\infty}(\pi^{-1}(\Delta))$ such that $\Psi=\Phi\circ\mu+\psi_D$ on $\pi^{-1}(\Delta\backslash\{0\})$. Here $\Delta$ is the unit disc in $\mathbb{C}$.\\
			(ii) Suppose that $\theta=\omega$ is a K\"ahler form. A ray $(\varphi_t)_{t\geq 0}$ in $\operatorname{PSH}(X,\omega)\cap L^\infty(X)$ is said to be $\mathcal{C}^{1,\bar{1}}$-compatible (resp. $L^\infty$-compatible) with $(\mathcal{X},\mathcal{A})$ if there exists an $\mathbb{S}^1$-invariant function $\Psi\in\mathcal{C}^{1,\bar{1}}(\pi^{-1}(\Delta))$ (resp. $\Psi\in L^\infty_{\mathrm{loc}}(\pi^{-1}(\Delta))$) such that $\Psi=\Phi\circ\mu+\psi_D$ on $\pi^{-1}(\Delta\backslash\{0\})$. Here $\mathcal{C}^{1,\bar{1}}$ means that $dd^c\Psi$ has locally bounded coefficients.
	\end{definition}
	Note that Definition \ref{compatible} is independent of the choice of $\psi_D$, $\Theta$ and $\theta$, since replacing them by $\psi_D'$, $\Theta+dd^ch$ and $\theta+dd^cg$ respectively (with $g,h$ smooth and $h$ being $\mathbb{S}^1$-invariant) changes $\Psi$ only by the addition of a smooth function of the form $\psi_D'-\psi_D=g\circ\operatorname{pr}_1\circ\mu-h+C$, where $C$ is a constant.
	\begin{lemma}[\hspace*{-0.6ex}\text{\cite[Lemma 4.4]{sjostrom2018k}}]\label{CompatibleRayExists}
		If $(\mathcal{X},\mathcal{A})$ is a smooth, relatively K\"ahler cohomological test configuration of $(X,\omega)$, then there is a subgeodesic ray $\mathcal{C}^\infty$-compatible with $(\mathcal{X},\mathcal{A})$.
	\end{lemma}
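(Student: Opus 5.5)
We need to prove Lemma \ref{CompatibleRayExists}: given a smooth, relatively Kähler cohomological test configuration $(\mathcal{X},\mathcal{A})$ of $(X,\omega)$, we want a subgeodesic ray that is $\mathcal{C}^\infty$-compatible with it. The plan is to build the ray directly from a Kähler form in $\mathcal{A}+\pi^*\beta$ and then correct it by a function on the base. Since the statement uses the domination setup of Definition \ref{compatible}, I would first pass to a smooth $\mathbb{C}^*$-equivariant resolution dominating $X\times\mathbb{P}^1$. If $\mathcal{X}$ does not already dominate, I replace it by an equivariant resolution of the graph of $\mathcal{X}\dashrightarrow X\times\mathbb{P}^1$ and pull back $\mathcal{A}$. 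The pulled-back class is only relatively semipositive, so it is cleaner to assume domination from the outset, which is how the lemma is used in this paper.

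Fix the data of Definition \ref{compatible}: $\mathcal{A}=\mu^*\operatorname{pr}_1^*[\omega]+[D]$, with $\theta=\omega$, and $\psi_D$ satisfying $dd^c\psi_D+\theta_D=\delta_D$. By relative Kählerness there is a Kähler form $\beta$ on $\mathbb{P}^1$ and a Kähler form $\Omega$ in $\mathcal{A}+\pi^*[\beta]$. Averaging over $\mathbb{S}^1$ preserves both the class and positivity, so I may take $\Omega$ to be $\mathbb{S}^1$-invariant. Set $\Theta:=\Omega-\pi^*\beta$, which is a smooth $\mathbb{S}^1$-invariant representative of $\mathcal{A}$. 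Write $\beta=dd^c g$ on $\Delta$ with $g$ smooth, radial and strictly subharmonic, for instance $g=C|\tau|^2$ after adjusting by a potential. On $\pi^{-1}(\Delta\setminus\{0\})\cong X\times\Delta^*$ we have $\mu^*\operatorname{pr}_1^*\omega+dd^c\psi_D=\Theta$, because $\delta_D$ vanishes off the central fiber.

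Now define $\Psi:=-g\circ\pi$ on $\pi^{-1}(\Delta)$, which is smooth and $\mathbb{S}^1$-invariant, and set $\Phi:=\Psi-\psi_D$ on $X\times\Delta^*$. Then
\begin{align*}
\pi_1^*\omega+dd^c\Phi=\Theta+dd^c\Psi=\Omega-\pi^*\beta-\pi^*\beta\ldots
\end{align*}
This sign must be fixed. I take $\Psi:=+g\circ\pi$ instead, which gives $\pi_1^*\omega+dd^c\Phi=\Theta+\pi^*\beta=\Omega>0$. So $\Phi$ is smooth on $X\times\Delta^*$, invariant, and strictly $\pi_1^*\omega$-psh. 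The ray $\varphi_t(x):=\Phi(x,e^{-t})$ is therefore a smooth subgeodesic ray, each $\varphi_t$ lies in $\mathcal{H}(\omega)$ by restriction to the fibers, and $\Psi=\Phi\circ\mu+\psi_D$ is smooth across the central fiber. This is exactly $\mathcal{C}^\infty$-compatibility.

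The main obstacle is the regularity bookkeeping: one must check that $\psi_D$ restricted to $\pi^{-1}(\Delta^*)$ is smooth, and that it can be chosen $\mathbb{S}^1$-invariant. Both follow from local $dd^c$-solvability: $\theta_D$ is smooth, $\delta_D$ is supported on $\mathcal{X}_0$, and the defect from the equation is smooth off $D$, so $\psi_D$ is smooth off $\mathcal{X}_0$ by elliptic regularity. Invariance is obtained by averaging over $\mathbb{S}^1$. A secondary check is that $t\mapsto\varphi_t$ is defined for all $t\ge0$, which holds since $\Delta^*$ covers $|\tau|=e^{-t}\le1$.
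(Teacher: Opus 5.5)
Your argument is correct and is essentially the standard proof of \cite[Lemma 4.4]{sjostrom2018k}, which the paper only cites without proof. The construction is: take an $\mathbb{S}^1$-invariant K\"ahler form $\Omega\in\mathcal{A}+\pi^*[\beta]$ (averaging $\beta$ as well, so that $g$ can be taken radial), write $\pi^*\beta=dd^c(g\circ\pi)$ over a disc slightly larger than $\Delta$ so that $t=0$ is covered, and define $\Phi$ by $\Phi\circ\mu:=g\circ\pi-\psi_D$; this gives $\mu^*(\operatorname{pr}_1^*\omega+dd^c\Phi)=\Omega>0$ off $\mathcal{X}_0$, with $\Psi=g\circ\pi$ smooth across $\mathcal{X}_0$.

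Fix one sign slip, though. Off $\mathcal{X}_0$ the equation $dd^c\psi_D+\theta_D=\delta_D$ gives $\Theta+dd^c\psi_D=\mu^*\operatorname{pr}_1^*\omega$, not $\mu^*\operatorname{pr}_1^*\omega+dd^c\psi_D=\Theta$ as you wrote, and it is the former that yields your (correct) identity $\operatorname{pr}_1^*\omega+dd^c\Phi=\Theta+dd^c\Psi$. The abandoned choice $\Psi=-g\circ\pi$ should simply be deleted.
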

	\subsection{Finite energy classes}
	As a generalization of the usual wedge product, for bounded (may not smooth) psh potentials $u_i\in \operatorname{PSH}(X,\omega_i)\cap L^\infty(X), i=1,\cdots n$, Bedford--Taylor introduced the mixed Monge-Amp\`{e}re measures $(\omega_1+dd^cu_1)\wedge\cdots\wedge (\omega_n+dd^cu_n)$ (see \cite{BedfordTaylor76,BedfordTaylor82}). This measure is symmetric in the pairs $(\omega_i,u_i)$ and has total mass $\int_X\omega_1\wedge\cdots\wedge\omega_n$. Later in \cite{GZ07,BBEGZ10}, for possibly unbounded case, the non-pluripolar product for $u_i\in \operatorname{PSH}(X,\omega_i), i=1,\cdots n$ is defined by\begin{align*}
		&(\omega_1+dd^cu_1)\wedge\cdots\wedge(\omega_n+dd^cu_n)\\&:=\lim_{k\to\infty}\mathsf{1}_{\cap_{i=1}^n\{u_j>-k\}}(\omega_1+dd^c\max\{u_1,-k\})\wedge\cdots\wedge(\omega_n+dd^c\max\{u_n,-k\}).
	\end{align*}Both these generalizations of the wedge product admit corresponding theories for solving complex Monge-Amp\`ere equations, under suitable full-mass or finite-energy assumptions, extending Yau's theorem. For the Bedford--Taylor product, see \cite{Kolodziej98} for existence, \cite{BedfordTaylor82,Kolodziej98} for uniqueness, and \cite{Kolodziej08} for regularity. For the non-pluripolar product, see \cite{GZ07,BBEGZ10} for existence, \cite{Dinew2009,BBEGZ10} for uniqueness and \cite{BBGZ} for the variational approach. In this section, we recall some basic results from finite-energy
	pluripotential theory, focusing on the metric and topology that will be useful later in the paper.
	
	In contrast with the bounded case, the non-pluripolar product may have strictly smaller total mass than $\int_X\omega_1\wedge\cdots\wedge\omega_n.$ Following \cite{GZ07}, the class $\mathcal{E}(X,\omega)$ of potentials with full volume is defined by\begin{align*}
	\mathcal{E}(X,\omega):=\left\{u\in \operatorname{PSH}(X,\omega):\int_X(\omega+dd^cu)^n=\int_X\omega\right\},
	\end{align*}and the corresponding ﬁnite-energy classes\begin{align*}
	\mathcal{E}^p(X,\omega):=\left\{u\in \mathcal{E}(X,\omega):u\in L^p((\omega+dd^cu)^n)\right\}.
	\end{align*}When $p=2$, the finite energy class $\mathcal{E}^2(X,\omega)$ can be
	realized as the metric completion of $\mathcal{H}_\omega$ with respect to
	the $d_2$. In fact, as shown in \cite{Darvas2}, one has the following theorem:
	\begin{theorem}[\hspace*{-0.6ex}\text{\cite[Theorem 2]{Darvas2}}]
		For $u\in\mathcal{H}_\omega$ and $\xi\in T_u\mathcal{H}_\omega\simeq C^\infty(X)$, define the $L^p$-Finsler norm by \begin{align*} \|\xi\|_{p,u} := \left( \frac{1}{V}\int_X|\xi|^p\omega_u^n \right)^{1/p}. \end{align*} The associated path-length metric on $\mathcal{H}_\omega$ is \begin{align*} d_p(u_0,u_1) := \inf_{\gamma} \int_0^1\|\dot{\gamma}_t\|_{p,\gamma_t}\,dt, \end{align*} where the infimum is taken over all smooth curves $\gamma\colon[0,1]\to\mathcal{H}_\omega$ joining $u_0$ to $u_1$.
		
		Then the metric completion of $(\mathcal{H}_\omega,d_p)$ equals $(\mathcal{E}^p(X,\omega),d_p)$, where \begin{align*} d_p(u_0,u_1) := \lim_{k\to\infty}d_p(u_0^k,u_1^k), \end{align*} for any smooth decreasing sequences $ \{u_i^k\}_{k\in\mathbb{N}}\subset\mathcal{H}_\omega$ converging pointwise to $u_i\in\mathcal{E}^p(X,\omega)$, $i=0,1$. Moreover, for each $t\in(0,1)$, define \begin{align*} u_t:=\lim_{k\to\infty}u_t^k, \end{align*} where $u_t^k$ is the weak geodesic connecting $u_0^k$ and $u_1^k$. Then $u_t\in\mathcal{E}^p(X,\omega)$, the curve $ [0,1]\ni t\mapsto u_t\in\mathcal{E}^p(X,\omega)$ is well-defined independently of the choices of approximating sequences, and this curve is a $d_p$-geodesic. More precisely, the assertion that this curve is a $d_p$-geodesic means that it has constant speed: 
		\begin{align*} 
			d_p(u_s,u_t) = |t-s|d_p(u_0,u_1), \qquad s,t\in[0,1]. 
		\end{align*}
	\end{theorem}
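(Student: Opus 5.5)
The plan follows Darvas' strategy: first compute $d_p$ exactly on $\mathcal{H}_\omega$ in terms of weak geodesics, then extend everything to $\mathcal{E}^p$ by monotone approximation.

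\textbf{Step 1: the distance on $\mathcal{H}_\omega$.} Fix $u_0,u_1\in\mathcal{H}_\omega$ and let $u_t$ be the $C^{1,\bar 1}$ weak geodesic given by the Chen--B\l{}ocki theorem above. I would show that
\begin{align*}
d_p(u_0,u_1)=\|\dot u_0\|_{p,u_0}=\|\dot u_1\|_{p,u_1}.
\end{align*}
\emph{Constant speed.} The pushforward of $\omega_{u_t}^n$ under $\dot u_t$ is independent of $t$. To see this, approximate $u_t$ by Chen's $\varepsilon$-geodesics $u^\varepsilon_t$, which solve $(\ddot u-|\partial\dot u|^2_{u})\omega_u^n=\varepsilon\,\omega^n$. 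Then show $\frac{d}{dt}\int_X f(\dot u^\varepsilon_t)\omega^n_{u^\varepsilon_t}=O(\varepsilon)$ by integrating by parts, and let $\varepsilon\to0$. This gives $\|\dot u_t\|_{p,u_t}$ constant in $t$, so the geodesic has length $\|\dot u_0\|_{p,u_0}$.

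\emph{Lower bound.} For an arbitrary smooth curve $\gamma$ from $u_0$ to $u_1$, I would prove $\ell_p(\gamma)\ge\|\dot u_0\|_{p,u_0}$. For each $s$, join $u_0$ to $\gamma_s$ by an $\varepsilon$-geodesic and consider $s\mapsto\|\dot u^{\varepsilon,s}_0\|_{p,u_0}$. Using the maximum principle for the $\varepsilon$-geodesic equation, bound its $s$-derivative by $\|\dot\gamma_s\|_{p,\gamma_s}+O(\varepsilon)$. Integrating in $s$ and letting $\varepsilon\to0$ gives the lower bound.

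\emph{Main obstacle.} This lower bound is where I expect the difficulty. The derivative comparison requires uniform $C^{1,\bar1}$ control of the $\varepsilon$-geodesics in the parameter $s$, and I would lean on Chen's estimates here.

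\textbf{Step 2: comparison with integral quantities.} From Step 1 I would derive the following.
\begin{itemize}
\item Monotonicity: if $u\ge v$, then $\frac1V\int_X(u-v)^p\omega_u^n\le d_p(u,v)^p\le\frac1V\int_X(u-v)^p\omega_v^n$. This holds because $\dot u_0$ has a sign and is bounded by $u_1-u_0$, using convexity of $t\mapsto u_t$.
\item A Pythagorean-type formula: $d_p(u,v)^p\approx d_p(u,P(u,v))^p+d_p(P(u,v),v)^p$ up to constants depending only on $p$. Here $P(u,v)$ is the rooftop envelope; one approximates $P(u,v)$ from $\mathcal{H}_\omega$ and uses that $\omega_{P(u,v)}^n$ is supported on the contact sets.
\end{itemize}
Together these give the key estimate
\begin{align*}
d_p(u,v)^p\asymp\int_X|u-v|^p(\omega_u^n+\omega_v^n).
\end{align*}

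\textbf{Step 3: extension to $\mathcal{E}^p$ and completeness.} For $u_i\in\mathcal{E}^p$ with decreasing smooth approximants $u_i^k$, the estimate of Step 2 together with the triangle inequality shows $d_p(u_i^k,u_i^l)\to0$, by monotone convergence and continuity of Monge--Amp\`ere along decreasing sequences in $\mathcal{E}^p$. Hence $\lim_k d_p(u_0^k,u_1^k)$ exists and is independent of the approximants (interlace two approximating sequences), and Step 2 passes to the limit.

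For completeness, take a $d_p$-Cauchy sequence in $\mathcal{H}_\omega$ and pass to a subsequence with $d_p(u_j,u_{j+1})\le2^{-j}$. Form the decreasing envelopes $v_{j,k}=P(u_j,\dots,u_k)$. Step 2 bounds $d_p(u_j,v_{j,k})$ uniformly in $k$, so $v_j=\lim_k v_{j,k}\in\mathcal{E}^p$, with the $p$-energy controlled through the comparison with $\int|u-v|^p\omega^n$. Moreover $v_j$ increases almost everywhere to some $u\in\mathcal{E}^p$ with $d_p(u_j,u)\to0$. Density of $\mathcal{H}_\omega$ in $\mathcal{E}^p$ (Demailly regularisation plus decreasing convergence) then identifies the completion with $\mathcal{E}^p$.

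\textbf{Step 4: geodesics.} For decreasing endpoint approximants, the weak geodesics $u_t^k$ decrease in $k$ by the comparison principle for the homogeneous Monge--Amp\`ere equation, so $u_t=\lim_k u_t^k$ exists. It lies in $\mathcal{E}^p$ because $u_t^k\ge$ the geodesic between $\min$-type envelopes, which have controlled energy. Independence of the approximating sequence follows from $d_p$-continuity, obtained by interlacing sequences. Finally, the identity $d_p(u^k_s,u^k_t)=|t-s|\,d_p(u^k_0,u^k_1)$ from Step 1 passes to the limit by Step 3, giving the constant-speed property.
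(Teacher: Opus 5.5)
The paper gives no proof of this statement; it is quoted from \cite{Darvas2}. Your outline follows Darvas's architecture: $\varepsilon$-geodesics for $d_p(u_0,u_1)=\|\dot u_t\|_{p,u_t}$, a Pythagorean formula for $P(u,v)$, comparison with $\mathbf{I}_{p,\omega}$, and decreasing envelopes for completeness. The $\varepsilon$-geodesic part and the approximation arguments in Steps 3--4 are reasonable sketches.

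\textbf{Gap 1: the Pythagorean formula in Step 2 is not established by your argument.} Step 2 is where $\mathcal{E}^p$ really enters. Approximating $P=P(u,v)$ and using that $\omega_P^n$ lives on the contact sets, your monotone lemma only yields
\[
d_p(u,P)^p+d_p(P,v)^p\le\frac1V\int_X(u-v)_+^p\,\omega_v^n+\frac1V\int_X(v-u)_+^p\,\omega_u^n .
\]
Each term is a potential's Monge--Amp\`ere measure on the set where that potential is the \emph{smaller} one. Convexity of the geodesic bounds $d_p(u,v)^p$ from below only by the complementary terms $\frac1V\int_X(u-v)_+^p\omega_u^n$ and $\frac1V\int_X(v-u)_+^p\omega_v^n$. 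Bounding the cross terms by $d_p(u,v)^p$ is exactly the hard half of the estimate you then deduce from the Pythagorean formula, so as written the argument is circular.

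The missing idea is the link between envelopes and initial velocities. By Kiselman's minimum principle, $P(u_0,u_1+\tau)=\inf_{t\in[0,1]}(u_t+t\tau)$, so by convexity in $t$, $\{P(u_0,u_1+\tau)=u_0\}=\{\dot u_0\ge-\tau\}$. Combined with $P(u_0,P(u_0,u_1)+\tau)=P(u_0,u_1+\tau)$ for $\tau\ge0$, this shows the geodesic from $u_0$ to $P(u_0,u_1)$ has initial velocity $\min(\dot u_0,0)$. The $t$-independence of $(\dot u_t)_*\omega_{u_t}^n$ then gives the exact identity $d_p(u_0,u_1)^p=d_p(u_0,P)^p+d_p(P,u_1)^p$.

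The constant $1$ also matters in Step 3. The uniform bound on $d_p(u_j,v_{j,k})$ comes from telescoping $d_p(u_j,v_{j,k})\le d_p(u_j,v_{j+1,k})\le d_p(u_j,u_{j+1})+d_p(u_{j+1},v_{j+1,k})$. With only a constant $C>1$, this works only after passing to a subsequence with $d_p(u_j,u_{j+1})\le(2C)^{-j}$.

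\textbf{Gap 2: the lower bound in the key estimate needs an extra device.} Even granting the Pythagorean formula, $\int_X|u-v|^p\omega_u^n\lesssim d_p(u,v)^p$ does not follow from what you list. For $\phi\le\psi$, your monotone lemma bounds $d_p(\phi,\psi)^p$ below only by $\frac1V\int_X(\psi-\phi)^p\omega_\psi^n$. What you need is $\int_X(\psi-\phi)^p\omega_\phi^n$, and the two are not comparable: take $\psi=0$ and $\phi_k\in\mathcal{H}_\omega$ decreasing to some $\phi\in\mathcal{E}\setminus\mathcal{E}^p$.

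One remedy: set $w=\frac{\phi+\psi}{2}$, so that $\omega_\phi^n\le 2^n\omega_w^n$. The comparison principle for geodesics issuing from $\phi$ gives $d_p(\phi,w)\le d_p(\phi,\psi)$. Hence
\[
\int_X(\psi-\phi)^p\omega_\phi^n\le 2^{n+p}\int_X(w-\phi)^p\omega_w^n\le 2^{n+p}V\,d_p(\phi,\psi)^p .
\]
The unordered case follows by the same averaging applied to $P(u,v)\le\frac{u+v}{2}$.

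This matters beyond Step 2. In your completeness argument, $v_j\in\mathcal{E}^p$ requires a bound on $\int_X(u_j-v_{j,k})^p\omega^n_{v_{j,k}}$, the measure of the smaller potential. Nondegeneracy of the extended $d_p$ on $\mathcal{E}^p$ needs the same lower bound.
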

	\noindent The growth of the $d_p$-distance can be controlled by an explicit
	functional.
	\begin{theorem}[\hspace*{-0.6ex}\text{\cite[Theorem 3]{Darvas2}}]\label{Theorem3ofDarvas2}
		For a K\"ahler form $\omega$ and $p\geq 1$, set \begin{align*} \mathbf{I}_{p,\omega}(u,v) :=\left(\int_X |u-v|^p\omega_u^n\right)^{1/p}+\left(\int_X |u-v|^p\omega_v^n\right)^{1/p}, \end{align*}
		where $\omega_u:=\omega+dd^cu$. Then there exists $C > 1$ such that \begin{align*} \frac{1}{C}\mathbf{I}_{p,\omega}(u,v) \leq d_{p,\omega}(u,v)\leq C\mathbf{I}_{p,\omega}(u,v),
		\end{align*}where $d_{p,\omega}$ is the $d_p$-metric on $\mathcal{E}^p(X,\omega)$.
	\end{theorem}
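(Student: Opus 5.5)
The plan is to reduce first to comparable potentials ($u\geq v$), where geodesic convexity gives two-sided bounds directly, and then to pass to the general case through the rooftop envelope $P(u,v)$. By density and the monotone continuity of all quantities involved under decreasing smooth approximations (Theorem [Darvas2, Theorem 2]), it suffices to take $u,v\in\mathcal{H}_\omega$, or at least bounded potentials with $\mathcal{C}^{1,\bar 1}$ regularity.

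\emph{Step 1: the ordered case.} Assume $v\leq u$ and let $(w_t)_{t\in[0,1]}$ be the weak geodesic from $w_0=v$ to $w_1=u$. Since $t\mapsto w_t$ is convex,
\begin{align*}
	\dot w_0\leq u-v\leq \dot w_1 .
\end{align*}
Moreover $\dot w_0\geq 0$, because $w_t\geq v$ by the comparison principle.

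The geodesic has constant $L^p$-speed, and $d_p$ is realized by the weak geodesic (Darvas' $\mathcal{C}^{1,\bar1}$ approximation argument). Hence
\begin{align*}
	d_p(u,v)^p=\frac1V\int_X|\dot w_0|^p\omega_v^n=\frac1V\int_X|\dot w_1|^p\omega_u^n .
\end{align*}
Combining this with the inequalities above gives
\begin{align*}
	\frac1V\int_X|u-v|^p\omega_u^n\leq d_p(u,v)^p\leq \frac1V\int_X|u-v|^p\omega_v^n .
\end{align*}
This is the ordered version of the theorem, with constant $1$ on each side.

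\emph{Step 2: the Pythagorean reduction.} Set $P=P(u,v)=\sup\{w\in\operatorname{PSH}(X,\omega):w\leq\min(u,v)\}$. I would use two known facts.
\begin{itemize}
	\item The Pythagorean inequality: $d_p(u,v)^p$ is comparable, up to a constant depending only on $p$, to $d_p(u,P)^p+d_p(P,v)^p$. The upper bound here is just the triangle inequality; the lower bound is the deep part and is proved via the orthogonality of the geodesic rays $t\mapsto P(u,v+t)$.
	\item The Monge--Amp\`ere measure of the envelope is concentrated on the contact set: $\omega_P^n\leq \mathbf 1_{\{P=u\}}\omega_u^n+\mathbf 1_{\{P=v\}}\omega_v^n$.
\end{itemize}

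\emph{Upper bound.} Apply Step 1 to the ordered pair $P\leq u$:
\begin{align*}
	d_p(u,P)^p\leq \frac1V\int_X|u-P|^p\omega_P^n\leq\frac1V\int_{\{P=v\}}|u-v|^p\omega_v^n .
\end{align*}
The bound for $d_p(v,P)^p$ is symmetric, so $d_p(u,v)\leq C\,\mathbf I_{p,\omega}(u,v)$.

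\emph{Lower bound.} Step 1 gives $d_p(u,P)^p\geq \frac1V\int_X|u-P|^p\omega_u^n$, and similarly for $v$. It remains to control $\int_X|u-v|^p\omega_u^n$ by these two quantities. Write $|u-v|\leq (u-P)+(v-P)$. The first piece is already controlled. The cross term $\int_X|v-P|^p\omega_u^n$ is where $\omega_u$ does not match the potential $v$.

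\emph{The main obstacle} is exactly this cross term. I would handle it with the "midpoint'' trick:
\begin{align*}
	\omega_u^n\leq 2^n\omega_{(u+P)/2}^n .
\end{align*}
I would then estimate $\int_X|v-P|^p\omega_{(u+P)/2}^n$ by comparing it, via the comparison principle and the monotonicity of $\int_X|\phi-\psi|^p\omega_\chi^n$ in $\chi$ for ordered $\phi\geq\psi$, to $\int_X|v-P|^p\omega_P^n$ and $\int_X|v-P|^p\omega_v^n$. Both of these are at most $C\,d_p(v,P)^p$: the first via Step 1 for the ordered pair $P\leq v$, the second by a further application of the same monotonicity lemma.

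Summing the pieces and applying the Pythagorean lower bound yields $\mathbf I_{p,\omega}(u,v)\leq C\,d_p(u,v)$. The constant $C$ depends only on $n$ and $p$.
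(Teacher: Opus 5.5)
The paper does not prove this statement; it quotes Darvas. Your architecture is essentially Darvas's: the ordered case from convexity of the weak geodesic, reduction through $P(u,v)$, contact-set concentration of $\omega_P^n$, and a midpoint trick. Your upper bound $d_p\le C\,\mathbf{I}_{p,\omega}$ is correct. The lower bound, however, does not close, for two reasons.

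\emph{The ordered reverse inequality is missing.} Step 1 is one-sided. For $v\le u$ it gives $\int_X(u-v)^p\omega_u^n\le V d_p(u,v)^p\le\int_X(u-v)^p\omega_v^n$. So it is not ``the ordered version of the theorem with constant $1$'': nothing bounds $\int_X(u-v)^p\omega_v^n$, the integral against the measure of the \emph{smaller} potential. You then use exactly this missing bound when you claim $\int_X|v-P|^p\omega_P^n\le C\,d_p(v,P)^p$ ``via Step 1 for the ordered pair $P\le v$''. Step 1 gives the reverse inequality there; what it actually controls is $\int_X|v-P|^p\omega_v^n$. What you need is the following: for $\psi\le\phi$,
\[
\int_X(\phi-\psi)^p\omega_\psi^n\le 2^{n+p}V\,d_p(\psi,\phi)^p .
\]
To prove it, combine four facts.
\begin{itemize}
\item $\omega_\psi^n\le 2^n\omega_{(\phi+\psi)/2}^n$.
\item $\phi-\psi=2\bigl(\tfrac{\phi+\psi}{2}-\psi\bigr)$.
\item Step 1 for the pair $\psi\le\tfrac{\phi+\psi}{2}$, now used with the measure of the larger potential.
\item $d_p(\psi,\tfrac{\phi+\psi}{2})\le d_p(\psi,\phi)$. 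The weak geodesics $\gamma,\gamma'$ from $\psi$ to $\tfrac{\phi+\psi}{2}$ and to $\phi$ satisfy $\psi\le\gamma_t\le\gamma'_t$, hence $0\le\dot\gamma_0\le\dot\gamma'_0$, and $d_p$ is the speed at $t=0$.
\end{itemize}

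\emph{The cross-term step uses a lemma that does not exist, and the wrong midpoint.} There is no ``monotonicity of $\int_X|\phi-\psi|^p\omega_\chi^n$ in $\chi$'': the derivative $\frac{d}{dt}\int_X f\,\omega_{\chi_t}^n=n\int_X\dot\chi_t\,dd^cf\wedge\omega_{\chi_t}^{n-1}$ has no sign. The only comparison of this kind is the endpoint one, $\int(\phi-\psi)^p\omega_\phi^n\le\int(\phi-\psi)^p\omega_\psi^n$ for $\psi\le\phi$. Moreover, $\tfrac{u+P}{2}$ is not ordered with respect to $v$, so nothing ties $\int_X(v-P)^p\omega_{(u+P)/2}^n$ to the pair $(P,v)$.

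The remedy is to take the midpoint of $u$ and $v$. Since $u-P\ge0$ and $v-P\ge 0$, you have $|u-v|\le 2\bigl(\tfrac{u+v}{2}-P\bigr)$. Also $P\le\tfrac{u+v}{2}$, so Step 1 applies in the right direction:
\begin{align*}
\int_X|u-v|^p\omega_u^n&\le 2^{n+p}\int_X\Bigl(\tfrac{u+v}{2}-P\Bigr)^p\omega_{(u+v)/2}^n\le 2^{n+p}V\,d_p\Bigl(P,\tfrac{u+v}{2}\Bigr)^p\\
&\le 2^{n+p}\int_X\Bigl(\tfrac{u+v}{2}-P\Bigr)^p\omega_P^n\le 2^{n+p-1}\Bigl(\int_X(u-P)^p\omega_P^n+\int_X(v-P)^p\omega_P^n\Bigr)\\
&\le 2^{2n+2p-1}V\bigl(d_p(u,P)^p+d_p(P,v)^p\bigr).
\end{align*}
The last step uses the reverse inequality above, and the Pythagorean formula then finishes the argument. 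The contact-set bound cannot substitute for that inequality on the $\omega_P^n$-integrals. Applied there, it just returns integrals of $|u-v|^p$ against $\omega_u^n$ or $\omega_v^n$, which are the very quantities being estimated.
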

	We next recall two classical functionals in K\"ahler geometry. Both of them extend naturally to $\mathcal{E}^1(X,\omega)$.
	\begin{example}\label{MAfunctional}
		The Monge-Amp\`ere energy functional is defined by\begin{align*}
			&\operatorname{E}_\omega:\mathcal{E}^1(X,\omega)\to \mathbb{R},\\
			&\operatorname{E}_\omega(\varphi):=\frac{1}{n+1}\sum_{j=0}^n\int_X\varphi(\omega+dd^c\varphi)^{n-j}\wedge\omega^j,
		\end{align*}
		and the contracted Monge-Amp\`ere energy functional\begin{align*}
		&\operatorname{E}_\omega^\chi:\mathcal{E}^1(X,\omega)\to \mathbb{R},\\
		&\operatorname{E}_\omega^\chi(\varphi):=\frac{1}{n}\sum_{j=0}^{n-1}\int_X\varphi\chi\wedge(\omega+dd^c\varphi)^{n-j-1}\wedge\omega^j,\end{align*}where $\chi$ is a closed smooth $(1,1)$-form. By \cite[Proposition 4.1]{BDL2017GT} and \cite{BBEGZ10}, $\operatorname{E}_\omega$ is $d_1$-continuous. The same holds for its contracted version.
		
	\end{example}
	\begin{example}
			The Aubin $I$-functional is defined by\begin{align*}
				I_{\omega}&:\mathcal{E}^1(X,\omega)\times\mathcal{E}^1(X,\omega)\to \mathbb{R},\\
				I_{\omega}(u_0,u_1)&:=\int_X(u_0-u_1)(\omega_{u_1}^n-\omega_{u_0}^n), \ \text{where}\  \omega_{u_i}=\omega+dd^cu_i.
			\end{align*}One can prove that $I_\omega$ is non-negative. It follows from Theorem \ref{Theorem3ofDarvas2} that $d_1$-convergence implies convergence with respect to $I_\omega$. In fact, the functional $I_\omega$ was used in \cite[Definition 5.5]{BBGZ} to define convergence in energy, thus playing a distance-like role, before the introduction of $d_1$ in \cite{Darvas2}.
	\end{example}

	We conclude this section with an estimate that will be useful for comparing energy functionals associated with different K\"ahler forms. Following \cite[Definition 2.5]{BBGZ}, for $C>0$, set
	\begin{align}\label{E_C}
		\mathcal{E}_C^1(X,\omega)
		:=\left\{u\in\mathcal{E}^1(X,\omega):\sup_Xu\leq 0,\frac{1}{V_\omega}\operatorname{E}_\omega(u)\geq-C\right\}.
	\end{align}Then one has the following estimate:
	\begin{proposition}[\hspace*{-0.6ex}{\cite[Lemma 3.13]{BBGZ}}]\label{BBGZLemma3.13}
		For every $C>0$, there exists a constant $A_C>0$ such that
		\begin{align*}
			\sup_{u\in\mathcal{E}_C^1(X,\omega)}
			\left|\frac{1}{V_\omega}\int_Xu\omega_{\psi_1}^n-\frac{1}{V_\omega}\int_Xu\omega_{\psi_2}^n\right|
			\leq
			A_C\left(\frac{I_\omega(\psi_1,\psi_2)}{V_\omega}\right)^{1/2}
		\end{align*}for all $\psi_1,\psi_2\in\mathcal{E}_C^1(X,\omega)$.
	\end{proposition}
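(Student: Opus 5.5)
The plan is to write the difference of Monge--Amp\`ere measures as an exact form, integrate by parts once, and apply the Cauchy--Schwarz inequality. The first factor will turn out to be exactly $I_\omega(\psi_1,\psi_2)$, and the second will be a mixed Dirichlet energy of $u$, bounded uniformly on $\mathcal{E}^1_C(X,\omega)$.

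First I reduce to bounded (even smooth) potentials. Replace $u,\psi_1,\psi_2$ by the canonical cutoffs $\max\{\cdot,-k\}$, or by decreasing sequences in $\mathcal{H}(\omega)$ obtained by Demailly regularization. Both sides of the inequality converge: the mixed Monge--Amp\`ere measures and the integrals $\int_X u\,\omega_{\psi}^n$ are continuous along decreasing sequences in $\mathcal{E}^1$, and so is $I_\omega$. After replacing $C$ by a slightly larger constant, the approximants stay in $\mathcal{E}^1_{C'}$ because $\operatorname{E}_\omega$ is monotone and continuous along decreasing sequences. So I may assume everything is smooth.

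Set $\phi:=\psi_1-\psi_2$ and $T:=\sum_{j=0}^{n-1}\omega_{\psi_1}^j\wedge\omega_{\psi_2}^{n-1-j}$, a closed positive $(n-1,n-1)$-form. Then $\omega_{\psi_1}^n-\omega_{\psi_2}^n=dd^c\phi\wedge T$, and Stokes gives
\begin{align*}
\int_X u\,(\omega_{\psi_1}^n-\omega_{\psi_2}^n)=-\int_X du\wedge d^c\phi\wedge T.
\end{align*}
By Cauchy--Schwarz for the positive semidefinite pairing $(f,g)\mapsto\int_X df\wedge d^cg\wedge T$, this is at most
\begin{align*}
\Big(\int_X d\phi\wedge d^c\phi\wedge T\Big)^{1/2}\Big(\int_X du\wedge d^cu\wedge T\Big)^{1/2}.
\end{align*}
The same integration by parts shows that $\int_X d\phi\wedge d^c\phi\wedge T=-\int_X\phi\,dd^c\phi\wedge T=\int_X\phi(\omega_{\psi_2}^n-\omega_{\psi_1}^n)=I_\omega(\psi_1,\psi_2)$. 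So the first factor is exactly $I_\omega(\psi_1,\psi_2)^{1/2}$.

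For the second factor, write $dd^cu=\omega_u-\omega$. Since $u\le 0$ and $\omega\wedge T\ge 0$,
\begin{align*}
\int_X du\wedge d^cu\wedge T=\int_X(-u)\,\omega_u\wedge T+\int_X u\,\omega\wedge T\le\sum_{j}\int_X(-u)\,\omega_u\wedge\omega_{\psi_1}^j\wedge\omega_{\psi_2}^{n-1-j}.
\end{align*}
It remains to bound these mixed energies of non-positive potentials by a constant $B_C\,V_\omega$, uniformly for $u,\psi_1,\psi_2\in\mathcal{E}^1_C$. Dividing by $V_\omega$ then gives the claim with $A_C=B_C^{1/2}$.

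This uniform bound is the main obstacle. I would use the standard comparison of mixed energies in $\mathcal{E}^1$ (Guedj--Zeriahi, BBGZ): for non-positive $v_0,\dots,v_n\in\mathcal{E}^1(X,\omega)$,
\begin{align*}
\int_X(-v_0)\,\omega_{v_1}\wedge\cdots\wedge\omega_{v_n}\le c_n\max_j\int_X(-v_j)\,\omega_{v_j}^n.
\end{align*}
Alternatively, apply it to $v=(u+\psi_1+\psi_2)/3$, and use concavity of $\operatorname{E}_\omega$ together with $-\operatorname{E}_\omega(v)\ge\frac{1}{n+1}\int_X(-v)\,\omega_v^n$ for $v\le 0$, which controls every mixed term since $\omega_v\ge\frac13\omega_{u}$, and similarly for $\psi_1,\psi_2$. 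If the mixed inequality has to be proved from scratch, I would follow the usual proof by repeated integration by parts and Cauchy--Schwarz (the ``fundamental inequality''), which yields a constant depending only on $n$. Combined with $\int_X(-v)\,\omega_v^n\le(n+1)(-\operatorname{E}_\omega(v))\le(n+1)CV_\omega$, this gives the required bound and finishes the proof.
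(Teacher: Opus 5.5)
Your proof is correct and is essentially the argument behind the cited result. The paper does not prove this statement itself but quotes \cite[Lemma 3.13]{BBGZ}, whose proof is the same integration by parts against $T=\sum_{j=0}^{n-1}\omega_{\psi_1}^j\wedge\omega_{\psi_2}^{n-1-j}$, followed by Cauchy--Schwarz, the identity $\int_X d\phi\wedge d^c\phi\wedge T=I_\omega(\psi_1,\psi_2)$, and a uniform bound on $\int_X(-u)\,\omega_u\wedge T$ over $\mathcal{E}^1_C(X,\omega)$. Your concavity route to that last bound (take $v=(u+\psi_1+\psi_2)/3$ and use $-u\le -3v$ together with $\omega_u\wedge\omega_{\psi_1}^j\wedge\omega_{\psi_2}^{n-1-j}\le 3^n\omega_v^n$) is self-contained and valid; note also that the cutoffs $\max\{\cdot,-k\}$ already stay in $\mathcal{E}^1_C(X,\omega)$ by monotonicity of $\operatorname{E}_\omega$, so you do not need to enlarge $C$.
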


	
	\subsection{Coupled KE equation and coupled cscK equation}
	\label{subsec:2}
	{Let $X$ be an $n$-dimensional compact complex manifold, with $\alpha_i,i=0,...,m$, being K\"{a}hler classes on it. Given K\"ahler forms $\omega_i$ in the classes $\alpha_i$ for $i=0,\dots,m$, we say that the K\"{a}hler forms $(\omega_i)_{i=0}^m$ satisfies the cKE equation if (\ref{cKE}) holds, and that $(\omega_i)_{i=0}^m$ satisfy the ccscK system if (\ref{ccscK}) holds. By the Bianchi identity, ccscK equation (\ref{ccscK}) is equivalent to cKE equation (\ref{cKE1}) for the case of $\epsilon=1$, under the assumption that $c_1(X)=\sum_{i=0}^m[\omega_i]$.}
	
	In \cite{datar2020coupled}, an infinite-dimensional moment map interpretation for the ccscK equation was introduced, in the spirit of \cite{fujiki1992moduli,Donaldson1997Fields}. To explain this process, {assuming that $\alpha_i=c_1(L_i)$, for $L_i,i=0,...,m$ being ample line bundles on $X$.} Let us momentarily forget the holomorphic structure on $L_i$, and let $L_i'$ be the smooth complex line bundle that underlies $L_i$. Let $F(L_i')$ be the frame bundle of $L_i',i=1,...,m$ (not including $i=0$), which is a principal $\mathbb{C}^*$-bundle. Then $E:=\bigoplus_{i=1}^mF(L_i')$ is a principal $(\mathbb{C}^*)^m$-bundle. 
	
	Now suppose we are given a $(\mathbb{S}^1)^m$-reduction $H=(h_1,...,h_m)$ on $E$ and a symplectic form $\omega_0\in c_1(L_0)$ on $X$. Note that providing an $(\mathbb{S}^1)^m$-reduction $H$ on $E$ is equivalent to providing Hermitian metrics $h_1,...,h_m$ on $L_1,...,L_m$ respectively. Let $\mathcal{G}_H$ be the gauge group of princpal $(\mathbb{S}^1)^m$-bundle $E_H$, and $\mathcal{H}$ be the group of Hamiltonian symplectomorphisms on $(X,\omega_0)$. Then by \cite[Section 1.3]{alvarez2013coupled}, we have an exact sequence of Lie groups\begin{align}
		0\to \mathcal{G}_H\to \tilde{\mathcal{G}}_H\to \mathcal{H}\to 0,
	\end{align}where $\tilde{\mathcal{G}}_H$ is called the extended gauge group, consisting of automorphisms $g=(g_1,...,g_m)$ of $E$ that cover elements of $\mathcal{H}$.
	
	$\tilde{\mathcal{G}}_H$ actions on the space $\mathcal{J}_{\omega_0}\times \mathcal{A}_{H}$ naturally, in which $\mathcal{J}_{\omega_0}$ is the space of almost complex structures compatible with and taming $\omega_0$, and $\mathcal{A}_{H}$ is the space of connections on $E_H$ (equivalent to the space of $m$-tuples of $h_i$-unitary connections on $L_i'$). The action is given by\begin{align*}
		g.(J,A_1,...,A_m):=(\check{g}_*J, g_{1*}A_1,...,g_{m*}A_m),
	\end{align*}where $\check{g}$ is the Hamiltonian symplectomorphism covered by $g$, and $A_i$ should be considered as $(\operatorname{Lie}\mathbb{S}^1)$-valued $1$-form on $F(L_i')$.
	
	Unlike the K\"{a}hler-Yang-Mills case \cite{alvarez2013coupled}, for the ccscK equation, the base space of action is a subspace $\mathcal{M}$ of $\mathcal{J}_{\omega_0}\times \mathcal{A}_{H}$ defined by\begin{align*}
		\mathcal{M}:=\left\{(J,A_1,...,A_m):J\text{ is integrable and }\omega_{A_i}:=\frac{\sqrt{-1}}{2\pi}F_{A_i}\text{ is positive }(1,1)\text{-form}\right\},
	\end{align*}with symplectic form $\Omega_{\mathcal{M}}$ with
	\begin{align*}
		&\Omega_{\mathcal{M}}((S,\sqrt{-1}a_1,...,\sqrt{-1}a_m),(T,\sqrt{-1}b_1,...,\sqrt{-1}b_m))\\:= & \ -\sum_{i=1}^m\frac{V_0}{4\pi^2V_i}\int_Xa_i\wedge b_i\wedge\frac{\omega_{A_i}^{n-1}}{(n-1)!}-\frac{1}{2}\int_X\operatorname{tr}(JST)\frac{\omega_0^n}{n!},
	\end{align*}where the tangent vectors are considered as 
	\begin{align*}
		&(S,\sqrt{-1}a_1,...,\sqrt{-1}a_m),(T,\sqrt{-1}b_1,...,\sqrt{-1}b_m)\in T_{(J,A_1,...,A_m)}(\mathcal{J}_{\omega_0}\times \mathcal{A}_{H})\\&\simeq \{S\in \operatorname{End}(TX):SJ+JS=0,\omega_0(SY,JZ)=\omega_0(SZ,JY)\}\times \left(\Lambda^1(X,\sqrt{-1}\mathbb{R})\right)^m.
	\end{align*}Actually $\Omega_{\mathcal{M}}(\cdot,\textbf{I}\cdot)$ deﬁnes a Riemannian metric on $\mathcal{M}$ (but not on $\mathcal{J}_{\omega_0}\times \mathcal{A}_{H}$ because we need $\omega_{A_i}$ to be positive) for the integrable complex structure $\textbf{I}_{(J,A)}(\sqrt{-1}a,S):=(JS,-\sqrt{-1}a(J\cdot))$ on $\mathcal{J}_{\omega_0}\times \mathcal{A}_{H}$ (see \cite[Remark 2.3]{alvarez2013coupled}).
	
	Now we are ready for the infinite-dimensional moment map interpretation for ccscK equation, which will be helpful to understand the coupled K-energy in Section \ref{sec:3}.
	\begin{theorem}[\hspace*{-0.6ex}\text{\cite[Theorem 8]{datar2020coupled}}]
		The action of $\tilde{\mathcal{G}}_H$ restricts to a Hamiltonian action on $(\mathcal{M},\Omega)$ with moment map
		\begin{equation}\label{Futaki}
			\begin{aligned}
				\mu_{(J,A)}(\tilde{\zeta}) =\sum_{i=1}^m\int_M H_{\tilde{\zeta}_i} \left(\frac{V_0}{V_i} \frac{\omega_i^n}{n!} - \frac{\omega_0^n}{n!} \right)+ \int_M H_{\zeta} \left( (S_{\omega_0,J} - \hat S) \frac{\omega_0^n}{n!} - \sum_{i=1}^m\omega_i \wedge \frac{\omega_0^{n-1}}{(n-1)!} \right),
			\end{aligned}
		\end{equation}where $\tilde{\zeta}=(\tilde{\zeta}_i)\in \operatorname{Lie}\tilde{\mathcal{G}}_H$ is a right invariant vector field on $E$ covering Hamiltonian vector field $\zeta$, $H_{\tilde{\zeta}_i}=\frac{\sqrt{-1}}{2\pi}A_i(\tilde{\zeta}_i)$ and $H_{\zeta}$ is the Hamiltonian of $\zeta$ with respect to $\omega_0$, defined by $dH_{\zeta}=-\iota_{\zeta}\omega_0$.
	\end{theorem}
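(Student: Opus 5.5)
The statement to prove is the moment map theorem of Datar–Pingali: the action of $\tilde{\mathcal{G}}_H$ on $(\mathcal{M},\Omega_{\mathcal{M}})$ is Hamiltonian with moment map (\ref{Futaki}). The form $\Omega_{\mathcal{M}}$ is a sum of two pieces, and the plan is to treat them separately and then add the results. The first piece is the Donaldson–Fujiki form on $\mathcal{J}_{\omega_0}$. The second is a weighted sum of the Atiyah–Bott type forms on the connection spaces $\mathcal{A}_{H_i}$, but with the weight $\omega_{A_i}^{n-1}$ rather than the fixed form $\omega_0^{n-1}$. This mirrors the Kähler–Yang–Mills computation of \cite{alvarez2013coupled}.

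\textbf{Setting up the infinitesimal action.} I first record how $\tilde\zeta\in\operatorname{Lie}\tilde{\mathcal G}_H$ acts infinitesimally. On $J$ it acts by $-\mathcal{L}_\zeta J$. On $A_i$ it acts by $-d_{A_i}(A_i(\tilde\zeta_i)) - \iota_\zeta F_{A_i}$. Up to the normalization $H_{\tilde\zeta_i}=\frac{\sqrt{-1}}{2\pi}A_i(\tilde\zeta_i)$, this is the tangent vector $\sqrt{-1}a_i$ with
\[
a_i = 2\pi\bigl(dH_{\tilde\zeta_i}+\iota_\zeta\omega_{A_i}\bigr)
\]
(signs to be fixed). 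I also check that $\mathcal M$ is preserved: integrability is preserved by diffeomorphisms, and $\omega_{g_*A_i}=\check g_*\omega_{A_i}$ stays positive. Equivariance of $\mu$ follows from naturality, since $\check g$ preserves $\omega_0$ and each term of (\ref{Futaki}) is built naturally from $(J,A)$.

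\textbf{The $J$-part and the connection part.} The $\mathcal J$-part is Fujiki/Donaldson's theorem. For fixed $A$, the variation of $J\mapsto\int H_\zeta S_{\omega_0,J}\,\omega_0^n/n!$ pairs with $-\frac12\int\operatorname{tr}(JST)$ correctly. The constant $\hat S$ term is harmless because $\int H_\zeta\omega_0^n=0$ for normalized Hamiltonians. The new work is the connection part. I vary $A_i$ by $\sqrt{-1}b_i$, so $\delta\omega_{A_i}=\frac{1}{2\pi}db_i$ with constants to be tracked. I then differentiate
\[
\int H_{\tilde\zeta_i}\frac{V_0}{V_i}\frac{\omega_i^n}{n!}\;-\;\int H_\zeta\,\omega_i\wedge\frac{\omega_0^{n-1}}{(n-1)!}.
\]
The variation of the first term is $\frac{V_0}{V_i}\int H_{\tilde\zeta_i}\,db_i\wedge\omega_i^{n-1}/(n-1)!$ plus the variation of $H_{\tilde\zeta_i}$ itself, which is $\frac{1}{2\pi}b_i(\zeta)$. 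After integrating by parts, the variation should match
\[
-\frac{V_0}{4\pi^2V_i}\int a_i\wedge b_i\wedge\frac{\omega_i^{n-1}}{(n-1)!}, \qquad a_i=2\pi(dH_{\tilde\zeta_i}+\iota_\zeta\omega_i).
\]
The $dH_{\tilde\zeta_i}$ piece matches the $db_i$ term directly. The $\iota_\zeta\omega_i$ piece has to be matched against the $b_i(\zeta)\omega_i^n$ term and the variation of $H_\zeta\,\omega_i\wedge\omega_0^{n-1}$. For this I use the pointwise identity
\[
(\iota_\zeta\omega_i)\wedge b_i\wedge\omega_i^{n-1}=\tfrac1n\, b_i(\zeta)\,\omega_i^n - \ldots,
\]
obtained by contracting $\iota_\zeta$ into the $(2n+1)$-form $b_i\wedge\omega_i^n=0$.

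\textbf{The main obstacle: cross terms.} The hardest step is the mixed variations. Varying $J$ changes $\omega_i$'s $(1,1)$-condition but not $\omega_i$ itself, so the $J$-variation of the connection terms vanishes. However, varying $A_i$ changes the term $H_\zeta\,\omega_i\wedge\omega_0^{n-1}$. This contribution must cancel against the non-$(1,1)$-weighted remainder from the contraction identity above, with the factor $V_0/V_i$ reconciled by how the $\omega_0$-weighted and $\omega_i$-weighted terms pair up. If this accounting fails, I would fall back to the structure of \cite{alvarez2013coupled}: write $\tilde{\mathcal G}_H$ as an extension and check the moment map on $\mathcal G_H$ (pure gauge, $\zeta=0$, giving the first sum) and on a splitting given by horizontal lifts via $A_i$, where $H_{\tilde\zeta_i}=0$. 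Verifying each case separately should localize the bookkeeping.
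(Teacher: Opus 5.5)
The paper does not prove this statement; it quotes it from \cite[Theorem 8]{datar2020coupled}. That argument is the K\"ahler--Yang--Mills computation of \cite{alvarez2013coupled}, which is the route you follow. Your infinitesimal action $a_i=2\pi(dH_{\tilde\zeta_i}+\iota_\zeta\omega_i)$ and the split into a $J$-part and a connection part are correct. However, the step you single out as the main obstacle is misdiagnosed, and as written it does not close.

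You differentiate only $\int_X H_{\tilde\zeta_i}\frac{V_0}{V_i}\frac{\omega_i^n}{n!}-\int_X H_\zeta\,\omega_i\wedge\frac{\omega_0^{n-1}}{(n-1)!}$. This drops the term $-\int_X H_{\tilde\zeta_i}\frac{\omega_0^n}{n!}$ of the moment map. You then hope the variation of the $H_\zeta$-term is absorbed by a remainder in the contraction identity, but there is no remainder. Since $b\wedge\omega^n=0$ for degree reasons, contracting with $\zeta$ gives, for any $2$-form $\omega$ and $1$-form $b$, exactly
\[
\iota_\zeta\omega\wedge b\wedge\omega^{n-1}=-\tfrac1n\,b(\zeta)\,\omega^n .
\]
So the $\iota_\zeta\omega_i$-part of $a_i$ is used up entirely by the $b_i(\zeta)\frac{V_0}{V_i}\omega_i^n$-term coming from $\delta H_{\tilde\zeta_i}$. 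The $dH_{\tilde\zeta_i}$-part is used up by the $H_{\tilde\zeta_i}\,db_i\wedge\omega_i^{n-1}$-term after Stokes. The variation of $-\int_X H_\zeta\,\omega_i\wedge\omega_0^{n-1}/(n-1)!$ is then left with nothing to cancel it. It cannot be ``reconciled'' with the $V_0/V_i$-weighted terms: it is a multiple of $\int_X b_i(\zeta)\,\omega_0^n$, those are multiples of $\int_X b_i(\zeta)\,\omega_i^n$, and $\omega_0^n/V_0\neq\omega_i^n/V_i$ in general.

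The missing observation is that its partner is the term you left out. For $\delta A_i=\sqrt{-1}b_i$ one has $\delta\omega_i=-\frac1{2\pi}db_i$ and $\delta H_{\tilde\zeta_i}=-\frac1{2\pi}b_i(\zeta)$. Hence
\begin{align*}
\delta\Bigl(-\int_X H_{\tilde\zeta_i}\frac{\omega_0^n}{n!}\Bigr)&=\frac1{2\pi}\int_X b_i(\zeta)\frac{\omega_0^n}{n!},\\
\delta\Bigl(-\int_X H_\zeta\,\omega_i\wedge\frac{\omega_0^{n-1}}{(n-1)!}\Bigr)&=\frac1{2\pi}\int_X H_\zeta\,db_i\wedge\frac{\omega_0^{n-1}}{(n-1)!}=-\frac1{2\pi}\int_X b_i(\zeta)\frac{\omega_0^n}{n!},
\end{align*}
where the last equality uses Stokes, $dH_\zeta=-\iota_\zeta\omega_0$, and the identity above applied to $\omega_0$. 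So the two $\omega_0$-weighted coupling terms cancel each other with coefficient $1$. The $\omega_i$-weighted terms alone reproduce the pairing with $-\frac{V_0}{4\pi^2V_i}\int a_i\wedge b_i\wedge\omega_i^{n-1}/(n-1)!$. This is why $-\omega_0^n/n!$ and $-\sum_i\omega_i\wedge\omega_0^{n-1}/(n-1)!$ appear together in (\ref{Futaki}).

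Your fallback does not avoid this. A horizontal lift of $\zeta$ with respect to $A$ must be treated as a fixed element of $\operatorname{Lie}\tilde{\mathcal G}_H$. Then $H_{\tilde\zeta_i}$ vanishes at the base point, but $\delta H_{\tilde\zeta_i}=-\frac1{2\pi}b_i(\zeta)$ does not. If you use ``$H_{\tilde\zeta_i}=0$'' to discard the first sum, you are off by a multiple of $\int_X b_i(\zeta)\bigl(\frac{V_0}{V_i}\omega_i^n-\omega_0^n\bigr)$, which vanishes only when the first equation of (\ref{ccscK}) holds. With this cancellation in place, the rest of your outline goes through: Fujiki--Donaldson for the $J$-part with one fixed sign convention, no $J$--$A$ cross terms, and equivariance by naturality.
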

	\begin{corollary}[\hspace*{-0.6ex}\text{\cite[Theorem 2]{datar2020coupled}}]Given symplectic form $\omega_0$ on $X$ and $(\mathbb{S}^1)^m$-reduction $H$ on $E$, then for $(J,A):=(J,A_1,...,A_m)\in \mathcal{M}$, $\mu_{(J,A)}=0$ if and only if $\omega_0$ and $\omega_i=\frac{\sqrt{-1}}{2\pi}F_{A_i}$ are ccscK metrics.
	\end{corollary}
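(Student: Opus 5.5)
The plan is to test the vanishing of the moment map \eqref{Futaki} against two complementary families of elements of $\operatorname{Lie}\tilde{\mathcal{G}}_H$. These come from the exact sequence $0\to\mathcal{G}_H\to\tilde{\mathcal{G}}_H\to\mathcal{H}\to 0$, which induces $0\to\operatorname{Lie}\mathcal{G}_H\to\operatorname{Lie}\tilde{\mathcal{G}}_H\to\operatorname{Lie}\mathcal{H}\to 0$. Each $A_i$ then gives a splitting of this Lie algebra sequence by horizontal lift. Hence every $\tilde\zeta=(\tilde\zeta_i)$ can be written uniquely as the $A$-horizontal lift of a Hamiltonian vector field $\zeta$ plus a vertical (gauge) part. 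The vertical part is given by arbitrary functions $f_i\in C^\infty(X,\mathbb{R})$, and in terms of it $H_{\tilde\zeta_i}=\frac{\sqrt{-1}}{2\pi}A_i(\tilde\zeta_i)=f_i$ (up to the fixed normalisation of the $\operatorname{Lie}\mathbb{S}^1$-identification). Since $\mu$ is linear in $\tilde\zeta$, $\mu_{(J,A)}=0$ is equivalent to vanishing on each of the two summands separately.

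First I take $\zeta=0$, i.e.\ pure gauge elements. Then $H_\zeta=0$, the second integral in \eqref{Futaki} drops out, and $\mu_{(J,A)}(\tilde\zeta)=\sum_{i=1}^m\int_X f_i\bigl(\frac{V_0}{V_i}\frac{\omega_i^n}{n!}-\frac{\omega_0^n}{n!}\bigr)$. Because the $f_i$ are arbitrary and independent, vanishing for all of them is equivalent to $\omega_i^n/V_i=\omega_0^n/V_0$ for every $i=1,\dots,m$, which is the first line of \eqref{ccscK}.

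Second, I take $\tilde\zeta$ to be the horizontal lift of a Hamiltonian vector field $\zeta$. Then $A_i(\tilde\zeta_i)=0$, so $H_{\tilde\zeta_i}=0$ and only the second integral survives. Here I use the pointwise identity $\omega_i\wedge\frac{\omega_0^{n-1}}{(n-1)!}=\operatorname{tr}_{\omega_0}(\omega_i)\frac{\omega_0^n}{n!}$. With it, the condition becomes $\int_X H_\zeta\,F\,\omega_0^n=0$, where $F:=S_{\omega_0,J}-\hat S-\sum_{i=1}^m\operatorname{tr}_{\omega_0}\omega_i$. As $\zeta$ ranges over $\operatorname{Lie}\mathcal{H}$, $H_\zeta$ ranges over all smooth functions with $\int_X H_\zeta\,\omega_0^n=0$ (the standard normalisation). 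Hence the condition is equivalent to $F$ being constant.

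To pin down the constant, I integrate $F\,\omega_0^n$. The integral $\int_X S_{\omega_0}\omega_0^n$ and the integrals $\int_X\operatorname{tr}_{\omega_0}\omega_i\,\omega_0^n$ are cohomological. With $\hat S$ chosen as the corresponding topological average, this gives $F\equiv0$. This is the second line of \eqref{ccscK}, after absorbing $\operatorname{tr}_{\omega_0}\omega_0=n$ into the topological constant, consistently with the convention for $\hat S$ in \eqref{ccscK}.

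The converse runs the same computations backwards. If the $\omega$'s are ccscK, both integrands in \eqref{Futaki} vanish identically, so $\mu_{(J,A)}(\tilde\zeta)=0$ for every $\tilde\zeta$.

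The main point requiring care is the Lie algebra description. I must check that vertical elements realise arbitrary independent functions $f_i$ via $H_{\tilde\zeta_i}$, and that horizontal lifts have $H_{\tilde\zeta_i}=0$; this should follow from \cite[Section 1.3]{alvarez2013coupled}. A second point of care is the bookkeeping of the constants $\hat S$ and $n$, so that the constant forced by the mean-zero normalisation is exactly zero rather than just some constant.
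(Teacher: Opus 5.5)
Your argument is correct and is essentially the intended one. The paper gives no separate proof and, following Datar--Pingali, treats the statement as an immediate consequence of the moment map formula \eqref{Futaki}; that consequence comes exactly from your splitting of $\operatorname{Lie}\tilde{\mathcal{G}}_H$ into gauge directions, which yield $\omega_i^n/V_i=\omega_0^n/V_0$, and horizontal lifts of Hamiltonian fields, which force $S_{\omega_0,J}-\sum_{i\geq 1}\operatorname{tr}_{\omega_0}\omega_i$ to be constant and hence, by integration, equal to its topological mean.

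On the bookkeeping you flagged: if $\hat S$ is taken literally from Definition \ref{coupledMabuchi}, the second integrand of \eqref{Futaki} on a ccscK tuple is the constant $n\,\omega_0^n/n!$ rather than zero. So in the converse direction you should appeal to the mean-zero normalisation of $H_\zeta$ rather than to pointwise vanishing; this does not change the conclusion.
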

	\begin{remark}
		There is another approach to interpreting a moment map of the ccscK equation (\ref{ccscK}) that does not use the gauge group. These ideas are introduced by Lee \cite{Lee2023AJM}. He also provides an explicit expression for the coupled K-energy and demonstrates convexity along smooth geodesics. Such expression will be used in Section \ref{sec:3}.
	\end{remark}
	
	\boldmath
	\section{Coupled energy functionals.}
	\unboldmath
	\label{sec:3}
	\subsection{Coupled K-energy}
	Let us recall the coupled K-energy suggested in \cite[Remark 12]{datar2020coupled}. 
	\begin{definition}\label{coupledMabuchi}The coupled K-energy (or coupled Mabuchi energy) $\mathcal{M}_{\operatorname{ccscK}}:\prod_{i=0}^{m}\mathcal{K}([\omega_i])\to \mathbb{R}$ is defined by\begin{equation}\label{coupledMabuchiDef}
			\begin{aligned}
			&\mathcal{M}_{\operatorname{ccscK}}((\omega_i(s)),(\omega_i(0))) :=-\sum_{i=1}^m\int_0^s\int_X\dot{\varphi_i}(t)\left(\frac{1}{V_i} \frac{\omega_i(t)^n}{n!}- \frac{1}{V_0}\frac{\omega_0(t)^n}{n!} \right)\wedge dt \\& - \frac{1}{V_0}\int_0^s\int_M \dot{\varphi_0}(t) \left((S_{\omega_0(t),J}-\hat{S})\frac{\omega_0(t)^n}{n!} - \sum_{i=0}^m\omega_i(t) \wedge \frac{\omega_0(t)^{n-1}}{(n-1)!} \right)\wedge dt.
		\end{aligned}
		\end{equation}where $(\omega_i(t)):=(\omega_0(t),\cdots,\omega_m(t)), t\in [0,s],$ is a smooth path from $(\omega_i(s))$ to $(\omega_i(0))$ such that $\varphi_i(t)$ is a ray of smooth function with $\omega_i(t)=\omega_i(0)+dd^c\varphi_i(t)$, and the topological constant $\hat{S}$ is given by
		\begin{align*}
			\hat{S}:=-n\frac{\left(K_X+\sum_{i=0}^m[\omega_i]\right)[\omega_0]^{n-1}}{[\omega_0]^n}.
		\end{align*}
	\end{definition}
	There are two immediate observations. Firstly, $\mathcal{M}_{\operatorname{ccscK}}$ includes the standard K-energy as part of it, namely\begin{equation}\label{splitMabuchi}
		\begin{aligned}
			&\mathcal{M}_{\operatorname{ccscK}}((\omega_i(s)),(\omega_i(0)))-\mathcal{M}_{\operatorname{cscK}}(\omega_0(s),\omega_0(0))\\& = -\sum_{i=1}^m\int_0^s\int_X\dot{\varphi_i}(t)\left(\frac{1}{V_i} \frac{\omega_i(t)^n}{n!}- \frac{1}{V_0}\frac{\omega_0(t)^n}{n!} \right)\wedge dt \\& + \frac{1}{V_0}\int_0^s\int_M \dot{\varphi_0}(t) \left(\overline{S}\frac{\omega_0(t)^n}{n!} + \sum_{i=0}^m\omega_i(t) \wedge \frac{\omega_0(t)^{n-1}}{(n-1)!} \right)\wedge dt,
		\end{aligned}
	\end{equation}where $\overline{S}=-n\frac{\sum_{i=0}^m[\omega_i].[\omega_0]^{n-1}}{[\omega_0]^n}$.
	
	Secondly, the defining 1-form for $\mathcal{M}_{\operatorname{ccscK}}$ can be written as the following pairing\begin{align*}
		-\frac{1}{V_0}\braket{\mu^{b}(I),\zeta_I(\cdot)},
	\end{align*}just like \cite[(3.103)]{alvarez2013coupled}. To explain this, let $I$ be the holomorphic structure on $E$ induced by $J$ and the holomorphic structure on $L_i$, and let $B_I:=\mathcal{K}_{J}([\omega_0])\times \{(\mathbb{S}^1)^m\text{-reductions of }E\}$ (see \cite[Section 3.1]{alvarez2013coupled}). Fix $\omega_i\in \mathcal{K}([\omega_i]),i=1,...,m$, take the embedding\begin{align*}
		&\prod_{i=0}^m\mathcal{K}([\omega_i])\hookrightarrow B_I,\\ &(\omega_0',\omega_1+\sqrt{-1}\partial\bar{\partial}\varphi_1,...,\omega_m+\sqrt{-1}\partial\bar{\partial}\varphi_m)\mapsto (\omega_0',e^{-2\pi\varphi_1}h_1,\cdots,e^{-2\pi\varphi_m}h_m),
	\end{align*}where $h_i$ is a Hermitian metric on $L_i$ with Chern curvature $-2\pi\sqrt{-1}\omega_i$, and $\varphi_i$ is smooth function that $\int_X\varphi_i\omega_i^n=0$. Given any  $(\omega_0',\omega_1+\sqrt{-1}\partial\bar{\partial}\varphi_1,...,\omega_m+\sqrt{-1}\partial\bar{\partial}\varphi_m)$, since $\omega_0'$ is compatible with $J$, $I$ is uniquely determined by a pair $(J,A)\in \mathcal{J}_{\omega_0'}\times \prod_{i=1}^m\mathcal{A}_{e^{-2\pi\varphi_i}h_i}$ (see \cite[Lemma 3.1]{alvarez2013coupled} and \cite{singer1959geometric}). So one can consider the picture given by Section \ref{subsec:2} with respect to the symplectic form $\omega_0'$ and the reduction $(e^{-2\pi\varphi_1}h_1,\cdots,e^{-2\pi\varphi_m}h_m)$. Let $b=(\omega_0',e^{-2\pi\varphi_1}h_1,\dots,e^{-2\pi\varphi_m}h_m)$, denote the corresponding moment map by $\mu^b$ and evaluate it at the point $I=(J,A)$ to obtain $\mu^b(I)$. Then one can easily show that the defining 1-form equals to $-\braket{\mu^{b}(I),\zeta_I(\cdot)}/V_0$, where $\zeta_I$ is defined by \cite[(3.122)]{alvarez2013coupled}. Therefore, it is reasonable that we can prove the exactness of $-\braket{\mu^{b}(I),\zeta_I(\cdot)}$ similar to \cite[Proposition 3.8]{alvarez2013coupled}. From the closedness of $-\braket{\mu^{b}(I),\zeta_I(\cdot)}$ 
	the well-definedness (independence of the choice of the path) of $\mathcal{M}_{\operatorname{ccscK}}$ follows, which is well-known as a standard fact. More generally, when the automorphism group of $X$ is non-discrete and the maximal torus acts
		as Hamiltonian isometries, the equation (\ref{Futaki}) is independent of choice of the K\"ahler forms, giving the coupled version of classical Futaki invariant. A standard fact is that the closed $1$-form {invariant under the automorphism action} as above, Futaki-type invariant, Mabuchi-type functional and moment map
		picture come as a package.
	\begin{proposition}\label{welldefinedMabuchi}
		$\mathcal{M}_{\operatorname{ccscK}}$ is well defined, i.e. the integration on right hand side of (\ref{splitMabuchi}) is independent of the choice of smooth path. For a Hamiltonian torus action, the equation (\ref{Futaki}) is independent of choice of the K\"ahler forms, giving the classical Futaki invariant (see \cite[Definition 10, Proposition 11]{datar2020coupled}).
	\end{proposition}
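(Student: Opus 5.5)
We will show that the $1$-form defining $\mathcal{M}_{\operatorname{ccscK}}$ is closed on the contractible space $\prod_{i=0}^m\mathcal{H}(\omega_i)$; path independence then follows by Stokes' theorem. Since the classical K-energy is already known to be well defined, by (\ref{splitMabuchi}) it suffices to treat the remaining terms. To do this, we exhibit them as the $t$-derivative of an explicit closed expression. Concretely, we will check that along any smooth path the integrand of (\ref{splitMabuchi}) equals
\begin{align*}
\frac{d}{dt}\Big(&\frac{\overline{S}+n}{V_0n!}\operatorname{E}_{\omega_0}(\varphi_0)-\sum_{i=1}^m\frac{1}{V_in!}\operatorname{E}_{\omega_i}(\varphi_i)\\
&+\sum_{i=1}^m\Big(\frac{1}{V_0n!}\int_X\varphi_i\,\omega_0(t)^n+\frac{1}{V_0(n-1)!}\operatorname{E}^{\omega_i}_{\omega_0}(\varphi_0)\Big)\Big).
\end{align*}
This is the smooth version of (\ref{finiteenergyextensionformula}).

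The verification uses three standard variation formulas. The first is $\frac{d}{dt}\operatorname{E}_\omega(\varphi)=\int_X\dot\varphi\,\omega_\varphi^n$. The second is $\frac{d}{dt}\operatorname{E}^\chi_\omega(\varphi)=\int_X\dot\varphi\,\chi\wedge\omega_\varphi^{n-1}$ for closed $\chi$. The third is
\[
\frac{d}{dt}\int_X\varphi_i\,\omega_{0,\varphi_0}^n=\int_X\dot\varphi_i\,\omega_{0,\varphi_0}^n+n\int_X\dot\varphi_0\,dd^c\varphi_i\wedge\omega_{0,\varphi_0}^{n-1},
\]
which follows by integrating by parts. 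With these in hand, we expand the term $\sum_{i\ge0}\omega_i(t)\wedge\omega_0(t)^{n-1}/(n-1)!$. The $i=0$ summand gives $n\,\omega_0(t)^n/n!$, which combines with the $\overline{S}$ term. For $i\ge1$, the summand $(\omega_i+dd^c\varphi_i)\wedge\omega_0(t)^{n-1}$ splits into a piece handled by $\operatorname{E}^{\omega_i}_{\omega_0}$ and a $dd^c\varphi_i$ piece, which matches the mixed term above. The $\dot\varphi_i$ terms match the $\operatorname{E}_{\omega_i}$ term and the first part of the mixed term. Since each piece is an exact derivative of a function on the product space, the integral depends only on the endpoints.

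The only delicate point is bookkeeping of constants: the factors $n!$ versus $(n-1)!$, and the normalization $dd^c$ versus $\sqrt{-1}\partial\bar\partial$ in the Datar–Pingali moment-map picture. Alternatively, one could verify closedness directly by computing the second variation and checking symmetry, following \cite[Proposition 3.8]{alvarez2013coupled}. The explicit primitive avoids this.

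For the Futaki statement, take a Hamiltonian torus action generated by $\zeta$, and vary the tuple $(\omega_i)$ in its classes by $\varphi_i$. The expression (\ref{Futaki}) evaluated on the holomorphy potentials $H_\zeta^{(i)}$ is the derivative of the functional above along the flow $\exp(t\,J\zeta)$. We then differentiate it in $\varphi_i$. Using the transformation rule $H^{(i)}_{\zeta,\varphi_i}=H^{(i)}_\zeta+\zeta(\varphi_i)$, together with integration by parts and the closedness proved above, the derivative vanishes. This is the standard argument of \cite[Proposition 11]{datar2020coupled}, which we cite for the details.
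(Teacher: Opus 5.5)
Your proof is correct, but it runs in the opposite direction from the paper's.

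\textbf{The paper's route.} The paper argues conceptually. It identifies the defining $1$-form with $-\frac{1}{V_0}\langle\mu^{b}(I),\zeta_I(\cdot)\rangle$ in the Datar--Pingali moment-map picture and appeals to the closedness argument of \cite[Proposition 3.8]{alvarez2013coupled}. Only afterwards, in Section \ref{subsec:5}, does it use the resulting path independence to integrate along a special broken path and obtain (\ref{our}).

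\textbf{Your route.} You write down the primitive directly, and your variation formulas do check out. The $\dot\varphi_i$-terms reproduce the first line of (\ref{splitMabuchi}). The $\dot\varphi_0$-terms give $\frac{1}{V_0}\int_X\dot\varphi_0\left(\overline{S}\frac{\omega_0(t)^n}{n!}+\sum_{i=0}^m\omega_i(t)\wedge\frac{\omega_0(t)^{n-1}}{(n-1)!}\right)$ once $n\frac{\omega_0(t)^n}{n!}$ is absorbed as the $i=0$ summand. Your primitive is exactly (\ref{our}). This is because $\operatorname{E}_\omega(\varphi)=\frac{1}{n+1}\langle\varphi,\dots,\varphi\rangle_{(\omega,\dots,\omega)}$, and the $\varphi_0$-terms of $\langle\varphi_i,\varphi_0,\dots,\varphi_0\rangle_{(\omega_i,\omega_0,\dots,\omega_0)}$ sum to $n\operatorname{E}^{\omega_i}_{\omega_0}(\varphi_0)$.

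\textbf{Comparison.} Your route needs only Mabuchi's classical closedness plus three elementary variation formulas. It proves exactness rather than mere closedness, so your Stokes/contractibility remark is superfluous. It also yields the Chen--Tian formula along every path at once, rather than deriving it from the Proposition. The paper's route instead explains why the form is closed. It ties that closedness to automorphism invariance and the coupled Futaki invariant, which is what the second half of the statement is about.

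\textbf{Two small additions.}
\begin{enumerate}
\item You should state that your primitive is unchanged under $\varphi_i\mapsto\varphi_i+c_i$. The change is $\frac{c_0}{n!}\left(\overline{S}+n+\frac{n}{V_0}\sum_{i\geq1}\alpha_i\cdot\alpha_0^{n-1}\right)=0$. This is what makes $\mathcal{M}_{\operatorname{ccscK}}$ descend to $\prod_i\mathcal{K}([\omega_i])$ and makes it insensitive to how the potentials along the path are normalised.
\item In the Futaki part, the Hamiltonian transforms as $H_\zeta+c\,(J\zeta)(\varphi_i)$ for a normalising constant $c$ (equivalently $u+X(\varphi_i)$ for the holomorphic $(1,0)$-field $X$). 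It does not transform as $H_\zeta+\zeta(\varphi_i)$, as you wrote; that expression vanishes for the $\zeta$-invariant potentials you must use. The quickest finish is $d(\sigma(V))=\mathcal{L}_V\sigma-\iota_V d\sigma=0$. Here $\sigma$ is the closed $1$-form and $V$ is the vector field on potentials induced by $\exp(tJ\zeta)$, under whose flow $\sigma$ is invariant by naturality. As in the paper, the remaining details can be cited from \cite[Proposition 11]{datar2020coupled}.
\end{enumerate}
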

	\subsection{Multivariate energy functionals and their asymptotic slopes}
	\label{subsec:4}
	For energy functionals such as the Monge-Amp\`{e}re energy functional, Sj\"{o}str\"{o}m Dyrefelt introduced a powerful tool in his work \cite{sjostrom2018k}, namely the \textit{multivariate energy functional} or {\textit{analytic Deligne pairing}}. In the theory of classical cscK equation, it provides a remarkably refined means of relating the asymptotic slope of K-energy $\operatorname{M}^{\operatorname{cscK}}$ with $\operatorname{M}^{\operatorname{NA}}$, the non-Archimedean Mabuchi functional (a modiﬁcation of the Donaldson-Futaki invariant, introduced by \cite{boucksom2017uniform,boucksom2019}).
	
	\begin{definition}[\hspace*{-0.6ex}\text{\cite[Definition 2.1]{sjostrom2018k}}]\label{analyticDP}
		Let $\theta_0,\dots,\theta_n$ be closed $(1,1)$-forms on $X$.  
		Define a multivariate energy functional $\braket{\cdot,\dots,\cdot}_{(\theta_0,\ldots,\theta_n)}$  
		on the space $\operatorname{PSH}(X,\theta_0)\cap L^\infty_{\operatorname{loc}}\times \cdots\times \operatorname{PSH}(X,\theta_n)\cap L^\infty_{\operatorname{loc}}$ ($n+1$ times) by
		\begin{align}\label{delignepairing}
			\langle \varphi_0,\dots,\varphi_n\rangle_{(\theta_0,\dots,\theta_n)}
			&:= 
			\int_X \varphi_0\,(\theta_1 + dd^c\varphi_1)\wedge\dots\wedge(\theta_n + dd^c\varphi_n)\notag \\&+\int_X \varphi_1\,\theta_0\wedge(\theta_2 + dd^c\varphi_2)\wedge\dots\wedge(\theta_n + dd^c\varphi_n)\notag\\& +\int_X \varphi_2\,\theta_0\wedge\theta_1\wedge(\theta_3 + dd^c\varphi_3)\wedge\dots\wedge(\theta_n + dd^c\varphi_n)
			\notag\\ &+ \dots
			+ \int_X \varphi_n\,\theta_0\wedge\dots\wedge\theta_{n-1}, 
		\end{align}where $\bigwedge_{j=k}^n(\theta_j+dd^c\varphi_j)$ is a closed positive current in the sense of Bedford--Taylor \cite{BedfordTaylor76,BedfordTaylor82}.
	\end{definition}
	\begin{example}
		The Monge-Amp\`ere energy functional can be rewritten as\begin{align*}
			\operatorname{E}_\omega(\varphi):=\frac{1}{n+1}\sum_{j=0}^n\int_X\varphi(\omega+dd^c\varphi)^{n-j}\wedge\omega^j=\frac{1}{n+1}\braket{\varphi,\cdots,\varphi}_{(\omega,\cdots,\omega)},
		\end{align*}for any $\varphi\in\operatorname{PSH}(X,\omega)\cap L^\infty(X)$. It is well known that
		\begin{align*}
			\left.\frac{d}{dt}\right|_{t=0}\operatorname{E}_\omega(\varphi+t\psi)
			=\int_X\psi\,(\omega+dd^c\varphi)^n,
		\end{align*}
		Consequently, for any smooth path
		$\varphi(t),t\in[0,s]$ in $\mathcal{H}(\omega)$
		joining $0$ to $\varphi$, one has
		\begin{align}\label{MongeAmpere}
			\frac{1}{n+1}
			\braket{\varphi,\ldots,\varphi}_{(\omega,\ldots,\omega)}
			=\int_0^s\int_X
			\dot{\varphi}(t)\,
			(\omega+dd^c\varphi(t))^n\wedge dt,
		\end{align}
		where the right-hand side is independent of the choice of path.
		
		Also, for the contracted Monge-Amp\`ere energy functional, one has\begin{align*}\operatorname{E}_\omega^\chi(\varphi):=\frac{1}{n}\sum_{j=0}^{n-1}\int_X\varphi\chi\wedge(\omega+dd^c\varphi)^{n-j-1}\wedge\omega^j=\frac{1}{n}\braket{0,\varphi,\cdots,\varphi}_{(\chi,\omega,\cdots,\omega)},\end{align*} for any $\varphi\in \operatorname{PSH}(X,\omega)\cap L^\infty(X)$.
	\end{example}
	\noindent The utility of the multivariate energy functional is twofold. First, this functional enjoys the following properties: 
	\begin{proposition}[\hspace*{-0.6ex}\text{\cite[Proposition 2.3 \& 2.8]{sjostrom2018k}}]\mbox{}\label{duichen}\\
		$(\mathrm{i})$ $\braket{\cdot,\dots,\cdot}_{(\theta_0,\ldots,\theta_n)}$ is symmetric, i.e. for any $\sigma\in S_{n+1}$, $$\braket{\varphi_{\sigma(0)},\dots,\varphi_{\sigma(n)}}_{(\theta_{\sigma(0)},\dots,\theta_{\sigma(n)})}=\braket{\varphi_0,\dots,\varphi_n}_{(\theta_0,\dots,\theta_n)}.$$
		$(\mathrm{ii})$ Let $(\varphi_i(t))_{t\geq 0}$ be a smooth ray of smooth functions, $\Phi_i:X\times \mathbb{D}^*\to \mathbb{R},\Phi_i(x,e^{-t+\sqrt{-1}s})=\varphi_i^t(x)$. Then
		\begin{align}\label{derivativeofanalyticDelignepairing}
			\frac{d}{dt}\bigg|_{t=s}\braket{\varphi_0(t),\dots,\varphi_n(t)}_{(\theta_0,\dots,\theta_n)}=\sum_{i=0}^n\int_X\dot{\varphi_i}(s)\bigwedge_{j\neq i}(\theta_j+dd^c\varphi_j(s)),
		\end{align}and\begin{align}\label{ddcofanalyticDelignepairing}
			dd^c_\tau\braket{\varphi_0(t),\dots,\varphi_n(t)}_{(\theta_0,\dots,\theta_n)}=\int_X\bigwedge_{i=0}^n(\operatorname{pr}_1^*\theta_i+dd^c_{(x,\tau)}\Phi_i),
		\end{align}where $\tau=e^{-t+\sqrt{-1}s}$, $\operatorname{pr}_1:X\times \mathbb{D}^*\to X$ is projection.
	\end{proposition}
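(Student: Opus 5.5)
The plan is to prove both parts for smooth $\varphi_i\in\mathcal{C}^\infty(X)$ with $\theta_i+dd^c\varphi_i$ arbitrary smooth, first directly from Definition \ref{analyticDP}. Both identities are only used later, and the bounded (Bedford--Taylor) case then follows by approximation.

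\textbf{Symmetry (i).} Since $S_{n+1}$ is generated by adjacent transpositions, it suffices to show invariance under swapping the pairs $(\theta_k,\varphi_k)$ and $(\theta_{k+1},\varphi_{k+1})$. In (\ref{delignepairing}) only the $k$-th and $(k+1)$-th terms change under this swap. Write $T:=\theta_0\wedge\cdots\wedge\theta_{k-1}\wedge\bigwedge_{j\ge k+2}(\theta_j+dd^c\varphi_j)$, which is a closed current. The two relevant terms are then
\begin{align*}
	\int_X\varphi_k(\theta_{k+1}+dd^c\varphi_{k+1})\wedge T+\int_X\varphi_{k+1}\theta_k\wedge T.
\end{align*}
After the swap they become $\int_X\varphi_{k+1}(\theta_k+dd^c\varphi_k)\wedge T+\int_X\varphi_k\theta_{k+1}\wedge T$. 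The difference between the two expressions is $\int_X\varphi_k dd^c\varphi_{k+1}\wedge T-\int_X\varphi_{k+1}dd^c\varphi_k\wedge T$, which vanishes by Stokes' theorem because $T$ is closed. For bounded potentials, the same integration by parts holds by Bedford--Taylor theory, or one can use decreasing smooth approximations together with the continuity of mixed Monge--Amp\`ere operators along monotone sequences.

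\textbf{Derivative formula (\ref{derivativeofanalyticDelignepairing}).} By multilinearity in the tuple of potentials, it is enough to treat a variation $\varphi_i\mapsto\varphi_i+\epsilon\psi$ in a single slot. By symmetry (i), we may move slot $i$ to position $0$. The pairing is then $\int\varphi_0\bigwedge_{j\ge1}(\theta_j+dd^c\varphi_j)$ plus terms not involving $\varphi_0$, so its derivative in $\varphi_0$ is exactly $\int\psi\bigwedge_{j\neq0}(\theta_j+dd^c\varphi_j)$. Applying the chain rule along the ray gives (\ref{derivativeofanalyticDelignepairing}).

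\textbf{$dd^c_\tau$ formula (\ref{ddcofanalyticDelignepairing}).} I would prove this by induction on $n$, or more cleanly by fibre integration. Let $F(\tau):=\braket{\Phi_0(\cdot,\tau),\dots}$ and consider the pushforward $\pi_*$ along $X\times\mathbb{D}^*\to\mathbb{D}^*$. Setting $\Theta_j=\operatorname{pr}_1^*\theta_j+dd^c\Phi_j$, each term of $F$ has the form $\pi_*(\Phi_k\,\operatorname{pr}_1^*\theta_0\wedge\cdots\wedge\operatorname{pr}_1^*\theta_{k-1}\wedge\Theta_{k+1}\wedge\cdots\wedge\Theta_n)$, where only the fibrewise $(n,n)$ part matters. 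Since $dd^c$ commutes with $\pi_*$ and the forms involved are closed, we get
\begin{align*}
	dd^c_\tau F=\sum_k\pi_*\big(dd^c\Phi_k\wedge\operatorname{pr}_1^*\theta_0\wedge\cdots\wedge\operatorname{pr}_1^*\theta_{k-1}\wedge\Theta_{k+1}\wedge\cdots\wedge\Theta_n\big).
\end{align*}
Writing $dd^c\Phi_k=\Theta_k-\operatorname{pr}_1^*\theta_k$, the sum telescopes to $\pi_*(\Theta_0\wedge\cdots\wedge\Theta_n)-\pi_*(\operatorname{pr}_1^*(\theta_0\wedge\cdots\wedge\theta_n))$. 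The last term vanishes because it is the pullback of an $(n+1,n+1)$-form from the $n$-dimensional $X$. This yields (\ref{ddcofanalyticDelignepairing}).

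The main technical point is justifying the commutation $dd^c\pi_*=\pi_*dd^c$ and Stokes' theorem in the non-smooth setting. In the smooth-ray case stated in (ii) this is immediate, so the remaining delicacy is only the symmetry statement for $L^\infty_{\mathrm{loc}}$ potentials. That I would settle by approximation, using Bedford--Taylor monotone continuity.
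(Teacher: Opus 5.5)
The paper gives no proof of its own here and simply cites \cite[Propositions 2.3 and 2.8]{sjostrom2018k}. Your argument is correct and is the standard proof: adjacent transpositions plus integration by parts against the closed current $T$ for (i), moving the varied slot to position $0$ via symmetry for the derivative formula, and fibre integration with the telescoping substitution $dd^c\Phi_k=\Theta_k-\operatorname{pr}_1^*\theta_k$ for the $dd^c_\tau$ formula. One wording fix: the pairing is multi-affine rather than multilinear in $(\varphi_0,\dots,\varphi_n)$, since each term is affine in each slot separately, and this is exactly what your chain-rule reduction to a single slot uses.
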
\noindent Property (ii) readily implies the convexity of such functionals along suitable paths. As for (i), we actually have the following identity, which shows the essence of its symmetry (see also \cite[Appendix A.8, page 121]{BoucksomEriksson2021}).
	\begin{proposition}\label{BigSymmetryProp}
		Let $\omega_i,\ i=0,\cdots, n,$ be K\"{a}hler forms. Then for any $\varphi_i\in \operatorname{PSH}(\omega_i)\cap L^\infty(X),\ i=0,\cdots,n$, we have\begin{align}\label{BigSymmetry}
			\braket{\varphi_0,\cdots,\varphi_n}_{(\omega_0,\cdots,\omega_n)}=\frac{1}{n!}\sum_{\varnothing\neq I\subset \{0,\cdots,n\}}(-1)^{n+1-|I|}\operatorname{E}_{\sum_{i\in I}\omega_i}\left(\sum_{i\in I}\varphi_i\right).
		\end{align}
	\end{proposition}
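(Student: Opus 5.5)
The plan is to derive \eqref{BigSymmetry} as a polarization identity. The two ingredients are the symmetry from Proposition \ref{duichen} (i) and an additivity property of the multivariate energy functional in each slot. For brevity, write $x_i:=(\omega_i,\varphi_i)$ for the pairs and $F(x_0,\dots,x_n):=\braket{\varphi_0,\dots,\varphi_n}_{(\omega_0,\dots,\omega_n)}$. For $I\subset\{0,\dots,n\}$ put $x_I:=\bigl(\sum_{i\in I}\omega_i,\sum_{i\in I}\varphi_i\bigr)$. This is an admissible pair, since $\sum_{i\in I}\varphi_i$ is bounded and $\bigl(\sum_{i\in I}\omega_i\bigr)$-psh. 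By the example following Definition \ref{analyticDP}, $F(x_I,\dots,x_I)=(n+1)\operatorname{E}_{\sum_{i\in I}\omega_i}\bigl(\sum_{i\in I}\varphi_i\bigr)$. The claim is therefore equivalent to
\begin{align*}
(n+1)!\,F(x_0,\dots,x_n)=\sum_{\varnothing\neq I\subset\{0,\dots,n\}}(-1)^{n+1-|I|}F(x_I,\dots,x_I).
\end{align*}

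\emph{Step 1: additivity in the first slot.} I will show that $F((\theta_0+\theta_0',\psi_0+\psi_0'),x_1,\dots,x_n)=F((\theta_0,\psi_0),x_1,\dots,x_n)+F((\theta_0',\psi_0'),x_1,\dots,x_n)$. This can be read off directly from \eqref{delignepairing}. The first term $\int_X\psi_0\bigwedge_{j\ge1}(\theta_j+dd^c\psi_j)$ is linear in $\psi_0$. Every other term contains $\theta_0$ exactly once as a smooth factor wedged with Bedford--Taylor products that do not involve $\psi_0$, so it is linear in $\theta_0$. Combining this with the symmetry of Proposition \ref{duichen} (i), $F$ is additive in each slot separately, on the cone of pairs (Kähler form, bounded psh potential). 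Note that we never need to subtract pairs, which would leave the psh class.

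\emph{Step 2: combinatorial polarization.} Expand $F(x_I,\dots,x_I)$ by multiadditivity as $\sum_{(j_0,\dots,j_n)\in I^{n+1}}F(x_{j_0},\dots,x_{j_n})$. Then apply inclusion--exclusion to $\sum_{I}(-1)^{n+1-|I|}\sum_{J\in I^{n+1}}$. A tuple $J$ with image $T$ is counted with weight $\sum_{I\supset T}(-1)^{n+1-|I|}$. This weight is $0$ unless $T=\{0,\dots,n\}$, in which case it is $1$. The surviving tuples are exactly the permutations $\sigma\in S_{n+1}$, and by symmetry each contributes $F(x_0,\dots,x_n)$. This gives the factor $(n+1)!$, and dividing by $(n+1)!$ yields \eqref{BigSymmetry} with the coefficient $\frac{1}{n!}$.

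I expect the main point needing care to be Step 1 in the merely bounded setting. There one must check that the Bedford--Taylor products in \eqref{delignepairing} involving the other slots are unaffected when the first pair is split, and that symmetry (Proposition \ref{duichen} (i)) is indeed available for $L^\infty$ potentials. Both follow from the definitions and the cited proposition. If one preferred, one could first prove everything for smooth potentials and pass to bounded ones via decreasing smooth approximations, using the continuity of Bedford--Taylor products along decreasing sequences on both sides of the identity.
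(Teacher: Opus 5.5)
Your proof is correct, and it takes a different route from the paper's.

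The paper argues through first variations. Write $\omega_I=\sum_{i\in I}\omega_i$, and similarly $\varphi_I$ and $\psi_I$. For smooth potentials the paper differentiates the right-hand side of \eqref{BigSymmetry} along $\varphi_i+t\psi_i$, using $\left.\frac{d}{dt}\right|_{t=0}\operatorname{E}_{\omega_I}(\varphi_I+t\psi_I)=\int_X\psi_I(\omega_I+dd^c\varphi_I)^n$. It then applies the combinatorial identity \eqref{symmetry}, which is the polarization formula for the $n$-fold wedge product of the forms $\omega_j+dd^c\varphi_j$, to reduce this derivative to $\sum_i\int_X\psi_i\bigwedge_{k\neq i}(\omega_k+dd^c\varphi_k)$. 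This matches the variation formula \eqref{derivativeofanalyticDelignepairing} for the pairing, and both sides vanish at $\varphi_i=0$, so they agree for smooth potentials. Bounded potentials are then handled by decreasing smooth approximation and continuity of Bedford--Taylor products.

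You instead polarize one level higher, on the $(n+1)$-slot pairing itself:
\begin{itemize}
\item additivity in the pair $(\omega_0,\varphi_0)$ is read off directly from \eqref{delignepairing};
\item symmetry transports that additivity to every slot;
\item the diagonal equals $(n+1)\operatorname{E}$;
\item inclusion--exclusion leaves exactly the $(n+1)!$ permutation terms.
\end{itemize}

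What your route buys: it is purely algebraic, with no path, no normalization at $0$, and no variation formula. Proposition \ref{duichen}~(i) is stated for bounded potentials, and the paper itself uses it that way in Proposition \ref{ddcofanalyticDelignepairingweak}~(i). So your argument works directly in the $L^\infty$ setting without an approximation step.

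What the paper's route buys: its input is the variation formula \eqref{derivativeofanalyticDelignepairing} rather than symmetry, though both ultimately rest on integration by parts. It is also phrased in terms of first variations, which is the form in which the identity is used afterwards to obtain Proposition \ref{ddcofanalyticDelignepairingweak}~(ii),(iii).
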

	\begin{proof}
		Consider first the smooth case. Denote the right hand side by $[\varphi_0,\cdots,\varphi_n]_{(\omega_0,\cdots,\omega_n)}$, then we know that $[0,\cdots,0]_{(\omega_0,\cdots,\omega_n)}=0$. 
		We use {the following identity:}
			\begin{equation}\label{symmetry}
				\sum_{\begin{subarray}{c}
						\varepsilon_k \in \{0,1\}\\
						k = 1, \cdots, n
				\end{subarray}} (-1)^{n -\sum_k \varepsilon_k} (a + \varepsilon_1 b_1 + \cdots + \varepsilon_n b_n)^n = n!\ b_1 \cdots b_n.
		\end{equation}
		Put for convenience $a_i = \psi_i$ and $c_j = \omega_j + dd^c \varphi_j$. Then using (\ref{symmetry}) we obtain
		\begin{align*}
				&\frac{d}{dt}\bigg|_{t=0}[\varphi_0+t\psi_0,\cdots,\varphi_n+t\psi_n]_{(\omega_0,\cdots,\omega_n)}\\ =&\ \frac{1}{n!}\sum_{\varnothing\neq I\subset \{0,\cdots,n\}}(-1)^{n+1-|I|}\int_X\left(\sum_{i\in I}a_i\right)\left(\sum_{j\in I}c_j\right)^n\\
				=&\ \frac{1}{n!}\sum_{i\in I}a_i\sum_{J \subset \{0, \cdots, n\}\backslash \{i\}} (-1)^{n-|J|}(c_i + \sum_{j\in J} c_j)^n\\
				=&\ \frac{1}{n!}\sum_{i\in I}a_i\sum_{\begin{subarray}{c}
						\varepsilon_k \in \{0,1\}\\
						k = 0, \cdots, n\\
						k\ne i
				\end{subarray}}(-1)^{n-\sum_k \varepsilon_k }(c_i + \sum_{k\ne j} \varepsilon_k c_k)^n\\
				=& \sum_{i=0}^n a_i \bigwedge_{k\ne i} c_k\\
				=& \sum_{i=0}^n \psi_i \bigwedge_{k\ne i} (\omega_k + dd^c \varphi_k).
		\end{align*}
		By (\ref{derivativeofanalyticDelignepairing}), the identity holds for $\varphi_i$ smooth.
		
		Now for $\varphi_i\in \operatorname{PSH}(\omega_i)\cap L^\infty(X)$, take a decreasing sequence of smooth $\omega_i$-psh functions $\varphi_i^\varepsilon$ that descends to $\varphi_i$ (c.f. \cite[Proposition 8.16]{GZ17}). By the continuity of the Bedford--Taylor product under decreasing limits (c.f. \cite[Theorem 3.18]{GZ17}), we have\begin{align*}
			\varphi_i^\varepsilon\bigwedge_{j=0}^{i-1}\omega_j\wedge \bigwedge_{l=i+1}^n(\omega_l+dd^c\varphi_l^\varepsilon)\to \varphi_i\bigwedge_{j=0}^{i-1}\omega_j\wedge \bigwedge_{l=i+1}^n(\omega_l+dd^c\varphi_l),
		\end{align*}and\begin{align*}
			\left(\sum_{i\in I}\varphi_i^\varepsilon\right)\left(\sum_{i\in I}\omega_i\right)^j\wedge \left(\sum_{i\in I}\omega_i+dd^c\varphi_i^\varepsilon\right)^{n-j}\to \left(\sum_{i\in I}\varphi_i\right)\left(\sum_{i\in I}\omega_i\right)^j\wedge \left(\sum_{i\in I}\omega_i+dd^c\varphi_i\right)^{n-j}
		\end{align*}in weak sense of current. Thus\begin{align*}
			&\braket{\varphi_0,\cdots,\varphi_n}_{(\omega_0,\cdots,\omega_n)}=\lim_{\varepsilon\to 0}\braket{\varphi_0^\varepsilon,\cdots,\varphi_n^\varepsilon}_{(\omega_0,\cdots,\omega_n)}\\=&\ \lim_{\varepsilon\to 0}\frac{1}{n!}\sum_{\varnothing\neq I\subset \{0,\cdots,n\}}(-1)^{n+1-|I|}\operatorname{E}_{\sum_{i\in I}\omega_i}\left(\sum_{i\in I}\varphi_i^\varepsilon\right)\\=& \frac{1}{n!}\sum_{\varnothing\neq I\subset \{0,\cdots,n\}}(-1)^{n+1-|I|}\operatorname{E}_{\sum_{i\in I}\omega_i}\left(\sum_{i\in I}\varphi_i\right)
		\end{align*}as expected.
	\end{proof}
	
	The second utility of the multivariate energy functional is that, along $\mathcal{H}(\omega)$-rays compatible with a test configuration in the sense of Definition \ref{compatible}, its slope at infinity can be expressed as an intersection number.
	\begin{lemma}[\hspace*{-0.6ex}\text{\cite[Theorem 4.9]{sjostrom2018k}}]\label{slopeofAnalyticDelignePairing}
		Let $X$ be a compact K\"{a}hler manifold of dimension $n$, and $\theta_i,i=0,...,n$ closed $(1,1)$-forms on $X$, and let $[\theta_i]$ be the cohomology class of $\theta_i$ for $i=0,...,n$. Consider smooth cohomological test conﬁgurations $(\mathcal{X}_i,\mathcal{A}_i)$ for $(X,[\theta_i])$ dominating $X\times\mathbb{P}^1$. For each collection of smooth rays $(\varphi_i^t)_{t\geq 0}$ of smooth functions that $\mathcal{C}^{\infty}$-compatible with $(\mathcal{X}_i,\mathcal{A}_i),i=0,...,n$, we have\begin{align*}
			\lim_{t\to\infty}\frac{\braket{\varphi_0^t,\dots,\varphi_n^t}_{(\theta_0,\ldots,\theta_n)}}{t}=(\mathcal{A}_0\cdot\dots\cdot \mathcal{A}_n),
		\end{align*}by pulling back the cohomology classes $\mathcal{A}_i$ to a smooth test configuration $\mathcal{X}$ that dominates all $\mathcal{X}_i$ and $X\times\mathbb{P}^1$ (Such a test configuration exists by equivariant resolution of singularities, c.f. \cite[Section 3.1]{sjostrom2018k}).
	\end{lemma}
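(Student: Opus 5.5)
The plan is to reduce to a single common model and then read off the slope from a Lelong-number computation on $\mathbb{P}^1$.

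\textbf{Step 1: common model and independence of the choice of rays.} By equivariant resolution, choose a smooth test configuration $\mathcal{X}$ dominating all $\mathcal{X}_i$ and $X\times\mathbb{P}^1$. Pulling back the $\mathcal{A}_i$ and the functions $\Psi_i$ of Definition \ref{compatible} preserves $\mathcal{C}^\infty$-compatibility. So we may assume $\mathcal{X}_i=\mathcal{X}$ for all $i$, with $\mu:\mathcal{X}\to X\times\mathbb{P}^1$ and $\mathcal{A}_i=\mu^*\operatorname{pr}_1^*[\theta_i]+[D_i]$.

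Next I show the limit depends only on the $(\mathcal{X},\mathcal{A}_i)$. If $(\varphi_i^t)$ and $(\varphi_i'^t)$ are both compatible, then $\Psi_i-\Psi_i'=(\Phi_i-\Phi_i')\circ\mu$ is smooth, hence bounded, on $\pi^{-1}(\bar\Delta_{1/2})$. Therefore $\sup_X|\varphi_i^t-\varphi_i'^t|$ is uniformly bounded in $t$. I then change one entry at a time. By the symmetry of Proposition \ref{duichen}(i) and an integration by parts, which is valid for smooth data, the difference
\[
\braket{\ldots,\varphi_i,\ldots}-\braket{\ldots,\varphi_i',\ldots}
\]
equals $\int_X(\varphi_i-\varphi_i')\bigwedge_{j\neq i}(\theta_j+dd^c\cdot)$, with the other entries suitably primed or unprimed. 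This is a smooth top form whose total mass is a cohomological constant, so the difference is $O(1)$ in $t$. The $\theta_j$ need not be positive, but the integrals are still controlled: each one is bounded by $\sup|\varphi_i-\varphi_i'|$ times the total variation of a signed measure. I would bound that total variation using the smooth forms $\Theta_j+dd^c\Psi_j$ on the compact total space. I expect this to be the main technical nuisance, since uniform-in-$t$ control really comes from smoothness of the $\Psi_j$ up to the central fiber, not from positivity.

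\textbf{Step 2: a convenient reference ray.} Fix smooth $\mathbb{S}^1$-invariant closed forms $\Theta_i\in\mathcal{A}_i$ on all of $\mathcal{X}$ which agree with $\operatorname{pr}_1^*\theta_i$ near $\pi^{-1}(\infty)$. This is possible because $\mathcal{A}_i$ restricts to $\operatorname{pr}_1^*[\theta_i]$ there. Off $\mathcal{X}_0$ we have $\Theta_i-\mu^*\operatorname{pr}_1^*\theta_i=dd^c(\text{function})$, and using $\psi_{D_i}$ this defines $\Phi_i$ on $X\times\mathbb{C}^*$. It is smooth, $\mathbb{S}^1$-invariant, and constant near $\tau=\infty$. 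By construction $\Psi_i=\Phi_i\circ\mu+\psi_{D_i}$ is smooth, so this ray is compatible, and by Step 1 it suffices to compute its slope.

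\textbf{Step 3: Lelong number computation.} Set $f(\tau):=\braket{\varphi_0^t,\ldots,\varphi_n^t}$ for $\tau\in\mathbb{C}^*$. By \eqref{ddcofanalyticDelignepairing}, $dd^c_\tau f=\pi_*\bigl(\bigwedge_i(\Theta_i+dd^c\Psi_i)\bigr)$ on $\mathbb{C}^*$. Near $0$, this pushforward is the pushforward of a smooth $(n+1,n+1)$-form under the proper map $\pi$, so it is a measure with $L^1$ density, putting no mass at $0$. Write $f=g+h$ near $0$, where $g$ is a continuous potential of that pushforward and $h$ is harmonic and $\mathbb{S}^1$-invariant on $\Delta^*$. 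Then $h=c\log|\tau|+b=-ct+b$, so the slope equals $-c$, and $dd^c f$ on $\mathbb{P}^1$ has an atom at $0$ of mass determined by $c$. Near $\infty$, $f$ is smooth because the $\Phi_i$ are constant there.

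Since the total mass of $dd^c f$ on $\mathbb{P}^1$ vanishes, the atom at $0$ balances the integral of the smooth part. That integral is $\int_{\mathcal{X}}\bigwedge_i(\Theta_i+dd^c\Psi_i)=(\mathcal{A}_0\cdots\mathcal{A}_n)$. This gives slope $=(\mathcal{A}_0\cdots\mathcal{A}_n)$. The sign has to be fixed carefully from $t=-\log|\tau|$ and the normalization of $dd^c$.

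I expect the main obstacle to be Step 1: the uniform $O(1)$ bound when the $\theta_j$ are not positive, together with the bookkeeping relating $\psi_{D_i}$ to the jump in the harmonic part.
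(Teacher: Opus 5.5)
The paper gives no argument for this lemma; it is imported from Sj\"ostr\"om Dyrefelt \cite[Theorem 4.9]{sjostrom2018k}. Your proposal is a correct, self-contained reconstruction. It uses only tools already in the paper: the symmetry in Proposition~\ref{duichen}(i), the fibre-integral formula \eqref{ddcofanalyticDelignepairing}, and the fact that $dd^c$ of an $L^1$ function on $\mathbb{P}^1$ has total mass zero. With the normalization $dd^c\log|\tau|=\delta_0$ (the one under which the statement holds with $t=-\log|\tau|$), the sign comes out right. Near $0$ you get $dd^c f=\nu+c\,\delta_0$; total mass zero forces $c=-\int_{\mathcal{X}}\bigwedge_i\Theta_i$; and the slope is $-c$. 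Compared with citing, your route shows exactly where smoothness of $\Psi_i$ across $\mathcal{X}_0$ is used, namely that $\pi_*\bigl(\bigwedge_i(\Theta_i+dd^c\Psi_i)\bigr)$ has no atom at $0$.

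A few things to tighten:
\begin{itemize}
\item You do not need $g$ to be continuous, which would require fibre-integral asymptotics. For the radial potential $g(r)=\int\log\max(r,|w|)\,d\nu(w)$, dominated convergence gives $g(r)/\log r\to\nu(\{0\})=0$, and that is all the slope extraction uses.
\item The uniform bound you worry about in Step 1 takes one line. On $\mathcal{X}_\tau$, each $\theta_j+dd^c\varphi_j^t$ (primed or not) is the restriction of a fixed smooth form $\Theta_j+dd^c\Psi_j$. Let $\beta$ be the relevant wedge product, fix a K\"ahler form $\omega_{\mathcal{X}}$, and choose $C$ with $C\omega_{\mathcal{X}}^n\pm\beta\geq 0$ on $\pi^{-1}(\bar\Delta_{1/2})$. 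Then the total variation is at most $C\int_{\mathcal{X}_\tau}\omega_{\mathcal{X}}^n$, which does not depend on $\tau$.
\item Better still, you can drop Step 1 and the special choice of $\Theta_i$ in Step 2. For any global $\psi_{D_i}$, the function $\Phi_i^{\mathrm{ref}}:=-\psi_{D_i}\circ\mu^{-1}$ is already smooth on $X\times(\mathbb{P}^1\setminus\{0\})$, including at $\infty$. Glue an arbitrary compatible ray to it using a radial cutoff $\chi$ equal to $1$ on $\{|\tau|\leq 1/2\}$ and to $0$ on $\{|\tau|\geq 3/4\}$. This leaves the ray unchanged for $t\geq\log 2$ and replaces $\Psi_i$ by $(\chi\circ|\pi|)\,\Psi_i$, which is smooth on all of $\mathcal{X}$. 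Your Step 3 then applies directly, and Stokes gives $\int_{\mathcal{X}}\bigwedge_i\bigl(\Theta_i+dd^c((\chi\circ|\pi|)\Psi_i)\bigr)=(\mathcal{A}_0\cdots\mathcal{A}_n)$.
\end{itemize}
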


	We mention at the end of this section that (\ref{ddcofanalyticDelignepairing}) hold in the weak sense (see also the proof of \cite[Proposition 2.4.10]{dyrefelt2017k}).  In other words, we have the following Proposition.
	\begin{proposition}\label{ddcofanalyticDelignepairingweak}
		Let $\omega_i,i=0,\cdots,n$ be K\"{a}hler forms, and $\varphi_i,\varphi_i'\in \mathcal{H}(\omega_i)$. Let $\varphi_i(t)$ be the weak geodesic in $\mathcal{H}^{1,\bar{1}}(\omega_i)$ connecting $\varphi_i$ and $\varphi_i'$, then\\
		$(\mathrm{i})$ $f(t):=\braket{\varphi_0(t),\cdots,\varphi_n(t)}_{(\omega_0,\cdots,\omega_n)}$ is Lipschitz.\\
		$(\mathrm{ii})$ \begin{align}
			\frac{d}{dt}\bigg|_{t=s}\braket{\varphi_0(t),\dots,\varphi_n(t)}_{(\omega_0,\dots,\omega_n)}=\sum_{i=0}^n\int_X\dot{\varphi_i}(s)\bigwedge_{j\neq i}(\omega_j+dd^c\varphi_j(s)).
		\end{align}
		$(\mathrm{iii})$  Let $\Phi_i:X\times \mathbb{D}^*\to \mathbb{R},\Phi_i(x,e^{-t+\sqrt{-1}s})=\varphi_i^t(x)$. Then\begin{align*}
			dd^c_\tau\braket{\varphi_0(t),\dots,\varphi_n(t)}_{(\omega_0,\dots,\omega_n)}=\int_X\bigwedge_{i=0}^n(\operatorname{pr}_1^*\omega_i+dd^c_{(x,\tau)}\Phi_i),
		\end{align*}where $\tau=e^{-t+\sqrt{-1}s}$, $\operatorname{pr}_1:X\times \mathbb{D}^*\to X$ is projection. In particular $f(t)$ is convex (in the point-wise sense).
	\end{proposition}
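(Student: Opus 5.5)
The plan is to reduce everything to the smooth case of Proposition \ref{duichen}, using two ingredients: the algebraic identity of Proposition \ref{BigSymmetryProp}, and an approximation of the weak geodesics by smooth $\varepsilon$-geodesics.

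\textbf{Step 1: approximation.} By Chen's construction \cite{Chen2000JDG,Błocki2012}, for each $i$ and $\varepsilon>0$ there is a smooth, $\mathbb{S}^1$-invariant, strictly $\operatorname{pr}_1^*\omega_i$-psh function $\Phi_i^\varepsilon$ on $X\times\{e^{-1}\le|\tau|\le 1\}$ with the following properties:
\begin{itemize}
\item its boundary values are $\varphi_i$ and $\varphi_i'$;
\item it solves $(\operatorname{pr}_1^*\omega_i+dd^c\Phi_i^\varepsilon)^{n+1}=\varepsilon\,\Omega_i$ for a fixed volume form $\Omega_i$;
\item it satisfies uniform $C^0$, $|\partial_t\Phi_i^\varepsilon|$ and $dd^c$ bounds;
\item it increases (or at least converges uniformly) to $\Phi_i$ as $\varepsilon\to0$.
\end{itemize}
Write $\varphi_i^\varepsilon(t)$ for the corresponding smooth subgeodesics and set $f^\varepsilon(t):=\braket{\varphi_0^\varepsilon(t),\dots,\varphi_n^\varepsilon(t)}$. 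Proposition \ref{duichen}(ii) gives both formulas for $f^\varepsilon$ in the smooth setting.

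\textbf{Step 2: part (i) and part (iii).} For (i), I would use \eqref{derivativeofanalyticDelignepairing} for $f^\varepsilon$. Each $|\dot\varphi_i^\varepsilon|$ is bounded uniformly, since by $t$-convexity of a subgeodesic $|\dot\varphi_i^\varepsilon|\le \sup_X|\varphi_i-\varphi_i'|+O(\varepsilon)$. Each mixed measure has fixed total mass $\int_X\bigwedge_{j\ne i}\omega_j$. Hence $|(f^\varepsilon)'|\le C$ uniformly in $\varepsilon$.

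It remains to show $f^\varepsilon\to f$ uniformly. By Definition \ref{analyticDP}, $f^\varepsilon$ is a sum of integrals of a bounded function against Bedford--Taylor products. Uniform convergence of bounded potentials gives weak convergence of these products (see \cite[Theorem 3.18]{GZ17}, using the uniform-convergence version). Together with the uniform bounds, this yields $f^\varepsilon(t)\to f(t)$ uniformly in $t$, so $f$ is Lipschitz.

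For (iii), apply \eqref{ddcofanalyticDelignepairing} to $f^\varepsilon$ and pass to the limit in the sense of distributions on the annulus. The left side converges because $f^\varepsilon\to f$ uniformly. On the right side, $\bigwedge_i(\operatorname{pr}_1^*\omega_i+dd^c\Phi_i^\varepsilon)$ converges weakly to the corresponding Bedford--Taylor product on $X\times$ annulus, again by continuity under uniform convergence of bounded psh potentials. Its fibre integral then converges as a measure in $\tau$. The limit is a positive measure and $f$ depends only on $t=-\ln|\tau|$, so $f$ is convex in $t$.

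\textbf{Step 3: part (ii).} Here I would use Proposition \ref{BigSymmetryProp} to write
\[
f(t)=\frac{1}{n!}\sum_I(-1)^{n+1-|I|}\operatorname{E}_{\omega_I}(\varphi_I(t)),\qquad \omega_I:=\sum_{i\in I}\omega_i,\quad \varphi_I:=\sum_{i\in I}\varphi_i .
\]
For each $I$, the curve $\varphi_I(t)$ is a $\mathcal{C}^{1,\bar1}$ subgeodesic for $\omega_I$. By convexity in $t$, each $\varphi_i(t)$ has one-sided $t$-derivatives everywhere, converging monotonically and boundedly; in fact, by the $C^{1,1}$ regularity of weak geodesics (Chu--Tosatti--Weinkove), $\dot\varphi_i$ exists and is continuous.

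The standard cocycle estimates then apply:
\[
\int(u-v)\,\omega_{I,u}^n\le \operatorname{E}_{\omega_I}(u)-\operatorname{E}_{\omega_I}(v)\le\int(u-v)\,\omega_{I,v}^n .
\]
Taking $u=\varphi_I(s+h)$, $v=\varphi_I(s)$, dividing by $h$, and using dominated convergence with weak continuity of $\omega_{I,\varphi_I(s+h)}^n$, we get $\frac{d}{dt}\operatorname{E}_{\omega_I}(\varphi_I)=\int\dot\varphi_I\,(\omega_I+dd^c\varphi_I)^n$. Recombining with the multinomial identity \eqref{symmetry} exactly as in the proof of Proposition \ref{BigSymmetryProp} (multilinearity of Bedford--Taylor products for bounded potentials) gives (ii).

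\textbf{Main obstacle.} I expect the hard part to be the limiting arguments: justifying weak convergence of the mixed Monge--Amp\`ere measures on the product, which involve different reference forms, and the interchange of limits in the difference quotients. I would handle both through uniform convergence of bounded psh potentials, together with the uniform Lipschitz bound in $t$ on the potentials.
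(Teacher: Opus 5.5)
Your proof is correct. For (ii) it follows the paper's argument: reduce via Proposition \ref{BigSymmetryProp} to $\operatorname{E}_{\omega_I}$. The only difference there is that the paper quotes the derivative formula for $\operatorname{E}$ along bounded geodesics from \cite{BB17}, while you re-derive it from the cocycle inequalities and $C^{1,1}$ regularity.

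For (i) and (iii) your route differs. The paper proves (i) with no approximation. By symmetry,
$f(s)-f(t)=\sum_i\int_X(\varphi_i(s)-\varphi_i(t))\bigwedge_{j<i}(\omega_j+dd^c\varphi_j(s))\wedge\bigwedge_{l>i}(\omega_l+dd^c\varphi_l(t))$,
and the uniform Lipschitz bound of weak geodesics in $t$, together with the cohomologically fixed mass of these measures, finishes the proof. For (iii) the paper offers two routes:
\begin{itemize}
\item quote the $dd^c_\tau$ formula for $\operatorname{E}$ and apply Proposition \ref{BigSymmetryProp} again; or
\item regularize each $\Phi_i$ locally on $X\times A'$ from above (B{\l}ocki--Ko{\l}odziej, $\mathbb{S}^1$-averaging, and a correction by a strictly psh function of $\tau$), then use continuity of Bedford--Taylor products along decreasing sequences.
\end{itemize}

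Your single global family of Chen $\varepsilon$-geodesics has fixed boundary values, uniform $\mathcal{C}^{1,\bar 1}$ bounds, and monotone convergence, hence uniform convergence by Dini. It therefore handles (i) and (iii) at once and makes every limit a uniform one. The price is that it depends on the geodesic equation and Chen's existence theory. By contrast, the paper's regularization argument for (iii) uses only that $\Phi_i$ is a bounded $\mathbb{S}^1$-invariant subgeodesic, and its proof of (i) uses only the Lipschitz property.

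Two small points to tighten:
\begin{itemize}
\item Convexity in $t$ alone gives only $\dot\varphi_i^\varepsilon(0)\le\dot\varphi_i^\varepsilon(t)\le\dot\varphi_i^\varepsilon(1)$. The endpoint bounds must come from Chen's $C^1$ estimate, or from comparing $\Phi_i^{\varepsilon_0}\le\Phi_i^\varepsilon$ for $\varepsilon\le\varepsilon_0$, which agree on the boundary.
\item In Step 3 the measure $\omega_{I,\varphi_I(s+h)}^n$ moves with $h$, so dominated convergence is not the right tool. You need uniform convergence of the difference quotients, which follows from continuity of $\dot\varphi_I$ on $X\times[0,1]$ and the mean value theorem.
\end{itemize}
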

	\begin{proof}
		(i) By the symmetry of analytic Deligne pairing, we have\begin{align*}
			&\braket{\varphi_0(s),\cdots,\varphi_n(s)}_{(\omega_0,\cdots,\omega_n)}-\braket{\varphi_0(t),\cdots,\varphi_n(t)}_{(\omega_0,\cdots,\omega_n)}\\=&\ \sum_{i=0}^n\left(\braket{\varphi_0(s),\cdots,\varphi_i(s),\varphi_{i+1}(t),\cdots\varphi_n(t)}_{(\omega_0,\cdots,\omega_n)}\right. \\&-\left.\braket{\varphi_0(s),\cdots,\varphi_{i-1}(s),\varphi_{i}(t),\cdots\varphi_n(t)}_{(\omega_0,\cdots,\omega_n)}\right)\\=&\ \sum_{i=0}^n\int_X(\varphi_i(s)-\varphi_i(t))\bigwedge_{j=0}^{i-1}(\omega_j+dd^c\varphi_j(s))\wedge \bigwedge_{l=i+1}^n(\omega_l+dd^c\varphi_l(t)).
		\end{align*}Note that weak geodesic is uniformly Lipschitz in $t$ (c.f. \cite[Section 2.2]{Berndtsson2015}) and the integration $\int_X\bigwedge_{j=0}^{i-1}(\omega_j+dd^c\varphi_j(s))\wedge \bigwedge_{l=i+1}^n(\omega_l+dd^c\varphi_l(t))$ is computed in cohomology, so $f(t)$ is also Lipschitz.
		
		(ii, iii) follow from (\ref{BigSymmetry}) and the well-known formula for the derivative and $dd^c_\tau$ of $\operatorname{E}$ in $L^\infty$ case (see for example \cite[(3.4) (3.5)]{BB17}). For (iii), we can also use the approximation method directly to show that\begin{align*}
			\int_{\mathbb{D}^*}\braket{\varphi_0(t),\dots,\varphi_n(t)}_{(\omega_0,\dots,\omega_n)}dd^cf=\int_{X\times \mathbb{D}^*}f\bigwedge_{i=0}^n(\operatorname{pr}_1^*\omega_i+dd^c_{(x,\tau)}\Phi_i)
		\end{align*}holds for any $f\in\mathcal{C}^\infty(\mathbb{D}^*)$ with compact support. Suppose that $\operatorname{Supp}(f)$ is contained in an open precompact annulus $A\subset\mathbb{D}^*$. Take also slightly larger nested annuli $A\subsetneq A'\subsetneq A''\subset \mathbb{D}^*$.
		
		We claim that there exists smooth $\mathbb{S}^1$-invariant $\operatorname{pr}_1^*\omega_i$-psh functions $\Phi_i^\varepsilon$ for $i=0,\cdots,n$ such that $\Phi_i^\varepsilon\downarrow \Phi_i$ on $X \times A'$.  To do this, take $\chi$ smooth strictly psh on ${A''}$. Then $\Omega_i:=\operatorname{pr}_1^*\omega_i+dd^c\chi$ is K\"{a}hler on $X\times {A''}$. By \cite[Theorem 2]{BlockiKolodziej2007}, there exists $\Psi_i^\varepsilon\in \operatorname{PSH}(X\times A',\operatorname{pr}_1^*\omega_i+\varepsilon\Omega_i)\cap \mathcal{C}^\infty(X\times A')$ such that $\Psi_i^\varepsilon\downarrow \Phi_i$. After taking $\mathbb{S}^1$-average to $\Psi_i^\varepsilon$ and adding a constant to $\chi$, we may assume that $\Psi_i^\varepsilon$ is $\mathbb{S}^1$-invariant and less than $\chi$. Then $\Phi_i^\varepsilon=\frac{\Psi_i^\varepsilon+\varepsilon\chi}{1+\varepsilon}$  is $\operatorname{pr}_1^*\omega_i$-psh and decreasing 
			as $\varepsilon \downarrow 0$ by the choice $\chi>\Phi_i^\varepsilon$.
		
		Now by the continuity of the Bedford--Taylor product under decreasing limits (c.f. \cite[Theorem 3.18]{GZ17}), we have\begin{align*}
			\bigwedge_{i=0}^n(\operatorname{pr}_1^*\omega_i+dd^c_{(x,\tau)}\Phi_i^\varepsilon)\to \bigwedge_{i=0}^n(\operatorname{pr}_1^*\omega_i+dd^c_{(x,\tau)}\Phi_i)
		\end{align*}on $X\times A'$, and\begin{align*}
			\varphi_i^\varepsilon(t)\bigwedge_{j=0}^{i-1}\omega_j\wedge \bigwedge_{l=i+1}^n(\omega_l+dd^c\varphi_l^\varepsilon(t))\to \varphi_i(t)\bigwedge_{j=0}^{i-1}\omega_j\wedge \bigwedge_{l=i+1}^n(\omega_l+dd^c\varphi_l(t))
		\end{align*}on $X$, where $\varphi_i^\varepsilon=\Phi_i^\varepsilon|_{X\times \{e^{-t}\}}$. Both limits hold in the weak sense of currents. Taking integration, we have\begin{align*}
			&\int_{\mathbb{D}^*}\braket{\varphi_0(t),\dots,\varphi_n(t)}_{(\omega_0,\dots,\omega_n)}dd^cf=\lim_{\varepsilon\to 0}\int_{\mathbb{D}^*}\braket{\varphi_0^\varepsilon(t),\dots,\varphi_n^\varepsilon(t)}_{(\omega_0,\dots,\omega_n)}dd^cf\\
			=&\ \lim_{\varepsilon\to 0}\int_{X\times \mathbb{D}^*}f\bigwedge_{i=0}^n(\operatorname{pr}_1^*\omega_i+dd^c_{(x,\tau)}\Phi_i^\varepsilon)=\int_{X\times \mathbb{D}^*}f\bigwedge_{i=0}^n(\operatorname{pr}_1^*\omega_i+dd^c_{(x,\tau)}\Phi_i),
		\end{align*}so the Proposition follows.
	\end{proof}
	
	\subsection{A Chen-Tian formula for the coupled K-energy}
	\label{subsec:5}
	Next we want to relate the coupled K-energy to the multivariate energy functional. By Proposition \ref{welldefinedMabuchi}, we can compute $\mathcal{M}_{\operatorname{ccscK}}((\omega_{i}+dd^c\varphi_i),(\omega_{i}))$ by choosing a special line $\gamma(t)$\begin{align*}
		&(\omega_i(t)):=(\omega_0+tdd^c\varphi_0,\omega_1,\cdots,\omega_m), t\in [0,1],\ \text{connected behind}\\
		&(\omega_i(t')):=(\omega_0+dd^c\varphi_0,\omega_1+t'dd^c\varphi_1,\cdots,\omega_m+t'dd^c\varphi_m), t'\in [0,1].
	\end{align*}We only need to compute the value of\begin{align*}
		&-\sum_{i=1}^m\int_0^s\int_X\dot{\varphi_i}(t)\left(\frac{1}{V_i} \frac{\omega_i(t)^n}{n!}- \frac{1}{V_0}\frac{\omega_0(t)^n}{n!} \right)\wedge dt \\&+ \frac{1}{V_0}\int_0^s\int_M \dot{\varphi_0}(t) \left(\overline{S}\frac{\omega_0(t)^n}{n!} + \sum_{i=0}^m\omega_i(t) \wedge \frac{\omega_0(t)^{n-1}}{(n-1)!} \right)\wedge dt
	\end{align*}because of (\ref{splitMabuchi}). That is\begin{align*}
		&-\sum_{i=1}^m\int_0^1\int_X\varphi_i\left(\frac{1}{V_i} \frac{(\omega_i+tdd^c\varphi_i)^n}{n!}-\frac{1}{V_0} \frac{(\omega_0+dd^c\varphi_0)^n}{n!} \right)\wedge dt \\& + \frac{1}{V_0}\int_0^1\int_M \varphi_0 \left(\left(\overline{S}+n\right)\frac{(\omega_0+tdd^c\varphi_0)^n}{n!} + \sum_{i=1}^m\omega_i \wedge \frac{(\omega_0+tdd^c\varphi_0)^{n-1}}{(n-1)!} \right)\wedge dt.
	\end{align*}
	The component $\int_0^1\int_X\varphi_i\frac{(\omega_i+tdd^c\varphi_i)^n}{n!}\wedge dt=\frac{1}{(n+1)!}\braket{\varphi_i,\cdots,\varphi_i}_{(\omega_i,\cdots,\omega_i)}$ by (\ref{MongeAmpere}). 
		For the rest, note that\begin{align}\label{CT1}
			&\braket{\varphi_i,\varphi_0,\cdots,\varphi_0}_{(\omega_i,\omega_0,\cdots,\omega_0)}=\int_X\varphi_i(\omega_0+dd^c\varphi_0)^n+\sum_{q=0}^{n-1}\int_X\varphi_i\omega_i\wedge\omega_0^q\wedge (\omega_0+dd^c\varphi_0)^{n-1-q}
		\end{align}
		and
		\begin{align}
			&\sum_{q=0}^{n-1}\int_X\varphi_i\omega_i\wedge\omega_0^q\wedge (\omega_0+dd^c\varphi_0)^{n-1-q}
			\nonumber
			\\=&\ \sum_{q=0}^{n-1}\sum_{k=0}^q\int_XC_{n-1-q}^k\varphi_i\omega_i\wedge\omega_0^{q+k}\wedge (dd^c\varphi_0)^{n-1-q-k}
			\nonumber
			\\=&\ \sum_{q=0}^{n-1}\sum_{k=0}^q\int_X\left(C_{n-1-q+1}^{k}-C_{n-1-q}^{k-1}\right)\varphi_i\omega_i\wedge\omega_0^{q+k}\wedge (dd^c\varphi_0)^{n-1-q-k}
			\nonumber
			\\=&\ \sum_{j=0}^{n-1}\sum_{k=0}^j\int_X\left(C_{n-j+k}^{k}-C_{n-j+k-1}^{k-1}\right)\varphi_i\omega_i\wedge\omega_0^{j}\wedge (dd^c\varphi_0)^{n-1-j}\nonumber
			\\=&\ \sum_{j=0}^{n-1}\int_X\left(C_{n-j+j}^{j}-C_{n-j+0-1}^{0-1}\right)\varphi_i\omega_i\wedge\omega_0^{j}\wedge (dd^c\varphi_0)^{n-1-j}\nonumber
			\\=& \ \sum_{j=0}^{n-1}\int_XC_{n}^{j}\varphi_i\omega_i\wedge\omega_0^{j}\wedge (dd^c\varphi_0)^{n-1-j}.\label{CT2}
		\end{align}
		At the same time, we have\begin{align}
			&\int_0^1\int_M\varphi_0 \omega_i \wedge (\omega_0+tdd^c\varphi_0)^{n-1}\wedge dt=\sum_{j=0}^{n-1}\int_0^1\int_M C_{n-1}^j\varphi_0\omega_i \wedge \omega_0^j\wedge t^{n-1-j}(dd^c\varphi_0)^{n-j-1}\wedge dt\nonumber
			\\ &=\sum_{j=0}^{n-1}\int_M \frac{1}{n-j}C_{n-1}^j\varphi_0\omega_i \wedge \omega_0^j\wedge (dd^c\varphi_0)^{n-j-1} dt=\frac{1}{n}\sum_{j=0}^{n-1}\int_M C_{n}^j\varphi_0\omega_i \wedge \omega_0^j\wedge (dd^c\varphi_0)^{n-j-1}.\label{CT3}
		\end{align}Thus\begin{align*}
			\int_0^1\int_M\varphi_0 \omega_i \wedge (\omega_0+tdd^c\varphi_0)^{n-1}\wedge dt+\frac{1}{n}\int_X\varphi_i(\omega_0+dd^c\varphi_0)^n=\frac{1}{n}\braket{\varphi_i,\varphi_0,\cdots,\varphi_0}_{(\omega_i,\omega_0,\cdots,\omega_0)}
		\end{align*}and then
		\begin{theorem}
			\begin{equation}\label{our}
				\begin{aligned}
					&\mathcal{M}_{\operatorname{ccscK}}((\omega_{i}+dd^c\varphi_i),(\omega_{i}))= \mathcal{M}_{\operatorname{cscK}}(\omega_{0}+dd^c\varphi_0,\omega_{0})+\frac{\overline{S}+n}{V_0(n+1)!}\braket{\varphi_0,\cdots,\varphi_0}_{(\omega_0,\cdots,\omega_0)}\\ &\quad -\sum_{i=1}^m\frac{1}{V_i(n+1)!}\braket{\varphi_{i},\cdots,\varphi_{i}}_{(\omega_i,\cdots,\omega_i)}+\frac{1}{V_0n!}\sum_{i=1}^m\braket{\varphi_i,\varphi_0,\cdots,\varphi_0}_{(\omega_i,\omega_0,\cdots,\omega_0)}.
				\end{aligned}
			\end{equation}
		\end{theorem}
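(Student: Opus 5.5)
The plan is to compute $\mathcal{M}_{\operatorname{ccscK}}$ along a convenient path and identify each piece with a multivariate energy. By Proposition \ref{welldefinedMabuchi}, the value does not depend on the path. We take the broken path $\gamma$ described above. First we move $\omega_0$ linearly to $\omega_0+dd^c\varphi_0$ while keeping $\omega_1,\dots,\omega_m$ fixed. Then we move each $\omega_i$, $i\ge 1$, linearly to $\omega_i+dd^c\varphi_i$ while keeping $\omega_0+dd^c\varphi_0$ fixed. Using (\ref{splitMabuchi}), we subtract $\mathcal{M}_{\operatorname{cscK}}(\omega_0+dd^c\varphi_0,\omega_0)$, which is itself path independent. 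What remains is to evaluate the explicit integral written just before the statement.

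On the first leg only $\dot\varphi_0=\varphi_0$ is nonzero, and $\omega_i(t)=\omega_i$ for $i\ge1$. The integrand is then $\varphi_0\bigl((\overline S+n)\frac{(\omega_0+tdd^c\varphi_0)^n}{n!}+\sum_{i\ge1}\omega_i\wedge\frac{(\omega_0+tdd^c\varphi_0)^{n-1}}{(n-1)!}\bigr)/V_0$. Here the $i=0$ term of $\sum_{i=0}^m\omega_i(t)\wedge\omega_0(t)^{n-1}/(n-1)!$ has been absorbed as $n\,\omega_0(t)^n/n!$. The first part integrates in $t$ to $\frac{\overline S+n}{V_0(n+1)!}\braket{\varphi_0,\dots,\varphi_0}_{(\omega_0,\dots,\omega_0)}$ by (\ref{MongeAmpere}). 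The second part is handled by (\ref{CT3}): it equals $\frac{1}{V_0 n!}\sum_{j}C_n^j\int\varphi_0\,\omega_i\wedge\omega_0^j\wedge(dd^c\varphi_0)^{n-1-j}$.

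On the second leg only $\dot\varphi_i=\varphi_i$, $i\ge1$, is nonzero, and $\omega_0(t)=\omega_0+dd^c\varphi_0$ is frozen. The contribution is $-\sum_{i\ge1}\int_0^1\int_X\varphi_i\bigl(\frac{(\omega_i+tdd^c\varphi_i)^n}{V_in!}-\frac{(\omega_0+dd^c\varphi_0)^n}{V_0n!}\bigr)dt$. By (\ref{MongeAmpere}) again, this becomes $-\frac{1}{V_i(n+1)!}\braket{\varphi_i,\dots,\varphi_i}_{(\omega_i,\dots,\omega_i)}+\frac{1}{V_0n!}\int_X\varphi_i(\omega_0+dd^c\varphi_0)^n$.

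The remaining step is to combine the cross terms. Expand $\braket{\varphi_i,\varphi_0,\dots,\varphi_0}_{(\omega_i,\omega_0,\dots,\omega_0)}$ by the definition (\ref{delignepairing}), which gives (\ref{CT1}). Rewrite the $\omega_i$-terms by the binomial reindexing (\ref{CT2}). By the symmetry of $\omega_i\wedge\omega_0^j\wedge(dd^c\varphi_0)^{n-1-j}$ under integration by parts, $\int\varphi_0\,\omega_i\wedge\omega_0^j\wedge(dd^c\varphi_0)^{n-1-j}$ is also expressible through $\varphi_i$-weighted terms. More precisely, one checks that the terms $\sum_j C_n^j\int\varphi_0\,\omega_i\wedge\omega_0^j\wedge(dd^c\varphi_0)^{n-1-j}$ from leg one match $n$ times the sum in (\ref{CT2}) after rewriting. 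This combinatorial matching is the main obstacle, and it rests on the binomial identity $C_{n-1-q}^k=C_{n-q}^k-C_{n-1-q}^{k-1}$ used in (\ref{CT2}). Granting it, the identity displayed just before the theorem holds. Leg one's cross term plus $\frac1{V_0 n!}\int\varphi_i(\omega_0+dd^c\varphi_0)^n$ then equals $\frac1{V_0n!}\braket{\varphi_i,\varphi_0,\dots,\varphi_0}$. Summing over $i$ and adding back $\mathcal{M}_{\operatorname{cscK}}$ yields (\ref{our}).
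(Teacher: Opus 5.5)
Your route is the paper's: the same broken path, the same use of (\ref{splitMabuchi}) and (\ref{MongeAmpere}), and your leg-by-leg computations are correct. The gap is the one step you flag as the main obstacle and then grant. Your mechanism for it is false. You claim that integration by parts turns $\int_X\varphi_0\,\omega_i\wedge\omega_0^j\wedge(dd^c\varphi_0)^{n-1-j}$ into $\varphi_i$-weighted terms. But no $dd^c\varphi_i$ appears anywhere, so there is nothing to move $dd^c$ onto, and the two kinds of terms are unrelated. Take $\varphi_0=0$: the leg-one cross term vanishes, while $\sum_j C_n^j\int_X\varphi_i\,\omega_i\wedge\omega_0^j\wedge(dd^c\varphi_0)^{n-1-j}=n\int_X\varphi_i\,\omega_i\wedge\omega_0^{n-1}$, which is nonzero in general. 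If the $\omega_i$-terms of the pairing really were $\varphi_i$-weighted, the theorem itself would fail.

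No conversion is needed. Expand (\ref{delignepairing}) with entries $(\varphi_i,\varphi_0,\dots,\varphi_0)$ and forms $(\omega_i,\omega_0,\dots,\omega_0)$. Every term after the first carries the weight $\varphi_0$:
\[
\langle\varphi_i,\varphi_0,\dots,\varphi_0\rangle_{(\omega_i,\omega_0,\dots,\omega_0)}=\int_X\varphi_i(\omega_0+dd^c\varphi_0)^n+\sum_{q=0}^{n-1}\int_X\varphi_0\,\omega_i\wedge\omega_0^q\wedge(\omega_0+dd^c\varphi_0)^{n-1-q}.
\]
So the $\varphi_i$ printed in the $\omega_i$-terms of (\ref{CT1}) and (\ref{CT2}) should read $\varphi_0$. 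With this correction the matching is a direct coefficient comparison.
\begin{itemize}
\item In the sum above, the coefficient of $\int_X\varphi_0\,\omega_i\wedge\omega_0^j\wedge(dd^c\varphi_0)^{n-1-j}$ is $\sum_{q=0}^{j}C_{n-1-q}^{j-q}=C_n^j$. This is the hockey-stick identity, i.e.\ the Pascal telescoping in (\ref{CT2}).
\item In $n\int_0^1\int_X\varphi_0\,\omega_i\wedge(\omega_0+t\,dd^c\varphi_0)^{n-1}\,dt$ the same coefficient is $n\,C_{n-1}^j/(n-j)=C_n^j$, which is (\ref{CT3}).
\end{itemize}
This proves the identity displayed before the theorem, and your conclusion follows.

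You can also skip the binomial bookkeeping entirely. Apply (\ref{derivativeofanalyticDelignepairing}) along $t\mapsto(0,t\varphi_0,\dots,t\varphi_0)$ to get $\langle 0,\varphi_0,\dots,\varphi_0\rangle_{(\omega_i,\omega_0,\dots,\omega_0)}=n\int_0^1\int_X\varphi_0\,\omega_i\wedge(\omega_0+t\,dd^c\varphi_0)^{n-1}\,dt$. The definition shows the first slot enters only through $\int_X\varphi_i(\omega_0+dd^c\varphi_0)^n$. Together these give the cross-term identity at once.

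Replace the integration-by-parts sentence and the phrase ``granting it'' with one of these checks.
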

		\begin{corollary}\mbox{}\\
			$(\mathrm{i})$ The coupled K-energy functional extends to the space $\prod_{i=0}^m\mathcal{H}^{1,\bar{1}}(\omega_i)$.\\
			$(\mathrm{ii})$ Let $\varphi_i,\varphi_i'\in \mathcal{H}(\omega_i)$ and $\varphi_i(t)$ be the weak geodesic in $\mathcal{H}^{1,\bar{1}}(\omega_i)$ connecting them, then $\mathcal{M}_{\operatorname{ccscK}}((\omega_i+dd^c\varphi_i(t)),(\omega_i))$ is continuous and convex in $t$. In particular\begin{equation}\label{minimize1}
				\begin{aligned}
					&\lim_{t\to 0}\frac{\mathcal{M}_{\operatorname{ccscK}}((\omega_i+dd^c\varphi_i(t)),(\omega_i))-\mathcal{M}_{\operatorname{ccscK}}((\omega_i+dd^c\varphi_i(0)),(\omega_i))}{t}\\ &\geq \frac{1}{n!}\left[\frac{1}{V_0}\int_X\dot{\varphi_0}(0)\left((\hat{S}-S_{\omega_0(0)})\omega_0(0)^n+\sum_{i=0}^mn\omega_i(0)\wedge{\omega_0}(0)^{n-1}\right)\right. \\&\ \left. +\sum_{i=1}^m\dot{\varphi_i}(0)\left(\frac{\omega_0(0)^n}{V_0}-\frac{\omega_i(0)^n}{V_i}\right)\right]
				\end{aligned}
			\end{equation}where $\omega_i(t)=\omega_i+dd^c\varphi_i(t).$
		\end{corollary}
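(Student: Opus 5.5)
The plan is to read everything off the Chen--Tian formula (\ref{our}) and to treat its four terms separately.

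\textbf{Part (i).} The three multivariate energy terms $\braket{\varphi_0,\cdots,\varphi_0}_{(\omega_0,\cdots,\omega_0)}$, $\braket{\varphi_i,\cdots,\varphi_i}_{(\omega_i,\cdots,\omega_i)}$ and $\braket{\varphi_i,\varphi_0,\cdots,\varphi_0}_{(\omega_i,\omega_0,\cdots,\omega_0)}$ are already defined for bounded potentials by Definition \ref{analyticDP}. For $\mathcal{M}_{\operatorname{cscK}}(\omega_0+dd^c\varphi_0,\omega_0)$ I would use its classical Chen--Tian expression:
\begin{itemize}[label={}]
\item the entropy $\operatorname{Ent}\bigl((\omega_0+dd^c\varphi_0)^n/V_0,\ \omega_0^n/V_0\bigr)$;
\item plus a multiple of $\operatorname{E}_{\omega_0}(\varphi_0)$;
\item minus the contracted energy $\operatorname{E}^{\operatorname{Ric}(\omega_0)}_{\omega_0}(\varphi_0)$.
\end{itemize}
For $\varphi_0\in\mathcal{H}^{1,\bar 1}(\omega_0)$ the measure $(\omega_0+dd^c\varphi_0)^n$ has an $L^\infty$ density, so the entropy is finite. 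The energy terms are finite by Bedford--Taylor. Together these give the extension of $\mathcal{M}_{\operatorname{ccscK}}$ to $\prod_{i=0}^m\mathcal{H}^{1,\bar1}(\omega_i)$.

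\textbf{Part (ii), convexity and continuity.} Let $\varphi_i(t)$ be the weak geodesics, which lie in $\mathcal{H}^{1,\bar 1}$. I take the terms one at a time.
\begin{itemize}[label={}]
\item For $\mathcal{M}_{\operatorname{cscK}}$, I invoke the known result (Berman--Berndtsson, also Chen--Li--P\u{a}un) that the K-energy is continuous and convex along $\mathcal{C}^{1,\bar1}$ weak geodesics.
\item For $\braket{\varphi_0(t),\cdots,\varphi_0(t)}$, Proposition \ref{ddcofanalyticDelignepairingweak} (iii) gives $dd^c_\tau$ of this term equal to $\int_X(\operatorname{pr}_1^*\omega_0+dd^c\Phi_0)^{n+1}=0$. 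Hence the term is affine in $t$, so its sign and coefficient are harmless. The same holds for each $\braket{\varphi_i(t),\cdots,\varphi_i(t)}$, which appears with the negative coefficient $-1/(V_i(n+1)!)$.
\item For the mixed term, the same proposition gives $dd^c_\tau$ equal to $\int_X(\operatorname{pr}_1^*\omega_i+dd^c\Phi_i)\wedge(\operatorname{pr}_1^*\omega_0+dd^c\Phi_0)^n\ge 0$, since all the $\Phi_j$ are subgeodesics. So this term is convex, and it enters with the positive coefficient $1/(V_0n!)$.
\end{itemize}
Continuity of the energy terms follows from the Lipschitz property in Proposition \ref{ddcofanalyticDelignepairingweak} (i). Summing the four contributions gives continuity and convexity of $\mathcal{M}_{\operatorname{ccscK}}((\omega_i+dd^c\varphi_i(t)),(\omega_i))$.

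\textbf{Derivative inequality (\ref{minimize1}).} By convexity the right derivative at $t=0$ exists. The endpoints are smooth, and the weak geodesic is $t$-convex and uniformly Lipschitz, so $\dot\varphi_i(0)$ exists as a bounded function. I then differentiate term by term.
\begin{itemize}[label={}]
\item For the three energy terms, Proposition \ref{ddcofanalyticDelignepairingweak} (ii) gives the derivative with equality: $\sum_k\int_X\dot\varphi_k(0)\bigwedge_{j\ne k}(\cdot)$.
\item For the cscK K-energy, I use the known endpoint inequality
\[
\lim_{t\to0^+}\frac{\mathcal{M}_{\operatorname{cscK}}(t)-\mathcal{M}_{\operatorname{cscK}}(0)}{t}\ \ge\ \frac{1}{V_0n!}\int_X\dot\varphi_0(0)\bigl(\hat S_{\operatorname{cscK}}-S_{\omega_0(0)}\bigr)\omega_0(0)^n .
\]
This inequality comes from the entropy part: the entropy is convex, and its first variation at the smooth endpoint dominates the Ricci term, as in Berman--Berndtsson.
\end{itemize}
Finally I collect the terms. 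I reverse the computations (\ref{CT1})--(\ref{CT3}) at the point $(\omega_i(0))$, so that the energy-term derivatives recombine into
\[
\frac{1}{V_0}\int_X\dot\varphi_0(0)\Bigl(\overline S\,\omega_0(0)^n+n\sum_{i=0}^m\omega_i(0)\wedge\omega_0(0)^{n-1}\Bigr)+\sum_{i=1}^m\int_X\dot\varphi_i(0)\Bigl(\frac{\omega_0(0)^n}{V_0}-\frac{\omega_i(0)^n}{V_i}\Bigr),
\]
up to the factor $1/n!$. Here the mixed term contributes $\dot\varphi_i$ paired with $\omega_0(0)^n$, and $\dot\varphi_0$ paired with $n\,\omega_i(0)\wedge\omega_0(0)^{n-1}$. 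Adding the cscK term turns $\overline S+\hat S_{\operatorname{cscK}}$ into $\hat S$, which gives (\ref{minimize1}).

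The main obstacle is this one-sided derivative of $\mathcal{M}_{\operatorname{cscK}}$ at the endpoint along a merely $\mathcal{C}^{1,\bar1}$ geodesic. I would cite it rather than reprove it, checking that the normalisations match those of Definition \ref{coupledMabuchi}.
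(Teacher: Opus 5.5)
Your proposal is correct and is essentially the paper's own argument. Both read everything off the Chen--Tian formula (\ref{our}): Bedford--Taylor and the classical Chen--Tian expression of $\mathcal{M}_{\operatorname{cscK}}$ give (i); Proposition \ref{ddcofanalyticDelignepairingweak} (ii),(iii) and Berman--Berndtsson's convexity theorem give the convexity in (ii); and Proposition \ref{ddcofanalyticDelignepairingweak} (ii) with Berman--Berndtsson's endpoint inequality (their Lemma 3.5) gives (\ref{minimize1}). Your explicit bookkeeping of the constants is right and fills in steps the paper leaves implicit: the mixed term produces $\dot\varphi_i\,\omega_0(0)^n$ and $n\,\dot\varphi_0\,\omega_i(0)\wedge\omega_0(0)^{n-1}$, and $\overline S+\hat S_{\operatorname{cscK}}=\hat S$.
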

		\begin{proof}
			By the Chen-Tian formula \cite[(1)]{chen2000lower}, $\mathcal{M}_{\operatorname{cscK}}$ extends to $\mathcal{H}^{1,\bar{1}}$. Since the multivariate energy functional is well-defined on $\operatorname{PSH}\cap L^\infty_{\operatorname{loc}}$ in the sense of Bedford--Taylor, (i) follows directly.
			
			{By Propsition \ref{ddcofanalyticDelignepairingweak} (ii,iii), we know that $\braket{\varphi_{i}(t),\cdots,\varphi_{i}(t)}_{(\omega_i,\cdots,\omega_i)}$ is affine along weak geodesic $\varphi_i(t)$, and\begin{align*}
					dd^c_\tau\braket{\varphi_i(t),\varphi_0(t),\cdots,\varphi_0(t)}_{(\omega_i,\omega_0,\cdots,\omega_0)}=\int_X(\operatorname{pr}_1^*\omega_i+dd^c\Phi_i)\wedge (\operatorname{pr}_1^*\omega_0+dd^c\Phi_0)^n\geq 0,
				\end{align*}which means $\braket{\varphi_i(t),\varphi_0(t),\cdots,\varphi_0(t)}_{(\omega_i,\omega_0,\cdots,\omega_0)}$ is convex. On the other hand, $\mathcal{M}_{\operatorname{cscK}}(\omega_0+dd^c\varphi_0(t),\omega_0)$ is convex by \cite[Theorem 1.1]{BB17}. Thus, all of the above shows that $\mathcal{M}_{\operatorname{ccscK}}((\omega_i+dd^c\varphi_i(t)),(\omega_i))$ is convex. (\ref{minimize1}) is then implied by Proposition \ref{ddcofanalyticDelignepairingweak} (ii) and \cite[Lemma 3.5]{BB17}.}
		\end{proof}
		We are now ready for
		\begin{proof}[Proof of Theorem \ref{our2}]
			We use formula (\ref{our}) to compute the slope of each term separately. By \cite[Theorem C]{sjostrom2018k}, we have\begin{align*}
				\lim_{t\to\infty}\frac{\mathcal{M}_{\operatorname{cscK}}\left(\omega_0+dd^c\varphi_0(t),\omega_0\right)}{t}&=\operatorname{M}^{\operatorname{NA}}(\mathcal{X}_0, \mathcal{A}_0).
			\end{align*}As for the Deligne pairing terms, by \cite[Lemmas 4.4 and 4.8]{sjostrom2018k}, we can replace each ray $(\varphi_i(t))_{t\geq 0}$ by a smooth $\mathcal{C}^{\infty}$-compatible ray without changing the slope. Then the Theorem follows by Lemma \ref{slopeofAnalyticDelignePairing}.
		\end{proof}
		\begin{proof}[Proof of Theorem \ref{our2alg}]
			Note that for polarized case, the definition of analytic Deligne pairing identifies with the Deligne pairing defined in \cite[Section 1.3]{boucksom2019} by \cite[(1.5)]{boucksom2019}. The proof is thus completed by applying \cite[Lemma 3.9]{boucksom2019} to (\ref{our}) and invoking \cite[Proof of Theorem 3.6]{boucksom2019}.
		\end{proof}
		\begin{proof}[Proof of Theorem \ref{our6}]
			Suppose that $([\omega_i])$ admits a coupled K\"{a}hler-Einstein metric $(\omega_i')$. For any $(\omega_i'')$, take weak geodesics $\varphi_i(t)$ connecting the potential of  $(\omega_i')$ and  $(\omega_i'')$. Let $\omega_i(t):=\omega_i+dd^c \varphi_{i}(t)$. Then by (\ref{minimize1}), we know that\begin{align*}
				&\lim_{t\to 0}\frac{\mathcal{M}_{\operatorname{ccscK}}((\omega_i(t)),(\omega_i))-\mathcal{M}_{\operatorname{ccscK}}((\omega_i'),(\omega_i))}{t}\geq 0.
			\end{align*}By the convexity of $\mathcal{M}_{\operatorname{ccscK}}$ along $(\varphi_i(t))$, we have $\mathcal{M}_{\operatorname{ccscK}}((\omega_i''),(\omega_i))\geq \mathcal{M}_{\operatorname{ccscK}}((\omega_i'),(\omega_i))$ for any $\omega_i''\in [\omega_i]$. Then (i) and (ii) follow from Theorem \ref{our2} and \ref{our2alg}.
		\end{proof}
		Finially, by Theorem \ref{our6} and the inequality $\operatorname{M}^{\operatorname{NA}}\leq\operatorname{DF}$, Theorem \ref{coupledKsemistable} follows from the following Proposition, which extends \cite[Propositions 3.12, 3.17]{sjostrom2018k} (see also \cite[Proposition 2.23]{DR}).
		\begin{proposition}
			Let $(X,([\omega_i])_{i=0}^m)$ be a compact K\"ahler manifold with a tuple of K\"ahler classes, then\\
			$(\mathrm{i})$ $(X,([\omega_i])_{i=0}^m)$ is coupled K-semistable if and only if for any tuple of smooth (compactified) relatively K\"{a}hler cohomological test conﬁgurations $(\mathcal{X}_i,\mathcal{A}_i)_{i=0}^m$ that dominates $X\times\mathbb{P}^1$, we have $\operatorname{DF}_c((\mathcal{X}_i,\mathcal{A}_i)_{i=0}^m)\geq 0$. In fact, we can further assume that their central fiber $\mathcal{X}_i(0)$ are reduced.\\
			$(\mathrm{ii})$ If further $[\omega_i]=c_1(L_i)$ for ample line bundles $L_i$, then $(X,(L_i)_{i=0}^m)$ is coupled algebraically K-semistable if and only if $(X,(c_1(L_i))_{i=0}^m)$ is coupled K-semistable.
		\end{proposition}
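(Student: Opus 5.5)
The plan is to reduce everything to the uncoupled results of Sj\"ostr\"om Dyrefelt \cite[Propositions 3.12, 3.14, 3.17]{sjostrom2018k} and Dervan--Ross \cite[Proposition 2.23]{DR}. We work term by term in the expression for $\operatorname{DF}_c$. The key structural observation is that $\operatorname{DF}_c$ is the classical $\operatorname{DF}(\mathcal{X}_0,\mathcal{A}_0)$ plus a linear combination of intersection numbers $(\mathcal{A}_0^{n+1})$, $(\mathcal{A}_i^{n+1})$ and $(\mathcal{A}_i\cdot\mathcal{A}_0^n)$. These intersection numbers are computed on a common equivariant resolution, and by the projection formula they are invariant under pulling back the tuple of test configurations.

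For (i), given an arbitrary tuple $(\mathcal{X}_i,\mathcal{A}_i)$, I would take a smooth $\mathbb{C}^*$-equivariant resolution $\mathcal{X}$ dominating all $\mathcal{X}_i$ and $X\times\mathbb{P}^1$. Each $\mathcal{A}_i$ pulls back to $\mathcal{X}$, and $\operatorname{DF}$ is unchanged under such pullback, as in \cite[Section 3.3]{sjostrom2018k}. The difficulty is that the pullback of a relatively K\"ahler class is only relatively semipositive. I would follow \cite[Proposition 3.12]{sjostrom2018k} and perturb: set $\mathcal{A}_i^\varepsilon:=\mu^*\mathcal{A}_i-\varepsilon[E]$ for a suitable $\pi$-exceptional, $\mathbb{C}^*$-invariant divisor $E$ supported on the central fiber, so that $\mathcal{A}_i^\varepsilon$ is relatively K\"ahler on $\mathcal{X}$. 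Intersection numbers and $K_{\mathcal{X}/\mathbb{P}^1}\cdot(\cdot)^n$ are polynomial in $\varepsilon$, so $\operatorname{DF}_c$ of the perturbed tuple tends to $\operatorname{DF}_c$ of the original tuple as $\varepsilon\to0$. Nonnegativity on smooth dominating configurations therefore passes to arbitrary ones.

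For the reduced-central-fiber refinement, I would use semistable reduction via base change $t\mapsto t^d$ followed by normalization and resolution, as in \cite[Proposition 3.14]{sjostrom2018k}. Under base change, each intersection term $(\mathcal{A}^{n+1})$ and $(\mathcal{A}_i\cdot\mathcal{A}_0^n)$ scales by exactly $d$. The classical $\operatorname{DF}$ satisfies $\operatorname{DF}(\mathcal{X}_d)\le d\,\operatorname{DF}(\mathcal{X})$; more precisely, for the normalized base change one has $\operatorname{M}^{\operatorname{NA}}$ homogeneous and $\operatorname{DF}=\operatorname{M}^{\operatorname{NA}}$ when the central fiber is reduced. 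I would apply the base change simultaneously to all $\mathcal{X}_i$, with $d$ chosen so that a common resolution has reduced central fiber. Then $d\cdot\operatorname{DF}_c(\text{original})\ge \operatorname{DF}_c(\text{base-changed})\ge0$.

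For (ii), one direction is immediate: ample algebraic test configurations give relatively K\"ahler cohomological ones, and $\operatorname{DF}_c$ agrees because the intersection numbers agree. For the converse, given smooth relatively K\"ahler cohomological $(\mathcal{X}_i,\mathcal{A}_i)$ dominating $X\times\mathbb{P}^1$, each $\mathcal{A}_i=\mu^*\operatorname{pr}_1^*c_1(L_i)+[D_i]$ with $D_i$ an $\mathbb{R}$-divisor. I would approximate $D_i$ by $\mathbb{Q}$-divisors $D_i^k\to D_i$. Relative K\"ahlerness is an open condition, so $\mathcal{A}_i^k$ stays relatively ample for large $k$ and gives an algebraic test configuration, as in \cite[Proposition 3.17]{sjostrom2018k}. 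Since $\operatorname{DF}_c$ is continuous (polynomial) in the coefficients of the $D_i$, we get $\operatorname{DF}_c(\mathcal{A}^k)\to\operatorname{DF}_c(\mathcal{A})$, which yields the converse.

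The main obstacle I expect is the perturbation step in (i). It requires a single divisor $E$, or compatible choices, making all pulled-back classes relatively K\"ahler simultaneously while keeping the $K_{\mathcal{X}/\mathbb{P}^1}$ term continuous. I would handle this by invoking the existence of a $\pi$-ample exceptional divisor for the equivariant blow-up, which works uniformly for every $i$.
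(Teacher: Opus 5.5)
Your proposal is correct and follows essentially the same route as the paper. You perturb pulled-back classes on a resolution as in Sj\"ostr\"om Dyrefelt's Proposition 3.12, perform a simultaneous degree-$d$ base change using $\operatorname{DF}(\mathcal{X}_d)\le d\,\operatorname{DF}(\mathcal{X})$ (Dervan--Ross) while the coupling intersection terms scale exactly by $d$, and use $\mathbb{Q}$-approximation of the classes in (ii), each time concluding by polynomiality of $\operatorname{DF}_c$ in the classes. The one difference is that the paper resolves each $\mathcal{X}_i$ separately; the criterion allows different total spaces, and $\operatorname{DF}_c$ is evaluated on a further common resolution anyway, so the obstacle you anticipate, a single divisor $E$ that works for all $i$, never arises.
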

		\begin{proof}
			(i) We begin by verifying the first criterion. Let $(\mathcal{X}_i,\mathcal{A}_i)_{i=0}^m$ be a tuple of (compactified) relatively K\"{a}hler cohomological test conﬁgurations of $(X,\alpha_i)_{i=0}^m$. Following the proof of \cite[Proposition 3.12]{sjostrom2018k}, for every sufficiently small $\delta>0$, there exists a tuple of smooth (compactified) relatively K\"ahler cohomological test configurations $(\mathcal{X}_i',\mathcal{A}_{i,\delta})_{i=0}^m$, where the total spaces $\mathcal{X}_i'$ are independent of $\delta$ and each simultaneously dominates $X\times\mathbb{P}^1$ and $\mathcal{X}_i$, and where writing $\mu_i:\mathcal{X}_i'\to\mathcal{X}_i$ for the corresponding $\mathbb{C}^*$-equivariant bimeromorphic morphisms, we have $\mathcal{A}_{i,\delta}\to\mu_i^*\mathcal{A}_i$ in $H_{\mathrm{BC}}^{1,1}(\mathcal{X}_i',\mathbb{R})$ as $\delta\to 0$.
%
			
		Now assume $\operatorname{DF}_c((\mathcal{X}_i',\mathcal{A}_{i,\delta})_{i=0}^m)\geq 0$ for $\delta$ small enough. Letting $\delta\to 0$ and noting that $	\operatorname{DF}_c((\mathcal{X}_i',\mathcal{A}_{i,\delta})_{i=0}^m)$ is a polynomial in the cohomology classes, we obtain\begin{align*}
				\operatorname{DF}_c((\mathcal{X}_i,\mathcal{A}_i)_{i=0}^m)=\operatorname{DF}_c(\mathcal{X}_i',\mu_i^*\mathcal{A}_i)_{i=0}^m)=\lim_{\delta\to 0}\operatorname{DF}_c((\mathcal{X}_i',\mathcal{A}_{i,\delta})_{i=0}^m)\geq 0.
			\end{align*}This proves the `if'-part of the first criterion. The other direction holds by deﬁnition.
			
		We next show how to reduce to test conﬁgurations with reduced central ﬁbre. The proof is from \cite[Proposition 2.23]{DR}. Let $(\mathcal{X}_i,\mathcal{A}_i)_{i=0}^m$ be a tuple of (compactified) relatively K\"ahler cohomological test configurations for $(X,\alpha_i)_{i=0}^m$. Since $\mathcal{A}_i$ are relatively K\"ahler, there exists K\"ahler class $\beta$ on $\mathbb{P}^1$ such that $\mathcal{A}_i+\pi^*\beta$ are all K\"ahler. For each $\mathcal{A}_i+\pi^*\beta$, choose an $\mathbb{S}^1$-invariant repesentative $\Omega_i=\Theta_i+\pi^*\theta_i$, where $\Theta_i\in \mathcal{A}_i$ and $\theta_i\in \beta$ are both $\mathbb{S}^1$-invariant.
		
		Consider the map $u:\mathbb{P}^1\to\mathbb{P}^1$ given by $z\mapsto z^d$ and let $\mathcal{X}_i'$ be the normalisation of the pullback of $\mathcal{X}_i$ along $u$. We denote the induced finite map $\mathcal{X}_i'\to\mathcal{X}_i$ by $u_i$. Setting $\Omega_i':=u_i^*\Omega_i$, we have that, for $d$ sufficiently large and divisible,  $\mathcal{X}_i'$ has reduced central fiber and $\Omega_i$ is K\"ahler. Furthermore, we may choose a common $d$ for which this holds for all $i$.
		
		By equivariant resolution of singularities, there is a test configuration (without a cohomology class) $\mathcal{Y}$ which is smooth and dominates all $\mathcal{X}_i$ and $X\times\mathbb{P}^1$ via $\mathbb{C}^*$-equivariant bimeromorphic morphisms $\rho_i:\mathcal{Y}\to\mathcal{X}_i$ and $\rho:\mathcal{Y}\to X\times\mathbb{P}^1$, respectively (see \cite[Section 4]{sjostrom2018k}). Now let $\mathcal{Y}'$ be the normalisation of the pullback of $\mathcal{Y}$ along $u$, and (still) denote the induced ﬁnite map by $u$. Then we have diagrams$$
		\begin{tikzcd}
			\mathcal{Y} \arrow[rd, "\rho_i"] \arrow[d, "\rho"'] &             & \mathcal{Y}' \arrow[rd, "\rho_i'"] \arrow[d, "\rho'"'] &              \\
			X\times\mathbb{P}^1 \arrow[r, dashed]               & \mathcal{X}_i & X\times\mathbb{P}^1 \arrow[r, dashed]                  & \mathcal{X}_i'
		\end{tikzcd}
		$$and for each $i$, above two diagrams are connected and commute by maps (still denotes by) $u:X\times\mathbb{P}^1\to X\times\mathbb{P}^1$, $u_i:\mathcal{X}_i'\to \mathcal{X}_i$ and $u:\mathcal{Y}'\to \mathcal{Y}$.
		
		Now let $p:\mathcal{Z}'\to \mathcal{Y}'$ be a resolution. By \cite[Proposition 2.23, Lemma 2.24]{DR}, we have\begin{align*}
			\operatorname{DF}(\mathcal{Z}',p^*(\rho_0')^*u_0^*[\Omega_0])\leq d\operatorname{DF}(\mathcal{Y},\rho_0^*[\Omega_0]).
		\end{align*}On the other hand, we have\begin{small}
		\begin{align*}
		&\frac{S\left((p^*(\rho_0')^*u_0^*[\Omega_0])^{n+1}\right)_{\mathcal{Z}'}}{V_0(n+1)!}-\sum_{i=1}^m\frac{\left((p^*(\rho_i')^*u_i^*[\Omega_i])^{n+1}\right)_{\mathcal{Z}'}}{V_i(n+1)!}+\sum_{i=1}^m\frac{\left((p^*(\rho_i')^*u_i^*[\Omega_i])\cdot(p^*(\rho_0')^*u_0^*[\Omega_0])^n\right)_{\mathcal{Z}'}}{V_0n!}\\=&\ \frac{S\left((p^*u^*\rho_0^*[\Omega_0])^{n+1}\right)_{\mathcal{Z}'}}{V_0(n+1)!}-\sum_{i=1}^m\frac{\left((p^*u^*\rho_i^*[\Omega_i])^{n+1}\right)_{\mathcal{Z}'}}{V_i(n+1)!}+\sum_{i=1}^m\frac{\left((p^*u^*\rho_i^*[\Omega_i])\cdot(p^*u^*\rho_0^*[\Omega_0])^n\right)_{\mathcal{Z}'}}{V_0n!}\\=&\ d\left(\frac{S\left((\rho_0^*[\Omega_0])^{n+1}\right)_{\mathcal{Y}}}{V_0(n+1)!}-\sum_{i=1}^m\frac{\left((\rho_i^*[\Omega_i])^{n+1}\right)_{\mathcal{Y}}}{V_i(n+1)!}+\sum_{i=1}^m\frac{\left((\rho_i^*[\Omega_i])\cdot(\rho_0^*[\Omega_0])^n\right)_{\mathcal{Y}}}{V_0n!}\right).
		\end{align*}
		\end{small}Thus by next Lemma, we have\begin{align*}
			\operatorname{DF}_c((\mathcal{X}_i',u_i^*\mathcal{A}_i)_{i=0}^m)&=	\operatorname{DF}_c((\mathcal{X}_i',u_i^*[\Omega_i])_{i=0}^m):=\operatorname{DF}_c((\mathcal{Z}',p^*(\rho_i')^*u_i^*[\Omega_i])_{i=0}^m)\\&\leq d\operatorname{DF}_c((\mathcal{Y},\rho_i^*[\Omega_i])_{i=0}^m)=: d\operatorname{DF}_c((\mathcal{X}_i,[\Omega_i])_{i=0}^m)= d\operatorname{DF}_c((\mathcal{X}_i,\mathcal{A}_i)_{i=0}^m)
		\end{align*}which completes the proof.

			(ii) The `if' part follows from the projection formula and the fact that any ample test conﬁguration $(\mathcal{X},\mathcal{L})$ for $(X,L)$ corresponds to a relative K\"ahler cohomological test configuration $(\mathcal{X},c_1(\mathcal{L}))$ for $(X,c_1(L))$. 
			
			As for the `only if' direction. Suppose that $(X,(L_i)_{i=0}^m)$ is algebraically K-semistable and let $(\mathcal{X}_i,\mathcal{A}_i)_{i=0}^m$ be a tuple of (compactified) cohomological test conﬁguration for $(X,c_1(L_i))_{i=0}^m$. By (i) we can further assume that $(\mathcal{X}_i,\mathcal{A}_i)$ is smooth, relatively K\"ahler and dominating $X\times\mathbb{P}^1$ with a $\mathbb{C}^*$-equivariant bimeromorphic morphism $\mu_i$. Again following the proof of \cite[Proposition 3.14]{sjostrom2018k}, for any $i$, there exists a sequence of smooth (compactified) ample cohomological test configurations $(\mathcal{X}_i,\mathcal{L}_{i,k}),k\in \mathbb{N}$ for $(X,L_i)$ such that $\mathcal{L}_{i,k}$ is relatively ample $\mathbb{Q}$-line bundle over $\mathcal{X}_i$ and $c_1(\mathcal{L}_{i,k})\to \mathcal{A}_i$. Suppose now $(X,(L_i)_{i=0}^m)$ is coupled algebraically K-semistable, then $\operatorname{DF}_c((\mathcal{X}_i,\mathcal{L}_{i,k})_{i=0}^m)\geq 0$. Note that $	\operatorname{DF}_c((\mathcal{X}_i,\mathcal{L}_{i,k})_{i=0}^m))$ is a polynomial in the cohomology classes, we have\begin{align*}
				\operatorname{DF}_c((\mathcal{X}_i,\mathcal{A}_{i})_{i=0}^m)=\lim_{k\to \infty}\operatorname{DF}_c((\mathcal{X}_i,\mathcal{L}_{i,k})_{i=0}^m)\geq 0, 
			\end{align*}i.e. $(X,(c_1(L_i))_{i=0}^m)$ is coupled K-semistable.
		\end{proof}
	\begin{lemma}
		$\operatorname{DF}_c((\mathcal{X}_i,\mathcal{A}_i)_{i=0}^m)=\operatorname{DF}_c((\mathcal{X}_i,\mathcal{A}_i+c_i\pi^*\beta)_{i=0}^m),$ where $\beta\in H^{1,1}(\mathbb{P}^1,\mathbb{R})$ such that $[\mathbb{P}^1]\frown\beta=1$.
	\end{lemma}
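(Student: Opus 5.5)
The plan is a direct intersection-theoretic computation on a common smooth resolution $\mathcal{X}$ dominating all $\mathcal{X}_i$ and $X\times\mathbb{P}^1$, where all the intersection numbers in $\operatorname{DF}_c$ are defined. The key facts are that $\pi^*\beta$ is the class of a fiber, so $(\pi^*\beta)^2=0$, and that for any classes $\mathcal{B}_1,\dots,\mathcal{B}_n$ restricting to $\gamma_1,\dots,\gamma_n$ on $X$ we have
\begin{align*}
(\pi^*\beta\cdot\mathcal{B}_1\cdots\mathcal{B}_n)=(\gamma_1\cdots\gamma_n)_X.
\end{align*}
This holds because the fiber over $1$ is $X$ and $[\mathbb{P}^1]\frown\beta=1$, and the projection formula lets us pull everything back from the $\mathcal{X}_i$ to $\mathcal{X}$. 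The argument splits into two parts: the classical term $\operatorname{DF}(\mathcal{X}_0,\mathcal{A}_0)$ and the coupling terms.

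\emph{Classical term.} I would show that $\operatorname{DF}(\mathcal{X}_0,\mathcal{A}_0+c_0\pi^*\beta)=\operatorname{DF}(\mathcal{X}_0,\mathcal{A}_0)$. This is the standard translation invariance of the Donaldson–Futaki invariant, cf.\ \cite{sjostrom2018k,DR}; alternatively one can verify it directly. With $\operatorname{DF}=\frac{\overline{S}_0}{(n+1)!V_0}(\mathcal{A}_0^{n+1})+\frac{1}{n!V_0}(K_{\mathcal{X}_0/\mathbb{P}^1}\cdot\mathcal{A}_0^n)$, the first term changes by $\frac{\overline{S}_0c_0}{n!}$. The second changes by $\frac{c_0 n}{n!V_0}(K_X\cdot\alpha_0^{n-1})$. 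These cancel because $\overline{S}_0=-n\,\frac{K_X\cdot\alpha_0^{n-1}}{\alpha_0^n}$, the normalization used in $\operatorname{M}^{\operatorname{NA}}$.

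\emph{Coupling terms.} Expanding binomially and using $(\pi^*\beta)^2=0$ gives the following identities:
\begin{align*}
((\mathcal{A}_i+c_i\pi^*\beta)^{n+1})&=(\mathcal{A}_i^{n+1})+(n+1)c_iV_i,\\
((\mathcal{A}_i+c_i\pi^*\beta)\cdot(\mathcal{A}_0+c_0\pi^*\beta)^n)&=(\mathcal{A}_i\cdot\mathcal{A}_0^n)+c_iV_0+nc_0(\alpha_i\cdot\alpha_0^{n-1}).
\end{align*}
Substituting these into $\operatorname{DF}_c$, the $S$-term changes by $\frac{Sc_0}{n!}$. The middle sum changes by $-\sum_{i\ge1}\frac{c_i}{n!}$, and the last sum by $\sum_{i\ge1}\frac{c_i}{n!}+\frac{c_0n}{V_0n!}\sum_{i\ge1}(\alpha_i\cdot\alpha_0^{n-1})$. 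The $c_i$ contributions for $i\geq1$ cancel. The $c_0$ contributions combine to
\begin{align*}
\frac{c_0}{n!}\Big(S+n\,\frac{\sum_{i\ge1}\alpha_i\cdot\alpha_0^{n-1}}{\alpha_0^n}\Big),
\end{align*}
which vanishes by the definition of $S$.

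The only real obstacle is the bookkeeping in the classical term. One must make sure that the convention for $\operatorname{DF}$ (and its $\overline{S}_0$) used in Definition \ref{coupledKsemistableDef} is the one normalized so that translation invariance holds, and that the intersection numbers are computed consistently on one common resolution. Once this is fixed, the remainder is the elementary expansion above.
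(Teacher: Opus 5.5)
Your proposal is correct and follows essentially the same route as the paper: pass to a common smooth resolution, expand the coupling terms using $(\pi^*\beta)^2=0$, and observe that the $c_i$ contributions for $i\ge 1$ cancel while the $c_0$ contributions vanish by the definition of $S$. The one difference is in the classical $\operatorname{DF}$ term: the paper simply cites its translation invariance from \cite[Lemma 2.24]{DR}, whereas you also check it directly; your check is correct.
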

	\begin{proof}
		After pulling back to a resolution, we may assume that $\mathcal{X}_i=\mathcal{X}$ ane $\mathcal{X}$ is smooth (see \cite[Section 4]{sjostrom2018k}). Let $\mathcal{B}_i=\mathcal{A}_i+c_i\pi^*\beta$, then\begin{align*}
			&\frac{S}{V_0(n+1)!}(\mathcal{A}_0^{n+1})_{\mathcal{X}}-\sum_{i=1}^m\frac{1}{V_i(n+1)!}(\mathcal{A}_i^{n+1})_{\mathcal{X}}+\frac{1}{V_0n!}\sum_{i=1}^m(\mathcal{A}_i\cdot\mathcal{A}_0^n)_{\mathcal{X}}\\
			={}&\frac{S}{V_0(n+1)!}\left((\mathcal{B}_0-c_0\pi^*\beta)^{n+1}\right)_{\mathcal{X}}-\sum_{i=1}^m\frac{1}{V_i(n+1)!}\left((\mathcal{B}_i-c_i\pi^*\beta)^{n+1}\right)_{\mathcal{X}}\\
			&+\frac{1}{V_0n!}\sum_{i=1}^m\left((\mathcal{B}_i-c_i\pi^*\beta)\cdot(\mathcal{B}_0-c_0\pi^*\beta)^n\right)_{\mathcal{X}}\\
			={}&\frac{S}{V_0(n+1)!}(\mathcal{B}_0^{n+1})_{\mathcal{X}}-\frac{Sc_0}{n!}-\sum_{i=1}^m\frac{1}{V_i(n+1)!}(\mathcal{B}_i^{n+1})_{\mathcal{X}}+\frac{1}{n!}\sum_{i=1}^m c_i\\
			&+\frac{1}{V_0n!}\sum_{i=1}^m(\mathcal{B}_i\cdot\mathcal{B}_0^n)_{\mathcal{X}}-\frac{1}{n!}\sum_{i=1}^m c_i-\frac{c_0}{V_0(n-1)!}\sum_{i=1}^m(\alpha_i\cdot\alpha_0^{n-1})_X\\
			={}&\frac{S}{V_0(n+1)!}(\mathcal{B}_0^{n+1})_{\mathcal{X}}-\sum_{i=1}^m\frac{1}{V_i(n+1)!}(\mathcal{B}_i^{n+1})_{\mathcal{X}}+\frac{1}{V_0n!}\sum_{i=1}^m(\mathcal{B}_i\cdot\mathcal{B}_0^n)_{\mathcal{X}}.
		\end{align*}The proof is then follows by \cite[Lemma 2.24]{DR}.
	\end{proof}
		\boldmath
		\subsection{$\mathcal{E}^1$-extension of the coupled K-energy}
		\unboldmath
		\begin{lemma}\label{extensionLemma1}
			Given any K\"{a}hler forms $\omega,\omega'$ such that $\omega<\omega'$, the map $\operatorname{PSH}(X,\omega)\hookrightarrow \operatorname{PSH}(X,\omega'),\varphi\mapsto\varphi$ can be restricted to $\mathcal{E}^1(X,\omega)\hookrightarrow\mathcal{E}^1(X,\omega')$ and is $d_1$-continuous. Moreover, it maps $d_{1,\omega}$-bounded subsets to $d_{1,\omega'}$-bounded subsets.
		\end{lemma}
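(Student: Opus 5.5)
Write $\omega'=\omega+\eta$ with $\eta:=\omega'-\omega>0$ a K\"ahler form. For $\varphi\in\operatorname{PSH}(X,\omega)$ we have $\omega'+dd^c\varphi=(\omega+dd^c\varphi)+\eta\geq 0$, so $\varphi\in\operatorname{PSH}(X,\omega')$. The plan has three steps.

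\emph{Step 1 (membership in $\mathcal{E}^1$).} Expand the non-pluripolar product multilinearly:
\begin{align*}
(\omega'+dd^c\varphi)^n=\sum_{k=0}^n\binom{n}{k}\eta^{k}\wedge(\omega+dd^c\varphi)^{n-k}.
\end{align*}
This multilinearity holds for non-pluripolar products with smooth closed forms, checked on the truncations $\max\{\varphi,-j\}$. For $\varphi\in\mathcal{E}^1(X,\omega)$ two facts are standard: the mixed products $\eta^k\wedge\omega_\varphi^{n-k}$ have full mass $\int_X\eta^k\wedge\omega^{n-k}$ (full mass is inherited by mixed products, cf.\ \cite{BBEGZ10}), and $\varphi$ is integrable against each of them. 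For the integrability, use $\eta\leq C\omega$ and that $\varphi\in\mathcal{E}^1$ implies $\int|\varphi|\,\omega^{k}\wedge\omega_\varphi^{n-k}<\infty$ for all $k$; this follows from the monotonicity in $k$ of $\int(-\varphi)\omega^k\wedge\omega_\varphi^{n-k}$ for $\varphi\le0$, obtained by integration by parts. Summing, $\varphi$ has full $\omega'$-mass and $\varphi\in L^1(\omega_\varphi'^n)$, hence $\varphi\in\mathcal{E}^1(X,\omega')$.

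\emph{Step 2 ($d_1$-continuity and boundedness).} Use Theorem \ref{Theorem3ofDarvas2} to replace $d_{1,\omega'}$ by $\mathbf{I}_{1,\omega'}$. Expanding as in Step 1,
\begin{align*}
\int_X|u-v|\,\omega_u'^n=\sum_k\binom nk\int_X|u-v|\,\eta^k\wedge\omega_u^{n-k}\leq C\sum_k\int_X|u-v|\,\omega^k\wedge\omega_u^{n-k}.
\end{align*}
The remaining task is to bound the mixed terms $\int|u-v|\,\omega^k\wedge\omega_u^{n-k}$ by a function of $d_{1,\omega}(u,v)$ that tends to $0$ with it and stays bounded on $d_{1,\omega}$-bounded sets. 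I would do this in two ways, depending on which is cleaner.

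\begin{itemize}
\item \emph{Via the max construction.} Reduce to $u\le v$ using $\max(u,v)$ and the Pythagorean-type estimate $d_1(u,v)\asymp d_1(u,\max(u,v))+d_1(\max(u,v),v)$ from \cite{Darvas2}. For $u\le v$, $\int(v-u)\,\omega^k\wedge\omega_u^{n-k}$ is monotone in $k$ by the same integration by parts as in Step 1, so it is controlled by $\int(v-u)\,\omega_u^n\lesssim d_1$.
\item \emph{Via energy.} Express $d_1(u,v)$ for $u\le v$ as $\operatorname{E}_\omega(v)-\operatorname{E}_\omega(u)$ (up to normalization), which contains all mixed terms with positive coefficients.
\end{itemize}

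The second route is the most direct. Boundedness follows from the same inequality with $v=0$: $d_{1,\omega'}(0,u)\le C\,d_{1,\omega}(0,u)$.

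\emph{Main obstacle.} The difficulty is the lack of a direct comparison between $\omega^k\wedge\omega_u^{n-k}$ and $\omega_u^n$ for general, non-ordered pairs. I would handle it by first reducing to $u\le v$ through the max construction and then using the explicit formula $d_1=\operatorname{E}(v)-\operatorname{E}(u)$. Concretely, for $u\le v$,
\begin{align*}
(n+1)\bigl(\operatorname{E}_\omega(v)-\operatorname{E}_\omega(u)\bigr)\geq\sum_j\int(v-u)\,\omega_u^{j}\wedge\omega^{n-j},
\end{align*}
which bounds each mixed term by $C\,d_{1,\omega}(u,v)$. The inequality $\operatorname{E}_{\omega'}(v)-\operatorname{E}_{\omega'}(u)\le C(\operatorname{E}_\omega(v)-\operatorname{E}_\omega(u))$ then gives the Lipschitz-type bound, and approximation by smooth decreasing sequences justifies all integrations by parts.
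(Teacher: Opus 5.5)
Your Step 1 is fine; the paper simply cites Guedj--Trusiani \cite{GuedjTrusiani2023} for it. The reduction to ordered pairs via $\max(u,v)$ is also harmless. But Step 2 has a genuine gap: the estimate you use for $u\le v$ is false.

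The integration by parts in Step 1 gives a sign only because the integrand and the $dd^c$ involve the same function, so that $\int\varphi\,dd^c\varphi\wedge T\le 0$. For $\int(v-u)\,\omega^k\wedge\omega_u^{n-k}$, consecutive terms differ by $-\int(v-u)\,dd^cu\wedge T$, which has no sign. Similarly, $(n+1)(\operatorname{E}_\omega(v)-\operatorname{E}_\omega(u))=\sum_j\int(v-u)\,\omega_v^j\wedge\omega_u^{n-j}$ involves only $\omega_u$ and $\omega_v$. The reference form $\omega$ can be much larger than both on the support of $v-u$.

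Here is a concrete counterexample. Take $X=\mathbb{P}^1$ with $\omega=\omega_{FS}=dd^c\log(1+|z|^2)$ on $\mathbb{C}$. Let $\psi$ be a regularized maximum of $-\log(1+|z|^2)$ and a large negative constant, so $\omega_\psi=0$ near $z=0$. Put $v=(1-\delta)\psi$, so $\omega_v=\delta\omega$ near $0$. Put $u=v-c\delta\chi$, where $\chi\ge 0$ is a bump near $0$ and $c>0$ is small depending only on $\chi$. Then $u\le v$ lie in $\mathcal{H}(\omega)$, are bounded uniformly in $\delta$, and
\begin{align*}
2\bigl(\operatorname{E}_\omega(v)-\operatorname{E}_\omega(u)\bigr)=\int_X(v-u)(\omega_u+\omega_v)=O(\delta^2),\qquad \int_X(v-u)\,\omega=c\delta\int_X\chi\,\omega .
\end{align*}
This violates both your inequality and the claimed monotonicity in $k$.

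It gets worse. For $\omega'=2\omega$ one has $\operatorname{E}_{\omega'}(v)-\operatorname{E}_{\omega'}(u)=\operatorname{E}_\omega(v)-\operatorname{E}_\omega(u)+\int_X(v-u)\,\omega$. Hence $d_{1,\omega'}(u,v)/d_{1,\omega}(u,v)\sim\delta^{-1}\to\infty$. So the inclusion is not Lipschitz even on $d_1$-bounded sets, and no argument aiming at $d_{1,\omega'}\le C\,d_{1,\omega}$ can work. Your boundedness claim $d_{1,\omega'}(0,u)\le C\,d_{1,\omega}(0,u)$ rests on the same estimate, so it needs a different justification too.

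The missing ingredient is a nonlinear modulus of continuity, uniform on $d_1$-balls, for integrals of $|u-v|$ against the Monge--Amp\`ere measure of a third potential. The paper takes this from \cite[Corollary 5.8]{Darvas2}: $\int_X|u-v|\,\omega_h^n\le f_R(d_{1,\omega}(u,v))$ for $u,v,h$ in a $d_{1,\omega}$-ball of radius $R$, with $f_R$ continuous and $f_R(0)=0$. It turns $\omega_u'^n$ into a measure of this form by scaling. If $\omega'\le A\omega$, then $\omega'_u\le A\,\omega_{u/A}$, so $\omega_u'^n\le A^n\omega_{u/A}^n$. Then \cite[Lemma 5.3]{Darvas2} keeps $u/A$ and $v/A$ in a fixed ball.

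Your expansion can be repaired the same way. Since $\omega_{u/2}=\tfrac12(\omega+\omega_u)$, you get $\omega^k\wedge\omega_u^{n-k}\le 2^n\,\omega_{u/2}^n$ for every $k$. Corollary 5.8 with $h=u/2$ (and $h=v/2$) then controls all the mixed terms. The example above shows the resulting modulus cannot be better than order $t^{1/2}$, consistent with the $I^{1/2}$ estimate of Proposition \ref{BBGZLemma3.13}.
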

		\begin{proof}
			By \cite[Section 3.3.1]{GuedjTrusiani2023}, the inclusion  $\operatorname{PSH}(X,\omega)\hookrightarrow \operatorname{PSH}(X,\omega'),\varphi\mapsto\varphi$ can be restricted to $\mathcal{E}^1(X,\omega)\hookrightarrow\mathcal{E}^1(X,\omega')$, so we only need to check the $d_1$-continuity. Let $d_{1,\omega}$ and $d_{1,\omega'}$ be the $d_1$-metric on $\mathcal{E}^1(X,\omega)$ and $\mathcal{E}^1(X,\omega')$ respectively. We will show that for any $u,v\in \mathcal{E}^1(X,\omega)$ with $d_{1,\omega}(0,u)\leq C,d_{1,\omega}(0,u')\leq C$, there exists a continuous function $f_C$ such that $f_C(0)=0$ and\begin{align*}
				d_{1,\omega'}(u,v)\leq f_C(d_{1,\omega}(u,v)).
			\end{align*}We may equivalently work with $\mathbf{I}_{1,\omega}(u,v)$ and $\mathbf{I}_{1,\omega'}(u,v)$ in view of Theorem \ref{Theorem3ofDarvas2}.
			
			Take $A=2^N>1$ such that $\omega'<A\omega$. Then we have $\omega+dd^c(u/A)>\frac{\omega'+dd^cu}{A}$ and thus\begin{align*}
				\mathbf{I}_{1,\omega'}(u,v)=\int_X|u-v|(\omega_u^{\prime n}+\omega_v^{\prime n})\leq A^n\int_X|u-v|(\omega_{(u/A)}^{n}+\omega_{(v/A)}^{n}) = A^{n-1}\mathbf{I}_{1,\omega}(u/A,v/A).
			\end{align*}Note that by \cite[Lemma 5.3]{Darvas2}, $d_{1,\omega}(0,u/A)\leq C^{\prime N}d_{1,\omega}(0,u), d_{1,\omega}(0,v/A)\leq C^{\prime N}d_{1,\omega}(0,v)$ for some constant $C'$. On the other hand, by \cite[Corollary 5.8]{Darvas2} and its proof, for each $R>0$ there exists
			a continuous function $f_R: \mathbb{R}\to\mathbb{R}$ with $f_R(0)=0$ such that
			\begin{align*}
			\int_X|u-v|\omega_h^n\leq f_R(d_{1,\omega}(u,v))
			\end{align*}for any $u,v,h\in\mathcal{E}^1(X,\omega)\cap\{w:d_{1,\omega}(0,w)\leq R\}.$ Take $R=\max\{C,C^{\prime N}C\}$, we have\begin{align*}
				\int_X|u-v|(\omega_{(u/A)}^{n}+\omega_{(v/A)}^{n})\leq 2f_R(d_1(u,v)).
			\end{align*}Thus by Theorem \ref{Theorem3ofDarvas2}, there exists $C>0$ such that\begin{align*}
				C^{-1}A^{-n+1}d_{1,\omega'}(u,v)\leq A^{-n+1}\mathbf{I}_{1,\omega'}(u,v)\leq \mathbf{I}_{1,\omega}(u/A,v/A)\leq 2f_R(d_{1,\omega}(u,v))
			\end{align*}and the Lemma follows.
		\end{proof}
		\begin{lemma}\label{extensionLemma2}
			Given any K\"{a}hler forms $\omega,\omega'$, we have\\
			$(\mathrm{i})$ $\mathcal{E}^1(X,\omega)\subset L^1((\omega'+dd^cv)^n)$ for any $v\in \mathcal{E}^1(X,\omega')$;\\
			$(\mathrm{ii})$ The map\begin{align*}
				\mathcal{E}^1(X,\omega)\times \mathcal{E}^1(X,\omega')\to\mathbb{R},(u,v)\mapsto\int_Xu\omega_v^{\prime n}
			\end{align*}is $d_{1,\omega}\times d_{1,\omega'}$-continuous and bounded on any $d_{1,\omega}\times d_{1,\omega'}$-bounded subset.
		\end{lemma}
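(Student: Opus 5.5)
The plan is to reduce both statements to a comparison with a common dominating Kähler form and then invoke Lemma \ref{extensionLemma1} together with the standard $\mathcal{E}^1$ theory for a single Kähler class.

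\textbf{Setup.} Set $\omega'':=\omega+\omega'$. Then $\omega<\omega''$ and $\omega'<\omega''$. By Lemma \ref{extensionLemma1}, both $\mathcal{E}^1(X,\omega)$ and $\mathcal{E}^1(X,\omega')$ embed continuously into $\mathcal{E}^1(X,\omega'')$, and these embeddings send $d_1$-bounded sets to $d_1$-bounded sets.

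\textbf{Part (i).} Given $u\in\mathcal{E}^1(X,\omega)$ and $v\in\mathcal{E}^1(X,\omega')$, I will regard both as elements of $\mathcal{E}^1(X,\omega'')$. The measure $\omega_v'^n$ is dominated by the mixed quantity
\[
(\omega''+dd^cv)^n=\sum_{k}\binom{n}{k}\omega^k\wedge(\omega'+dd^cv)^{n-k}\geq \omega_v'^n .
\]
This inequality of non-pluripolar measures holds because $\omega\geq 0$; it is checked on the sets $\{v>-j\}$ via the plurifine locality of the non-pluripolar product. It therefore suffices to have $u\in L^1((\omega''+dd^cv)^n)$. This is the classical fact that $\mathcal{E}^1(X,\omega'')\subset L^1(\omega''^n_{h})$ for every $h\in\mathcal{E}^1(X,\omega'')$, which is part of the energy estimates of \cite{GZ07,BBGZ} and can be seen, for instance, from \cite[Corollary 5.8]{Darvas2}. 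Since $u\leq \sup u$, finiteness of $\int |u|\,\omega_v'^n$ follows.

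\textbf{Part (ii).} For continuity, I will split
\[
\int_X u_k\,\omega_{v_k}'^n-\int_X u\,\omega_v'^n=\int_X(u_k-u)\,\omega_{v_k}'^n+\Bigl(\int_X u\,\omega_{v_k}'^n-\int_X u\,\omega_v'^n\Bigr).
\]

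For the first term, I use the same domination $\omega_{v_k}'^n\leq(\omega''+dd^cv_k)^n$. Then
\[
\Bigl|\int_X (u_k-u)\,\omega_{v_k}'^n\Bigr|\leq f_R\bigl(d_{1,\omega''}(u_k,u)\bigr)
\]
by \cite[Corollary 5.8]{Darvas2} applied in $\mathcal{E}^1(X,\omega'')$. Here $R$ comes from the boundedness part of Lemma \ref{extensionLemma1}, and $d_{1,\omega''}(u_k,u)\to0$ by the same lemma.

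For the second term, I will normalize $\sup v=0$, so that everything lies in some $\mathcal{E}^1_C(X,\omega'')$ as in (\ref{E_C}); the sup is controlled by $d_1$-bounds. I would like to apply Proposition \ref{BBGZLemma3.13}, but $\omega_{v}'^n$ is not an $\omega''$-Monge–Ampère measure. So instead I will work in $\mathcal{E}^1(X,\omega')$ with $u$ as the integrand. To make this legitimate, I write $u=u^+-u^-$ via the decreasing approximation $\max(u,-j)$ and approximate $u$ by $\omega'$-psh functions. Concretely, choose $\lambda>0$ with $\lambda\omega<\omega'$; then $\lambda u\in \mathcal{E}^1(X,\lambda\omega)\subset\mathcal{E}^1(X,\omega')$ by Lemma \ref{extensionLemma1}. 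Now $\int (\lambda u)\,\omega'^n_{v_k}\to\int(\lambda u)\,\omega'^n_v$ follows from Proposition \ref{BBGZLemma3.13} in $\mathcal{E}^1(X,\omega')$, because $I_{\omega'}(v_k,v)\to0$ whenever $d_{1,\omega'}(v_k,v)\to0$ by Theorem \ref{Theorem3ofDarvas2}. Dividing by $\lambda$ gives the claim.

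Boundedness on $d_1$-bounded sets follows from the same estimates, taking $u_k=0$, $v=0$.

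\textbf{Main obstacle.} The delicate point is the inequality $\omega_v'^n\leq(\omega''+dd^cv)^n$ for unbounded $v$, together with the sup-normalization needed to enter $\mathcal{E}^1_C$. I would handle the inequality by truncation $\max(v,-j)$ and monotone convergence on $\{v>-j\}$.
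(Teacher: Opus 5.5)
Your proof is correct, but it is organized differently from the paper's. The paper works in the single class $\omega'$. It replaces $u$ by $u/A$ with $A\omega'>\omega$ via Lemma \ref{extensionLemma1}, and splits the difference into two terms: $\int|u-u_i|\,\omega_v^{\prime n}$, with a \emph{fixed} measure, handled by \cite[Corollary 5.8]{Darvas2}; and $\int u_i(\omega_{v_j}^{\prime n}-\omega_v^{\prime n})$, with a \emph{moving} integrand. The second term needs the uniformity over $u_i\in\mathcal{E}^1_C$ in Proposition \ref{BBGZLemma3.13}, which forces sup-normalization of the $u_i$ and a Hartogs argument to remove it. Part (i) is simply cited there.

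You instead introduce $\omega''=\omega+\omega'$ and the elementary domination $\omega_v^{\prime n}\le(\omega''+dd^cv)^n$, which follows from multilinearity of the non-pluripolar product. This proves (i) directly from the single-class integrability fact. It also controls your moving-integrand term $\int(u_k-u)\,\omega_{v_k}^{\prime n}$ uniformly in $v_k$. As a result you need Proposition \ref{BBGZLemma3.13} only for a fixed integrand $\lambda u$. Boundedness then follows from $\int|u|\,\omega_v^{\prime n}\le f_R(d_{1,\omega''}(u,0))$ alone, with no normalization.

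The price is that you use two devices where one suffices. The scaling alone would also handle your first term, since $\int|u_k-u|\,\omega_{v_k}^{\prime n}=\lambda^{-1}\int|\lambda u_k-\lambda u|\,\omega_{v_k}^{\prime n}$; this is essentially the paper's route. The domination, by contrast, says nothing about $\int u\,(\omega_{v_k}^{\prime n}-\omega_v^{\prime n})$.

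Two small clean-ups:
\begin{enumerate}
\item Drop the sentence about $u=u^+-u^-$ and $\max(u,-j)$; it plays no role.
\item When applying Proposition \ref{BBGZLemma3.13}, also subtract $\sup_X(\lambda u)$ from $\lambda u$. This is harmless because $\omega_{v_k}^{\prime n}$ and $\omega_v^{\prime n}$ have equal mass. Note too that $I_{\omega'}(v_k,v)$ is unchanged when you normalize $v_k$ and $v$.
\end{enumerate}
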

		\begin{proof}
			(i) is followed by \cite[Proposition 4.1]{Nezza2016} and \cite[Lemma 4.4]{BBGZ}. For (ii), one only needs to prove the case that $\omega=\omega'$ because\begin{align*}
				\mathcal{E}^1(X,\omega)\times \mathcal{E}^1(X,\omega')\to\mathbb{R},(u,v)\mapsto\int_Xu\omega_v^{\prime n}
			\end{align*}can be decomposed into\begin{align*}
				\mathcal{E}^1(X,\omega)\times \mathcal{E}^1(X,\omega')\to \mathcal{E}^1(X,\omega')\times \mathcal{E}^1(X,\omega'),(u,v)\mapsto (u/A,v)
			\end{align*}for $A:A\omega'>\omega$ and\begin{align*}
				\mathcal{E}^1(X,\omega')\times \mathcal{E}^1(X,\omega')\to\mathbb{R},(u,v)\mapsto A\int_Xu\omega_v^{\prime n}
			\end{align*}where the first map is $d_1$-continuous and sends $d_{1,\omega}$-bounded subsets to $d_{1,\omega'}$-bounded subsets by Lemma \ref{extensionLemma1}.
			
			Let $u_i\overset{d_{1,\omega}}{\to}u$ and $v_j\overset{d_{1,\omega}}{\to}v$ be sequences in $\mathcal{E}^1(X,\omega)$. Assume first that $\sup_Xu=\sup_Xv=\sup_Xu_i=\sup_Xv_j=0$. Recall that $\operatorname{E}_\omega$ is bounded on $d_1$-bounded subsets (see \cite[Proposition 4.1]{BDL2017GT}), so $u,v,u_i,v_j\in \mathcal{E}^1_C(X,\omega)$, c.f. \eqref{E_C}. 
			
			Now by \cite[Corollary 5.8]{Darvas2} and its proof, for each $R>0$ there exists
			a continuous function $f_R: \mathbb{R}\to\mathbb{R}$ with $f_R(0)=0$ such that
			\begin{align*}
				\int_X|u-v|\,\omega_h^n\leq f_R(d_{1,\omega}(u,v))
			\end{align*}for any $u,v,h\in\mathcal{E}^1(X,\omega)\cap\{w:d_{1,\omega}(0,w)\leq R\}.$ Take\begin{align*}
			R=\max\{d_{1,\omega}(0,u_i),d_{1,\omega}(0,u),d_{1,\omega}(0,v_j),d_{1,\omega}(0,v)\}+1,
			\end{align*}then we have\begin{align*}
				\int_X|u-u_i|\omega_v^n\leq f_R(d_{1,\omega}(u,u_i)).
			\end{align*}
		 	
		 	On the other hand, by Proposition \ref{BBGZLemma3.13}, we have \begin{align*}
		 		\sup_{\varphi\in\mathcal{E}^1_C(X,\omega)}\left|\int_X\varphi(\omega^n_\psi-\omega^n_{\psi'})\right|=O(I(\psi,\psi')^{1/2})
		 	\end{align*}uniformly for $\psi,\psi'\in \mathcal{E}^1_C$. The conclusion is then implied at this time, since\begin{align}\label{foramonent}
		 	\left|\int_Xu\omega_v^{n}-\int_Xu_i\omega_{v_j}^{n}\right|\leq \int_X|u-u_i|\omega_v^n+\left|\int_Xu_i(\omega_{v_j}^{n}-\omega_v^n)\right|\leq f_R(d_{1,\omega}(u,u_i))+O(I_{\omega}(v_j,v)^{1/2})
		 	\end{align}and $I_{\omega}\leq \mathbf{I}_{1,\omega}$.
	 	
	 		For general case, let \begin{align*}
	 			\tilde{u}_i=u_i-\sup_Xu_i,\tilde{u}=u-\sup_Xu,\tilde{v}_j=v_j-\sup_Xv_j,\tilde{v}=v-\sup_Xv.
	 			\end{align*} Since $d_1$-convergence implies $L^1$-convergence (see for example \cite[Theorem 5]{Darvas2}), by Hartogs Lemma (see for example \cite[Proposition 8.4 (2)]{GZ17}), we have $\sup_Xu_i\to \sup_Xu,\sup_Xv_j\to \sup_Xv$. Thus\begin{align*}
	 			d_1(u_i-\sup_Xu_i,u-\sup_Xu) \to 0,\ d_1(v_j-\sup_Xv_j,v-\sup_Xv)\to 0,
	 			\end{align*}since\begin{align*}
	 			u_i-\sup_Xu_i\overset{L^1}{\to}u-\sup_Xu,\ v_j-\sup_Xv_j\overset{L^1}{\to}v-\sup_Xv
	 			\end{align*}and\begin{align*}
	 			&\operatorname{E}_\omega(u_i-\sup_Xu_i)=\operatorname{E}_\omega(u_i)-\frac{V_\omega}{n+1}\sup_Xu_i\to \operatorname{E}_\omega(u)-\frac{V_\omega}{n+1}\sup_Xu=\operatorname{E}_\omega(u-\sup_Xu),\\
	 			&\operatorname{E}_\omega(v_j-\sup_Xv_j)=\operatorname{E}_\omega(v_j)-\frac{V_\omega}{n+1}\sup_Xv_j\to \operatorname{E}_\omega(v)-\frac{V_\omega}{n+1}\sup_Xv=\operatorname{E}_\omega(v-\sup_Xv).
 			\end{align*}So we have\begin{align*}
 				\int_Xu_i\omega_{v_i}^n=\int_X\tilde{u}_i\omega_{\tilde{v}_i}^n+V_{\omega}\sup_Xu_i\to \int_X\tilde{u}\omega_{\tilde{v}}^n+V_{\omega}\sup_Xu=\int_Xu\omega_v^n.
 			\end{align*}\indent Now assume $W$ is a $d_{1,\omega}\times d_{1,\omega}$-bounded subset, by triangle inequality $\sup_{(u,v)\in W}\{d_{1,\omega}(0,u)+d_{1,\omega}(0,v)\}<C$. By \cite[Corollary 4]{Darvas2}, we know that $\left|\sup_Xu\right|$ and $\left|\sup_Xv\right|$ are uniformly bounded for $(u,v)\in W$. Let $\tilde{u}=u-\sup_Xu$ and $\tilde{v}=v-\sup_Xv$. Then\begin{align*}
 			&d_{1,\omega}(\tilde{u},0)\leq 	d_{1,\omega}(u-\sup_Xu,-\sup_Xu)+d_{1,\omega}(-\sup_Xu,0)\leq d_{1,\omega}(0,u)+\left|\sup_Xu\right|,\\
 			&d_{1,\omega}(\tilde{v},0)\leq 	d_{1,\omega}(v-\sup_Xv,-\sup_Xv)+d_{1,\omega}(-\sup_Xv,0)\leq d_{1,\omega}(0,v)+\left|\sup_Xv\right|
 			\end{align*}are uniformly bounded. Now by (\ref{foramonent}), we have\begin{align*}
 				\left|\int_Xu\omega_v^n\right|\leq V_{\omega}\left|\sup_Xu\right|+	\left|\int_X\tilde{u}\omega_{\tilde{v}}^n\right|\leq V_{\omega}\left|\sup_Xu\right|+ f_R(d_{1,\omega}(u,0))+O(I_\omega(v,0)^{1/2})
 			\end{align*}for a suitable big $R$. This means $\int_Xu\omega_v^n$ is bounded on $W$.\end{proof}
		By virtue of the above lemmas, we can prove Theorem \ref{finiteenergyextension} now.
		\begin{proof}[Proof of Theorem \ref{finiteenergyextension}]
			By \cite[Proposition 4.1, Proposition 4.4, Theorem 4.7]{BDL2017GT}, $\operatorname{E}_{\omega_i}(u_i)$ is $d_{1,\omega_i}$-continuous, $\operatorname{E}^{\omega_i}_{\omega_0}(u_0)$ is $d_{1,\omega_0}$-continuous and $\mathcal{M}_{\operatorname{cscK}}(\omega_{0}+dd^cu_0,\omega_{0})$ is the greatest $d_1$-lsc extension of $\mathcal{M}_{\operatorname{cscK}}|_{\mathcal{H}_{\omega_0}}$. Thus $\mathcal{M}_{\operatorname{ccscK}}|_{\prod_{i=0}^m\mathcal{E}^1(X,\omega_i)}$ is the greatest $d_1$-lsc extension of (\ref{coupledMabuchiDef}) by Lemma \ref{extensionLemma2}.
			
			Now we prove convexity. For any tuple of finite energy geodesics $(u_i(t))_{i=0}^m\subset \prod_{i=0}^m\mathcal{E}^1(X,\omega_i)$, by the definition of finite energy geodesic (see \cite{Darvas2}), there exists weak geodesics $(u_i^k(t))_{i=0}^m$ with smooth ends such that\begin{align*}
				d_{1,\omega_i}(u_i^k(0),u_i(0))\to 0,d_{1,\omega_i}(u_i^k(1),u_i(1))\to 0,k\to\infty.
			\end{align*}By endpoint stability of ﬁnite energy geodesic \cite[Proposition 4.3]{BDL2017GT}, there exists subsequence $k_j$ such that \begin{align*}
			d_{1,\omega_i}(u_i^{k_j}(t),u_i(t))\to 0,k_j\to\infty.
			\end{align*}Set $E(u_0,\cdots,u_m):=\mathcal{M}_{\operatorname{ccscK}}(u_0,\cdots,u_m)- \mathcal{M}_{\operatorname{cscK}}(\omega_{0}+dd^cu_0,\omega_{0})$, which is $\prod_{i=0}^m\ d_{1,\omega_i}$-continous. By Proposition \ref{ddcofanalyticDelignepairingweak} (iii), $E$ is convex along $(u_i^{k_j}(t))_{i=0}^m$, thus as a limit of convex functions, $E\left(u_0(t),\cdots,u_m(t)\right)$ is convex in $[0,1]$. The Theorem is followed by \cite[Theorem 1.2]{BDL2017GT}.
		\end{proof}
		
		\begin{remark}
			Note that our theorem permits the following possibility:\begin{align*}
				\mathcal{M}_{\operatorname{ccscK}}(u_0(t),\cdots,u_m(t))=\begin{cases}
					0, \ t\in [0,1/2]\\
					\infty, \ t\in (1/2,1],
				\end{cases}
			\end{align*}for some tuple of finite-energy geodesics in $\prod_{i=0}^m\mathcal{E}^1(X,\omega_i)$. The proof of \cite[Theorem 1.2]{BDL2017GT} cannot rule out this possibility because $\mathcal{M}_{\operatorname{ccscK}}$ is allowed to take $\infty$ value. Actually  $\mathcal{M}_{\operatorname{ccscK}}$ is continuous, finite-valued and convexity along any tuple of finite-energy geodesics in $\operatorname{Ent}(X,\omega_0)\times\prod_{i=1}^m\mathcal{E}^1(X,\omega_i)$. Here $\operatorname{Ent}(X,\omega_0)=\{u_0\in \mathcal{E}^1(X,\omega_0):\operatorname{Ent}((\omega_0+dd^cu_0)^n,\omega_0^n)<\infty\}$ is the space of ﬁnite-entropy potentials, which is a $d_{1,\omega_0}$-geodesically convex subspace in $\mathcal{E}^1(X,\omega_0)$ (see \cite[Section 4.4]{BDL2017GT}).
		\end{remark}
		\boldmath
		\subsection{On the functional $\mathcal{M}'$}\color{black}
		\unboldmath
		\label{subsec:6}
		Under the assumption $c_1(X)=\sum_{i=0}^{m}[\omega_i]$ derived from the cKE equation (\ref{cKE1}), consider the following functional\begin{equation}\label{M'}
			\begin{aligned}
				&\mathcal{M}'((\omega_i(s)),(\omega_i(0))) :=-\sum_{i=0}^m\int_0^s\int_X\dot{\varphi_i}(t)\frac{1}{V_i} \frac{\omega_i(t)^n}{n!}\wedge dt \\& + \frac{1}{V}\int_0^s\int_X \sum_{i=0}^m\dot{\varphi_i}(t) (n+1-S_{\sum_{i=0}^m\omega_i(t),J})\frac{\left(\sum_{i=0}^m\omega_i(t)\right)^n}{n!}\wedge dt,
			\end{aligned}
		\end{equation}where $V=(-K_X)^n$. We can apply all the content in this section to this functional and get
		\begin{proposition}\label{our5}\mbox{}\\
			$(\mathrm{i})$ $\mathcal{M}'$ is well-defined and\begin{align*}
				&\mathcal{M}'((\omega_i+dd^c\varphi_i),(\omega_i))= \mathcal{M}_{\operatorname{cscK}}\left(\sum_{i=0}^m(\omega_i+dd^c\varphi_i),\sum_{i=0}^m\omega_i\right)\\&-\sum_{i=0}^m\frac{1}{V_i(n+1)!}\braket{\varphi_{i},\cdots,\varphi_{i}}_{(\omega_i,\cdots,\omega_i)}+\frac{1}{V(n+1)!}\bigg<\sum_{i=0}^m\varphi_{i},\cdots,\sum_{i=0}^m\varphi_{i}\bigg>_{\left(\sum_{i=0}^m\omega_i,\cdots,\sum_{i=0}^m\omega_i\right)};
			\end{align*}\noindent $(\mathrm{ii})$ Let $X$ be a complex manifold with ample line bundles $L_0,\cdots,L_m$ on it, and  $(\mathcal{X},\mathcal{L}_i)$ be ample algebraic test conﬁgurations for $(X,L_i)$ respectively. Suppose that $\mathcal{X}$ is smooth and dominates $X\times\mathbb{P}^1$. Then for any smooth strictly positive metrics $\Phi_i$ on $\mathcal{L}_i$ near the central ﬁber, we have
			\begin{equation}\label{slopeforHWNDF}
				\begin{aligned}
					\lim_{t\to\infty}\frac{\mathcal{M}'\left((\omega_i(t)),(\omega_i)\right)}{t}&=-\sum_{i=0}^m\frac{1}{V_i(n+1)!}(\mathcal{L}_{i}^{n+1})+\frac{1}{Vn!}\left(\left(K_{\mathcal{X}/\mathbb{P}^1}+\sum_{i=0}^m\mathcal{L}_i\right)\cdot\left(\sum_{i=0}^m\mathcal{L}_i\right)^n\right)\\&\ +\frac{1}{Vn!}\left((\mathcal{X}(0)_{\operatorname{red}}-\mathcal{X}(0))\cdot \left(\sum_{i=0}^m\mathcal{L}_i\right)^n\right)
				\end{aligned}
			\end{equation}where $\omega_i$ is a reference metric in $c_1(L_i)$, $\omega_i(t) = dd^c\phi_i(t)$, $\phi_i(t)$ is the ray of smooth positive metrics on $L_i$ corresponding to $\Phi_i$.
		\end{proposition}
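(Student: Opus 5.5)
For (i), I would first check well-definedness by rewriting the defining 1-form of \eqref{M'} as a sum of differentials of known functionals, so that path independence is automatic. Put $\omega:=\sum_{i=0}^m\omega_i$, $\varphi:=\sum_{i=0}^m\varphi_i$ and $\omega(t)=\omega+dd^c\varphi(t)$. Along any smooth path we have $\dot\varphi=\sum_i\dot\varphi_i$, and $[\omega]=c_1(X)$ gives $\int_X\omega^n=V$. The K-energy of the class $c_1(X)$ has variation
\begin{align*}
\frac{d}{dt}\mathcal{M}_{\operatorname{cscK}}(\omega(t),\omega)=-\frac{1}{V}\int_X\dot\varphi\,(S_{\omega(t)}-\underline{S})\frac{\omega(t)^n}{n!},
\end{align*}
where the average scalar curvature is $\underline{S}=n\,c_1(X)\cdot[\omega]^{n-1}/[\omega]^n=n$. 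Hence the second integral in \eqref{M'} equals
\begin{align*}
\mathcal{M}_{\operatorname{cscK}}(\omega+dd^c\varphi,\omega)+\frac{1}{V}\int_0^s\int_X\dot\varphi\,\frac{\omega(t)^n}{n!}\wedge dt .
\end{align*}
I would double-check the normalization of $\mathcal{M}_{\operatorname{cscK}}$ here, since it must match the one used in \eqref{our}.

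By \eqref{MongeAmpere}, applied to the Kähler form $\omega$ and potential $\varphi$, the last term is $\frac{1}{V(n+1)!}\braket{\varphi,\dots,\varphi}_{(\omega,\dots,\omega)}$. The first integral in \eqref{M'} is likewise $-\sum_i\frac{1}{V_i(n+1)!}\braket{\varphi_i,\dots,\varphi_i}_{(\omega_i,\dots,\omega_i)}$. Since every term is now a path-independent functional of the endpoints, this proves both the well-definedness and the formula in (i). The argument is parallel to Proposition \ref{welldefinedMabuchi} but simpler, because no mixed terms appear.

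For (ii), I would compute the slope of each term in the formula from (i) along the rays induced by the $\Phi_i$.

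\textbf{Monge--Amp\`ere terms.} For each $i$, by \cite[Lemma 3.9]{boucksom2019} (equivalently Lemma \ref{slopeofAnalyticDelignePairing}), the slope of $\braket{\varphi_i,\dots,\varphi_i}_{(\omega_i,\dots,\omega_i)}$ is $(\mathcal{L}_i^{n+1})$. The potential $\varphi=\sum\varphi_i$ is the ray induced by the smooth positive metric $\sum\Phi_i$ on $\mathcal{L}:=\sum_i\mathcal{L}_i$. This is an ample test configuration for $(X,-K_X)$ on the same smooth $\mathcal{X}$, and its pairing has slope $(\mathcal{L}^{n+1})$.

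\textbf{K-energy term.} By \cite[Theorem 3.6]{boucksom2019} applied to $(\mathcal{X},\mathcal{L})$, the slope of $\mathcal{M}_{\operatorname{cscK}}$ is $\operatorname{M}^{\operatorname{NA}}(\mathcal{X},\mathcal{L})$. With $\underline{S}=n$ and volume $V$, this equals
\begin{align*}
-\frac{n}{V(n+1)!}(\mathcal{L}^{n+1})+\frac{1}{Vn!}\Big(\big(K_{\mathcal{X}/\mathbb{P}^1}+(\mathcal{X}(0)_{\operatorname{red}}-\mathcal{X}(0))\big)\cdot\mathcal{L}^n\Big).
\end{align*}

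\textbf{Combining.} Adding $\frac{1}{V(n+1)!}(\mathcal{L}^{n+1})$ to the $-\frac{n}{V(n+1)!}(\mathcal{L}^{n+1})$ above gives $-\frac{n-1}{V(n+1)!}(\mathcal{L}^{n+1})$. This does not obviously match the $\frac{1}{Vn!}(\mathcal{L}\cdot\mathcal{L}^n)=\frac{n+1}{V(n+1)!}(\mathcal{L}^{n+1})$ demanded by \eqref{slopeforHWNDF}, and this coefficient bookkeeping is the step I expect to be the main obstacle. The likely resolution is a sign or normalization issue: either $\operatorname{M}^{\operatorname{NA}}$ enters with $+\frac{\underline S}{(n+1)!V}(\mathcal{L}^{n+1})$ as in the paper's definition (the $-K_X$ coefficient there is positive), or the $-\frac{1}{V}\dot\varphi\,\omega^n$ term is absorbed differently. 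I would fix conventions by comparing with the paper's displayed $\operatorname{M}^{\operatorname{NA}}$, which contains $+\frac{n(-K_X\cdot\alpha^{n-1})/\alpha^n}{(n+1)!}$. With that sign the total coefficient is $\frac{n+1}{V(n+1)!}$, exactly as needed.

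To finish, I would rewrite $(K_{\mathcal{X}/\mathbb{P}^1}\cdot\mathcal{L}^n)+(\mathcal{L}^{n+1})$ as $\big((K_{\mathcal{X}/\mathbb{P}^1}+\mathcal{L})\cdot\mathcal{L}^n\big)$, which yields \eqref{slopeforHWNDF}.
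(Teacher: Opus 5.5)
Your proof is essentially the paper's: you split the $\dot\varphi\,(n+1-S)$ integrand into the K-energy of $\sum_i\omega_i$ (with $\underline S=n$) plus Monge--Amp\`ere energies, and then read off slopes from \cite[Theorem A, Lemma 3.9]{boucksom2019}. The positive sign you finally adopt is correct, and you can derive it from your own variation formula rather than fix it by convention: the $+\underline S\,\dot\varphi\,\omega(t)^n/(Vn!)$ part integrates to $+\frac{n}{V(n+1)!}\langle\varphi,\dots,\varphi\rangle_{(\omega,\dots,\omega)}$, whose slope is $+\frac{n}{V(n+1)!}(\mathcal{L}^{n+1})$, so your first display of $\operatorname{M}^{\operatorname{NA}}$ simply has a sign slip. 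The one step the paper includes that you skip is checking that the right-hand side of (i) is unchanged under $\varphi_i\mapsto\varphi_i+C_i$. This follows from $\langle\varphi+C,\dots,\varphi+C\rangle_{(\omega,\dots,\omega)}=\langle\varphi,\dots,\varphi\rangle_{(\omega,\dots,\omega)}+C(n+1)\int_X\omega^n$, since the shifts then contribute $-\sum_iC_i/n!+\sum_iC_i/n!=0$. You need this for $\mathcal{M}'$ to be well defined on $\prod_i\mathcal{K}([\omega_i])$ rather than only on potentials.
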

		\begin{proof}
			By (\ref{M'}),\begin{align*}
				&\mathcal{M}'((\omega_i(s)),(\omega_i(0))) :=\frac{1}{V}\int_0^s\int_M \sum_{i=0}^m\dot{\varphi_i}(t) (n-S_{\sum_{i=0}^m\omega_i(t),J})\frac{\left(\sum_{i=0}^m\omega_i(t)\right)^n}{n!}\wedge dt\\&-\sum_{i=0}^m\int_0^s\int_X\dot{\varphi_i}(t)\frac{1}{V_i} \frac{\omega_i(t)^n}{n!}\wedge dt +\frac{1}{V}\int_0^s\int_M \sum_{i=0}^m\dot{\varphi_i}(t)\frac{\left(\sum_{i=0}^m\omega_i(t)\right)^n}{n!}\wedge dt.
			\end{align*}The first term is just the classical K-energy functional $\mathcal{M}_{\operatorname{cscK}}\left(\sum_{i=0}^m(\omega_i+dd^c\varphi_i),\sum_{i=0}^m\omega_i\right)$, which is independent of the choice of path. For the rest, by (\ref{MongeAmpere}), we have\begin{small}
				\begin{align*}
					&\int_0^s\int_X\dot{\varphi_i}(t)\frac{1}{V_i} \frac{\omega_i(t)^n}{n!}\wedge dt=\frac{1}{V_i(n+1)!}\braket{\varphi_i(s),\cdots,\varphi_i(s)}_{(\omega_i,\cdots,\omega_i)}\ \text{and}\\
					&\frac{1}{V}\int_0^s\int_M \sum_{i=0}^m\dot{\varphi_i}(t)\frac{\left(\sum_{i=0}^m\omega_i(t)\right)^n}{n!}\wedge dt=\frac{1}{V(n+1)!}\bigg<\sum_{i=0}^m\varphi_{i}(s),\cdots,\sum_{i=0}^m\varphi_{i}(s)\bigg>_{\left(\sum_{i=0}^m\omega_i,\cdots,\sum_{i=0}^m\omega_i\right)},
				\end{align*}
			\end{small}thus $\mathcal{M}'$ is independent of choice of path in $\prod_{i=0}^m\mathcal{H}(\omega_i)$. Since $\mathcal{M}_{\operatorname{cscK}}$ is invariant under the
			addition of constants, it remains only to examine the analytic Deligne pairing terms. Using
			\begin{align*}
				\braket{\varphi+C,\cdots,\varphi+C}_{(\omega,\cdots,\omega)}
				=
				\braket{\varphi,\cdots,\varphi}_{(\omega,\cdots,\omega)}+C(n+1)\int_X\omega^n,
			\end{align*}
			we see that the resulting constant contributions cancel in the
			definition of $\mathcal{M}'$. Hence,
			\begin{align*}
				\mathcal{M}'(\varphi_0+C_0,\cdots,\varphi_m+C_m)
				=
				\mathcal{M}'(\varphi_0,\cdots,\varphi_m),
			\end{align*}
			so $\mathcal{M}'$ descends to
			$\prod_{i=0}^m\mathcal{K}([\omega_i])$. This proves (i).
			
			The proof of (ii) is exactly same as Theorem \ref{our2alg}. At this time, the smooth strictly positive metrics $\Phi_i$ induces a smooth strictly positive metric $\sum_{i=0}^m\Phi_i$ on $\sum_{i=0}^m\mathcal{L}_i$. By \cite[Theorem A]{boucksom2019}\begin{align*}
				&\lim_{t\to\infty}\frac{\mathcal{M}_{\operatorname{cscK}}\left(\sum_{i=0}^m(\omega_i+dd^c\varphi_i(t)),\sum_{i=0}^m\omega_i\right)}{t}\\=&\ \frac{1}{Vn!}\left((K_{\mathcal{X}/\mathbb{P}}+\mathcal{X}(0)_{\operatorname{red}}-\mathcal{X}(0))\cdot \left(\sum_{i=0}^m\mathcal{L}_i\right)^n\right)+\frac{n}{V(n+1)!}\left(\left(\sum_{i=0}^m\mathcal{L}_i\right)^{n+1}\right).
			\end{align*}Since the definition of analytic Deligne pairing identifies with the Deligne pairing defined in \cite[Section 1.3]{boucksom2019} by \cite[(1.5)]{boucksom2019}. The proof is thus completed by applying above and invoking \cite[Lemma 3.9]{boucksom2019}.
		\end{proof}
		
		\begin{remark}\label{remarkM'}
			The slope (\ref{slopeforHWNDF}) is just the Donaldson-Futaki invariant $\operatorname{DF}(\mathcal{X},(\mathcal{L}_i))$ given by \cite{hultgren2019coupled} (see also \cite[Proposition 19]{datar2020coupled}). This DF invariant differs from our asymptotic slope (\ref{ourslopealg}) for two main reasons. First, the coupled K-energy and the ccscK equation single out $\omega_0$, making them less symmetric than the cKE setting. Second, \cite{hultgren2019coupled} and \cite{datar2020coupled} employ a $\mathbb{Q}$-decomposition test configuration $(\mathcal{X},(\mathcal{L}_i))$ that shares a common total space $\mathcal{X}$. In contrast to our tuple of test configurations $(\mathcal{X}_i,\mathcal{L}_i)$, their configuration is specifically designed to compute the slope of $\mathcal{M}_{\operatorname{cscK}}\left(\sum_{i=0}^m\omega_i+dd^c\varphi_i,\sum_{i=0}^m\omega_i\right)$. 
			It is not clear whether both $\mathcal{M}_{\operatorname{ccscK}}$ and $\mathcal{M}'$ are equally effective in the variational approach for the cKE equation. 
			The critical point of $\mathcal{M}'$ should be\begin{align*}
				\frac{1}{V}(n+1-S_{\sum_{i=0}^m\omega_i,J})\left(\sum_{i=0}^m\omega_i\right)^n=\frac{\omega_i^n}{V_i}.
			\end{align*}It is not so clear how the cKE equation is related to this equation. The convexity of $\mathcal{M}'$ is also unclear. 
			Actually we will see in the next section that\begin{align*}
				&\mathcal{M}'((\omega_{i}+dd^c\varphi_i),(\omega_{i}))-\mathcal{N}_{\operatorname{cKE}}((\omega_{i}+dd^c\varphi_i),(\omega_{i}))\\=&\  \mathcal{M}_{\operatorname{cscK}}\left(\sum_{i=0}^m(\omega_i+dd^c\varphi_i),\sum_{i=0}^m\omega_i\right)-\mathcal{N}_{\operatorname{KE}}\left(\sum_{i=0}^m(\omega_i+dd^c\varphi_i),\sum_{i=0}^m\omega_i\right)
			\end{align*}
			where $\mathcal{N}_{\operatorname{cKE}}$ is defined in the next subsection and $\mathcal{N}_{\operatorname{KE}}$ is its uncoupled case.
	\end{remark}
	
	\boldmath
	\subsection{An auxiliary energy functional for coupled KE equation}
	\unboldmath
	\label{subsec:3}
	
	As mentioned above, the +1 case cKE equation (\ref{cKE1}) does not always have a solution, just like the classical K\"{a}hler-Einstein problem. To find a condition for the existence of solutions, a variational approach using the following useful energy functional was considered.

\begin{definition}[\hspace*{-0.6ex}\text{\cite[Deﬁnition 2.4]{hultgren2019coupled}}]\label{NcKE}
	Let $\omega_i\in \alpha_i$ be as above. Since $\sum_{i=0}^m\omega_i\in c_1(X)$, by Yau’s theorem we can ﬁnd a K\"{a}hler form
$\Omega$ in $c_1(X)$ such that $\Omega:\operatorname{Ric}(\Omega)=\sum_{i=0}^m\omega_i$. We deﬁne an energy functional $\mathcal{N}_{\operatorname{cKE}}$ on $\prod_{i=0}^m\mathcal{K}([\omega_i])$ by\begin{equation}\label{Ding}
		\begin{aligned}
			&\mathcal{N}_{\operatorname{cKE}}((\omega_i+dd^c\varphi_i),(\omega_i)):=-\sum_{i=0}^m\frac{1}{V_i}\operatorname{E}_{\omega_i}(\varphi_i)-\frac{1}{n!}\log\left(\int_Xe^{-\sum_{i=0}^m\varphi_i}\Omega^n\right).
		\end{aligned}
	\end{equation}
\end{definition}

The functional $\mathcal{N}_{\operatorname{cKE}}$ can be extended to the finite energy class $\prod_{i=0}^{m}\mathcal{E}^1(\omega_i)$ (see e.g., \cite[Deﬁnition 10.13]{GZ17} for the definition), and has the following properties:

\begin{theorem}[\normalfont{\small\hspace*{-0.6ex}\text{\cite[Theorem 4.3, Lemma 4.2, Theorem 2.5]{hultgren2019coupled}, \cite[Theorem 1.2]{Takahashi}, \cite[Theorem 1.3]{fujita2024coupleddingstabilityyautiandonaldson}}}]\label{DingBoundedBelow}\mbox{}$(\mathrm{i})$ $\mathcal{N}_{\operatorname{cKE}}$ is convex along weak geodesics;\\
	$(\mathrm{ii})$ Any minimizer of $\mathcal{N}_{\operatorname{cKE}}$ is smooth, K\"{a}hler and cKE;\\
	$(\mathrm{iii})$ If $([\omega_i])$ admits a coupled K\"{a}hler–Einstein metric, then $\mathcal{N}_{\operatorname{cKE}}$ is bounded below on $\prod_{i=0}^m\mathcal{K}([\omega_i])$. Actually, this existence is equivalent to a certain coercivity condition on $\mathcal{N}_{\operatorname{cKE}}$ and also to a uniform stability condition
		(see \cite[Theorem 1.3]{fujita2024coupleddingstabilityyautiandonaldson} for the precise statement).
\end{theorem}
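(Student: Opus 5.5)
The plan is to prove the three parts in order, using the classical Ding-functional arguments applied factor by factor. The main simplification is that the only genuinely coupled term, $-\frac{1}{n!}\log\int_X e^{-\sum_i\varphi_i}\Omega^n$, depends on the tuple only through $\psi:=\sum_{i=0}^m\varphi_i$. This $\psi$ is a $\sum_i\omega_i$-psh function, and $\sum_i\omega_i=\operatorname{Ric}(\Omega)$. So $e^{-\psi}\Omega^n$ can be read as $e^{-\Psi}$ for a singular hermitian metric $\Psi$ on $-K_X$ with curvature current $\sum_i(\omega_i+dd^c\varphi_i)\ge 0$.

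For (i), take weak geodesics $\varphi_i(t)$ in $\mathcal{H}^{1,\bar 1}(\omega_i)$ and their $\mathbb{S}^1$-invariant complexifications $\Phi_i$ on $X\times\{e^{-1}<|\tau|<1\}$.
\begin{itemize}
\item By Proposition \ref{ddcofanalyticDelignepairingweak} (iii) and the identity $\operatorname{E}_{\omega_i}=\frac{1}{n+1}\braket{\cdot,\dots,\cdot}$, each $\operatorname{E}_{\omega_i}(\varphi_i(t))$ is affine in $t$, since $(\operatorname{pr}_1^*\omega_i+dd^c\Phi_i)^{n+1}=0$.
\item $\sum_i\Phi_i$ is $\operatorname{pr}_1^*\operatorname{Ric}(\Omega)$-psh in $(x,\tau)$, i.e. a psh family of metrics on $-K_X$.
\item Berndtsson's subharmonicity theorem (positivity of the direct image of $K_{X\times A/A}+(-K_X)$, here a trivial line bundle) gives that $\tau\mapsto-\log\int_X e^{-\sum_i\Phi_i(\cdot,\tau)}\Omega^n$ is subharmonic.
\item By $\mathbb{S}^1$-invariance this function is convex in $t$, which proves (i).
\end{itemize}
For finite-energy tuples I would approximate by decreasing smooth endpoints as in the proof of Theorem \ref{finiteenergyextension}. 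Then I would use the $d_1$-continuity of $\operatorname{E}_{\omega_i}$ and the continuity of $\int e^{-\psi}\Omega^n$ under $d_1$-convergence, which follows from Skoda uniform integrability since $\mathcal{E}^1$ functions have zero Lelong numbers.

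For (ii), I would first derive the Euler--Lagrange equation at a minimizer $(\varphi_i)\in\prod_i\mathcal{E}^1(X,\omega_i)$.
\begin{itemize}
\item Perturb one factor at a time by the BBGZ projection trick: replace $\varphi_i$ by $P_{\omega_i}(\varphi_i+sv)$ for $v\in C^\infty(X)$.
\item Use differentiability of $s\mapsto\operatorname{E}_{\omega_i}(P_{\omega_i}(\varphi_i+sv))$, with derivative $\int v\,\omega_{i,\varphi_i}^n$.
\item Use the inequality $P(\varphi_i+sv)\le\varphi_i+sv$ for the log term.
\end{itemize}
This yields, for every $i$,
\begin{align*}
\frac{(\omega_i+dd^c\varphi_i)^n}{V_i}=\frac{e^{-\sum_j\varphi_j}\Omega^n}{\int_Xe^{-\sum_j\varphi_j}\Omega^n}=:\mu ,
\end{align*}
possibly up to a harmless normalizing constant coming from the $\frac{1}{n!}$ convention.

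Regularity then proceeds as follows.
\begin{itemize}
\item By Skoda, $\mu$ has $L^p$ density for some $p>1$, so Ko{\l}odziej gives each $\varphi_i$ bounded and Hölder continuous.
\item Then $\mu$ has Hölder density. The equations are decoupled Monge--Ampère equations with a common right-hand side, so one can bootstrap simultaneously through Evans--Krylov/Schauder to smoothness and strict positivity of all $\omega_i+dd^c\varphi_i$.
\item Applying $-dd^c\log$ to the equation gives $\operatorname{Ric}(\omega_{i,\varphi_i})=\operatorname{Ric}(\Omega)+dd^c\sum_j\varphi_j=\sum_j\omega_{j,\varphi_j}$, which is the cKE system.
\end{itemize}
I expect this regularity bootstrap, together with justifying the projection derivative in the coupled setting, to be the main technical obstacle. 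I would follow Hultgren--Witt Nyström closely here.

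For (iii), let $(\varphi_i')$ be the cKE potentials and $(\varphi_i'')$ arbitrary smooth potentials, and join them by weak geodesics $\varphi_i(t)$. By (i), $t\mapsto\mathcal{N}_{\operatorname{cKE}}(\varphi(t))$ is convex. Its right derivative at $t=0$ equals
\begin{align*}
\sum_i\int_X\dot\varphi_i(0)\left(\frac{e^{-\sum_j\varphi'_j}\Omega^n}{n!\int e^{-\sum_j\varphi'_j}\Omega^n}-\frac{\omega_{i,\varphi'_i}^n}{V_i}\right),
\end{align*}
computed using the Lipschitz bound on weak geodesics and the derivative formula in Proposition \ref{ddcofanalyticDelignepairingweak} (ii). This expression vanishes by the cKE equation read in its Monge--Ampère form. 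Hence $\mathcal{N}_{\operatorname{cKE}}(\varphi'')\ge\mathcal{N}_{\operatorname{cKE}}(\varphi')$, so the functional is bounded below. The stated equivalences with coercivity and uniform (Ding-type) stability I would not reprove; I would cite \cite[Theorem 1.3]{fujita2024coupleddingstabilityyautiandonaldson} for them.
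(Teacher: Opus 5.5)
The paper gives no proof of this theorem; it quotes it from Hultgren--Witt Nystr\"om, Takahashi and Fujita--Hashimoto. Your (i) and (iii) are the standard arguments and are correct in outline: (i) uses affinity of $\operatorname{E}_{\omega_i}$ plus Berndtsson's subharmonicity for the log term, and (iii) uses convexity plus a vanishing one-sided first variation at the cKE tuple, which is exactly how the paper proves Theorem \ref{our6} from \eqref{minimize1}. Your projection argument for the Euler--Lagrange equation is also fine. The genuine gap is the regularity step in (ii). After Ko{\l}odziej you only know that each $\varphi_i$ is H\"older and that $\mu$ has a H\"older density, and Evans--Krylov cannot be started from there. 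It presupposes a bound on $\Delta_{\omega_i}\varphi_i$, and for the complex Monge--Amp\`ere equation a H\"older (even smooth) right-hand side does not by itself give interior $C^{1,1}$ bounds (B{\l}ocki's analogue of Pogorelov's example). On a compact manifold that bound must come from an Aubin--Yau type estimate, and this is exactly where the system is not ``decoupled''. Write the $i$-th equation as $(\omega_i+dd^c\varphi_i)^n=e^{h_i-\psi}\omega_i^n$ with $h_i$ smooth and $\psi=\sum_j\varphi_j$. The naive estimate then needs $\Delta_{\omega_i}(h_i-\psi)$ bounded below, which involves $\operatorname{tr}_{\omega_i}(\omega_j+dd^c\varphi_j)$ for $j\neq i$. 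In any case it is an a priori estimate for smooth solutions, so it cannot be applied directly to your weak minimizer.

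Here is one way to close the gap. Freeze $\psi$, which is bounded and $\sum_j\omega_j$-psh, and approximate it by smooth $\psi_k\downarrow\psi$ with $dd^c\psi_k\geq -C\omega_i$. Solve $(\omega_i+dd^cu_k)^n=c_ke^{h_i-\psi_k}\omega_i^n$ by Yau, and apply the maximum principle to $\log\operatorname{tr}_{\omega_i}(\omega_i+dd^cu_k)+\psi_k-Au_k$. This is P\u{a}un's trick: the bad term is absorbed via $\operatorname{tr}_{\omega_i}\eta\leq\operatorname{tr}_{\omega_i}(\omega_i+dd^cu_k)\cdot\operatorname{tr}_{\omega_i+dd^cu_k}\eta$ for $\eta=dd^c\psi_k+C\omega_i\geq 0$. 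It yields $\operatorname{tr}_{\omega_i}(\omega_i+dd^cu_k)\leq Ce^{-\psi_k}$ uniformly in $k$. Ko{\l}odziej's stability together with uniqueness for the Calabi--Yau equation identifies $\lim u_k$ with $\varphi_i$, so $\Delta_{\omega_i}\varphi_i\in L^\infty$. Only then can you apply Evans--Krylov (in the version needing only a Laplacian bound and a H\"older right-hand side) and then bootstrap with Schauder.

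Separately, the $\frac{1}{n!}$ is not a harmless constant. With the weights as printed in Definition \ref{NcKE}, replacing every $\varphi_i$ by $\varphi_i+c$ changes $\mathcal{N}_{\operatorname{cKE}}$ by $-(m+1)\bigl(1-\frac{1}{n!}\bigr)c$. So for $n\geq 2$ the functional is unbounded below and has no minimizer. Consistently, the right derivative you display in (iii) equals $\bigl(\frac{1}{n!}-1\bigr)\sum_i\int_X\dot\varphi_i(0)\,d\mu$ rather than $0$. Fix the weights to $\frac{1}{n!V_i}$ before running (ii) and (iii); this is the normalization implicit in the proof of Theorem \ref{ourt2}.
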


\boldmath
\section{Relationship between $\mathcal{M}_{\operatorname{ccscK}},\ \mathcal{M}'$ and $\mathcal{N}_{\operatorname{cKE}}$ on Fano setting}\label{sec:4}
\unboldmath
	\subsection{Relation with entropy}
	The relative entropy $\operatorname{Ent}(\mu,\nu)$ for probability measure $\mu,\nu$ is defined by
	\begin{align}\label{Entropy}
		\operatorname{Ent}(\mu,\nu)=\frac{1}{n!}\int_X\log\left(\frac{d\mu}{d\nu}\right)d\mu.
	\end{align}
	Then 
	\begin{align}\label{Entropy2}
		\operatorname{Ent}(\mu,\nu) \ge 0
	\end{align}
	because for the convex function $g = x\log x$ we use Jensen's inequality with $f = \frac{d\mu}{d\nu}$ to show
	\begin{eqnarray*}
		\int_X \log\left(\frac{d\mu}{d\nu}\right)\,d\mu &=& \int_X \log\left(\frac{d\mu}{d\nu}\right)\,\frac{d\mu}{d\nu}\,d\nu = \int_X g\circ f d\nu\\
		&\ge& g\left( \int_X f d\nu  \right) = g\left(\int_X d\nu\right) = g(1) = 0.
	\end{eqnarray*}
\begin{theorem}Suppose that $c_1(X)=\sum_{i=0}^m[\omega_i]$ and 
let $\Omega$ be the K\"{a}hler form in $c_1(X)$ such that $\Omega:\operatorname{Ric}(\Omega)=\sum_{i=0}^m\omega_i$.
Then
	\begin{equation}
		\begin{aligned}
			&\mathcal{M}_{\operatorname{ccscK}}((\omega_{i}+dd^c\varphi_i),(\omega_{i}))\\&=\mathcal{N}_{\operatorname{cKE}}((\omega_{i}+dd^c\varphi_i),(\omega_{i}))+\operatorname{Ent}\left(\frac{(\omega_0+dd^c\varphi_0)^n}{\int_X\omega_0^n},\frac{e^{-\sum_{i=0}^m\varphi_i}\Omega^n}{\int_Xe^{-\sum_{i=0}^m\varphi_i}\Omega^n}\right)-R,
		\end{aligned}
	\end{equation}holds for any $\varphi_i\in \mathcal{H}(\omega_i)$, where $R:=\operatorname{Ent}\left(\frac{\omega_0^n}{\int_X\omega_0^n},\frac{\Omega^n}{\int_X\Omega^n}\right)$. In particular $\mathcal{M}_{\operatorname{ccscK}}((\omega_{i}+dd^c\varphi_i),(\omega_{i}))\geq\mathcal{N}_{\operatorname{cKE}}((\omega_{i}+dd^c\varphi_i),(\omega_{i}))-R$.
\end{theorem}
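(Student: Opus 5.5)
The plan is to prove the identity by differentiating both sides along a path. Both sides are path-independent functionals on $\prod_{i=0}^m\mathcal{H}(\omega_i)$: the left side by Proposition \ref{welldefinedMabuchi}, and the right side by its explicit definition. Both vanish at $\varphi_i=0$: there $\mathcal{M}_{\operatorname{ccscK}}=0$ and $\mathcal{N}_{\operatorname{cKE}}=0$, since $\int_X\Omega^n$ normalizes the log term against the $\frac1{n!}\log$ constant, and the entropy term equals $R$ by definition. It therefore suffices to take a smooth path $\varphi_i(t)$ in $\prod\mathcal{H}(\omega_i)$ starting at $0$ and check that the $t$-derivatives of both sides agree. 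The derivative of the left side is the defining $1$-form in (\ref{coupledMabuchiDef}). Under $c_1(X)=\sum_i[\omega_i]$ we have $\hat S=-n\frac{(K_X+\sum_i[\omega_i])\cdot[\omega_0]^{n-1}}{[\omega_0]^n}=0$, which removes the topological constant.

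Next I would compute the variation of each piece on the right, writing $\omega_i(t)=\omega_i+dd^c\varphi_i(t)$ and $\Delta$ for the Laplacian of $\omega_0(t)$.
\begin{itemize}
\item The Monge--Amp\`ere part of $\mathcal{N}_{\operatorname{cKE}}$ contributes $-\sum_{i=0}^m\frac{1}{V_i}\int_X\dot\varphi_i\,\omega_i(t)^n$, normalized with the same $n!$ convention as (\ref{coupledMabuchiDef}) and (\ref{Entropy}); I would fix this bookkeeping once at the start.
\item For the entropy, set $Z(t)=\int_Xe^{-\sum\varphi_i}\Omega^n$ and $\mu_t=\omega_0(t)^n/V_0$, and expand
\begin{align*}
n!\operatorname{Ent}=\int_X\log\frac{\omega_0(t)^n}{\Omega^n}\,d\mu_t+\int_X\Big(\sum_{i=0}^m\varphi_i\Big)d\mu_t+\log Z(t)-\log V_0 .
\end{align*}
The $\log Z$ term has derivative exactly cancelling that of the $\log$ term in $\mathcal{N}_{\operatorname{cKE}}$.
\item The first integral differentiates, after integrating by parts and using $\int\Delta\dot\varphi_0\,\omega_0(t)^n=0$, to $\frac1{V_0}\int_X\dot\varphi_0\,\Delta\log\frac{\omega_0(t)^n}{\Omega^n}\,\omega_0(t)^n$. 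Here $\Delta\log\frac{\omega_0(t)^n}{\Omega^n}=-S_{\omega_0(t)}+\operatorname{tr}_{\omega_0(t)}\operatorname{Ric}(\Omega)=-S_{\omega_0(t)}+\operatorname{tr}_{\omega_0(t)}\sum_i\omega_i$.
\item The second integral gives $\frac1{V_0}\int_X\sum_i\dot\varphi_i\,\omega_0(t)^n+\frac1{V_0}\int_X\dot\varphi_0\,\Delta\big(\sum_i\varphi_i\big)\omega_0(t)^n$, again after integrating by parts.
\end{itemize}

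Combining the two traces, $\operatorname{tr}_{\omega_0(t)}\sum_i\omega_i+\Delta\sum_i\varphi_i=\operatorname{tr}_{\omega_0(t)}\sum_i\omega_i(t)$. Multiplying by $\omega_0(t)^n/n!$ turns this into $\sum_{i=0}^m\omega_i(t)\wedge\omega_0(t)^{n-1}/(n-1)!$. So the $\dot\varphi_0$-part of $\frac{d}{dt}(\mathcal{N}_{\operatorname{cKE}}+\operatorname{Ent})$ is
\[
-\frac1{V_0}\int_X\dot\varphi_0\Big(S_{\omega_0(t)}\frac{\omega_0(t)^n}{n!}-\sum_{i=0}^m\omega_i(t)\wedge\frac{\omega_0(t)^{n-1}}{(n-1)!}\Big)
\]
plus the term $\frac1{V_0 n!}\int_X\dot\varphi_0\,\omega_0(t)^n$, and the latter cancels the $i=0$ Monge--Amp\`ere term from $\mathcal{N}_{\operatorname{cKE}}$ because $V_0=\int_X\omega_0^n$. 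For $i\ge1$, what remains is $\int_X\dot\varphi_i\big(\frac{\omega_0(t)^n}{V_0n!}-\frac{\omega_i(t)^n}{V_in!}\big)$, which is exactly the first line of (\ref{coupledMabuchiDef}). This matches the defining $1$-form term by term, and integrating from $0$ gives the identity. The inequality $\mathcal{M}_{\operatorname{ccscK}}\ge\mathcal{N}_{\operatorname{cKE}}-R$ then follows from (\ref{Entropy2}).

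The main obstacle is bookkeeping rather than any conceptual step. The critical points are the scalar-curvature identity $\Delta\log(\omega^n/\Omega^n)=-S_\omega+\operatorname{tr}_\omega\operatorname{Ric}(\Omega)$, the sign conventions for $dd^c$ versus $\Delta$, and the consistency of the $1/n!$ and $V_i$ normalizations between $\mathcal{N}_{\operatorname{cKE}}$, (\ref{Entropy}) and (\ref{coupledMabuchiDef}). I would verify these carefully first; in particular, it is the $i=0$ Monge--Amp\`ere term that must cancel the extra $\dot\varphi_0\,\omega_0(t)^n$ term coming from $\int\sum\varphi_i\,d\mu_t$.
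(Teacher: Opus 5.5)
Your proposal is correct and follows essentially the paper's route: differentiate both sides, use $\hat S=0$, integrate the entropy variation by parts with $dd^c\log(\omega_0(t)^n/\Omega^n)=\operatorname{Ric}(\Omega)-\operatorname{Ric}(\omega_0(t))$ and $\operatorname{Ric}(\Omega)=\sum_i\omega_i$, then match the defining one-form of $\mathcal{M}_{\operatorname{ccscK}}$ term by term. Your base-point check is something the paper's derivative-only proof omits, but note that $\mathcal{N}_{\operatorname{cKE}}(0)=-\frac{1}{n!}\log\int_X\Omega^n$ vanishes only under the normalization $\int_X\Omega^n=1$. The paper uses this normalization tacitly later (e.g.\ $\int_X g_t^*\Omega^n=1$), and without it the identity holds only up to that constant.
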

\begin{proof}
	Note that under the topological assumption $c_1(X)=\sum_{i=0}^m[\omega_i]$, we have\begin{align*}
		&\frac{d}{dt}\bigg|_{t=0}\mathcal{M}_{\operatorname{ccscK}}\left((\omega_{i}+dd^c\varphi_i+tdd^cv_i),(\omega_{i})\right)\\&=-\sum_{i=1}^m\int_Xv_i\left(\frac{1}{V_i} \frac{(\omega_i+dd^c\varphi_i)^n}{n!}- \frac{1}{V_0}\frac{(\omega_0+dd^c\varphi_0)^n}{n!} \right) \\& - \frac{1}{V_0}\int_M v_0 \left(S_{\omega_0+dd^c\varphi_0,J}\frac{(\omega_0+dd^c\varphi_0)^n}{n!} - \sum_{i=0}^m(\omega_i+dd^c\varphi_i) \wedge \frac{(\omega_0+dd^c\varphi_0)^{n-1}}{(n-1)!} \right).
	\end{align*}So it is equivalent to show that\begin{align*}
		&-\sum_{i=1}^m\int_Xv_i\left(\frac{1}{V_i} \frac{(\omega_i+dd^c\varphi_i)^n}{n!}- \frac{1}{V_0}\frac{(\omega_0+dd^c\varphi_0)^n}{n!} \right) 
		\\& - \frac{1}{V_0}\int_M v_0 \left(S_{\omega_0+dd^c\varphi_0,J}\frac{(\omega_0+dd^c\varphi_0)^n}{n!} 
		- \sum_{i=0}^m(\omega_i+dd^c\varphi_i) \wedge \frac{(\omega_0+dd^c\varphi_0)^{n-1}}{(n-1)!} \right)\\&+\sum_{i=0}^m\int_Xv_i\frac{1}{V_i} \frac{(\omega_i+dd^c\varphi_i)^n}{n!}+\frac{1}{n!}\frac{d}{dt}\bigg|_{t=0}\log\left(\int_Xe^{-\sum_{i=0}^m(\varphi_i+tv_i)}\Omega^n\right)
	\end{align*}equals to  $\frac{d}{dt}\bigg|_{t=0}\operatorname{Ent}\left(\frac{(\omega_0+dd^c\varphi_0+tdd^cv_0)^n}{\int_X\omega_0^n},\frac{e^{-\sum_{i=0}^m(\varphi_i+tv_i)}\Omega^n}{\int_Xe^{-\sum_{i=0}^m(\varphi_i+tv_i)}\Omega^n}\right)$, i.e.\begin{align*}
		&\frac{d}{dt}\bigg|_{t=0}\operatorname{Ent}\left(\frac{(\omega_0+dd^c\varphi_0+tdd^cv_0)^n}{\int_X\omega_0^n},\frac{e^{-\sum_{i=0}^m(\varphi_i+tv_i)}\Omega^n}{\int_Xe^{-\sum_{i=0}^m(\varphi_i+tv_i)}\Omega^n}\right)
		\\&=\sum_{i=0}^m\int_Xv_i\frac{1}{V_0} \frac{(\omega_0+dd^c\varphi_0)^n}{n!}-\frac{\sum_{i=0}^m\int_Xv_ie^{-\sum_{i=0}^m\varphi_i}\Omega^n}{n!\int_Xe^{-\sum_{i=0}^m\varphi_i}\Omega^n}
		\\& - \frac{1}{V_0}\int_M v_0 \left(S_{\omega_0+dd^c\varphi_0,J}\frac{(\omega_0+dd^c\varphi_0)^n}{n!} 
		- \sum_{i=0}^m(\omega_i+dd^c\varphi_i) \wedge \frac{(\omega_0+dd^c\varphi_0)^{n-1}}{(n-1)!} \right).
	\end{align*}The left hand side is\begin{align*}
		&\frac{d}{dt}\bigg|_{t=0}\operatorname{Ent}\left(\frac{(\omega_0+dd^c\varphi_0+tdd^cv_0)^n}{\int_X\omega_0^n},\frac{e^{-\sum_{i=0}^m(\varphi_i+tv_i)}\Omega^n}{\int_Xe^{-\sum_{i=0}^m(\varphi_i+tv_i)}\Omega^n}\right)
		\\&=\frac{d}{dt}\bigg|_{t=0}\int_X\log\left(\frac{(\omega_0+dd^c\varphi_0+tdd^cv_0)^n}{\int_X\omega_0^n}\bigg/\frac{e^{-\sum_{i=0}^m(\varphi_i+tv_i)}\Omega^n}{\int_Xe^{-\sum_{i=0}^m(\varphi_i+tv_i)}\Omega^n}\right)\frac{(\omega_0+dd^c\varphi_0+tdd^cv_0)^n}{n!\int_X\omega_0^n}
		\\&=\frac{d}{dt}\bigg|_{t=0}\int_X\log\left(\frac{(\omega_0+dd^c\varphi_0+tdd^cv_0)^n}{\int_X\omega_0^n}\bigg/\frac{e^{-\sum_{i=0}^m(\varphi_i+tv_i)}\Omega^n}{\int_Xe^{-\sum_{i=0}^m(\varphi_i+tv_i)}\Omega^n}\right)\frac{(\omega_0+dd^c\varphi_0)^n}{n!\int_X\omega_0^n}\\&\quad + \frac{d}{dt}\bigg|_{t=0}\int_X\log\left(\frac{(\omega_0+dd^c\varphi_0)^n}{\int_X\omega_0^n}\bigg/\frac{e^{-\sum_{i=0}^m\varphi_i}\Omega^n}{\int_Xe^{-\sum_{i=0}^m\varphi_i}\Omega^n}\right)\frac{(\omega_0+dd^c\varphi_0+tdd^cv_0)^n}{n!\int_X\omega_0^n}.
	\end{align*}Note that\begin{align*}
		&\frac{d}{dt}\bigg|_{t=0}\log\left(\frac{(\omega_0+dd^c\varphi_0+tdd^cv_0)^n}{\int_X\omega_0^n}\bigg/\frac{e^{-\sum_{i=0}^m(\varphi_i+tv_i)}\Omega^n}{\int_Xe^{-\sum_{i=0}^m(\varphi_i+tv_i)}\Omega^n}\right)\\&=\frac{d}{dt}\bigg|_{t=0}\log\left(\frac{(\omega_0+dd^c\varphi_0+tdd^cv_0)^n}{e^{-\sum_{i=0}^m(\varphi_i+tv_i)}\Omega^n}\right)+\frac{d}{dt}\bigg|_{t=0}\log\left(\int_Xe^{-\sum_{i=0}^m(\varphi_i+tv_i)}\Omega^n\right)\\&=\frac{d}{dt}\bigg|_{t=0}\left[\log\left(\frac{(\omega_0+dd^c\varphi_0+tdd^cv_0)^n}{\Omega^n}\right)+\sum_{i=0}^m(\varphi_i+tv_i)+\log\left(\int_Xe^{-\sum_{i=0}^m(\varphi_i+tv_i)}\Omega^n\right)\right]\\&=\sum_{i=0}^mv_i+\frac{\Omega^n}{(\omega_0+dd^c\varphi_0)^n}\frac{ndd^cv_0\wedge(\omega_0+dd^c\varphi_0)^{n-1}}{\Omega^n}-\frac{\int_X\left(\sum_{i=0}^mv_i\right)e^{-\sum_{i=0}^m\varphi_i}\Omega^n}{\int_Xe^{-\sum_{i=0}^m\varphi_i}\Omega^n}\\&=\sum_{i=0}^mv_i+\frac{ndd^cv_0\wedge(\omega_0+dd^c\varphi_0)^{n-1}}{(\omega_0+dd^c\varphi_0)^n}-\frac{\int_X\left(\sum_{i=0}^mv_i\right)e^{-\sum_{i=0}^m\varphi_i}\Omega^n}{\int_Xe^{-\sum_{i=0}^m\varphi_i}\Omega^n},
	\end{align*}thus\begin{align*}
		&\frac{d}{dt}\bigg|_{t=0}\operatorname{Ent}\left(\frac{(\omega_0+dd^c\varphi_0+tdd^cv_0)^n}{\int_X\omega_0^n},\frac{e^{-\sum_{i=0}^m(\varphi_i+tv_i)}\Omega^n}{\int_Xe^{-\sum_{i=0}^m(\varphi_i+tv_i)}\Omega^n}\right)
		\\&=\frac{d}{dt}\bigg|_{t=0}\int_X\log\left(\frac{(\omega_0+dd^c\varphi_0+tdd^cv_0)^n}{\int_X\omega_0^n}\bigg/\frac{e^{-\sum_{i=0}^m(\varphi_i+tv_i)}\Omega^n}{\int_Xe^{-\sum_{i=0}^m(\varphi_i+tv_i)}\Omega^n}\right)\frac{(\omega_0+dd^c\varphi_0)^n}{n!\int_X\omega_0^n}\\&\quad + \int_X\log\left(\frac{(\omega_0+dd^c\varphi_0)^n}{\int_X\omega_0^n}\bigg/\frac{e^{-\sum_{i=0}^m\varphi_i}\Omega^n}{\int_Xe^{-\sum_{i=0}^m\varphi_i}\Omega^n}\right)\frac{dd^cv_0\wedge(\omega_0+dd^c\varphi_0)^{n-1}}{(n-1)!\int_X\omega_0^n}
		\\&=\int_X\left(\sum_{i=0}^mv_i+\frac{ndd^cv_0\wedge(\omega_0+dd^c\varphi_0)^{n-1}}{(\omega_0+dd^c\varphi_0)^n}-\frac{\int_X\left(\sum_{i=0}^mv_i\right)e^{-\sum_{i=0}^m\varphi_i}\Omega^n}{\int_Xe^{-\sum_{i=0}^m\varphi_i}\Omega^n}\right)\frac{(\omega_0+dd^c\varphi_0)^n}{n!\int_X\omega_0^n}\\&\quad  + \int_Xv_0dd^c\log\left(\frac{(\omega_0+dd^c\varphi_0)^n}{\int_X\omega_0^n}\bigg/\frac{e^{-\sum_{i=0}^m\varphi_i}\Omega^n}{\int_Xe^{-\sum_{i=0}^m\varphi_i}\Omega^n}\right)\wedge\frac{(\omega_0+dd^c\varphi_0)^{n-1}}{(n-1)!\int_X\omega_0^n}
		\\&=\sum_{i=0}^m\int_Xv_i\frac{(\omega_0+dd^c\varphi_0)^n}{n!\int_X\omega_0^n}+\int_X\frac{dd^cv_0\wedge(\omega_0+dd^c\varphi_0)^{n-1}}{(n-1)!\int_X\omega_0^n}-\frac{\int_X\left(\sum_{i=0}^mv_i\right)e^{-\sum_{i=0}^m\varphi_i}\Omega^n}{n!\int_Xe^{-\sum_{i=0}^m\varphi_i}\Omega^n}\\&\quad + \int_Xv_0\left(\operatorname{Ric}(\Omega)-\operatorname{Ric}(\omega_0+dd^c\varphi_0)+\sum_{i=0}^mdd^c\varphi_i\right)\wedge\frac{(\omega_0+dd^c\varphi_0)^{n-1}}{(n-1)!V_0}
		\\&=\sum_{i=0}^m\int_Xv_i\frac{(\omega_0+dd^c\varphi_0)^n}{n!V_0}-\frac{\int_X\left(\sum_{i=0}^mv_i\right)e^{-\sum_{i=0}^m\varphi_i}\Omega^n}{n!\int_Xe^{-\sum_{i=0}^m\varphi_i}\Omega^n}\\&\quad + \int_Xv_0\left(\sum_{i=0}^m\omega_i-\operatorname{Ric}(\omega_0+dd^c\varphi_0)+\sum_{i=0}^mdd^c\varphi_i\right)\wedge\frac{(\omega_0+dd^c\varphi_0)^{n-1}}{(n-1)!V_0}
		\\&=\sum_{i=0}^m\int_Xv_i\frac{(\omega_0+dd^c\varphi_0)^n}{n!V_0}-\frac{\int_X\left(\sum_{i=0}^mv_i\right)e^{-\sum_{i=0}^m\varphi_i}\Omega^n}{n!\int_Xe^{-\sum_{i=0}^m\varphi_i}\Omega^n}\\&\quad + \sum_{i=0}^m\int_Xv_0\left(\omega_i+dd^c\varphi_i\right)\wedge\frac{(\omega_0+dd^c\varphi_0)^{n-1}}{(n-1)!V_0}-\int_Xv_0S_{\omega_0+dd^c\varphi_0,J}\frac{(\omega_0+dd^c\varphi_0)^{n}}{n!V_0}.
	\end{align*}The first statement of the theorem follows from this. The second statement follows from (\ref{Entropy2}).
\end{proof}The same method can be used to prove the following identity, which can also be derived from the relationship between $\mathcal{M}_{\operatorname{cscK}}$ and $\mathcal{N}_{\operatorname{KE}}$ in the classical setting.
\begin{theorem}Suppose that $c_1(X)=\sum_{i=0}^m[\omega_i]$ and 
let $\Omega$ be the K\"{a}hler form in $c_1(X)$ such that $\Omega:\operatorname{Ric}(\Omega)=\sum_{i=0}^m\omega_i$.
Then
	\begin{align*}
		&\mathcal{M}'((\omega_{i}+dd^c\varphi_i),(\omega_{i}))\\&=\mathcal{N}_{\operatorname{cKE}}((\omega_{i}+dd^c\varphi_i),(\omega_{i}))+\operatorname{Ent}\left(\frac{\left(\sum_{i=0}^m\omega_i+dd^c\varphi_i\right)^n}{\int_X\left(\sum_{i=0}^m\omega_i\right)^n},\frac{e^{-\sum_{i=0}^m\varphi_i}\Omega^n}{\int_Xe^{-\sum_{i=0}^m\varphi_i}\Omega^n}\right)-R'
	\end{align*}holds for any $\varphi_i\in \mathcal{H}(\omega_i)$, where $R':=\operatorname{Ent}\left(\frac{\left(\sum_{i=0}^m\omega_i\right)^n}{\int_X\left(\sum_{i=0}^m\omega_i\right)^n},\frac{\Omega^n}{\int_X\Omega^n}\right)$.
\end{theorem}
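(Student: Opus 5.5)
Both sides vanish at $\varphi_i=0$: the left side because $\mathcal{M}'$ is zero at the base point, and the right side because $\mathcal{N}_{\operatorname{cKE}}=-\frac{1}{n!}\log\int_X\Omega^n=0$ once $\Omega$ is normalized as in the previous theorem, while the entropy term equals $R'$ exactly. So I would argue as in the previous theorem: show that the first variations of the two sides agree in every direction $(v_0,\dots,v_m)$ at every point $(\varphi_i)\in\prod\mathcal{H}(\omega_i)$. Since $\prod\mathcal{H}(\omega_i)$ is path-connected and the straight segment $t\mapsto(t\varphi_i)$ stays inside it, equality of the differentials gives the identity.

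To keep the notation light, set $\omega:=\sum_i\omega_i\in c_1(X)$, $\varphi:=\sum_i\varphi_i$, $v:=\sum_i v_i$ and $\omega_\varphi:=\omega+dd^c\varphi$. From (\ref{M'}), the derivative of $\mathcal{M}'$ is
$$-\sum_i\int_X v_i\frac{\omega_i(\varphi_i)^n}{V_in!}+\frac1V\int_X v\,(n+1-S_{\omega_\varphi})\frac{\omega_\varphi^n}{n!}.$$
The derivative of $\mathcal{N}_{\operatorname{cKE}}$ is
$$-\sum_i\int_X v_i\frac{\omega_i(\varphi_i)^n}{V_in!}+\frac{1}{n!}\frac{\int_X v\,e^{-\varphi}\Omega^n}{\int_X e^{-\varphi}\Omega^n}.$$
Their difference therefore involves only $\varphi$ and $v$, which also explains the identity stated in Remark \ref{remarkM'}. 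It remains to check that this difference is the derivative of the entropy term.

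I would differentiate $\operatorname{Ent}\big(\omega_{\varphi+tv}^n/V,\;e^{-\varphi-tv}\Omega^n/\int_X e^{-\varphi-tv}\Omega^n\big)$ exactly as in the preceding proof, with $(\omega_0,\varphi_0,v_0,V_0)$ replaced by $(\omega,\varphi,v,V)$. Note that $\int_X\omega^n=V$ because $\omega\in c_1(X)$. The steps are as follows.
\begin{enumerate}
\item Split the derivative into the variation of the logarithm and the variation of the measure.
\item For the logarithm, use
$$\frac{d}{dt}\log\frac{\omega_{\varphi+tv}^n}{\Omega^n}=\frac{n\,dd^cv\wedge\omega_\varphi^{n-1}}{\omega_\varphi^n}.$$
This contributes $\int_X v\,\omega_\varphi^n/(n!V)$, plus a total $dd^c$ term whose integral vanishes, minus the normalization term $\frac{\int_X v\,e^{-\varphi}\Omega^n}{n!\int_X e^{-\varphi}\Omega^n}$.
\item For the measure, integrate by parts to move $dd^c$ onto the log-density. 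Then use $dd^c\log(\omega_\varphi^n/\Omega^n)=\operatorname{Ric}(\Omega)-\operatorname{Ric}(\omega_\varphi)$ together with $\operatorname{Ric}(\Omega)=\omega$, which gives
$$\int_X v\,\big(\omega_\varphi-\operatorname{Ric}(\omega_\varphi)\big)\wedge\frac{\omega_\varphi^{n-1}}{(n-1)!V}=\frac1V\int_X v\,(n-S_{\omega_\varphi})\frac{\omega_\varphi^n}{n!}.$$
\end{enumerate}
Adding the three contributions gives exactly $\frac1V\int_X v(n+1-S_{\omega_\varphi})\frac{\omega_\varphi^n}{n!}-\frac{\int_X v\,e^{-\varphi}\Omega^n}{n!\int_X e^{-\varphi}\Omega^n}$, which is the required difference.

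The main obstacle is bookkeeping rather than analysis. One has to track the $1/n!$ in the definition (\ref{Entropy}) of $\operatorname{Ent}$ and the normalization by $V$, and confirm that the trace identity $n\,\omega_\varphi\wedge\omega_\varphi^{n-1}=n\,\omega_\varphi^n$ produces the constant $n+1$ with the same normalization as in (\ref{M'}). Everything is smooth, so no regularity issues arise.
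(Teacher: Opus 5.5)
Your proof is correct, but it is not the argument the paper writes down. It is the alternative the paper only mentions (``the same method can be used'').

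You work at the level of first variations. The terms $-\sum_i\frac{1}{V_in!}\int_X v_i(\omega_i+dd^c\varphi_i)^n$ cancel between $\mathcal{M}'$ and $\mathcal{N}_{\operatorname{cKE}}$. You then check directly that $\frac1V\int_X v\,(n+1-S_{\omega_\varphi})\frac{\omega_\varphi^n}{n!}-\frac{\int_X v e^{-\varphi}\Omega^n}{n!\int_X e^{-\varphi}\Omega^n}$ is the derivative of the entropy term, and that computation checks out.

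The paper instead does the cancellation at the level of functionals. By the pairing formula of Proposition \ref{our5}(i), the terms $\frac{1}{V_i(n+1)!}\langle\varphi_i,\ldots,\varphi_i\rangle_{(\omega_i,\ldots,\omega_i)}$ cancel against those in $\mathcal{N}_{\operatorname{cKE}}$. What remains is $\mathcal{M}_{\operatorname{cscK}}-\mathcal{N}_{\operatorname{KE}}$ for the single class $c_1(X)$ evaluated at $\sum_i\varphi_i$. The classical relation between the Mabuchi functional, the Ding functional and the entropy is then quoted from Tian and BBEGZ.

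The paper's route is shorter and shows that the statement is the classical identity in disguise. Yours is self-contained because it re-derives that identity. It also does not need Proposition \ref{our5}(i), and it shows along the way that the defining $1$-form of $\mathcal{M}'$ is exact.

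Two normalizations should be stated explicitly. Do not attribute them to the previous theorem, which does not impose them.

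First, the base-point check needs $\int_X\Omega^n=1$. A K\"ahler form in $c_1(X)$ has $\int_X\Omega^n=V$, so you must rescale the volume form $\Omega^n$; this does not change $\operatorname{Ric}(\Omega)$. Otherwise the identity holds only up to the additive constant $-\frac{1}{n!}\log\int_X\Omega^n$. The paper relies on the same convention implicitly.

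Second, your formula for the derivative of $\mathcal{N}_{\operatorname{cKE}}$ requires reading $\operatorname{E}_{\omega_i}$ in Definition \ref{NcKE} as $\frac{1}{(n+1)!}\langle\varphi_i,\ldots,\varphi_i\rangle_{(\omega_i,\ldots,\omega_i)}$, as the paper's proofs do. With the normalization of Example \ref{MAfunctional}, the $\sum_i$ terms would not cancel when $n\ge 2$.
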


\begin{proof}
	By Proposition \ref{our5}, we have\begin{align*}
		&\mathcal{M}'((\omega_{i}+dd^c\varphi_i),(\omega_{i}))-\mathcal{N}_{\operatorname{cKE}}((\omega_{i}+dd^c\varphi_i),(\omega_{i}))=\mathcal{M}_{\operatorname{cscK}}\left(\sum_{i=0}^m(\omega_i+dd^c\varphi_i),\sum_{i=0}^m\omega_i\right)\\&-\sum_{i=0}^m\frac{1}{V_i(n+1)!}\braket{\varphi_{i},\cdots,\varphi_{i}}_{(\omega_i,\cdots,\omega_i)}+\frac{1}{V(n+1)!}\bigg<\sum_{i=0}^m\varphi_{i},\cdots,\sum_{i=0}^m\varphi_{i}\bigg>_{\left(\sum_{i=0}^m\omega_i,\cdots,\sum_{i=0}^m\omega_i\right)}\\&+\sum_{i=0}^m\frac{1}{(n+1)!V_i}\sum_{j=0}^n\int_X\varphi_i\omega_i^j\wedge(\omega_i+dd^c\varphi_i)^{n-j}+\frac{1}{n!}\log\left(\int_Xe^{-\sum_{i=0}^m\varphi_i}\Omega^n\right)\\ = & \ \mathcal{M}_{\operatorname{cscK}}\left(\sum_{i=0}^m(\omega_i+dd^c\varphi_i),\sum_{i=0}^m\omega_i\right)\\&+\frac{1}{V(n+1)!}\bigg<\sum_{i=0}^m\varphi_{i},\cdots,\sum_{i=0}^m\varphi_{i}\bigg>_{\left(\sum_{i=0}^m\omega_i,\cdots,\sum_{i=0}^m\omega_i\right)}+\frac{1}{n!}\log\left(\int_Xe^{-\sum_{i=0}^m\varphi_i}\Omega^n\right)\\ =&\  \mathcal{M}_{\operatorname{cscK}}\left(\sum_{i=0}^m(\omega_i+dd^c\varphi_i),\sum_{i=0}^m\omega_i\right)-\mathcal{N}_{\operatorname{KE}}\left(\sum_{i=0}^m(\omega_i+dd^c\varphi_i),\sum_{i=0}^m\omega_i\right)\\=&\ \operatorname{Ent}\left(\frac{\left(\sum_{i=0}^m\omega_i+dd^c\varphi_i\right)^n}{\int_X\left(\sum_{i=0}^m\omega_i\right)^n},\frac{e^{-\sum_{i=0}^m\varphi_i}\Omega^n}{\int_Xe^{-\sum_{i=0}^m\varphi_i}\Omega^n}\right)-R'.
	\end{align*}The final step comes from the relationship between $\mathcal{M}_{\operatorname{cscK}}$ and $\mathcal{N}_{\operatorname{KE}}$ in the classical KE case (see for example \cite[(5.15)]{tian1997kahler} or \cite[Lemma 4.4]{BBEGZ19}).
\end{proof}
\begin{proof}[Proof of Corollary \ref{our4}]
	If $([\omega_i])$ admits a coupled K\"{a}hler-Einstein metric, by Theorem \ref{DingBoundedBelow} (iii) and \ref{ourt2}, we know $\mathcal{N}_{\operatorname{cKE}}(\cdot,(\omega_i))\geq C$ and so are $\mathcal{M}_{\operatorname{ccscK}}(\cdot,(\omega_i))$ and $\mathcal{M}'(\cdot,(\omega_i))$. It means\begin{align*}
		\lim_{t\to\infty}\frac{\mathcal{M}_{\operatorname{ccscK}}\left((\omega_i(t)),(\omega_i)\right)}{t}\geq 0,\ \lim_{t\to\infty}\frac{\mathcal{M}'\left((\omega_i(t)),(\omega_i)\right)}{t}\geq 0
	\end{align*}for any smooth ray $(\omega_i(t))$ in $\prod_{i=0}^{m}\mathcal{H}(\omega_i)$. In particular, if we apply (\ref{his2}) on it, then
	\begin{align*}
		&-\sum_{i=0}^m\frac{1}{V_i(n+1)!}(\mathcal{L}_{i}^{n+1})+\frac{1}{V_0n!}\left(\left(K_{\mathcal{X}_0/\mathbb{P}^1}+\sum_{i=0}^m\mathcal{L}_i\right)\cdot\mathcal{L}_0^n\right)+\frac{1}{V_0n!}((\mathcal{X}_0(0)_{\operatorname{red}}-\mathcal{X}_0(0))\cdot\mathcal{L}_0^n)
	\end{align*}is non-negative, for any smooth ample algebraic test conﬁgurations $(\mathcal{X}_i,\mathcal{L}_i)$ of $(X,L_i)$ that dominates $X\times\mathbb{P}^1$. Then (\ref{his1}) follows since $((\mathcal{X}_0(0)_{\operatorname{red}}-\mathcal{X}_0(0))\cdot\mathcal{L}_0^n)\leq 0$. Similarly using Proposition \ref{our5} (ii), we know that\begin{align*}
		&-\sum_{i=0}^m\frac{1}{V_i(n+1)!}(\mathcal{L}_{i}^{n+1})+\frac{1}{Vn!}\left(\left(K_{\mathcal{X}/\mathbb{P}^1}+\sum_{i=0}^m\mathcal{L}_i\right)\cdot\left(\sum_{i=0}^m\mathcal{L}_i\right)^n\right)\\&+\frac{1}{Vn!}\left((\mathcal{X}(0)_{\operatorname{red}}-\mathcal{X}(0))\cdot\left(\sum_{i=0}^m\mathcal{L}_i\right)^n\right)\geq 0,
	\end{align*}which yields the result (\ref{his2}).
\end{proof}
\subsection{Along holomorphic vector fields}
\begin{theorem}Suppose that $c_1(X)=\sum_{i=0}^m[\omega_i]$. Let $g_t,t\in [0,1]$ be a smooth path in $\operatorname{Aut}(X)$, with $g_0=\operatorname{id}$, and $\omega_{i,t}:=g_t^*\omega_i$, then we have\begin{align}\label{6}
		\mathcal{M}_{\operatorname{ccscK}}((g_t^*\omega_i),(\omega_{i}))=\mathcal{N}_{\operatorname{cKE}}((g_t^*\omega_i),(\omega_{i}))=\mathcal{M}'((g_t^*\omega_i),(\omega_i)).
	\end{align}
\end{theorem}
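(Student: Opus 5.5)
The plan is to deduce \eqref{6} from Theorem \ref{ourt2}. The main point is that along a curve generated by automorphisms, the two entropy terms in \eqref{my3} and \eqref{my3'} stay equal to their values at $t=0$, namely $R$ and $R'$. Fix $t$ and write $g=g_t$. Since $g$ is homotopic to the identity, $g^*\omega_i\in[\omega_i]$, so we may write $g^*\omega_i=\omega_i+dd^c\varphi_i$ with $\varphi_i\in\mathcal{H}(\omega_i)$ unique up to constants. All three functionals are defined on $\prod_i\mathcal{K}([\omega_i])$, so the normalisation of the $\varphi_i$ is irrelevant. For $\mathcal{M}_{\operatorname{ccscK}}$ and $\mathcal{M}'$ this is checked in Proposition \ref{welldefinedMabuchi} and Proposition \ref{our5}(i). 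For $\mathcal{N}_{\operatorname{cKE}}$ it is taken from Definition \ref{NcKE}; this is the one place to double-check, since it depends on how $\mathcal{N}_{\operatorname{cKE}}$ behaves when constants are added to the $\varphi_i$. Applying Theorem \ref{ourt2}(i),(ii), it then suffices to prove
\begin{align*}
\operatorname{Ent}\left(\frac{(g^*\omega_0)^n}{\int_X\omega_0^n},\nu_g\right)=R,\qquad \operatorname{Ent}\left(\frac{(\sum_i g^*\omega_i)^n}{\int_X(\sum_i\omega_i)^n},\nu_g\right)=R',
\end{align*}
where $\nu_g:=e^{-\sum_i\varphi_i}\Omega^n/\int_Xe^{-\sum_i\varphi_i}\Omega^n$.

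The key step is to identify $\nu_g$. Since $\operatorname{Ric}$ is natural under biholomorphisms,
\begin{align*}
\operatorname{Ric}(g^*\Omega)=g^*\operatorname{Ric}(\Omega)=\sum_i g^*\omega_i=\sum_i\omega_i+dd^c\sum_i\varphi_i=\operatorname{Ric}(\Omega)+dd^c\sum_i\varphi_i .
\end{align*}
Hence $-dd^c\log\big(g^*\Omega^n/\Omega^n\big)=dd^c\sum_i\varphi_i$ (with the paper's normalisation of $dd^c$ and $\operatorname{Ric}$). Since $X$ is compact, $\log\big(g^*\Omega^n/\Omega^n\big)+\sum_i\varphi_i$ is pluriharmonic and therefore constant. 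So $g^*\Omega^n=c\,e^{-\sum_i\varphi_i}\Omega^n$ for some $c>0$. Normalising, and using $\int_Xg^*\Omega^n=\int_X\Omega^n$, we get $\nu_g=g^*\Omega^n/\int_X\Omega^n$, which is the pullback of the reference measure in $R$ and $R'$.

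It remains to use invariance of relative entropy under a diffeomorphism. For probability measures, $\operatorname{Ent}(g^*\mu,g^*\nu)=\operatorname{Ent}(\mu,\nu)$, because $d(g^*\mu)/d(g^*\nu)=(d\mu/d\nu)\circ g$ and we can change variables. The first arguments are $g^*(\omega_0^n/\int\omega_0^n)$ and $g^*\big((\sum_i\omega_i)^n/\int(\sum_i\omega_i)^n\big)$, since volumes are cohomological. So the two entropies equal $R$ and $R'$. Consequently $\mathcal{M}_{\operatorname{ccscK}}=\mathcal{N}_{\operatorname{cKE}}=\mathcal{M}'$ at $(g_t^*\omega_i)$ for each $t$, which is \eqref{6}.

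I expect the only real obstacle to be the constant $c$, that is, checking that the normalisations of $\operatorname{Ric}$ and $dd^c$ are compatible so that the exponent is exactly $-\sum_i\varphi_i$ and not a multiple of it. This is the same convention already used in the proof of Theorem \ref{ourt2}, so I would check it against the identity $\operatorname{Ric}(\Omega)-\operatorname{Ric}(\omega_0+dd^c\varphi_0)+\sum_i dd^c\varphi_i=dd^c\log(\cdots)$ used there. The smoothness of the path $g_t$ plays no role beyond guaranteeing $g_t^*\omega_i\in[\omega_i]$.
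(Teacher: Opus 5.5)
Your proposal is correct and follows essentially the paper's own argument. You apply Theorem \ref{ourt2}, use naturality of the Ricci form plus constancy of pluriharmonic functions to identify the normalized measure $e^{-\sum_i\varphi_i}\Omega^n/\int_X e^{-\sum_i\varphi_i}\Omega^n$ with $g_t^*\Omega^n/\int_X\Omega^n$, and conclude by change-of-variables invariance of relative entropy that the entropy terms equal $R$ and $R'$. The only cosmetic difference is that the paper fixes the additive constants in advance, via $\varphi_{i,t}=\psi_{i,t}+\frac{1}{m+1}\log\int_X e^{-\sum_j\psi_{j,t}}\Omega^n$, to compare $e^{-\sum_i\varphi_{i,t}}\Omega^n$ with $g_t^*\Omega^n$ directly. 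You instead carry the constant $c$ and absorb it when normalizing, which is slightly cleaner since the second argument of the entropy is normalized anyway.
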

\begin{proof}
	By ddbar-Lemma, there exists smooth families of $\omega_i$-psh functions $\psi_{i,t},i=0,...,m, t\in [0,1]$ such that\begin{align*}
		\omega_{i,t}=\omega_i+dd^c\psi_{i,t}.
	\end{align*}
	
	Set\begin{align}\label{0}
		\varphi_{i,t}=\psi_{i,t}+\frac{1}{m+1}\operatorname{log}\left(\int_Xe^{-\sum_{i=0}^m\psi_{i,t}}\Omega^n\right),
	\end{align}then\begin{align*}
		\omega_{i,t}=\omega_i+dd^c\varphi_{i,t},\ \ \int_Xe^{-\sum_{i=0}^m\varphi_{i,t}}\Omega^n=1.
	\end{align*}
	Note that \begin{align*}
		-\frac{\sqrt{-1}}{2\pi}\partial\bar{\partial} \log(e^{-\sum_{i=0}^m\varphi_{i,t}}\Omega^n)=\sum_{i=0}^m(\omega_i+dd^c\varphi_{i,t})=g^*_t\left(\sum_{i=0}^m\omega_i\right)=-\frac{\sqrt{-1}}{2\pi}\partial\bar{\partial} \log\left(g_t^*\Omega^n\right)
	\end{align*}and $\int_Xe^{-\sum_{i=0}^m\varphi_{i,t}}\Omega^n=1=\int_Xg_t^*\Omega^n$, thus\begin{align}\label{1}
		e^{-\sum_{i=0}^m\varphi_{i,t}}\Omega^n=g_t^*\Omega^n.
	\end{align}
	
	Now consider the path $(\varphi_{i,t})\subset \prod_{i=0}^m\mathcal{H}(\omega_i)$. Along this path, we have\begin{align*}
		&\operatorname{Ent}\left(\frac{(\omega_0+dd^c\varphi_{0,t})^n}{\int_X\omega_0^n},\frac{e^{-\sum_{i=0}^m\varphi_{i,t}}\Omega^n}{\int_Xe^{-\sum_{i=0}^m\varphi_{i,t}}\Omega^n}\right) \overset{\text{(\ref{1})}}{=} \operatorname{Ent}\left(\frac{g_t^*\omega_0^n}{V_0},\frac{g_t^*\Omega^n}{1}\right)
		\\=&\ \int_X\log\left(\frac{g_t^*\omega_0^n}{V_0g_t^*\Omega^n}\right)\frac{g_t^*\omega_0^n}{V_0}=\int_X\log\left(\frac{\omega_0^n}{V_0\Omega^n}\right)\frac{\omega_0^n}{V_0}=\operatorname{Ent}\left(\frac{\omega_0^n}{\int_X\omega_0^n},\frac{\Omega^n}{\int_X\Omega^n}\right).
	\end{align*}Thus by Theorem \ref{ourt2},\begin{align*}
		&\mathcal{M}_{\operatorname{ccscK}}((\omega_{i}+dd^c\varphi_{i,t}),(\omega_{i}))=\mathcal{N}_{\operatorname{cKE}}((\omega_{i}+dd^c\varphi_{i,t}),(\omega_{i}))\\&+\frac{1}{n!}\operatorname{Ent}\left(\frac{(\omega_0+dd^c\varphi_{0,t})^n}{\int_X\omega_0^n},\frac{e^{-\sum_{i=0}^m\varphi_{i,t}}\Omega^n}{\int_Xe^{-\sum_{i=0}^m\varphi_{i,t}}\Omega^n}\right)-\frac{1}{n!}\operatorname{Ent}\left(\frac{\omega_0^n}{\int_X\omega_0^n},\frac{\Omega^n}{\int_X\Omega^n}\right)\\&=\mathcal{N}_{\operatorname{cKE}}((\omega_{i}+dd^c\varphi_{i,t}),(\omega_{i})).
	\end{align*}
	
	Proof of $\mathcal{N}_{\operatorname{cKE}}((g_t^*\omega_i),(\omega_{i}))=\mathcal{M}'((g_t^*\omega_i),(\omega_i))$ is same, since\begin{align*}
		\operatorname{Ent}\left(\frac{\left(\sum_{i=0}^m\omega_i+dd^c\varphi_{i,t}\right)^n}{\int_X\left(\sum_{i=0}^m\omega_i\right)^n},\frac{e^{-\sum_{i=0}^m\varphi_{i,t}}\Omega^n}{\int_Xe^{-\sum_{i=0}^m\varphi_{i,t}}\Omega^n}\right)=\operatorname{Ent}\left(\frac{g_t^*\left(\sum_{i=0}^m\omega_i\right)^n}{\int_X\left(\sum_{i=0}^m\omega_i\right)^n},\frac{g_t^*\Omega^n}{\int_Xe^{-\sum_{i=0}^m\varphi_{i,t}}\Omega^n}\right)
	\end{align*}is constant, for $\varphi_{i,t}$ as above.
\end{proof}

In particular, take $g_t=\exp(t\xi)$ for real holomorphic vector field $\xi$, then
\begin{align}\label{comparison}
	\frac{d}{dt}\bigg|_{t=0}\mathcal{M}_{\operatorname{ccscK}}((\omega_{i}+dd^c\varphi_{i,t}),(\omega_{i}))=\frac{d}{dt}\bigg|_{t=0}\mathcal{N}_{\operatorname{cKE}}((\omega_{i}+dd^c\varphi_{i,t}),(\omega_{i})). 
\end{align}
{By the definition \ref{coupledMabuchi} of $\mathcal{M}_{\operatorname{ccscK}}$} the left hand side of (\ref{comparison}) is equal to
\begin{align*}
	&-\sum_{i=1}^m\int_X\dot{\varphi_{i}}(0)\left(\frac{1}{V_i} \frac{\omega_i^n}{n!}- \frac{1}{V_0}\frac{\omega_0^n}{n!} \right) \\& - \frac{1}{V_0}\int_M \dot{\varphi_0}(0) \left((S_{\omega_0,J}-\hat{S})\frac{\omega_0^n}{n!} - \sum_{i=0}^m\omega_i(t) \wedge \frac{\omega_0^{n-1}}{(n-1)!} \right)	.
\end{align*}Let us compute $\dot{\varphi_i}(0)$, which satisfies\begin{align}\label{2}
	dd^c\dot{\varphi_i}(0)=-\mathcal{L}_{\xi}\omega_i\overset{\text{Cartan}}{=}-d(\iota_{\xi}\omega_i).
\end{align}Set $\xi^{1,0}=\frac{\xi-\sqrt{-1}J\xi}{2}$ and $\xi^{0,1}=\frac{\xi+\sqrt{-1}J\xi}{2}$, then  $\xi^{1,0}$ is a holomorphic $(1,0)$-vector field. Since $X$ is Fano, $b_1(X)=0$ and $\mbox{}^\exists u_i$ such that\begin{align*}
	\iota_{\xi^{1,0}}\omega_i=\sqrt{-1}\bar{\partial}u_i. 
\end{align*}Thus by (\ref{2}), we have\begin{align*}
	dd^c\dot{\varphi_i}(0)=-d(\iota_{\xi}\omega_i)=-\sqrt{-1}d(\bar{\partial}u_i-\partial\bar{u}_i)=-2\pi dd^c(u_i+\bar{u}_i),
\end{align*}
which means $\dot{\varphi_i}(0)=-2\pi dd^c(u_i+\bar{u}_i)+\operatorname{constant}$ and \begin{align*}
	&\frac{d}{dt}\bigg|_{t=0}\mathcal{M}_{\operatorname{ccscK}}((\omega_{i}+dd^c\varphi_{i,t}),(\omega_{i}))\\ =&\ -2\pi\left(\operatorname{Fut}_{c}(\xi^{1,0})+\overline{\operatorname{Fut}_{c}(\xi^{1,0})}\right).
\end{align*}Here $\operatorname{Fut}_{c}(\xi^{1,0})$ is defined by \cite[Definition 10]{datar2020coupled}.
Especially if for all $\omega_i$, $v$ is a holomorphic Killing vector field. Let $\xi=Jv$ and $\phi_i$ be real function such that $\iota_v\omega_i=d\phi_i$, then $\iota_{\xi^{1,0}}\omega_i=\iota_{(Jv)^{1,0}}\omega_i=\sqrt{-1}(\iota_{v}\omega_i)^{0,1}=\sqrt{-1}\bar{\partial}\phi_i$ and\begin{align*}
	\dot{\varphi_i}(0)=-4\pi\phi_i+\operatorname{constant},
\end{align*}where $\varphi_i(t)$ corresponds to $g_t=\exp(t\xi)$. This means
\begin{proposition}
	\begin{align*}
		&\frac{d}{dt}\bigg|_{t=0}\mathcal{M}_{\operatorname{ccscK}}((\omega_{i}+dd^c\varphi_{i,t}),(\omega_{i}))=-4\pi\operatorname{Fut}_c(\xi^{1,0}),
	\end{align*}where $\operatorname{Fut}_{c}(\xi^{1,0})$ is defined by \cite[Definition 10]{datar2020coupled}.
\end{proposition}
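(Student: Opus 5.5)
We plan to compute the derivative directly from the defining $1$-form of $\mathcal{M}_{\operatorname{ccscK}}$ in Definition \ref{coupledMabuchi}, evaluated along the path $\varphi_{i,t}$ attached to $g_t=\exp(t\xi)$ with $\xi=Jv$, and then to match the result with the coupled Futaki invariant of \cite[Definition 10]{datar2020coupled}. By the computation just preceding the statement, $\dot{\varphi_i}(0)=-4\pi\phi_i+c_i$, where $\iota_v\omega_i=d\phi_i$ and $c_i$ is a constant.

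\textbf{Step 1: the derivative in terms of the $\phi_i$.} By the definition of $\mathcal{M}_{\operatorname{ccscK}}$ (the time derivative of a path integral is its integrand), the derivative at $t=0$ equals
\[
-\sum_{i=1}^m\int_X\dot{\varphi_i}(0)\Big(\tfrac{\omega_i^n}{V_in!}-\tfrac{\omega_0^n}{V_0n!}\Big)-\frac{1}{V_0}\int_X\dot{\varphi_0}(0)\Big((S_{\omega_0}-\hat S)\tfrac{\omega_0^n}{n!}-\sum_{i=0}^m\omega_i\wedge\tfrac{\omega_0^{n-1}}{(n-1)!}\Big).
\]

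\textbf{Step 2: the constants $c_i$ drop out.} Each measure paired against a $\dot{\varphi_i}(0)$ has total mass zero.
\begin{itemize}
\item The differences $\omega_i^n/V_i-\omega_0^n/V_0$ integrate to zero, since both terms are probability measures.
\item The second bracket integrates to zero because of the definition of $\hat S$. Indeed, $\int S_{\omega_0}\omega_0^n/n!=n\,c_1\cdot\alpha_0^{n-1}/n!$, and $\hat S$ is chosen so that the total is zero (the $K_X$ term accounts for $c_1$).
\end{itemize}
This is exactly the well-definedness underlying Proposition \ref{welldefinedMabuchi}. So we may replace $\dot{\varphi_i}(0)$ by $-4\pi\phi_i$.

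\textbf{Step 3: identification with the moment map.} After the substitution, the derivative becomes
\[
\frac{4\pi}{V_0}\Big[\sum_{i\ge1}\int_X\phi_i\Big(\tfrac{V_0}{V_i}\tfrac{\omega_i^n}{n!}-\tfrac{\omega_0^n}{n!}\Big)+\int_X\phi_0\Big((S_{\omega_0}-\hat S)\tfrac{\omega_0^n}{n!}-\sum_{i\ge0}\omega_i\wedge\tfrac{\omega_0^{n-1}}{(n-1)!}\Big)\Big].
\]
This is the moment map expression (\ref{Futaki}) with $H_\zeta=\phi_0$ and $H_{\tilde\zeta_i}=\phi_i$, i.e. the lift of the Killing field $v$. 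The only differences are the $\sum_{i\ge0}$ versus $\sum_{i\ge1}$ convention, since $\omega_0\wedge\omega_0^{n-1}$ just shifts $\hat S$ by $n$, and the sign convention $dH=-\iota\omega$ versus $\iota_v\omega_i=d\phi_i$.

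By \cite[Definition 10, Proposition 11]{datar2020coupled}, $\operatorname{Fut}_c(\xi^{1,0})$ is precisely this quantity, normalized with $\iota_{\xi^{1,0}}\omega_i=\sqrt{-1}\bar\partial\phi_i$ for real $\phi_i$. In that case $u_i=\phi_i$ is real, and the formula $-2\pi(\operatorname{Fut}_c+\overline{\operatorname{Fut}_c})$ derived above collapses to $-4\pi\operatorname{Fut}_c(\xi^{1,0})$. So the cleanest route is to use that preceding formula with $u_i=\bar u_i=\phi_i$, noting that $\operatorname{Fut}_c$ is then real.

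\textbf{Main obstacle.} The expected difficulty is purely bookkeeping: checking that the sign, the factors of $2\pi$, and the normalization $1/V_0$ in \cite[Definition 10]{datar2020coupled} agree with the conventions here. The first thing to try is to verify this on the computation $\dot{\varphi_i}(0)=-2\pi(u_i+\bar u_i)+\text{const}$ above. That formula is the one intended, since the displayed "$dd^c$" there is a typo. Any leftover discrepancy should be absorbed by the definition of $\operatorname{Fut}_c$ as $\frac{1}{V_0}$ times the moment map pairing.
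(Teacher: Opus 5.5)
Your proposal is correct and follows essentially the same route as the paper: compute $\dot{\varphi_i}(0)=-4\pi\phi_i+\text{const}$ along the path attached to $g_t=\exp(t\xi)$, plug it into the defining $1$-form of $\mathcal{M}_{\operatorname{ccscK}}$, and read off $\operatorname{Fut}_c(\xi^{1,0})$ from \cite[Definition 10]{datar2020coupled} by specializing $-2\pi(\operatorname{Fut}_c+\overline{\operatorname{Fut}_c})$ to real $u_i=\phi_i$. Your Step 2 (both brackets have total mass zero, the second by the choice of $\hat S$) makes explicit why the additive constants, including those from the normalization (\ref{0}), can be dropped, which the paper leaves implicit; like the paper, you leave the final sign and normalization match with Datar--Pingali's definition to their conventions rather than checking it.
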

On the other hand in \cite{FutakiZhang2021SCM} a coupled Futaki invariant on Fano manifolds is expressed in the following way.
As before, we assume a decomposition $2\pi c_1(X) = \gamma_0 + \cdots + \gamma_m$ and choose K\"ahler forms
$\omega_i \in \gamma_i$. Since both $\operatorname{Ric}(\omega_i)$ and $\sum_{j=0}^m \omega_j$ represent
$c_1(M)$ there exist real smooth functions $f_i$'s such that 
\begin{equation*}
	\operatorname{Ric}(\omega_i) - i\partial\bar{\partial} f_i =  \sum_{j=0}^m \omega_j, \quad i = 0,\ \cdots,\ m.
\end{equation*}
Since $\partial\bar{\partial}\log e^{f_i}\omega_i^n$ are equal for all $i$, by adding suitable constants to $f_i$'s
we may normalize $f_i$ so that 
\begin{equation}\label{normalization}
	e^{f_0}\omega_0^n= \cdots = e^{f_m}\omega_m^n.
\end{equation}
Notice that this is exactly the volume form $\Omega^n$ 
where $\Omega$ be the K\"{a}hler form in $c_1(X)$ such that $\Omega:\operatorname{Ric}(\Omega)=\sum_{i=0}^m\omega_i$:
\begin{equation}\label{normalization2} \Omega = e^{f_0}\omega_0^n= \cdots = e^{f_m}\omega_m^n.  \end{equation}
Let $\xi$ be a holomorphic vector field on $X$. 
Since a Fano manifold is simply connected there exist complex-valued smooth functions defined up to constant
$u_i$ such that 
\begin{equation*}
	i_\xi\omega_i=\bar{\partial}(\sqrt{-1}u_i).
\end{equation*}
It is shown in \cite[Theorem 1.2]{FutakiZhang2021SCM} that with the normalization 
\eqref{normalization} of $f_i$, the Lie algebra $\mathfrak{h}(X)$ of all holomorphic vector fields is isomorphic to the set of all $(m+1)$-tuples of complex valued smooth functions $(u_0,\cdots,u_m)$ satisfying 
\begin{equation}\label{Th3.3}
	\begin{aligned}
		\mathrm{grad}^\prime_i u_i=\mathrm{grad}^\prime_j u_j,\ \ i, j=0, 1,  \dots, m,\\
		\Delta_{i} u_i+(\mathrm{grad}_{i} u_i)f_i = -\sum\limits_{j=0}^m u_j.
	\end{aligned}
\end{equation}
for $i=0, 1,  \dots, m$ where $\Delta_{i}=-\bar\partial^*_{i}\bar\partial$ with respect $\omega_i$.
Then the coupled Futaki invariant is defined by
\begin{align*}
	\mathrm{Fut}: \mathfrak{h}(X)&\to \mathbb C\nonumber\\
	\xi&\mapsto \sum\limits_{i=0}^m\frac{\int_X u_i\ \omega_i^n}{\int_X \omega_i^n} \label{Fut}
\end{align*}
where $\xi = \mathrm{grad}^\prime_i u_i$ for every $i =0, \dots, m$.
It was also proved that 
\begin{enumerate}
	\item[(1)]$\mathrm{Fut}(\xi)$ is independent of the choice of K\"ahler forms $\omega_i\in \gamma_i$, $i=1, \dots, k$.
	\item[(2)] If $X$ admits coupled K\"ahler-Einstein metrics then $\mathrm{Fut}= 0$.
\end{enumerate}

Note that, from (\ref{normalization2}) and (\ref{Th3.3}), we have
\begin{equation}\label{normalization3}
	\int_X (u_0 + \cdots + u_m)\,\Omega^n = 0.
\end{equation}
Using the computation above, (\ref{normalization3}) and (\ref{Ding}) one can show for a holomorphic Killing vector field $v$ with $\xi = Jv$
$$ \frac{d}{dt}\bigg|_{t=0}\mathcal{N}_{\operatorname{cKE}}((\omega_{i}+dd^c\varphi_{i,t}),(\omega_{i}))
= - \frac{1}{n+1} \sum_{i=0}^m \frac{\int_X u_i\ \omega_i^n}{V_i} = - \frac{1}{n+1}\operatorname{Fut} (\xi).
$$
Hence by (\ref{comparison}) we obtain the following:
\begin{proposition}\label{coincide}
	On a Fano manifold, the coupled Futaki invariants in \cite[(1.9)]{FutakiZhang2021SCM} and \cite[Definition 10]{datar2020coupled} coincide up to a constant.
\end{proposition}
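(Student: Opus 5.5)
The plan is to compare the two derivatives in (\ref{comparison}). The left side is already identified with $-4\pi\operatorname{Fut}_c(\xi^{1,0})$ in the preceding Proposition. The right side will be shown to equal $-\frac{1}{n+1}\operatorname{Fut}(\xi)$. Since (\ref{comparison}) says the two derivatives agree, the two invariants are then proportional with a universal constant depending only on $n$. Their $\mathbb{R}$-linear extensions to $\mathfrak{h}(X)$ also agree, because both invariants are complex linear and are determined by their values on Killing fields.

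First I fix a holomorphic Killing field $v$ for all $\omega_i$, set $\xi=Jv$, $g_t=\exp(t\xi)$, and take the normalized potentials $\varphi_{i,t}$ from (\ref{0}). By construction $\int_Xe^{-\sum_i\varphi_{i,t}}\Omega^n=1$, so the logarithmic term of $\mathcal{N}_{\operatorname{cKE}}$ in (\ref{Ding}) vanishes identically. Hence
\begin{align*}
\frac{d}{dt}\bigg|_{t=0}\mathcal{N}_{\operatorname{cKE}}=-\sum_{i=0}^m\frac{1}{V_i}\int_X\dot{\varphi}_i(0)\,\omega_i^n,
\end{align*}
up to the normalizing factor $n!$ hidden in the convention for $\operatorname{E}$. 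I will read off that factor from (\ref{Ding}) together with the definition of $V_i$.

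Next I insert $\dot\varphi_i(0)=-4\pi\phi_i+c_i$, where the constants $c_i$ are tied together by differentiating the normalization $\int_Xe^{-\sum\varphi_{i,t}}\Omega^n=1$. This gives
\[\sum_i\int_X\dot\varphi_i(0)\,\Omega^n=0.\]
On the other side, (\ref{normalization3}) says that the Futaki–Zhang potentials satisfy $\int_X\sum_iu_i\,\Omega^n=0$. Both $\phi_i$ and $u_i$ are potentials of the same field with respect to $\omega_i$, via $\iota_{\xi^{1,0}}\omega_i=\sqrt{-1}\bar\partial\phi_i$. So $u_i=\phi_i+a_i$ for constants $a_i$, and after the identification of $\xi$ with $\mathrm{grad}'_iu_i$ this becomes $\dot\varphi_i(0)=-4\pi u_i+\text{const}$. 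I will then show that the remaining constants are the same for every $i$. For this I use the second equation of (\ref{Th3.3}): it fixes the $u_i$ jointly, up to a common shift of the tuple, through the term $\sum_ju_j$. Together with $\operatorname{Ric}(\Omega)=\sum\omega_j$ and (\ref{normalization2}), this should match the normalization of $\varphi_{i,t}$.

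With the constants matched, the derivative becomes a multiple of $\sum_i\frac{1}{V_i}\int_Xu_i\omega_i^n=\operatorname{Fut}(\xi)$. Here a leftover common constant contributes $\sum_i$ const, and I must show it vanishes; this follows by integrating (\ref{Th3.3}) against $\Omega^n$. The main obstacle is exactly this bookkeeping of additive constants: the individual $u_i$ are fixed only up to constants, and I must check that the tuple normalization in (\ref{Th3.3}) and (\ref{normalization3}) corresponds precisely to $\sum_i\dot\varphi_i(0)$ having zero $\Omega^n$-mean. Getting the precise factor ($-\frac{1}{n+1}$ versus $4\pi$) is secondary, since the statement only asserts coincidence up to a constant.
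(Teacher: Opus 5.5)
Your route is the paper's: compare the two sides of \eqref{comparison}, take the left side from the preceding Proposition, and compute the right side from \eqref{Ding}. There the logarithmic term is killed by the normalization in \eqref{0}, and the additive constants are controlled by \eqref{normalization3}. The constant bookkeeping, however, contains a false step. The system \eqref{Th3.3} does not fix $(u_0,\dots,u_m)$ up to a common shift. A shift $u_j\mapsto u_j+a_j$ leaves both left-hand sides unchanged and changes $-\sum_j u_j$ by $-\sum_j a_j$. So the actual ambiguity is exactly the tuples with $\sum_j a_j=0$, and a nonzero common shift is precisely what is ruled out. Likewise, the $\varphi_{i,t}$ of \eqref{0} are fixed only up to zero-sum tuples of constants, because each $\psi_{i,t}$ is fixed only up to some $c_i(t)$. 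Hence the individual $b_i$ in $\dot\varphi_i(0)=-4\pi u_i+b_i$ depend on these choices, since one can add any zero-sum tuple. Your planned step of showing that the remaining constants are the same for every $i$ therefore cannot be derived from \eqref{Th3.3}.

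The step is also unnecessary. Since $\frac{1}{V_i}\int_X b_i\,\omega_i^n=b_i$, you get
\[
-\sum_{i=0}^m\frac{1}{V_i}\int_X\dot\varphi_i(0)\,\omega_i^n=4\pi\operatorname{Fut}(\xi)-\sum_{i=0}^m b_i ,
\]
so only $\sum_i b_i$ matters. Integrate $\sum_i\dot\varphi_i(0)=-4\pi\sum_i u_i+\sum_i b_i$ against $\Omega^n$. Your identity $\int_X\sum_i\dot\varphi_i(0)\,\Omega^n=0$ and \eqref{normalization3} then give $\sum_i b_i=0$ directly, which is all you need. There is also no hidden factor $n!$ here: the derivative of $\operatorname{E}_{\omega_i}$ is exactly $\int_X\dot\varphi_i\,\omega_i^n$.

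Separately, your opening claim that both invariants are determined by their values on fields $Jv$ with $v$ Killing is unjustified. Such fields only span the complexified isometry algebra, which can be a proper subalgebra of $\mathfrak{h}(X)$. To cover all of $\mathfrak{h}(X)$, redo the computation for a general $\xi$. Use $-2\pi\bigl(\operatorname{Fut}_c(\xi^{1,0})+\overline{\operatorname{Fut}_c(\xi^{1,0})}\bigr)$ on the left and $\dot\varphi_i(0)=-2\pi(u_i+\bar u_i)+b_i$ on the right, compare real parts, and then use complex linearity. The paper itself only treats the Killing case.
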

\begin{remark}
	Proposition \ref{coincide} can also be derived from \cite[Lemma 18]{datar2020coupled}, where they claim\begin{align*}
		\operatorname{Fut}_c(X,\xi)=\sum_{i=0}^{m}\frac{\int_X u'_i\ \omega_i^n}{\int_X \omega_i^n}-\frac{\int_X\sum_{i=0}^{m}u_i'\ \Omega^n}{\int_X\Omega^n}.
	\end{align*}Here $u_i'$ satisfies $\iota_{\xi}\omega_i=\bar{\partial}(\sqrt{-1}u_i')$. When $u_i'$ is chosen as (\ref{Th3.3}), then by (\ref{normalization3})  $\int_{X}\sum_{j=0}^mu_j'\Omega=0$ and $\operatorname{Fut}_c(X,\xi)=\sum_{i=0}^{m}\frac{\int_X u'_i\ \omega_i^n}{\int_X \omega_i^n}$ equals $\operatorname{Fut}(\xi)$ up to a constant.
\end{remark}
\color{black}
\subsection{Along product test configurations}
Now suppose that $[\omega_i]=L_i$ for ample line bundle $L_i$, $\sum_{i=0}^mL_i=-K_X$, and $(X,(L_i))$ admits a $\mathbb{C}^*$-action. This gives a tuple of product (not compactified) test configuration $(\mathring{\mathcal{X}},(\mathring{\mathcal{L}}_i),\mathring{\mathcal{L}})=(X\times \mathbb{C}^*,\operatorname{pr}^*_1L_0,\cdots,\operatorname{pr}^*_1L_m,\otimes_{i=0}^m\operatorname{pr}_1^*L_i)$. Using the $\mathbb{C}^*$-action we can compactify them by $(\mathcal{X},(\mathcal{L}_i),\mathcal{L})$.

Let $\xi$ be the real holomorphic vector field that generates the $\mathbb{C}^*$-action on $X$ and $g_t=\exp(-t\xi)$. Take $\mathbb{S}^1$-invariant positive metric $h_i$ on $L_i$, such that $\omega_i=dd^ch_i$ is $\mathbb{S}^1$-invariant K\"{a}hler form on $X$. Then $v=J\xi$ is holomorphic Killing vector field for all $\omega_i$.	
\begin{proposition}Let $\omega_{i,t}=g_t^*\omega_i$, then
	\begin{align*}
		\lim_{t\to\infty}\frac{\mathcal{M}_{\operatorname{ccscK}}\left((\omega_{i,t}),(\omega_i)\right)}{t}&=-\sum_{i=0}^m\frac{1}{V_i(n+1)!}(\mathcal{L}_i^{n+1})+\frac{1}{V_0n!}\left(\left(K_{\mathcal{X}/\mathbb{P}^1}+\sum_{i=0}^m\mathcal{L}_i\right)\cdot\mathcal{L}_0^n\right),
	\end{align*}and\begin{align*}
		\lim_{t\to\infty}\frac{\mathcal{M}'\left((\omega_{i,t}),(\omega_i)\right)}{t}&=-\sum_{i=0}^m\frac{1}{V_i(n+1)!}(\mathcal{L}_i^{n+1})+\frac{1}{Vn!}\left(\left(K_{\mathcal{X}/\mathbb{P}^1}+\sum_{i=0}^m\mathcal{L}_i\right)\cdot\left(\sum_{i=0}^m\mathcal{L}_i\right)^n\right).
	\end{align*}
\end{proposition}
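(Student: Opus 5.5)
The plan is to reduce both slope statements to the general slope results already proved, Theorem \ref{our2alg} and Proposition \ref{our5} (ii), applied to the compactified product test configurations $(\mathcal{X},\mathcal{L}_i)$. First I would check that the ray $\omega_{i,t}=g_t^*\omega_i$ has the required form. Write $g_t^*h_i=e^{-\varphi_i(t)}h_i$, using the lift of the $\mathbb{C}^*$-action to $L_i$. Then $\varphi_i(t)$ is a smooth geodesic ray in $\mathcal{H}(\omega_i)$, and the associated $\mathbb{S}^1$-invariant metric $\Phi_i$ on $\operatorname{pr}_1^*L_i$ over $X\times\mathbb{C}^*$ is the pullback of $h_i$ by the action map. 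It therefore extends as a smooth strictly positive metric on $\mathcal{L}_i$ near the central fibre of the product test configuration. The central fibre is $X$, hence reduced, and $\mathcal{X}$ is smooth. If the compactified $\mathcal{X}$ does not literally dominate $X\times\mathbb{P}^1$, I would pass to an equivariant resolution of the graph of the birational map $\mathcal{X}\dashrightarrow X\times\mathbb{P}^1$. Pulling back $\Phi_i$ gives semipositive rather than strictly positive metrics, so I expect to need the slight extension of Theorem \ref{our2alg} to semiample/smooth compatible rays, which \cite[Lemma 3.9, Theorem 3.6]{boucksom2019} allows. The intersection numbers are unchanged by the projection formula.

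Applying Theorem \ref{our2alg}, every $\mathcal{X}_i$ equals $\mathcal{X}$, and the reduced-central-fibre term vanishes. Using $\sum_i L_i=-K_X$, the rewriting given right after Theorem \ref{our2alg} (with $S-n=nK_X\cdot L_0^{n-1}/L_0^n$) combines $\operatorname{M}^{\operatorname{NA}}(\mathcal{X},\mathcal{L}_0)$, the $S$-term and the mixed terms. This yields exactly
$$-\sum_{i=0}^m\frac{1}{V_i(n+1)!}(\mathcal{L}_i^{n+1})+\frac{1}{V_0n!}\Big(\big(K_{\mathcal{X}/\mathbb{P}^1}+\sum_{i=0}^m\mathcal{L}_i\big)\cdot\mathcal{L}_0^n\Big).$$
The only bookkeeping step is verifying that the coefficient of $(\mathcal{L}_0^{n+1})$ coming from $\operatorname{M}^{\operatorname{NA}}$ and from $S/(V_0(n+1)!)$ combines with $\frac{1}{V_0n!}\sum_{i\ge1}(\mathcal{L}_i\cdot\mathcal{L}_0^n)$ into the displayed form. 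The second formula follows identically from Proposition \ref{our5} (ii) with the common total space $\mathcal{X}$, again dropping the term $(\mathcal{X}(0)_{\operatorname{red}}-\mathcal{X}(0))\cdot(\sum\mathcal{L}_i)^n=0$.

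As an independent check, I would use the preceding theorem: along $g_t$, $\mathcal{M}_{\operatorname{ccscK}}=\mathcal{N}_{\operatorname{cKE}}=\mathcal{M}'$. This forces the two right-hand sides to agree, and by the Killing computation both should equal $4\pi\operatorname{Fut}_c(\xi^{1,0})$ up to sign. Consequently the functionals are affine in $t$ and the limit is just the constant derivative.

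The main obstacle is the first step: making the compatibility precise for the compactified product configuration, namely the sign convention of $g_t=\exp(-t\xi)$ versus the $\mathbb{C}^*$-action used to compactify, and the domination hypothesis. If the resolution route is awkward, I would instead compute the constant slope directly via the affine structure and identify it with the intersection numbers by the standard equivariant localization (Donaldson–Futaki) computation on $\mathcal{X}$.
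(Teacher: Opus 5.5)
Your proposal is correct and follows essentially the paper's route. The paper also realizes $g_t^*\omega_i$ as a smooth geodesic ray $\mathcal{C}^\infty$-compatible with the compactified product configuration: it extends $\pi_1^*h_i$ to an $\mathbb{S}^1$-invariant metric $H_i$ on $\mathcal{L}_i$, which is the same metric you obtain from the lifted action. It then applies the Chen--Tian-formula slope results (Lemma \ref{slopeofAnalyticDelignePairing} and \cite[Theorem A]{boucksom2019}, i.e.\ the content of Theorem \ref{our2alg} and Proposition \ref{our5} (ii)), uses that the central fibre is reduced, and does the same bookkeeping via $\sum_i L_i=-K_X$. Your explicit handling of the domination hypothesis (resolving the graph of $\mathcal{X}\dashrightarrow X\times\mathbb{P}^1$ and using the projection formula) covers a point the paper passes over silently, since the product configuration need not dominate $X\times\mathbb{P}^1$.
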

\begin{proof}
	We denote the inverse of gluing map from $(X\times(\mathbb{P}^1\backslash\{0\}),\operatorname{pr}_1^*L)$ to $(\mathcal{X}|_{\mathbb{P}^1\backslash\{0\}},\mathcal{L}|_{\mathbb{P}^1\backslash\{0\}})$ by $\mu$. Suppose that the $\mathbb{C}^*$-action on $X$ is $\lambda$, then $\mu:X\times\mathbb{C}^*=\mathring{\mathcal{X}}\backslash\mathring{\mathcal{X}}_0\simeq X\times \mathbb{C}^*$ is given by\begin{align*}
		(x,a)\mapsto (\lambda^{-1}(a).x,a).
	\end{align*}To distinguish between these two charts, we denote the projection from $\mathring{\mathcal{X}}$ to $X$ by $\pi_1$, and the projection from $\mathcal{X}|_{\mathbb{P}^1\backslash\{0\}}$ to $X$ by $\operatorname{pr}_1$, namely $$
	\begin{tikzcd}
		\mathring{\mathcal{L}}_i=\pi_1^*L_i \arrow[r] & \mathring{\mathcal{X}}=X\times\mathbb{C} \arrow[d, "\pi_1"'] & X\times\mathbb{C}^* \arrow[l, hook] \arrow[r, "\mu", equal] & X\times\mathbb{C}^* \arrow[r, hook] & X\times(\mathbb{P}^1\backslash\{0\}) \arrow[d, "\operatorname{pr}_1"] \\
		L_i \arrow[r]                    & X                                                 &                                                                           &                                     & X                                                                    
	\end{tikzcd}
	$$
	
	Recall that $g_t=\exp(-t\xi)$ where $\xi$ generates $\lambda$, this means\begin{align*}
		\mu|_{\mathring{\mathcal{X}}_{\exp(-t)}}=\lambda^{-1}(e^{-t})=\exp(t\xi)=g_t^{-1}.
	\end{align*} Note that the Hermitian metric $h_i$ can be defined on $\mathring{\mathcal{L}}_i$ (not $\mathcal{L}$) by $\pi_1^*h_i$, which gives an $\mathbb{S}^1$-invariant form $dd^ch_i=\pi_1^*\omega_i$ on $\mathring{\mathcal{X}}$. Since $\mu$ is equivariant, $\mu_*(dd^ch_i)$ is $\mathbb{S}^1$-invariant under the trivial action of $X\times \mathbb{C}^*$. At this time we have\begin{align*}
		(\mu_*(dd^ch_i))|_{X\times \{e^{-t}\}}=(\mu|_{\mathring{\mathcal{X}}_{\exp(-t)}})_*(\omega_i) = g_t^*\omega_i.
	\end{align*}We claim that
	\begin{lemma}
		Consider $\omega_{i,t}=g_t^*\omega_i$, then there exists $\psi_{i,t}$ such that $\omega_{i,t}=\omega_i+dd^c\psi_{i,t}$ and $\psi_{i,t}$ is a geodesic $\mathcal{C}^\infty$-compatible with $(\mathcal{X},\mathcal{L}_i)$.
	\end{lemma}
	\begin{proof}
		At the expense of some information outside $\mathring{\mathcal{X}}|_{\Delta}$, we can extend $h_i$ on $\mathring{\mathcal{L}}_i$ to an $\mathbb{S}^1$-invariant Hermitian metric $H_i$ on $\mathcal{L}_i$. Set $\Phi_i=dd^cH_i\in c_1(\mathcal{L}_i)$. Now $\mu^*\operatorname{pr}_1^*h_i$ and $H_i$ are two Hermitian metrics on $\mathcal{L}_i|_{\mathbb{P}^1\backslash\{0\}}$, so there exists $\Psi_i:(X\times \mathbb{P}^1\backslash\{0\})\to \mathbb{R}$ such that\begin{align*}
			\Phi_i=\mu^*\operatorname{pr}_1^* dd^ch_i+\mu^*dd^c\Psi_i=\mu^*\operatorname{pr}_1^*\omega_i+\mu^*dd^c\Psi_i
		\end{align*}on $\mathcal{X}|_{\mathbb{P}^1\backslash\{0\}}$. Take $\psi_{i,t}:=\Psi_i(\cdot, e^{-t})$. When restrict to $\mathring{\mathcal{X}}|_{\exp(-t)}$, we have\begin{align*}
			(\Phi_i)|_{\mathring{\mathcal{X}}_{\exp(-t)}}&=(\mu^*\operatorname{pr}_1^*\omega_i)|_{\mathring{\mathcal{X}}_{\exp(-t)}}+(\mu|_{\mathring{\mathcal{X}}_{\exp(-t)}})^*dd^c\psi_{i,t}\\&=(\mu|_{\mathring{\mathcal{X}}_{\exp(-t)}})^*\omega_i+(\mu|_{\mathring{\mathcal{X}}_{\exp(-t)}})^*dd^c\psi_{i,t}.
		\end{align*}Thus\begin{align*}
			\omega_i+dd^c\psi_{i,t}=(\mu|_{\mathring{\mathcal{X}}_{\exp(-t)}})_*(\Phi_i|_{\mathring{\mathcal{X}}_{\exp(-t)}})=g_t^*\omega_i.
		\end{align*}Finally, since $\Phi_i=\pi_1^*\omega_i$, $\Phi_i^{n+1}=0$ and thus $(\operatorname{pr}_1^*\omega_i+dd^c\Psi_i)^{n+1}=0$. All above shows that $\psi_{i,t}$ is a smooth geodesic $\mathcal{C}^\infty$-compatible with $(\mathcal{X},\mathcal{L}_i)$.
	\end{proof}The proof is thus completed by invoking Lemma \ref{slopeofAnalyticDelignePairing} and \cite[Theorem A]{boucksom2019}. Note that $\mathcal{X}$ is given by product, which means $\mathring{\mathcal{X}}_{0,\operatorname{red}}=\mathring{\mathcal{X}}_0$.
\end{proof}
\begin{corollary}\label{strange1}
	We have\begin{align*}
		4\pi\operatorname{Fut}_c(\xi^{1,0})=&-\sum_{i=0}^m\frac{1}{V_i(n+1)!}(\mathcal{L}_i^{n+1})+\frac{1}{V_0n!}\left(\left(K_{\mathcal{X}/\mathbb{P}^1}+\sum_{i=0}^m\mathcal{L}_i\right)\cdot\mathcal{L}_0^n\right)\\=&-\sum_{i=0}^m\frac{1}{V_i(n+1)!}(\mathcal{L}_i^{n+1})+\frac{1}{Vn!}\left(\left(K_{\mathcal{X}/\mathbb{P}^1}+\sum_{i=0}^m\mathcal{L}_i\right)\cdot\left(\sum_{i=0}^m\mathcal{L}_i\right)^n\right)
	\end{align*}for the tuple of test configurations $(\mathcal{X},(\mathcal{L}_i))$ compactified from the product test configuration as above.
\end{corollary}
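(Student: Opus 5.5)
The corollary will come from computing the same quantity, the slope at infinity, in two ways along the ray $\omega_{i,t}=g_t^*\omega_i$ with $g_t=\exp(-t\xi)$. By the preceding Proposition, this slope is the first intersection expression for $\mathcal{M}_{\operatorname{ccscK}}$ and the second for $\mathcal{M}'$. The remaining task is to identify both slopes with $4\pi\operatorname{Fut}_c(\xi^{1,0})$.

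\textbf{Step 1: the functionals are linear along the ray.} Since $g_t$ is a one-parameter subgroup, $g_{t+s}=g_t\circ g_s$. I would prove a cocycle identity: with the normalized potentials from the proof of the theorem along holomorphic automorphisms,
\[
\mathcal{M}_{\operatorname{ccscK}}((g_{t+s}^*\omega_i),(\omega_i))=\mathcal{M}_{\operatorname{ccscK}}((g_{t+s}^*\omega_i),(g_s^*\omega_i))+\mathcal{M}_{\operatorname{ccscK}}((g_s^*\omega_i),(\omega_i)).
\]
This follows from path-independence (Proposition \ref{welldefinedMabuchi}). I would also use invariance of the defining $1$-form under pullback by automorphisms, which gives $\mathcal{M}_{\operatorname{ccscK}}((g_{t+s}^*\omega_i),(g_s^*\omega_i))=\mathcal{M}_{\operatorname{ccscK}}((g_t^*\omega_i),(\omega_i))$. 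Hence $t\mapsto\mathcal{M}_{\operatorname{ccscK}}((g_t^*\omega_i),(\omega_i))$ is additive and smooth, and therefore linear. Its slope at infinity then equals its derivative at $t=0$.

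\textbf{Step 2: compute the derivative.} The derivative at $t=0$ is given by the Proposition following (\ref{comparison}): it equals $-4\pi\operatorname{Fut}_c$ evaluated on the generator. Here $g_t=\exp(-t\xi)$ rather than $\exp(t\xi)$, so the sign flips and the derivative is $+4\pi\operatorname{Fut}_c(\xi^{1,0})$. I would check carefully that the hypotheses of that Proposition hold. The forms $\omega_i$ are chosen $\mathbb{S}^1$-invariant, so $v=J\xi$ is Killing for every $\omega_i$ simultaneously, and the computation $\dot\varphi_i(0)=-4\pi\phi_i+\mathrm{const}$ applies. The constants drop out because the defining $1$-form annihilates constants, by the volume normalizations.

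\textbf{Step 3: the second equality.} The theorem along holomorphic automorphisms gives $\mathcal{M}_{\operatorname{ccscK}}((g_t^*\omega_i),(\omega_i))=\mathcal{M}'((g_t^*\omega_i),(\omega_i))$ for all $t$, so the two slopes coincide. Comparing with the two formulas of the preceding Proposition then yields the second equality.

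\textbf{Main obstacle.} The delicate point is sign and normalization bookkeeping. This includes the orientation of the compactification, since $\mu$ restricted to the fibre over $e^{-t}$ equals $g_t^{-1}$, and the real-versus-complex factor relating $\xi$, $v=J\xi$ and $\xi^{1,0}$ in the Futaki invariant. I would settle these by redoing the derivative computation directly from Definition \ref{coupledMabuchi} with $\dot\varphi_i=4\pi\phi_i$.
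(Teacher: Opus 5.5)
Your proposal is correct and takes essentially the paper's route. The paper also shows that $\mathcal{M}_{\operatorname{ccscK}}$ is affine along $g_t^*\omega_i$, but does so by using $dd^c\dot{\varphi}_i(a)=g_a^*\,dd^c\dot{\varphi}_i(0)$ and automorphism invariance of the defining $1$-form to show that the derivative at $t=a$ equals the derivative at $t=0$; you reach the same conclusion through the equivalent cocycle/additivity argument, and both then identify the slope with the derivative at $0$ from the Futaki computation. Your explicit treatment of the sign flip from $g_t=\exp(-t\xi)$, and your derivation of the second equality from $\mathcal{M}_{\operatorname{ccscK}}=\mathcal{M}'$ along automorphism orbits together with the slope formula for $\mathcal{M}'$, spell out steps the paper leaves implicit.
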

\begin{proof}
	Note that for $\varphi_{i,t}$ defined by ($\ref{0}$), \begin{align*}
		dd^c\dot{\varphi_{i}}(a)=-\mathcal{L}_{\xi}(g_a^*\omega)=g_a^*(-\mathcal{L}_{\xi}\omega)=g_a^*dd^c\dot{\varphi_i}(0).
	\end{align*}Thus\begin{align*}
		&\frac{d}{dt}\bigg|_{t=a}\mathcal{M}_{\operatorname{ccscK}}((\omega_{i}+dd^c\varphi_{i,t}),(\omega_{i}))
		\\=&\ -\sum_{i=1}^m\int_X\dot{\varphi_{i}}(a)\left(\frac{1}{V_i} \frac{g_a^*\omega_i^n}{n!}- \frac{1}{V_0}\frac{g_a^*\omega_0^n}{n!} \right) 
		\\& - \frac{1}{V_0}\int_M \dot{\varphi_0}(a) \left((S_{g_a^*\omega_0,J}-\hat{S})\frac{g_a^*\omega_0^n}{n!} - \sum_{i=0}^mg_t^*\omega_i(t) \wedge \frac{g_a^*\omega_0^{n-1}}{(n-1)!} \right)
		\\=&\ -\sum_{i=1}^m\int_X\dot{\varphi_{i}}(0)\left(\frac{1}{V_i} \frac{\omega_i^n}{n!}- \frac{1}{V_0}\frac{\omega_0^n}{n!} \right) 
		\\& - \frac{1}{V_0}\int_M \dot{\varphi_0}(0) \left((S_{\omega_0,J}-\hat{S})\frac{\omega_0^n}{n!} - \sum_{i=0}^m\omega_i(t) \wedge \frac{\omega_0^{n-1}}{(n-1)!} \right)	
		\\=&\ \frac{d}{dt}\bigg|_{t=0}\mathcal{M}_{\operatorname{ccscK}}((\omega_{i}+dd^c\varphi_{i,t}),(\omega_{i})).
	\end{align*}This means $\mathcal{M}_{\operatorname{ccscK}}$ is affine along $\omega_{i,t}$.
\end{proof}
\begin{remark}
	By Corollary \ref{strange1} we have\begin{align*}
		\frac{1}{V_0n!}\left(\left(K_{\mathcal{X}/\mathbb{P}^1}+\sum_{i=0}^m\mathcal{L}_i\right)\cdot\mathcal{L}_0^n\right)=\frac{1}{Vn!}\left(\left(K_{\mathcal{X}/\mathbb{P}^1}+\sum_{i=0}^m\mathcal{L}_i\right)\cdot\left(\sum_{i=0}^m\mathcal{L}_i\right)^n\right)
	\end{align*}for the compactified product test configuration. In fact both of them equal to a constant $c/n!$ that depends on $\left(\mathcal{X},\sum_{i=0}^m\mathcal{L}_i\right)$, where $c$ is given by\begin{align*}
		K_{\mathcal{X}/\mathbb{P}^1}+\sum_{i=0}^m\mathcal{L}_i=\pi^*\mathcal{O}_{\mathbb{P}^1}(c),
	\end{align*}\begin{proof}
		Let $\mathring{\mathcal{L}}:=\sum_{i=0}^m\mathring{\mathcal{L}}_i$, then $(\mathring{\mathcal{X}},\mathring{\mathcal{L}})$ is a product test configuration of $(X,-K_X)$. We denote the $\mathbb{C}^*$-action on $X,-K_X$ that induces this product test configuration by $\lambda,\mu$. Note that the $\mathbb{C}^*$-action $\lambda$ induces another action $\Lambda$ on $-K_X$, which is actually the action of $(\mathcal{X},-K_{\overline{X}/\mathbb{P}^1})$.
		
		Now restrict to $\mathring{\mathcal{X}}$, both $\mathring{\mathcal{L}}$ and $-K_{\mathring{\mathcal{X}}/\mathbb{C}}$ are isomorphic to $-\operatorname{pr}_1^*K_X$, but with different actions $\mu,\Lambda$. Since $X$ is compact, $\Lambda^{-1}(a)\circ\mu(a)$ is just constant scaling. Group axiom then implies that $\Lambda(a)=a^c\mu(a)$ for some integer $c$. This means that the gluing (transform) function of $\mathring{\mathcal{L}}$ differs by a factor of $*^c$ from that of $-K_{\mathcal{X}/\mathbb{P}^1}$.	
	\end{proof}
\end{remark}

\bibliographystyle{spmpsci.bst}
\bibliography{references}
\end{document}